\documentclass{amsart}

\usepackage{amsmath,amssymb,amscd}
\usepackage{mathdots,mathabx}
\usepackage{blkarray}
\usepackage{caption}
\usepackage{hyperref}
\usepackage[all]{xy}
\usepackage{float}
\usepackage{mathtools}
\input{quivers_KC.sty}
\input{quivers_KC_triangles.sty}

\newtheorem{theorem}{Theorem}[section]
\newtheorem{lemma}[theorem]{Lemma}
\newtheorem{corollary}[theorem]{Corollary}
\newtheorem{proposition}[theorem]{Proposition}

\theoremstyle{definition}
\newtheorem{definition}[theorem]{Definition}
\newtheorem{example}[theorem]{Example}

\theoremstyle{remark}
\newtheorem{remark}[theorem]{Remark}

\numberwithin{equation}{section}

\DeclareMathOperator{\A}{A}

\DeclareMathOperator{\Coker}{Coker}

\DeclareMathOperator{\GL}{GL}
\DeclareMathOperator{\Gr}{Gr}
\DeclareMathOperator{\Hom}{Hom}
\DeclareMathOperator{\Id}{Id}

\DeclareMathOperator{\Ker}{Ker}
\DeclareMathOperator{\Mod}{Mod}

\DeclareMathOperator{\PHom}{PHom}

\DeclareMathOperator{\Rep}{Rep}

\DeclareMathOperator{\SI}{SI}
\DeclareMathOperator{\SL}{SL}

\DeclareMathOperator{\Spec}{Spec}

\DeclareMathOperator{\Tr}{Tr}

\newcommand{\op}[1]{\operatorname{#1}}
\newcommand{\opp}{{\operatorname{op}}}
\newcommand{\mc}[1]{\mathcal{#1}}
\newcommand{\mb}[1]{\mathbb{#1}}
\newcommand{\mr}[1]{{\sf #1}}
\newcommand{\mf}[1]{\mathfrak{#1}}
\newcommand{\mch}[1]{\check{\mathcal{#1}}}
\renewcommand{\b}[1]{\bold{#1}}
\newcommand{\bs}[1]{\boldsymbol{#1}}
\newcommand{\br}[1]{\overline{#1}}

\newcommand{\dv}{\underline{\dim}}
\newcommand{\wtd}[1]{\widetilde{#1}}

\newcommand{\e}{{\sf e}}
\newcommand{\f}{{\sf f}}
\newcommand{\g}{{\sf g}}

\renewcommand{\d}{{\sf d}}
\renewcommand{\t}{{\sf t}}
\renewcommand{\S}{{\bf S}}

\newcommand{\proj}{\operatorname{proj}\text{-}}

\newcommand{\ckQ}{\widehat{k\Delta}}

\newcommand{\sm}[1]{{\left(\begin{smallmatrix}#1\end{smallmatrix}\right)}}

\newcommand{\mat}[1]{{\begin{matrix}#1\end{matrix}}}
\newcommand{\fr}[1]{\framebox[1.2\width]{{${#1}$}}}

\newcommand{\uca}{\br{\mc{C}}}

\newcommand{\kllm}{{K_{l,l}^m}}
\newcommand{\zllm}{{Z_{l,l}^m}}

\newcommand{\invint}[1]{{\scriptsize \overleftarrow{[#1]}}}
\newcommand{\invarr}[1]{{\scriptsize \overleftarrow{#1}}}
\renewcommand{\ss}[1]{{\scriptstyle{#1}}}

\newcommand{\bl}{{\beta_l}}
\newcommand{\circm}{{\mathop{\text{\textcircled{\tiny \textit m}}}\nolimits}}

\newcommand{\ijn}{_{i,j}^{n}}
\newcommand{\bigsimeq}{\wtd{=\joinrel=}}
\newcommand{\Diamondsh}{\mathrel{\Diamond\!\!\raisebox{0.45pt}{$\rangle$}}}

\begin{document}

\title{Cluster Algebras, Invariant Theory, and Kronecker Coefficients II}
\author{Jiarui Fei}
\address{Shanghai Jiao Tong University, School of Mathematical Sciences}
\email{jiarui@sjtu.edu.cn}
\thanks{}
\dedicatory{Dedicated to Professor Sergey Fomin on the Occasion of his Sixtieth Birthday}

\subjclass[2010]{Primary 13F60, 20C30; Secondary 16G20, 13A50, 52B20}

\date{}
\keywords{Upper Cluster Algebra, Semi-invariant Ring, Kronecker Coefficient, Quiver Representation, Quiver with Potential, Cluster Character, Flagged Kronecker Quiver, Symmetric Function, $\g$-vector Cone, Lattice Point}

\begin{abstract} We prove that the semi-invariant ring of the standard representation space of the $l$-flagged $m$-arrow Kronecker quiver is an upper cluster algebra for any $l,m\in \mb{N}$.
The quiver and cluster are explicitly given.
We prove that the quiver with its rigid potential is a polyhedral cluster model.
As a consequence, to compute each Kronecker coefficient $g_{\mu,\nu}^\lambda$ with $\lambda$ at most $m$ parts,
we only need to count lattice points in at most $m!$ fibre (rational) polytopes inside the $\g$-vector cone,
which is explicitly given. 
\end{abstract}

\maketitle

\setcounter{tocdepth}{2}
\tableofcontents

\section*{Introduction}
In the previous paper \cite{Fk1}, we studied the semi-invariant rings $\SI_\bl(\kllm)$ of the {\em flagged Kronecker quivers} $\kllm$ with standard dimension vector $\bl$ below
$$\kronml{1}{2}{l}{l}{2}{1}{}$$
These semi-invariant rings are interesting because they are related to the important {\em Kronecker coefficients}.
We proved that when $m=2$, for each $l$ the semi-invariant ring is an {\em upper cluster algebra} \cite{BFZ}.
Moreover, the {\em ice quiver} of this upper cluster algebra has a {\em rigid potential} such that they form a {\em polyhedral cluster model}.
Roughly speaking, this means that the upper cluster algebra has a basis parametrized by the lattice points in the {\em $\mu$-supported $\g$-vector cone}.

In this paper, we settle the general case. Namely, 
\begin{theorem}[=Theorem \ref{T:equal}] For any $l,m\geq 2$, the semi-invariant ring $\SI_\bl(\kllm)$ is isomorphic to the graded upper cluster algebra $\uca(\wtd{\Diamond}_l^m,\wtd{\b{s}}_l^m; \wtd{\bs{\sigma}}_l^m)$.
\end{theorem}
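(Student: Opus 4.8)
The plan is to identify the entries of the initial extended cluster $\wtd{\b s}_l^m$ with explicit Schofield and flag semi-invariants carrying the weight grading $\wtd{\bs{\sigma}}_l^m$, and then to establish the two inclusions $\uca(\wtd{\Diamond}_l^m,\wtd{\b s}_l^m;\wtd{\bs{\sigma}}_l^m)\subseteq\SI_\bl(\kllm)\subseteq\uca(\wtd{\Diamond}_l^m,\wtd{\b s}_l^m;\wtd{\bs{\sigma}}_l^m)$ inside the field of rational semi-invariants, following the blueprint used for $m=2$ in \cite{Fk1}. First I would recall a finite generating set of $\SI_\bl(\kllm)$: by the Derksen--Weyman and Schofield--Van den Bergh description, made combinatorial in \cite{Fk1}, it is generated by the Schofield semi-invariants $c^W$ for a distinguished family of dimension vectors $W$ together with the determinantal semi-invariants attached to the flags. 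Reading off the vertices of $\wtd{\Diamond}_l^m$, one checks that the entries of $\wtd{\b s}_l^m$ occur among these generators, are algebraically independent, are $\dim\SI_\bl(\kllm)$ in number, and have weights $\wtd{\bs{\sigma}}_l^m$; this exhibits $\wtd{\b s}_l^m$ as a genuine extended cluster inside $\SI_\bl(\kllm)$ and gives $\operatorname{Frac}\uca=\mb{C}(\wtd{\b s}_l^m)\subseteq\operatorname{Frac}\SI_\bl(\kllm)$.

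For the inclusion $\SI_\bl(\kllm)\subseteq\uca$, I would show that every generator lies in $\uca$ by realizing it as a cluster variable (or a cluster monomial involving frozen variables): for each generator $g$ one produces a mutation sequence --- assembled from the ``hive-row'' and ``$\mu$-half-row'' moves visible in the diamond pictures --- carrying the initial seed to a seed in which $g$ is a cluster variable, the needed identity ``exchange relation $=$ product of semi-invariants'' being an instance of the generalized Pl\"ucker / Cauchy-type relations among Schofield semi-invariants for the flagged Kronecker quiver. Since $\uca$ is a ring containing all its cluster variables and frozen variables, it follows that $\SI_\bl(\kllm)=\mb{C}[\text{generators}]\subseteq\uca$.

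For the inclusion $\uca\subseteq\SI_\bl(\kllm)$, I would invoke the Berenstein--Fomin--Zelevinsky ``starfish'' criterion \cite{BFZ}. The ring $\SI_\bl(\kllm)=\mb{C}[\Rep_\bl(\kllm)]^{\SL_\bl}$ is a finitely generated normal domain, and the exchange matrix of $(\wtd{\Diamond}_l^m,\wtd{\b s}_l^m)$ has full rank, so $\uca$ equals the upper bound attached to the single initial seed. It then suffices to verify (i) each once-mutated variable $\mu_k(\wtd{s}_k)$ again lies in $\SI_\bl(\kllm)$ --- which follows from the determinantal identities of the previous step --- and (ii) for mutable $k\ne l$ the semi-invariants $\wtd{s}_k,\mu_k(\wtd{s}_k)$ and $\wtd{s}_k,\wtd{s}_l$ are pairwise coprime, so that the corresponding intersections of hypersurfaces have codimension $\ge 2$ in $\Spec\SI_\bl(\kllm)$. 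Granting (i)--(ii), normality gives $\SI_\bl(\kllm)=\bigcap_{\operatorname{ht}\mf p=1}(\SI_\bl(\kllm))_{\mf p}$, and for each height-one prime $\mf p$ one of the partial-Laurent rings occurring in the upper bound already localizes into $(\SI_\bl(\kllm))_{\mf p}$ --- namely $\mb{C}[\mu_k\wtd{\b s}]$ when $\wtd{s}_k\in\mf p$ for the (then unique) mutable $k$, and $\mb{C}[\wtd{\b s}^{\pm}]$ otherwise --- whence $\uca\subseteq\SI_\bl(\kllm)$.

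The main obstacle is the identification in the second step (and part (i) of the third) for the new $m$-arrow Kronecker core of $\wtd{\Diamond}_l^m$: one must match, uniformly in $m$, the cluster exchange relations supported on that core with multiplicative identities among Schofield and flag semi-invariants. I expect this to require an induction on $m$ that glues the already-established $\wtd{\Diamond}_l^{m-1}$-structure to one extra ``hive row'' of vertices, a direct computation of the cluster character of the rigid representation of $(\wtd{\Diamond}_l^m,\wtd{W}_l^m)$ sitting at the extra row, and a verification that the rigid potential $\wtd{W}_l^m$ restricts compatibly under the gluing. The coprimality bookkeeping in part (ii) should then be routine, the exchanged semi-invariants having disjoint weight supports dictated by the bipartite shape of the diamond.
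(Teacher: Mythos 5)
Your inclusion $\uca(\wtd{\Diamond}_l^m,\wtd{\b{s}}_l^m)\subseteq\SI_\bl(\kllm)$ follows the paper's route: the starfish criterion applied to the normal domain $\SI_\bl(\kllm)$, after verifying that the lifted semi-invariants, their one-step mutations, and the codimension-one coprimality all live in $\SI_\bl(\kllm)$ (this is Lemma \ref{L:RCA} together with Corollaries \ref{C:exrellift1}, \ref{C:exrellift2} and \ref{C:RCA}; note the exchange relations only lift after the extra arrows to the frozen vertices $n$ are added at the corner vertices, a point your sketch glosses over).

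The genuine gap is in your argument for $\SI_\bl(\kllm)\subseteq\uca$. You propose to realize every algebra generator of $\SI_\bl(\kllm)$ as a cluster variable or cluster monomial reached by an explicit mutation sequence. This cannot succeed: it would place $\SI_\bl(\kllm)$ inside the cluster algebra generated by all cluster and frozen variables, and combined with your other inclusion would force the cluster algebra to equal the upper cluster algebra, whereas the containment is strict for every $l,m\geq 2$ (as recorded in the introduction, and already for $m=2$ in \cite{Fk1}). Hence some generator of $\SI_\bl(\kllm)$ is provably not a cluster monomial, and no mutation sequence produces it. The paper's actual argument is of a different nature: it introduces the quotient $\zllm=(\mc{A}\times G^m\times\mch{A})/(G\times G)$ and proves $k[\zllm]\cong\uca(\Diamond_l^m,\b{s}_l^m)$ by comparing the two seeds attached to the triangulations $\mb{T}_m$ and $\mb{T}_m^\dagger$, which share no disk diagonal, and using factoriality to show the common zero locus of $D_{\rm h}$ and $D_{\rm h}^\dagger$ has codimension two (Proposition \ref{P:Zllmcluster}); the essential input is the closed embedding of Lemma \ref{L:embedding}, which also supplies the algebraic independence of the cluster that you merely assert. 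This gives $\SI_\bl(\kllm)_{D_{\rm c}}\cong\uca(\wtd{\Diamond}_l^m,\wtd{\b{s}}_l^m)_{D_{\rm c}}$, and the inclusion is then completed by showing that a semi-invariant, written as a Laurent polynomial in a suitable cluster, has no pole along any $\det a_k=0$; this rests on the algebraic independence of the restricted cluster on the degenerate locus $\mc{D}_k$ (Lemmas \ref{L:ACR} and \ref{L:embeddingk}, with the $w_0$-flipped seed handling even $k$), a step with no counterpart in your plan.
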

\noindent We explicitly construct an initial seed $(\wtd{\Diamond}_l^m,\wtd{\b{s}}_l^m)$.
Roughly speaking, the ice quiver $\wtd{\Diamond}_l^m$ is built up in two steps.
We first glue $m-1$ so-called {\em diamond quivers} $\Diamond_l$ together, then we {\em extend} it through $m$ frozen vertices.
A single $\Diamond_l$ for $l=5$ is drawn in Figure \ref{f:Pair} (right).  
Moreover, all {\em cluster variables} in $\wtd{\b{s}}_l^m$ are Schofield's semi-invariants.
Each such upper cluster algebra strictly contains the corresponding cluster algebra. 

The statement about the cluster model is also true for the general case.
\begin{theorem}[=Theorem \ref{T:CM}] There is a rigid potential $\wtd{W}_l^m$ on $\wtd{\Diamond}_l^m$ such that $(\wtd{\Diamond}_l^m,\wtd{W}_l^m)$ is a polyhedral cluster model.
\end{theorem}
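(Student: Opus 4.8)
The plan is to follow the two-step construction of the ice quiver $\wtd{\Diamond}_l^m$. First I would write down the potential $\wtd{W}_l^m$ explicitly: on each of the $m-1$ glued copies of the diamond quiver $\Diamond_l$ one keeps the rigid potential $W_l$ whose rigidity and model property in the case $m=2$ (a single diamond) is established in \cite{Fk1}; to this one adjoins the new cyclic terms supported on the paths created along the gluing loci, together with the $3$- and $4$-cycles produced when the glued quiver is extended through the $m$ frozen vertices. Concretely, $\wtd{W}_l^m$ should be, up to right-equivalence, the unique reduced potential whose Jacobian ideal is generated by the commutativity relations dictated by the underlying hive/diamond combinatorics. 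Once $\wtd{W}_l^m$ is written down, the proof splits into two essentially independent assertions: that $\wtd{W}_l^m$ is rigid, and that $(\wtd{\Diamond}_l^m,\wtd{W}_l^m)$ is a polyhedral cluster model.

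For rigidity I would use a gluing principle for quivers with potential: if $(Q_1,W_1)$ and $(Q_2,W_2)$ are rigid ice QPs and one amalgamates them along a set of frozen vertices by adjoining a potential supported on the cycles running through those vertices, then the amalgamated QP is again rigid provided each new Jacobian generator cancels a path transversal to the gluing, so that no new nontrivial cycle survives in the Jacobian algebra. I would check this hypothesis from the explicit local picture of $\Diamond_l$ near its frozen vertices, and then observe that the extension step through the $m$ new frozen vertices is itself a finite iteration of the same kind of amalgamation. A possibly shorter alternative exploits that rigidity is mutation invariant: one exhibits a finite mutation sequence — peeling off one diamond layer at a time, which the built-in symmetry of $\Diamond_l$ makes natural to describe — reducing $(\wtd{\Diamond}_l^m,\wtd{W}_l^m)$ to a QP whose rigidity is inherited directly from \cite{Fk1}. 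I expect this rigidity step to be the main obstacle, since all the combinatorial complexity of $\wtd{\Diamond}_l^m$ is concentrated in the bookkeeping of cycles along the glued interfaces; everything afterwards is essentially an assembly of known machinery.

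With $\wtd{W}_l^m$ rigid, I would verify the defining conditions of a polyhedral cluster model, following the template of the $m=2$ case: namely that the $\mu$-supported generic cluster characters (Caldero--Chapoton functions) of $(\wtd{\Diamond}_l^m,\wtd{W}_l^m)$ form a basis of its upper cluster algebra, indexed by the lattice points of the $\mu$-supported $\g$-vector cone. This reduces to three points. (i) The generic cluster characters are well defined and $\mb{Z}$-linearly independent: this follows from rigidity (hence non-degeneracy) and the fact that distinct $\g$-vectors yield distinct leading monomials in the initial cluster. (ii) They span $\uca(\wtd{\Diamond}_l^m,\wtd{\b{s}}_l^m;\wtd{\bs{\sigma}}_l^m)$: here I would transport the question through the isomorphism of Theorem \ref{T:equal} into $\SI_\bl(\kllm)$, where the fact that the cluster variables of $\wtd{\b{s}}_l^m$ are exactly Schofield's semi-invariants, combined with a comparison of multigraded Hilbert series, forces the generic cluster characters to exhaust the ring. (iii) The $\mu$-support condition cuts the full $\g$-vector fan down to precisely the cone claimed, which is a direct computation with the initial $c$-vectors.

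It remains to make the polyhedral description explicit and to derive the corollary. Reading off the $c$-vectors of the initial seed and using the layered structure of $\wtd{\Diamond}_l^m$, I would show that the $\mu$-supported $\g$-vector cone fibres over the weight data attached to $(\lambda,\mu,\nu)$, with $\lambda$ ranging over partitions with at most $m$ parts, and that each such fibre is a union of at most $m!$ rational polytopes — the $m!$ and the fibre structure reflecting the $m$ arrows of $\kllm$ (equivalently the residual symmetry permuting the $m$ extension frozen vertices). Counting lattice points in these fibre polytopes then recovers $g_{\mu,\nu}^\lambda$. The only genuinely new work beyond \cite{Fk1} lies in the first two steps; the last two are verifications with the explicit seed.
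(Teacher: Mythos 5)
Your outline diverges from the paper's proof in the step that actually carries the weight, and that step as you describe it has a gap. The paper never proves the spanning property by transporting to $\SI_\bl(\kllm)$ and comparing multigraded Hilbert series. That route is essentially circular here: the identification of the graded dimensions of $\SI_\bl(\kllm)$ with lattice-point counts in the fibre polytopes is the \emph{downstream consequence} (Theorem \ref{T:KC}, via Proposition \ref{P:KC}), not an available input, and there is no independent closed formula for $\dim\SI_\bl(\kllm)_{\sigma,\lambda}$ to compare against. Instead, the paper works with the mutation-equivalent IQP $(\br{\Diamond}_l^m,\br{W}_l^m)$ and proves the cluster-model property \emph{intrinsically}, by repeatedly applying Theorem \ref{T:Cmodel} from \cite{FW}: one exhibits explicit mutation sequences (Lemmas \ref{L:hiverow}--\ref{L:cuthalf}) that optimize frozen vertices and the diamond-diagonal vertices, deletes them as sinks/sources (adding auxiliary frozen sinks to preserve the full-rank condition on the $B$-matrix), and reduces to two triangulated $(m+1)$-gons, whose rigid IQP is a polyhedral cluster model by \cite[Proposition 3.4]{FW}. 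None of this machinery appears in your plan; the combinatorial content of the proof is precisely the construction of these optimization sequences, not a Hilbert-series count.

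On rigidity: the ``gluing principle'' you invoke is not an established result, and its hypothesis (``each new Jacobian generator cancels a path transversal to the gluing'') is not something you can check without already doing the computation it is meant to replace. The paper's potential is the concrete alternating sum of $3$-cycles $\br{W}_l^m=abc-acb$, and rigidity is proved by a direct verification that every cycle is zero in the Jacobian algebra; the delicate points are exactly the degenerate vertices $(_{0,j}^{1})$ and $(_{1,0}^{2})$ on the gluing locus, where the commutation relations \eqref{eq:Jrel1} fail and must be replaced by \eqref{eq:Jrel21}--\eqref{eq:Jrel23}. Your alternative of reducing rigidity to the $m=2$ case by mutation would also need the explicit admissible sequences, which again is the content you are deferring. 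Finally, two smaller points: your item (iii) describing the cone by ``initial $c$-vectors'' does not match the paper, which describes $\mr{G}_l^m$ by stability inequalities $\g(\dv S)\ge 0$ over subrepresentations of the boundary representations $T_v$ (Propositions \ref{P:Tv} and \ref{P:Glm}); and the factor $m!$ in the application comes from the signed sum over $\mf{S}_m(\lambda)$ in Proposition \ref{P:KC}, not from a symmetry permuting the $m$ frozen vertices.
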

Actually, for the sake of exposition, we consider a quiver with potential $(\br{\Diamond}_l^m,\br{W}_l^m)$ {\em mutation}-equivalent to $(\wtd{\Diamond}_l^m,\wtd{W}_l^m)$.
Let $\mr{G}_l^m$ be the $\mu$-supported $\g$-vector cone of $(\br{\Diamond}_l^m,\br{W}_l^m)$. 
We can explicitly describe it by {\em stability conditions}, more precisely by the dimension vectors of subrepresentations of some {\em boundary} representations of $(\br{\Diamond}_l^m,\br{W}_l^m)$.
Combining with a result in the previous paper, we are able to efficiently compute Kronecker coefficients $g_{\mu,\nu}^{\lambda}$ associated to {\em all} triple of partitions $(\lambda,\mu,\nu)$ of $n$. 
By definition $g_{\mu,\nu}^\lambda$ is the tensor product multiplicity for irreducible representations of the symmetric group $\mf{S}_n$:
$$\S^\mu \otimes \S^\nu \cong \bigoplus_\lambda g_{\mu,\nu}^\lambda \S^\lambda.$$
To author's best knowledge, this is the fastest algorithm available so far.
\begin{theorem}[=Theorem \ref{T:KC}] Let $\mu,\nu$ (resp. $\lambda$) be partitions of length $\leq l$ (resp. $\leq m$). Then
	$$g_{\mu,\nu}^\lambda = \sum_{\omega\in \mf{S}_m(\lambda)} \op{sgn}(\omega) \Big|\mr{G}_l^m(\mu,\nu,\lambda^\omega)\cap \mb{Z}^{(\br{\Diamond}_l^m)_0}\Big|.$$
\end{theorem}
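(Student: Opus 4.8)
The plan is to derive Theorem \ref{T:KC} by combining the cluster-theoretic description of $\SI_\bl(\kllm)$ from Theorem \ref{T:equal} and Theorem \ref{T:CM} with the representation-theoretic interpretation of Kronecker coefficients from the previous paper \cite{Fk1}. First I would recall that a Kronecker coefficient $g_{\mu,\nu}^\lambda$ for a triple of partitions of $n$ equals the dimension of a weight space of the semi-invariant ring of the (unflagged, three-legged) Kronecker-type quiver, where the three partitions $\lambda,\mu,\nu$ determine the weight via the $\GL$-action on the vertices. The flagging construction replaces each $\GL$ with a full flag variety, so that after passing to the $l$-flagged $m$-arrow quiver the Kronecker coefficient gets expressed as an alternating sum over the Weyl group $\mf{S}_m$ acting on the $\lambda$-part — this is exactly the $\sum_{\omega\in\mf{S}_m(\lambda)}\op{sgn}(\omega)$ present in the statement, and it should already be essentially isolated as a lemma in \cite{Fk1} (a Weyl-character / Jacobi–Trudi type cancellation turning the flagged weight multiplicity into the honest Kronecker coefficient). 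So the first step is purely bookkeeping: write $g_{\mu,\nu}^\lambda$ as an alternating sum of dimensions $\dim \SI_\bl(\kllm)_{\sigma(\mu,\nu,\lambda^\omega)}$ of graded pieces, for the appropriate grading $\wtd{\bs{\sigma}}_l^m$.

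Next I would invoke Theorem \ref{T:equal} to replace $\SI_\bl(\kllm)$ by the graded upper cluster algebra $\uca(\wtd{\Diamond}_l^m,\wtd{\b{s}}_l^m;\wtd{\bs{\sigma}}_l^m)$, and then Theorem \ref{T:CM} — that $(\wtd{\Diamond}_l^m,\wtd{W}_l^m)$, equivalently the mutation-equivalent $(\br{\Diamond}_l^m,\br{W}_l^m)$, is a polyhedral cluster model — to conclude that this upper cluster algebra has a basis (the generic basis / the basis of generic Caldero–Chapoton functions) indexed by the lattice points of the $\mu$-supported $\g$-vector cone $\mr{G}_l^m$. The key point is that this basis is \emph{homogeneous} with respect to the grading: the weight of a basis element is a fixed linear function of its $\g$-vector, so $\dim\uca(\cdots)_w$ equals the number of lattice points in the fibre of $\mr{G}_l^m$ over $w$ under that linear map. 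This is where the "fibre polytope" language of the abstract enters: intersecting $\mr{G}_l^m$ with the affine subspace cutting out a fixed weight $w$ gives a rational polytope $\mr{G}_l^m(\mu,\nu,\lambda^\omega)$, and the dimension of the graded piece is $|\mr{G}_l^m(\mu,\nu,\lambda^\omega)\cap\mb{Z}^{(\br{\Diamond}_l^m)_0}|$. Summing the alternating-sum identity of the first step then yields the claimed formula. I would also make explicit (referring back to the description of $\mr{G}_l^m$ by stability conditions via dimension vectors of subrepresentations of the boundary representations) why the relevant fibres are bounded polytopes, so that the counts are finite.

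The main obstacle I expect is matching up the two gradings and the two parametrizations precisely: one must check that the $\GL$-weight grading on $\SI_\bl(\kllm)$ corresponds under the isomorphism of Theorem \ref{T:equal} to the $\g$-vector grading on $\uca$ via the stated linear map, i.e.\ that the isomorphism is an isomorphism of \emph{graded} algebras with the grading $\wtd{\bs{\sigma}}_l^m$, and that the cluster-model basis is compatible with this grading (homogeneity of generic CC-functions). A second, more technical, obstacle is the passage between $\wtd{\Diamond}_l^m$ and $\br{\Diamond}_l^m$: since the theorem is stated in terms of the $\g$-vector cone of $(\br{\Diamond}_l^m,\br{W}_l^m)$ but the ring identification is in terms of $\wtd{\Diamond}_l^m$, one needs the invariance of the (abstract) polyhedral cluster model and its lattice-point count under mutation — the number of lattice points in $\g$-vector cones is a mutation invariant, but the explicit cone $\mr{G}_l^m$ only has a clean stability-condition description on the $\br{\Diamond}$ side. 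Once those compatibilities are in place, the rest is a combination of previously established results and the finiteness/boundedness of the fibre polytopes, which follows from the explicit stability-condition description of $\mr{G}_l^m$.
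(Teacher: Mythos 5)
Your proposal follows essentially the same route as the paper: it combines the alternating-sum formula from the previous paper (recorded there as \cite[Corollary 3.2 and 3.4]{Fk1}, i.e.\ Proposition \ref{P:KC}) with Theorems \ref{T:equal} and \ref{T:CM} to parametrize a homogeneous basis of $\SI_\bl(\kllm)$ by lattice points of $\mr{G}_l^m$, and then counts each graded piece by lattice points in the corresponding fibre polytope. The compatibility issues you flag (graded isomorphism, passage from $\wtd{\Diamond}_l^m$ to $\br{\Diamond}_l^m$) are handled in the paper by the graded-seed formalism and the mutation-invariance of the polyhedral cluster model, exactly as you anticipate.
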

\noindent Here we write $\lambda^\omega$ for the vector in $\mb{Z}^m$ given by $\lambda^\omega(i)= \lambda(i)-i+\omega(i)$,
and set $\mf{S}_m(\lambda):=\left\{\omega\in \mf{S}_m \mid \lambda^\omega\in \mb{Z}_{\geq 0}^m \right\}$.
Moreover, $\mr{G}_l^m(\mu,\nu,\lambda^\omega)$ is the fibre polytope for the grading map $\br{\bs{\sigma}}_l^m : \mr{G}_l^m\to \mb{R}^{2l+m}$.

An earlier version of this paper had been finished by summer of 2016. In that version, we proved the same main results using the methods similar to those in the first paper. However, with this application in mind and motivated by the work of Gross-Hacking-Keel-Kontsevich \cite{GHKK}, we developed some machinery in \cite{FW}. The proof in the current version is much simpler and more conceptual than the previous one.  
The paper is almost self-contained -- we only assume Theorem 0.1 from the first paper.

Here is the outline of this paper.
In Section \ref{S:Bobjs}, we introduce the basic objects to study in this paper, including an important intermediate object $\zllm$.
Lemma \ref{L:embedding} is a key lemma.
In Section \ref{S:SSI}, we introduce a special class of Schofield's semi-invariants for $\Rep_{\bl}(\kllm)$, which will be our candidates for the cluster variables.
In Section \ref{S:UCA}, we recall the definition of graded upper cluster algebras.
In Section \ref{S:CSZ}, we describe the cluster structure of the coordinate algebra of $\zllm$, and make a connection with a construction of Fock-Goncharov.
In Section \ref{S:CSmain}, we prove our first main result (Theorem \ref{T:equal}) that $\SI_\bl(\kllm)$ is an upper cluster algebra.

In Section \ref{ss:QP}, we recall the ice quivers with potentials as cluster models following \cite{DWZ1,P,Fs1}. 
We also recall the main results in \cite{FW}. After some technical preparation in Section \ref{ss:museq},
we prove in Section \ref{ss:CM} our second main result (Theorem \ref{T:CM}) that $(\br{\Diamond}_l^m,\br{W}_l^m)$ is a polyhedral cluster model.
In Section \ref{ss:recap}, we recap the connection to the Kronecker coefficients made in the first paper.
As a consequence, we state our last main result (Theorem \ref{T:KC}) on computing Kronecker coefficients.  
In Section \ref{ss:Glm}, we briefly mention a description of the $\g$-vector cone $\mr{G}_l^m$ (Proposition \ref{P:Tv} and \ref{P:Glm}). 
We provide a detailed example when $l=m=3$.

\subsection*{Notations and Conventions}
Our vectors are exclusively row vectors. Unless otherwise stated, all modules are right modules.
For a quiver $Q$, we denote by $Q_0$ the set of vertices and by $Q_1$ the set of arrows.
Arrows are composed from left to right, i.e., $ab$ is the path $\cdot \xrightarrow{a}\cdot \xrightarrow{b} \cdot$.

Throughout the paper, the base field $k$ is algebraically closed of characteristic zero.
Unadorned $\Hom$ and $\otimes$ are all over the base field $k$, and the superscript $*$ is the trivial dual for vector spaces.
For any representation $M$, $\dv M$ is the dimension vector of $M$.
For direct sum of $n$ copies of $M$, we write $nM$ instead of the traditional $M^{\oplus n}$.

In this paper, we use the interval notation $[a,b]$ for the sequence $(a,a+1,\cdots,b)$,
and $\overleftarrow{[a,b]}$ for the reverse order sequence $(b,b-1,\cdots,a)$.
Moreover, $[n]$ is a shorthand for $[1,n]$.

\part{$\SI_\bl(\kllm)$ as an Upper Cluster Algebra} \label{P:I}
\section{Basic Objects} \label{S:Bobjs}

Let $X$ be an irreducible affine variety acted by a reductive algebraic group $G$.
We denote by $X/G:=\Spec k[X]^G$ the {\em categorical quotient} (in the category of varieties).
The typical situation throughout this paper is that there is a non-empty open subset $X^\circ\subset X$ where the action of $G$ is free. 
In particular, $X^\circ$ admits a {\em geometric quotient} \cite{PV}.
 
Let $Q=(Q_0,Q_1)$ be a finite quiver.
For a dimension vector $\beta$ of $Q$ (that is an element in $\mb{Z}^{Q_0}$), let $V$ be a $\beta$-dimensional vector space $\prod_{i\in Q_0} k^{\beta(i)}$. We write $V_i$ for the $i$-th component of $V$.
The space of all $\beta$-dimensional representations is
$$\Rep_\beta(Q):=\bigoplus_{a\in Q_1}\Hom(V_{t(a)},V_{h(a)}).$$
Here we denote by $t(a)$ and $h(a)$ the tail and head of $a$.
The product of general linear group
$$\GL_\beta:=\prod_{i\in Q_0}\GL(V_i)$$
acts on $\Rep_\beta(Q)$ by the natural base change.
But in this paper we are mainly interested in the action of the subgroup $\SL_\beta:=\prod_{i\in Q_0}\SL(V_i)\subset \GL_\beta$.
We define the rings of semi-invariants
$$\SI_\beta(Q):=k[\Rep_\beta(Q)]^{\SL_\beta}.$$

We call the linear quiver $A_l$ with the standard dimension vector $\bl$ a {\em complete flag} of length $l$
$$\Al{1}{2}{l}.$$  
The opposite quiver $A_l^\vee:=A_l^{\opp}$ of $A_l$ with the same dimension vector $\bl$ is called its {\em dual}
$$\Alv{1}{2}{l}.$$  
Let $\Rep_\bl^\circ(A_l)$ (resp. $\Rep_{\bl}^\circ(A_l^\vee)$) be the open subset of $\Rep_\bl(A_l)$ (resp. $\Rep_{\bl}(A_l^\vee)$) where each linear map is injective (resp. surjective).
Let $F\in \Rep_\bl^\circ(A_l)$ (resp. $\check{F}\in \Rep_{\bl}^\circ(A_l^\vee)$) be the representation given by $(I_k,0)$ (resp. $\sm{I_k \\ 0}$) on each arrow $k\to k+1$ (resp. $k+1\to k$), where $I_k$ is the $k\times k$ identity matrix. 

Throughout we set $G:=\SL_l$. Let $U^-$ (resp. $U:=U^+$) and $H$ be the subgroups of lower (resp. upper) triangular and diagonal matrices in $G$. 
For any $l\in\mb{N}$, we let $\gamma_l:=(1,2,\dots,l-1)$ so that we have the decomposition $\SL_{\bl}=\SL_{\gamma_{l}} \times \SL_l$.
We denote by $\mc{A}$ and $\mch{A}$ the categorical quotient $\Rep_\bl(A_l)/\SL_{\gamma_{l}}$ and $\Rep_\bl(A_l^\vee)/\SL_{\gamma_{l}}$ respectively.
The group $\SL_{\gamma_{l}}$ acts freely on $\Rep_\bl^\circ(A_l)$ so $\mc{A}^\circ:=\Rep_\bl^\circ(A_l)/\SL_{\gamma_{l}}$ is a geometric quotient. 
Similarly $\mch{A}^\circ:=\Rep_\bl^\circ(A_l^\vee)/\SL_{\gamma_{l}}$ is also a geometric quotient.
By abuse of notation, we still write $F$ and $\check{F}$ for the corresponding points in $\mc{A}$ and $\mch{A}$. 
Since the action of $\GL_l$ commutes with that of $\SL_{\gamma_{l}}$, the group $\GL_l$ acts on $\mc{A}$ and $\mch{A}$ as well.
As our convention, we change the left action of $\GL_l$ on $\mc{A}$ to the right action by taking the inverse.

\begin{lemma}[{\cite[Lemma 4.5]{FW}}] \label{L:stabilizer}  $F\cdot G$ is the only $G$-orbit in $\mc{A}^\circ$ and the stabilizer of $Fh$ is $U^-$ for any $h\in H$.
There is an obvious dual statement for $\check{F}$.
\end{lemma}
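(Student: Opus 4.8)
The plan is to identify $\mc{A}^\circ$, as a variety with $G$-action, with the homogeneous space $\SL_l/U^-$ (the principal/decorated flag variety); once this is done, both assertions of the lemma are immediate. The two structural facts that make it work are: $\Rep_\bl^\circ(A_l)$ is already a \emph{single} orbit for the full group $\GL_\bl$, and after passing to the $\SL_{\gamma_l}$-quotient the residual determinant torus is exactly compensated by the $\det=1$ defect between $\GL_l$ and $\SL_l$, so that $G=\SL_l$ (not merely $\GL_l$) stays transitive.

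First I would show $\Rep_\bl^\circ(A_l)=\GL_\bl\cdot F$: a representation of $A_l$ of dimension vector $\bl$ all of whose structure maps are injective is isomorphic to $F$ by choosing bases of $V_1,\dots,V_l$ inductively and compatibly. For the stabilizer, $(\phi_1,\dots,\phi_l)\in\GL_\bl$ fixes $F$ iff $\phi_i(I_i\mid 0)=(I_i\mid 0)\phi_{i+1}$ for every $i$, which forces $\phi_{i+1}=\sm{\phi_i&0\\ *&*}$; inductively $\phi_l$ is lower triangular and each $\phi_i$ is its leading principal $i\times i$ submatrix $\phi_l[i]$. Hence $\op{Stab}_{\GL_\bl}(F)$ is the lower Borel $B^-\subset\GL(V_l)$, embedded by $b\mapsto(b[1],\dots,b[l-1],b)$, and $\Rep_\bl^\circ(A_l)\cong\GL_\bl/B^-$.

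Next, since $\GL_\bl$ is a direct product, $\SL_{\gamma_l}$ is normal in it, so $\SL_{\gamma_l}\backslash\GL_\bl\cong T_\gamma\times\GL_l$ with $T_\gamma:=\GL_{\gamma_l}/\SL_{\gamma_l}\cong(k^{\times})^{l-1}$ via $(\phi_i)\mapsto(\det\phi_i)_i$; consequently $\mc{A}^\circ\cong(T_\gamma\times\GL_l)/\rho(B^-)$, where $\rho(b)=\big((\det b[i])_{i<l},\,b\big)$, and (after inverting) $G=\SL_l$ acts by left multiplication on the $\GL_l$-factor. For transitivity, given a class $\overline{(t,x)}$ I would pick $b=\operatorname{diag}(d_1,\dots,d_l)\in B^-$ with $d_1=t_1$, $d_i=t_i/t_{i-1}$ for $2\le i<l$, and $d_l=(\det x)/t_{l-1}$: then $\det b[i]=t_i$ for $i<l$ and $\det b=\det x$, so $xb^{-1}\in\SL_l$ and $\overline{(t,x)}=(xb^{-1})\cdot\overline{F}$, i.e. $\mc{A}^\circ=G\cdot F$. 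The same bookkeeping gives $\op{Stab}_G(\overline{F})=\{g\in B^-\cap\SL_l : \det g[i]=1 \text{ for } i<l\}$, and a lower-triangular matrix of determinant $1$ all of whose leading principal minors equal $1$ has every diagonal entry equal to $1$, hence lies in $U^-$; the reverse inclusion is clear. Finally $H$ normalizes $U^-$, so $\op{Stab}_G(\overline{Fh})=h^{-1}U^-h=U^-$ for all $h\in H$; the dual statement follows by running the same argument for $A_l^\vee$ with $\check{F}$ (the projections $\sm{I_k\\0}$) and the upper Borel $B^+$, yielding $\op{Stab}_G(\check{F}h)=U^+=U$.

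The main obstacle is not a hard estimate but the bookkeeping: one must correctly match $\op{Stab}_{\GL_\bl}(F)$ with $B^-$ through the \emph{leading} principal submatrices, and then see that the torus $T_\gamma$ coming from $\GL_{\gamma_l}/\SL_{\gamma_l}$ is exactly cancelled by the determinant condition, so that $\SL_l$ itself acts transitively on $\mc{A}^\circ$ with stabilizer the unipotent radical $U^-$. Everything else is elementary linear algebra.
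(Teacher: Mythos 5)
Your proof is correct. Note, however, that this paper contains no proof of the statement at all: it is imported verbatim as \cite[Lemma 4.5]{FW}, so there is nothing in-text to compare against. Your argument — identifying $\Rep_{\bl}^\circ(A_l)$ with $\GL_{\bl}/B^-$ via the leading-principal-submatrix embedding of the lower Borel, observing that the torus $\GL_{\gamma_l}/\SL_{\gamma_l}$ is exactly absorbed by the determinant freedom in $B^-$ so that $\SL_l$ remains transitive on $\mc{A}^\circ\cong \SL_l/U^-$, and then conjugating by $h\in H$ — is the standard realization of $\mc{A}^\circ$ as the base affine (decorated flag) space, and all the steps check out: the stabilizer computation $\phi_{k+1}=\sm{\phi_k&0\\ *&*}$ is right for the row-vector convention used here, the diagonal $b\in B^-$ with $\det b[i]=t_i$ and $\det b=\det x$ exists because all $t_i$ and $\det x$ are nonzero, and a lower-triangular matrix whose leading principal minors are all $1$ is indeed unipotent. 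The dual statement for $\check F$ with $U^+$ follows by the same bookkeeping, as you say.
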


Let $p_i$ (resp. $\check{p}_i$) be the unique path from $i$ to $l$ in $A_l$ (resp. from $l$ to $i$ in $\check{A}_l$).
A pair of representations $(A_1,A_2)\in \Rep_{\bl}(A_l)^2$ is called {\em generic} if the matrix $\sm{A_1(p_i)\\A_2(p_j)}$ has full rank for all $i+j=l$.
Similarly, we can define the generic pair in $\Rep_{\bl}(A_l^\vee)\times \Rep_{\bl}(A_l^\vee)$.
A pair $(A,\check{A})$ in $\Rep_{\bl}(A_l)\times \Rep_{\bl}(A_l^\vee)$ is called {generic} if the matrix $A(p_i)\check{A}(\check{p}_i)$ has full rank for all $1\leq i\leq l-1$.
It is clear that a representation $A$ (resp. $\check{A}$) in a generic pair lies in $\Rep_{\bl}^\circ(A_l)$ (resp. $\Rep_{\bl}^\circ(A_l^\vee)$).
A pair in $\mc{A}^\circ\times \mc{A}^\circ$ (or $\mch{A}^\circ\times \mch{A}^\circ$, $\mc{A}^\circ\times \mch{A}^\circ$) is called {\em generic} if their corresponding representations are generic.
Let $(\mc{A}\times\mc{A})^\circ$, $(\mc{A}\times\mch{A})^\circ$, and $(\mch{A}\times\mch{A})^\circ$ be the open subset of $\mc{A}\times\mc{A}$, $\mc{A}\times\mch{A}$, and $\mch{A}\times\mch{A}$ consisting of generic pairs.

\begin{corollary}[{\cite[Corollary 4.6]{FW}}] \label{C:doubleflag} $G$ acts freely on $(\mc{A}\times\mc{A})^\circ$, $(\mc{A}\times\mch{A})^\circ$, and $(\mch{A}\times\mch{A})^\circ$. Their quotients are all isomorphic to $H$.
\end{corollary}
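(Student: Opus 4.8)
The plan is to present the three factors as homogeneous spaces by means of Lemma~\ref{L:stabilizer} and then reduce everything to the Bruhat decomposition of $G=\SL_l$. Concretely, Lemma~\ref{L:stabilizer} says $\mc{A}^\circ$ is the single $G$-orbit $F\cdot G$ with $\operatorname{Stab}(F)=U^-$, so the orbit map $g\mapsto F\cdot g$ identifies $\mc{A}^\circ$ with the quasi-affine homogeneous space $U^-\backslash G$; dually $\mch{A}^\circ\cong U^+\backslash G$. Under this identification a point of $\mc{A}^\circ$ records a complete flag in $k^l$ --- namely the stabilising Borel $a^{-1}B^-a$ of $F\cdot a$ --- together with a volume form on each successive quotient (the $U^-\backslash B^-$-coordinate), and likewise for $\mch{A}^\circ$. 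Hence $(\mc{A}\times\mc{A})^\circ$, $(\mc{A}\times\mch{A})^\circ$ and $(\mch{A}\times\mch{A})^\circ$ embed $G$-equivariantly into $(U^-\backslash G)^2$, $(U^-\backslash G)\times(U^+\backslash G)$ and $(U^+\backslash G)^2$, with $G$ acting by simultaneous right translation; on such a product the element $ab^{-1}$, well defined modulo the two flanking unipotent subgroups, is a complete invariant for the $G$-orbit, so the orbit space is the corresponding double-coset space $U^-\backslash G/U^-$, $U^-\backslash G/U^+$, or $U^+\backslash G/U^+$.

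Next I would translate the genericity conditions. Unwinding the definitions, the full-rank requirement on the matrices $\sm{A_1(p_i)\\A_2(p_{l-i})}$ (and its two analogues) says exactly that the images of the two path maps are complementary for all $i$, i.e. that the two flags carried by the pair are in general position, equivalently that their stabilising Borels are opposite. In group-theoretic terms this is precisely the statement that $ab^{-1}$ lies in the big cell: $B^-w_0B^-$ in the first case, $B^-B^+=U^-HU^+$ in the mixed case, and $B^+w_0B^+$ in the third (the twist by $w_0$ disappearing in the mixed case because one uses the upper Borel there). Therefore the $G$-orbit space of each generic locus is the corresponding slice $U^-\backslash(B^-w_0B^-)/U^-$, $U^-\backslash(B^-B^+)/U^+$, or $U^+\backslash(B^+w_0B^+)/U^+$.

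Finally I would conclude both assertions at once. For freeness: the stabiliser of a point $(U^-a,U^-b)$ of the generic locus is contained in $a^{-1}U^-a\cap b^{-1}U^-b$ (with $U^+$ in place of $U^-$ in the relevant factor), the intersection of the unipotent radicals of two opposite Borels, which is trivial; so $G$ acts freely. For the quotient: each big cell carries a product decomposition whose middle factor is a copy of $H$ --- $B^-B^+=U^-\cdot H\cdot U^+$, and $B^\pm w_0 B^\pm=U^\pm\cdot(Hw_0)\cdot U^\pm$ --- and projection onto that factor is invariant under the two flanking unipotent subgroups, descends to an isomorphism of varieties from the double-coset slice onto $H$, and is $G$-invariant on the generic locus. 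Hence each of the three quotients is isomorphic to $H$.

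The step I expect to be the main obstacle is the middle one: making the dictionary ``genericity of a pair $\leftrightarrow$ general position of the associated decorated flags $\leftrightarrow$ membership in the big Bruhat/unipotent cell'' both correct and genuinely uniform across the three cases, with careful bookkeeping of the left-versus-right translation actions and of the interchange $U^-\leftrightarrow U^+$. A minor additional subtlety is that $U^\pm\backslash G$ is only quasi-affine, so the identifications above have to be read at the level of quasi-affine varieties rather than affine ones.
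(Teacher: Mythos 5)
Your proposal is correct and follows essentially the intended route: the paper takes this corollary from \cite[Corollary 4.6]{FW}, but the argument there (and the one suggested by the surrounding text, cf.\ Lemma \ref{L:stabilizer}, Lemma \ref{L:Gauss}, and the remark that a generic pair is equivalent to $(F,Fh\wtd{w}_0)$ or $(F,h\check{F})$) is exactly your reduction to $U^\pm\backslash G$, the Bruhat/Gauss big cell, triviality of $U^-\cap U^+$, and the middle factor $H$ of the cell decomposition. The only point to be slightly careful about is that the identification of the double-coset slice with $H$ really is the geometric quotient, which follows from the freeness together with the product decomposition $U^-\cdot H\wtd{w}_0\cdot U^-$ (resp.\ $U^-HU^+$) being an isomorphism of varieties.
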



We can reformulate the generic condition in terms of the diagonal $\GL_l$-action on $\mc{A}\times {\mc{A}}$ and $\mc{A}\times \mch{A}$.
It is easy to see that $(A_1,A_2)$ (resp. $(A,\check{A})$) is generic if it is equivalent to $(F, Fhw_0)$ (resp. $(F, h\check{F})$) for some $h\in H$ under the $\GL_l$-action. 
Here $w_0$ is the longest element in the Weyl group $\mf{S}_l$ of $\GL_l$.

Let $\kllm$ be the flagged $m$-arrow Kronecker quiver of length $l$
$$\kronml{-1}{-2}{-l}{l}{2}{1}{a_1,\dots,a_m}.$$
We still denote by $\bl$ the standard dimension vector $\bl(i)=|i|$ for $\kllm$.
We decompose the representation space as 
$$\Rep_\bl(\kllm):=\Rep_\bl(A_l) \times \Hom(V_{-l},V_l)^m \times \Rep_\bl(A_l^\vee).$$
We call a map in $\Hom(V_{-l},V_l)$ a {\em central} map in $\Rep_\bl(\kllm)$.
Let $\Rep_{\bl}^\circ(\kllm)$ be the open subset of $\Rep_{\bl}(\kllm)$ where all $m$ central maps are invertible,
and $\Rep_{\bl}^1(\kllm)$ be the closed subset where all $m$ central maps are in $G=\SL_l$.
We have that \begin{equation} \label{eq:repcirc} \Rep_{\bl}^\circ(\kllm)/\SL_\bl=(\mc{A}\times (\GL_l)^m \times \mch{A})/(G\times G).
\end{equation}
Let $D_{\rm c}\in \SI_\bl(\kllm)$ be the product of central determinants
$$D_{\rm c}(M):=\prod_{n=1}^m \det(M(a_n))\ \text{ for $M\in\Rep_\bl(\kllm)$.}$$
So $\Rep_{\bl}^\circ(\kllm)=\Rep_{\bl}(\kllm)\setminus Z(D)$, where $Z(D_{\rm c})$ is the zero locus of $D_{\rm c}$.
We define 
$$\zllm:=(\mc{A} \times G^m \times \mch{A})/{(G\times G)}.$$

The following lemma is a trivial fact but we will frequently use it (sometimes implicitly). 
\begin{lemma} \label{L:locinv}
	Localization with respect to an invariant commutes with taking invariants.
\end{lemma}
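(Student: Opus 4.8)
The plan is to reduce the statement to the standard fact that, for a linearly reductive group in characteristic zero, taking $G$-invariants is exact on rational $G$-modules. Write $R$ for the coordinate ring in question (so $R=k[X]$ for the relevant affine $G$-variety $X$, with $G$ reductive), and let $f\in R^G$ be the invariant at which we localize. Because $f$ is $G$-fixed, the $G$-action on $R$ extends uniquely to a rational $G$-action on $R_f$ for which the localization map $R\to R_f$ is $G$-equivariant, and $R_f$ remains a union of finite-dimensional $G$-submodules (given $x/f^n$, choose a finite-dimensional $G$-submodule $W\subseteq R$ containing $x$; then $\{y/f^n : y\in W\}$ is a finite-dimensional $G$-submodule of $R_f$).

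The key step is that localization at $f$ is compatible with the Reynolds operator. Since $G$ is linearly reductive, $R$ has an isotypic decomposition $R=R^G\oplus R_\natural$, where $R_\natural$ is the sum of the nontrivial isotypic components, and the projection $\mathcal{R}:R\to R^G$ (the Reynolds operator) is $R^G$-linear. The functor $(-)_f$ is exact and $R^G$-linear, so it carries this decomposition to $R_f=(R^G)_f\oplus (R_\natural)_f$ and carries $\mathcal{R}$ to an $(R^G)_f$-linear projection $\mathcal{R}_f:R_f\to (R^G)_f$. One checks that $(R_\natural)_f$ contains no nonzero $G$-invariants: each isotypic component of $R$ localizes to an isotypic component of the same type in $R_f$ (localization is exact and $G$-equivariant, hence preserves the property of being a sum of copies of a fixed irreducible), so the trivial isotypic component of $R_f$ is exactly $(R^G)_f$; equivalently, $\mathcal{R}_f$ is the Reynolds operator of $R_f$.

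It then follows immediately that $(R_f)^G=(R^G)_f$. The inclusion $(R^G)_f\subseteq (R_f)^G$ is clear, since an invariant stays invariant after inverting an invariant; conversely, any $\xi\in (R_f)^G$ satisfies $\xi=\mathcal{R}_f(\xi)\in (R^G)_f$. Geometrically this says that the principal open subset $X\setminus Z(f)$ has categorical quotient $(X/G)\setminus Z(f)$, which is exactly the form in which the lemma is used.

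Since the statement is purely formal, there is no genuine obstacle; the only point deserving a word of care is the compatibility of the Reynolds operator (equivalently, of the isotypic decomposition) with localization at an invariant element, and this is nothing more than the exactness of the flat functor $(-)_f$ together with the $G$-equivariance of $R\to R_f$ that is guaranteed by $f\in R^G$. This is why the lemma is, as stated, a trivial fact.
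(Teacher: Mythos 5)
Your proof is correct. The paper offers no argument at all here --- it simply records the statement as ``a trivial fact'' to be used freely --- so there is nothing to compare against except the standard justification, which is exactly what you give: extend the $G$-action to $R_f$ using $f\in R^G$, observe that the isotypic decomposition $R=R^G\oplus R_\natural$ and the Reynolds operator are $R^G$-linear and hence pass through the exact functor $(-)_f$, and conclude $(R_f)^G=(R^G)_f$ by applying the localized Reynolds operator to an arbitrary invariant. The one step you flag as needing care --- that $(R_\natural)_f$ has no nonzero invariants --- is handled correctly by noting that a nontrivial isotypic component $R_\lambda\cong V_\lambda\otimes\Hom_G(V_\lambda,R_\lambda)$ localizes to a (possibly infinite) sum of copies of the same nontrivial irreducible. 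It is worth noting that in the situation actually used in the paper ($X$ irreducible, so $R$ a domain) there is an even shorter argument requiring no reductivity: if $x/f^n$ is invariant then for each $g$ one has $f^m(g\cdot x-x)=0$ in $R$, whence $g\cdot x=x$ and $x\in R^G$. Your Reynolds-operator version has the advantage of working for arbitrary affine $G$-schemes.
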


\begin{lemma} \label{L:localdet} We have that $\Rep_{\bl}^\circ(\kllm)/\SL_\bl = \zllm\times (\mb{G}_{\rm m})^m$.
In particular, 
	$$\SI_\bl\left(\kllm\right)_D = k\left[\zllm\times (\mb{G}_{\rm m})^m\right].$$
\end{lemma}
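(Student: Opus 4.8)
The plan is to unwind the definitions and use the decomposition \eqref{eq:repcirc} together with the factorization of the determinant function $D_{\rm c}$. First I would observe that the $m$ central maps give a map $\Rep_{\bl}^\circ(\kllm)\to (\GL_l)^m$; composing with the determinant on each factor gives a morphism $\Rep_{\bl}^\circ(\kllm)\to(\mb{G}_{\rm m})^m$. Since the central maps transform under $\SL_\bl$ only through the $\GL_l\times\GL_l$ block acting at the two central vertices $\pm l$ — and those blocks lie in $\SL_l$, hence have determinant $1$ — the composite morphism is $\SL_\bl$-invariant and therefore descends to a morphism $\Rep_{\bl}^\circ(\kllm)/\SL_\bl\to(\mb{G}_{\rm m})^m$. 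This is exactly the map whose component functions are the $\det(M(a_n))$, i.e. it records the factors of $D_{\rm c}$.

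Next I would produce the splitting. Starting from \eqref{eq:repcirc}, write $\Rep_{\bl}^\circ(\kllm)/\SL_\bl=(\mc{A}\times(\GL_l)^m\times\mch{A})/(G\times G)$, and use the decomposition $\GL_l = G\cdot\mb{G}_{\rm m}$ on each of the $m$ central factors (via, say, $g\mapsto(g\cdot(\det g)^{-1/l}\text{-type normalization})$, or more cleanly: the map $(\GL_l)^m\to G^m\times(\mb{G}_{\rm m})^m$ that on each factor sends $g$ to its image in $\PGL_l$-lift-to-$\SL_l$ times $\det g$). The key point is that the $(\mb{G}_{\rm m})^m$ part of each central map is untouched by the $G\times G$-action at the central vertices, since $G=\SL_l$ acts with determinant one; hence the $(\GL_l)^m/(\text{central }G\text{-action})$ decouples as $(G^m/(G\times G))\times(\mb{G}_{\rm m})^m$, giving $\Rep_{\bl}^\circ(\kllm)/\SL_\bl\cong\big((\mc{A}\times G^m\times\mch{A})/(G\times G)\big)\times(\mb{G}_{\rm m})^m=\zllm\times(\mb{G}_{\rm m})^m$. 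One should check this is not just a bijection but an isomorphism of varieties, which follows because the section $(\mb{G}_{\rm m})^m\to(\GL_l)^m$ (embedding scalars, say) and the projection are both algebraic and mutually inverse after quotienting.

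For the ``in particular'' statement, combine the isomorphism just established with Lemma \ref{L:locinv}: we have $\SI_\bl(\kllm)_D = k[\Rep_\bl(\kllm)]^{\SL_\bl}_{D_{\rm c}} = (k[\Rep_\bl(\kllm)]_{D_{\rm c}})^{\SL_\bl} = k[\Rep_{\bl}^\circ(\kllm)]^{\SL_\bl} = k[\Rep_{\bl}^\circ(\kllm)/\SL_\bl] = k[\zllm\times(\mb{G}_{\rm m})^m]$, where the last equality is the isomorphism of varieties and the penultimate one uses that the $\SL_\bl$-action on $\Rep_{\bl}^\circ(\kllm)$ has a geometric quotient with coordinate ring the invariants (which holds since by \eqref{eq:repcirc} and Corollary \ref{C:doubleflag} the action on the relevant open locus is free, at least on a dense open subset whose complement has codimension $\geq 2$, so that regular functions extend).

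The main obstacle I anticipate is the last subtlety: justifying that $k[\Rep_{\bl}^\circ(\kllm)]^{\SL_\bl}$ really equals the coordinate ring of the categorical quotient and that this categorical quotient coincides with $\zllm\times(\mb{G}_{\rm m})^m$ as defined — i.e. matching the two possible orders of ``take invariants'' (first by $\SL_{\gamma_l}\times\SL_{\gamma_l}$ to form $\mc{A},\mch{A}$, then by $G\times G$) versus ``all at once by $\SL_\bl$''. This is a standard compatibility of iterated quotients, but it requires knowing that $\SL_\bl=\SL_{\gamma_l}\times\SL_l$ on each flag side and that the $\SL_{\gamma_l}$-action is free on the injective/surjective locus (which is Lemma \ref{L:stabilizer}, giving $\mc{A}^\circ,\mch{A}^\circ$ as geometric quotients), so that taking $\SL_{\gamma_l}\times\SL_{\gamma_l}$-invariants of $k[\Rep_{\bl}^\circ(\kllm)]_{D_{\rm c}}$ already lands us in $k[\mc{A}^\circ\times(\GL_l)^m\times\mch{A}^\circ]$ and then the residual $G\times G$-invariants give the claim.
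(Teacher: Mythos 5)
Your proposal is correct and follows essentially the same route as the paper's proof: the paper likewise starts from \eqref{eq:repcirc}, splits $\GL_l$ as $G\rtimes\mb{G}_{\rm m}$ with the $G\times G$-action trivial on the $\mb{G}_{\rm m}$ factor, and then invokes Lemma \ref{L:locinv} for the ring statement. The only caveats are minor: your suggested normalization $g\mapsto g\cdot(\det g)^{-1/l}$ is not an algebraic section (use the semidirect-product splitting $\GL_l\cong\SL_l\rtimes\mb{G}_{\rm m}$ via, e.g., $t\mapsto\operatorname{diag}(t,1,\dots,1)$, which is what the paper does), and your closing worry about identifying invariants with functions on the quotient is moot because the paper defines $X/G$ as the categorical quotient $\Spec k[X]^G$.
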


\begin{proof} Recall the equality \eqref{eq:repcirc}. Consider the $G\times G$-action on $\GL_l=G\rtimes \mb{G}_{\rm m}$.
	Since the action is trivial on $\mb{G}_{\rm m}$, we have that
$$\Rep_\bl^\circ(\kllm)/\SL_\bl=(\mc{A} \times (G\rtimes \mb{G}_{\rm m})^m \times \mch{A})/(G\times G) = \zllm\times (\mb{G}_{\rm m})^m.$$
Equivalently we have that $k\big[\Rep_\bl^\circ(\kllm)\big]^{\SL_\bl} = k\big[\zllm\times (\mb{G}_{\rm m})^m\big]$.
By Lemma \ref{L:locinv}, we have that
	$$\SI_\bl(\kllm)_D = \left(k\big[\Rep_\bl(\kllm)\big]_D\right)^{\SL_\bl} = k\left[\zllm\times (\mb{G}_{\rm m})^m\right].$$
\end{proof}

For a positive integer $n$, we use the shorthand $g_{[n]}$ for the alternating product $g_1g_2^{-1}\cdots g_n^{(-1)^{n-1}}$,
and $g_{\invint{n}}$ for the product in reverse order $g_n^{(-1)^{n-1}} \cdots g_2^{-1}g_1$.
Let us consider the map $\wtd{\pi}_n:\mc{A}\times G^m\times \mch{A} \to \mc{A}^2\times \mch{A}^2$ given by 
\begin{align} \label{eq:pieven} (A,g_1,\dots,g_m,\check{A})&\mapsto (Ag_{[n-1]},Ag_n,g_{\invint{n}}\check{A},\check{A}), && n\in [2,m] \text{ even}\\
\label{eq:piodd} (A,g_1,\dots,g_m, \check{A})&\mapsto (Ag_{[n-1]},A,g_{\invint{n}}\check{A},g_n\check{A}), && n\in [3,m] \text{ odd}.
 \end{align}  
These maps followed by the quotient map $\mc{A}^2\times \mch{A}^2\to (\mc{A}^2 \times \mch{A}^2 )/{G}$ are constant on the $G\times G$-orbits.
By the universal property of the categorical quotient, we get a map
\begin{equation} \label{eq:pi} \pi_{n}: (\mc{A} \times G^m \times \mch{A} )/{(G\times G)}\to (\mc{A}^2 \times \mch{A}^2 )/{G}.\end{equation}
Note that each $\pi_n$ is dominant.
We denote the quotient $(\mc{A}^2 \times \mch{A}^2 )/{G}$ by $X_{2,2}$.
Let $X_{2,2}^\circ$ be the open subset of $X_{2,2}$ where the pairs $(A_1,{A}_2),(A_1,\check{A}_2),(A_2,\check{A}_1)$ and $(\check{A}_2,\check{A}_1)$ are all generic.
We also define $X_{2,2}^{\circm}$ to be the open subset of $X_{2,2}$ where the pairs 
\begin{align*}
\text{$(A_1,\check{A}_2)$ and $(A_2,\check{A}_1)$ are generic if $m$ is even;}\\
\text{$(A_1,{A}_2)$ and $(\check{A}_2,\check{A}_1)$ are generic if $m$ is odd.}
\end{align*}

Let $(\zllm)^{\circ}_\pi$ be the open subset of $\zllm$ such that
\begin{enumerate}
\item[] $(Ag_{[n]},A)$ and $(\check{A},g_{\invint{n}}\check{A})$ are generic for each even $n\in[2,m-1]$; 
\item[] $(Ag_{[n]},\check{A})$ and $(A,g_{\invint{n}}\check{A})$ are generic for each odd $n\in[1,m-1]$.
\end{enumerate}
In particular, $(\zllm)^{\circ}_\pi$ is contained in the following open subset of $\zllm$ 
$$(\zllm)^\circ:=(\mc{A}^\circ \times G^m \times \mch{A}^\circ)/{(G\times G)}.$$
Recall that we say that $g\in \SL_l$ is {\em Gauss decomposable} if $g=\br{u}hu$ for some $\br{u}\in U^-,\ h\in H$, and $u\in U$. 
Let $\wtd{\mf{S}}_l\subseteq \SL_l$ be some Tits' lift of the Weyl group $\mf{S}_l$.
We write $\wtd{w}$ for the lift of $w$ in $\wtd{\mf{S}}_l$.
\begin{lemma} \label{L:Gauss} For any $g\in\SL_l$, $(Fg,\check{F})$ (resp. $(Fg,F)$) is generic if and only if $g$ (resp. $g\wtd{w}_0$) is Gauss decomposable. 
\end{lemma}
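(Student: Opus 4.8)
The plan is to reduce the statement to the description of genericity in terms of $\GL_l$-orbits already recorded just above the lemma, and to Lemma \ref{L:stabilizer} (via Lemma \ref{L:Gauss}'s own prerequisites, i.e. the structure of $\mc{A}^\circ$ as a single $G$-orbit). Recall that a pair $(A,\check{A})\in\mc{A}^\circ\times\mch{A}^\circ$ is generic iff it is $\GL_l$-equivalent to $(F,h\check{F})$ for some $h\in H$, and that $(A_1,A_2)\in\mc{A}^\circ\times\mc{A}^\circ$ is generic iff it is $\GL_l$-equivalent to $(F,Fhw_0)$ for some $h\in H$. Since $F\cdot G$ is the only $G$-orbit in $\mc{A}^\circ$ (Lemma \ref{L:stabilizer}), the pair $(Fg,\check F)$ is automatically of the form $(F g, \check F)$ with both entries in the respective open loci, and similarly $(Fg,F)$. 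So the content is: $(Fg,\check F)$ is generic iff $g$ takes $(F,\check F)$ into the $\GL_l$-orbit of some $(F,h\check F)$ by a diagonal base change, and likewise for $(Fg,F)$ and $(F,Fhw_0)$.

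First I would treat the case $(Fg,\check F)$. Genericity means there is $x\in\GL_l$ with $Fg\cdot x = F$ (so $x\in$ the stabilizer-coset data: by Lemma \ref{L:stabilizer} the stabilizer of $F$ under the right $G$-action is $U^-$, and scaling accounts for the $\GL_l/G = \mb{G}_{\rm m}$ part) and simultaneously $\check F\cdot x = h\check F$ for some $h\in H$; the second equation forces $x$, up to the $U^-$ ambiguity in the first, to lie in the appropriate Borel. Unwinding, $Fg x = F$ with $x$ upper-triangular (times a torus element) and $\check F x = h\check F$ with $x$ also respecting the dual flag means exactly that $g^{-1}$, hence $g$, is Gauss decomposable: the $U^-$ freedom in the stabilizer of $F$ supplies the $\br u\in U^-$ factor, the torus supplies $h$, and the constraint from $\check F$ supplies the $u\in U^+$ factor. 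Concretely I would write $g=\br u h u$ and check directly that $(F\br u h u,\check F)=(Fhu,\check F)$ (since $F\br u=F$) is $\GL_l$-equivalent, via $u^{-1}$, to $(Fh, \check F u^{-1})=(Fh, h'\check F)$ for a suitable $h'\in H$, using that $U^+$ stabilizes $\check F$ up to the flag — the cleanest route is to transport everything to the homogeneous-space picture of Corollary \ref{C:doubleflag}, where $(\mc{A}\times\mch{A})^\circ/G\cong H$, and observe that the $H$-coordinate of $(Fg,\check F)$ is well-defined precisely when the Gauss decomposition exists. For the converse, non-Gauss-decomposability of $g$ means $g$ lies in a lower Bruhat cell $U^-\wtd w B$ with $w\neq e$, and then the rank conditions defining genericity (full rank of $A(p_i)\check A(\check p_i)$ for all $i$) fail at the step dictated by the first descent of $w$; this is a direct minor computation in Bruhat coordinates.

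For the second case, $(Fg,F)$, I would use the relation to the first case via the longest element: a pair $(A_1,A_2)$ is generic iff $(A_1,\check A_2)$ is generic after applying $w_0$ to convert the second flag to a dual flag, which is the content of the remark that $(F,Fhw_0)$ is the generic normal form. Thus $(Fg,F)$ is generic iff $(Fg, \check F w_0\text{-twisted})$ is, iff $g\wtd w_0$ is Gauss decomposable by the first case — the twist by $\wtd w_0$ is exactly what converts the standard flag target $F$ into the dual-flag situation. The main obstacle I anticipate is bookkeeping the torus/scalar discrepancy between $\GL_l$ and $G=\SL_l$ and pinning down the $U^-$ versus $U^+$ sides correctly (it is easy to get the decomposition $g=\br u h u$ versus $g= u h\br u$ backwards); the safest remedy is to do the entire argument inside $\mc{A}^\circ\times\mch{A}^\circ$, invoke Corollary \ref{C:doubleflag} to identify the quotient with $H$, and let the Gauss decomposition of $g$ be *defined* by reading off this $H$-coordinate, so that both directions become the tautology that the $H$-coordinate exists iff $g$ avoids the bad Bruhat cells. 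The converse direction (non-generic $\Rightarrow$ not Gauss decomposable) is then the only place requiring an explicit rank/minor check, and it is short.
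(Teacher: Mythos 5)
Your proposal is correct and follows essentially the same route as the paper: write $g$ in Bruhat form $\br{u}h\wtd{w}u$, use Lemma \ref{L:stabilizer} to absorb the $U^-$ and $U$ factors so that the pair reduces to $(Fh\wtd{w},\check{F})$, and observe that the rank conditions (leading principal minors of $h\wtd{w}$) hold precisely when $\wtd{w}=e$, with the $(Fg,F)$ case handled by the $w_0$-twist. The paper's proof is just a terser version of this, leaving the final minor computation implicit exactly where you flag it.
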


\begin{proof} By the Bruhat decomposition we can write $g$ as $g=\br{u}h\wtd{w}u$ with $\br{u}\in U^-,\ h\in H,\ w\in \wtd{\mf{S}}_l$, and $u\in U$.
By Lemma \ref{L:stabilizer}, $(Fg,\check{F})=(Fh\wtd{w},\check{F})$.
It is generic if and only if $\wtd{w}$ is the identity element. The argument is similar for $(Fg,F)$.
\end{proof} 

Here is our key lemma. We suggest readers skipping its proof for the first-time read. 
The proof is easy to understand once readers have some pictures of Section \ref{S:CSZ} in mind. 
\begin{lemma} \label{L:embedding}  The restriction of the map 
	$$\pi:=(\pi_2,\dots,\pi_{m}): \zllm\to (X_{2,2})^{m-1}$$ 
	to the open set $(\zllm)^\circ_\pi$ is a closed embedding to $(X_{2,2}^\circ)^{m-2} \times X_{2,2}^{\circm}$.
\end{lemma}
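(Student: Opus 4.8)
The plan is to build an explicit inverse on the image, so that $\pi|_{(\zllm)^\circ_\pi}$ is seen to be injective with closed image, and then to identify that image with the asserted open subsets of the factors. The starting point is Corollary \ref{C:doubleflag}: each factor $X_{2,2}$ is (on its generic locus) isomorphic to $H$ via the $G$-quotient, so a point of $(X_{2,2}^\circ)^{m-2}\times X_{2,2}^{\circm}$ records, after choosing representatives, a tuple of diagonal matrices together with gluing data. Concretely, given a point $z\in(\zllm)^\circ_\pi$, pick a representative $(A,g_1,\dots,g_m,\check A)$; using Lemma \ref{L:stabilizer} we may normalize $A=F$ (adjusting by the left $G$-action), after which the right $G$-action is cut down to $U^-$, and then the genericity hypotheses defining $(\zllm)^\circ_\pi$ say, via Lemma \ref{L:Gauss}, precisely that each of the elements $g_{[n]}$ (for $n$ odd, resp. $g_{[n]}\wtd w_0$ for $n$ even) is Gauss decomposable. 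This is where the hypotheses are used in an essential way.

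First I would set up the normal form: writing $g_{[n]}=\br u_n h_n u_n$ in Gauss form for the relevant $n$, show that the residual $U^-$-action lets one further normalize so that the data $(h_1,\dots,h_m)$ together with the off-diagonal parts are uniquely determined by the $\pi_n(z)$. The $n$-th coordinate $\pi_n(z)\in X_{2,2}$ is, by \eqref{eq:pieven}–\eqref{eq:piodd}, the $G$-orbit of $(Fg_{[n-1]},Fg_n,g_{\invint n}\check F,\check F)$ (for $n$ even) or the analogous odd version; passing to the $H$-coordinate of Corollary \ref{C:doubleflag} extracts exactly the "overlap" between the $n$-th and $(n-1)$-st gluings. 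The key bookkeeping observation is the telescoping identity $g_{[n]}=g_{[n-1]}g_n^{(-1)^{n-1}}$: knowing consecutive pairs $\pi_{n-1}(z)$ and $\pi_n(z)$ determines $g_n$ up to the ambiguity already killed by the normalization, so one reconstructs $g_1,\dots,g_m$ inductively and hence recovers $z$. This gives injectivity of $\pi|_{(\zllm)^\circ_\pi}$ and simultaneously an explicit section, hence the map onto its image is an isomorphism.

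Next I would pin down the image. The point of the last factor being only in $X_{2,2}^{\circm}$ rather than $X_{2,2}^\circ$ is parity: the chain $\pi_2,\dots,\pi_m$ uses the conditions "$(Ag_{[n]},A)$ and $(\check A,g_{\invint n}\check A)$ generic for $n$ even" and "$(Ag_{[n]},\check A)$ and $(A,g_{\invint n}\check A)$ generic for $n$ odd", and for the top index $n=m$ only the two conditions matching the parity of $m$ are imposed — which is exactly the definition of $X_{2,2}^{\circm}$. For $n<m$ both the $n$-even and $n$-odd type conditions get used across consecutive links, forcing all four pairs in $X_{2,2}$ to be generic, i.e. landing in $X_{2,2}^\circ$. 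So I would verify, index by index, that the defining genericity of $(\zllm)^\circ_\pi$ translates under $\pi_n$ into membership in $X_{2,2}^\circ$ for $2\le n\le m-1$ and in $X_{2,2}^{\circm}$ for $n=m$, and conversely that the explicit section constructed above sends $(X_{2,2}^\circ)^{m-2}\times X_{2,2}^{\circm}$ back into $\zllm$ with image inside $(\zllm)^\circ_\pi$ — this reverse inclusion is what makes the image closed in that target (the section is a morphism of varieties and $\pi$ restricted to it is its two-sided inverse, so the image is the whole target, which is in particular closed in itself; more precisely $\pi|_{(\zllm)^\circ_\pi}$ factors as an isomorphism onto $(X_{2,2}^\circ)^{m-2}\times X_{2,2}^{\circm}$, and the latter is a locally closed — in fact, closed in its own ambient open — subset of $(X_{2,2})^{m-1}$, giving the closed embedding claimed).

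The main obstacle I anticipate is the careful tracking of the residual $U^-$-symmetry and the parity conventions: one must check that after normalizing $A=F$ the leftover $U^-$ really is absorbed consistently across all $m-1$ gluing maps at once (not just one at a time), and that the alternating exponents in $g_{[n]}$ and $g_{\invint n}$ are handled without sign errors when passing between the even and odd formulas \eqref{eq:pieven} and \eqref{eq:piodd}. In practice this is the computation that the pictures in Section \ref{S:CSZ} are meant to make transparent — each $X_{2,2}$ is a "diamond" and $\zllm$ is the chain of diamonds glued along the central $H$'s — so I would organize the argument around that picture, reducing the global statement to the compatibility of consecutive diamonds, which is the two-diamond case handled by Corollary \ref{C:doubleflag} and Lemma \ref{L:Gauss}.
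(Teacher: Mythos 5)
Your injectivity argument is essentially the paper's: normalize to $(F,g_1,\dots,g_m,\check F)$, use Lemma \ref{L:stabilizer} and Lemma \ref{L:Gauss} to control the residual stabilizers, and recover the $g_n$ inductively from consecutive $\pi_n$-images via the telescoping $g_{[n]}=g_{[n-1]}g_n^{(-1)^{n-1}}$. That part is sound, modulo the stabilizer bookkeeping you yourself flag (the paper's computation $g_2'g_2^{-1}\in U^-\cap h^{-1}Uh=\{e\}$ and its odd/even analogues is exactly that bookkeeping).

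The closedness argument, however, has a genuine error: you assert that the section is defined on all of $(X_{2,2}^\circ)^{m-2}\times X_{2,2}^{\circm}$ and hence that ``the image is the whole target.'' It is not. The map $\pi$ is very far from surjective onto that open set: a general tuple of points in the product does not satisfy the compatibility conditions forcing consecutive diamonds to share edge data, so your reconstruction of $g_n$ from $\pi_{n-1}(z)$ and $\pi_n(z)$ has no solution for a generic tuple. (A dimension count already rules this out: $\dim \zllm = (l-1)(l+2)+(l^2-1)(m-2)$, while $\dim (X_{2,2})^{m-1}=(m-1)\bigl(2(l-1)(l+2)-(l^2-1)\bigr)$; e.g.\ for $l=2$, $m=3$ these are $7$ and $10$.) The image is a proper closed subset, and its closedness must be established by a different mechanism. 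The paper does this by projecting each factor $X_{2,2}^\circ$ onto the $H$-coordinates of its edges (Corollary \ref{C:doubleflag}) and exhibiting $\pi\bigl((\zllm)^\circ_\pi\bigr)$ as $p^{-1}\bigl((\Delta_H)^{2m-3}\bigr)$ for a morphism $p$ defined on the whole target --- i.e.\ the image is cut out by the closed condition that the $H$-coordinates of identified edges of consecutive diamonds agree. Your ``two-diamond compatibility'' picture is the right intuition, but it must enter as the equation defining the image, not as a reason the image fills the target.
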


\begin{proof} We first show that the map restricted to $(\zllm)^\circ_\pi$ is injective.
	By the definition of $\pi$, the image $\pi\big((\zllm)^\circ_\pi\big)$ is in $(X_{2,2}^\circ)^{m-2}\times X_{2,2}^{\circm}$.
	We have seen that $(\zllm)^\circ$ and $X_{2,2}^\circ$ are geometric quotients.
	A point in $(\zllm)^\circ$ can be represented by $(F,g_1,\dots,g_m,\check{F})$.
	We keep in mind that the stabilizer of $Fh$ and $h\check{F}$ are $U^-$ and $U$ (Lemma \ref{L:stabilizer}).
	By Lemma \ref{L:Gauss} $g_1$ is Gauss decomposable, so we can assume that $g_1\in H$.
	Suppose that it has the same image as $(F,g_1',\dots,g_m',\check{F})$ with $g_1'\in H$.
	In particular, they have the same image under $\pi_2: \zllm\to X_{2,2}$, that is,
	\begin{align} 
	\big(Fg_1,Fg_2,g_2^{-1}g_1\check{F},\check{F}\big)&=\big(Fg_1',Fg_2',{g_2'}^{-1}g_1'\check{F},\check{F}\big) \text{ in } X_{2,2}. \label{eq:X21}
	\end{align}
	Comparing the first and last factors in \eqref{eq:X21}, we see that there are some $\br{u}\in U^-$ and $u\in U$ such that
	$\br{u}^{-1}g_1u=g_1'$. 
	Since $g_1,g_1'\in H$, we must have that $\br{u}=u=e$ so $g_1=g_1'=h$.
	Since the stabilizer of the generic pair $(Fh,\check{F})$ is equal to $U^-\cap U=\{e\}$,
	\eqref{eq:X21} also implies that 
	$$g_2'g_2^{-1} \in U^- \text{ and } ({g_2'}^{-1}g_1')^{-1}g_2^{-1}{g_1}=h^{-1}g_2'{g_2}^{-1}h \in U.$$
	So $g_2'g_2^{-1}\in U^-\cap h^{-1}Uh=\{e\}$, and we conclude that $g_2=g_2'$.
	
	We will prove $g_i=g_i'$ for $i\in [1,m]$ by induction. We have treated the initial case.
	Let us assume that $g_i=g_i'$ for $i\in [1,k-1]$ and $k$ is odd, then we consider the map 
	$\pi_{k}: \zllm\to X_{2,2}$.
	We have that \begin{align}  \label{eq:X22} 
	\big(Fg_{[k-1]},F,g_{\invint{k}}\check{F},g_k\check{F}\big)&=\big(Fg_{[k-1]}',F,g_{\invint{k}}'\check{F},g_k'\check{F}\big) \text{ in } X_{2,2}.
	\end{align}
	By the induction hypothesis, we have that $g_{[k-1]}=g_{[k-1]}'$.
	Since the stabilizer of the generic pair $(Fg_{[k-1]},F)$ is $\{e\}$,
	\eqref{eq:X22} implies that 
	$$g_k^{-1}g_k' \in U \text{ and } g_{\invint{k}}^{-1}g_{\invint{k}}'=\big(g_kg_{\invint{k-1}}\big)^{-1}g_k'g_{\invint{k-1}}'\in U.$$
	By the generic assumption, $g_{\invint{k-1}} \wtd{w}_0$ is Gauss decomposable.
	For the similar reason as before, we can assume that $g_{\invint{k-1}}\wtd{w}_0=h\in H$.
	So $\wtd{w}_0h^{-1} {g_k}^{-1}g_k' h\wtd{w}_0^{-1} \in U$, and thus $g_k^{-1}g_k'\in U\cap h\wtd{w}_0^{-1}U\wtd{w}_0h^{-1}=U\cap U^-=\{e\}$.
	We conclude that $g_k=g_k'$.
	Similar argument can also treat the case when $k$ is even.
	Therefore we finished the proof by induction.

We remain to show that the image $\pi\big((\zllm)^\circ_\pi\big)$ is closed in $(X_{2,2}^\circ)^{m-2} \times X_{2,2}^{\circm}$.
Let $p_{ij}$ be the natural projection from $X_{2,2}$ to $\mc{A}^2/G$ (or $(\mc{A}\times \mch{A})/G$) induced by 
mapping $\left(A_1,A_1,\check{A}_1,\check{A}_2\right)\in \mc{A}^2\times \mch{A}^2$ to its $i,j$-th factors.
We shall write $p_{i,j}^{n}$ for the projection $p_{i,j}$ from the $(n-1)$-th copy in $(X_{2,2}^\circ)^{m-1}$ (note that the $(n-1)$-th copy is the target of the map $\pi_n$).
We recall from Corollary \ref{C:doubleflag} that the quotient $(\mc{A}\times \mc{A})^\circ/G$ or $(\mc{A}\times \mch{A})^\circ/G$ is geometric and is isomorphic to the maximal torus $H\subset G$.
We define the map $p: (X_{2,2}^\circ)^{m-2} \times X_{2,2}^{\circm} \to H^{2(2m-3)}$ by \footnote{\hbox{The components of $p$ correspond to the disk diagonals in the triangulation $\mb{T}_m$ (see Section \ref{S:CSZ})}}
$$\left( p_{1,4}^{2},p_{2,3}^{2},p_{1,2}^{2},p_{1,2}^{3},p_{3,4}^{2},p_{3,4}^{3},p_{1,4}^{3},p_{1,4}^{4},p_{2,3}^{3},p_{2,3}^{4},\dots, p_{i,j}^{m-1},p_{i,j}^{m},p_{k,l}^{m-1},p_{k,l}^{m} \right),$$
where $(i,j,k,l)=(1,2,3,4)$ if $m$ is odd; $(i,j,k,l)=(1,4,2,3)$ if $m$ is even.
It is easy to see from the definition of $\pi$ that the image $\pi\big((\zllm)^\circ_\pi\big)$ is equal to the inverse image $p^{-1}\big((\Delta_H)^{2m-3}\big)$,
where $\Delta_H$ is the diagonal in $H\times H$.
This implies the closeness of $\pi\big((\zllm)^\circ_\pi\big)$.	
\end{proof}

\section{Schofield's Semi-invariants} \label{S:SSI}

\subsection{The semi-invariants $s\ijn$}
We follow the setting in \cite{SV}.
Let $Q$ be a finite quiver and add($Q$) be the additive $k$-category generated by $Q$.
For each vertex $i$ we have an object $P_i$ in add($Q$) such that $\Hom_{\rm{add}(Q)}(P_i,P_j)$ is the vector space with basis the paths from $j$ to $i$ including the trivial path $e_i$ if $i=j$.
From now on, we shall write $\Hom_{Q}(-,-)$ for $\Hom_{\rm{add}(Q)}(-,-)$.
For any $\f\in \mb{Z}_{\geq 0}^{Q_0}$, we denote
$$P(\f):=\bigoplus_{i\in Q_0} \f(i)P_i.$$
We call an element $f$ in $\Hom_{Q}(P(\f_1),P(\f_0))$ a {\em presentation}.
The {\em weight} of $f$ is by definition $\f_1-\f_0$.
From now on, we will view a weight as an element in the dual $\Hom_{\mb{Z}}(\mb{Z}^{Q_0},\mb{Z})$ via the usual dot product.

Recall that for any presentation $f$ of weight $\f$ with $\f(\beta)=0$,
we can associate a Schofield's semi-invariant function (\cite{S1}) $s(f)$ defined by 
$s(f)(M):=\det M(f)$, where $M(f)$ is the matrix obtained from the transpose matrix of $f$ by substituting each path $p$ by $M(p)$. 
It is easy to check that $s(f)$ lies in the weight space 
$$\SI_\beta(Q)_\f:=\big\{f\in k[\Rep_\beta(Q)]\mid g(f)=\f(g)f,\ \forall g\in\GL_\beta \big\},$$
where by abuse of notation $\f(g)$ is the character map 
$$\big(g(i)\big)_{i\in Q_0}\mapsto\prod_{i\in Q_0} \big(\det g(i)\big)^{\f(i)}.$$
In this context $\f$ is also called the {\em $\sigma$-weight} of $s(f)$.

\begin{definition} A Schofield's semi-invariant is called {\em pure} if all the entries in the matrix representing $f$ are given by a single path (rather than a linear combination of paths).
\end{definition}

Let $\br{K}_{l,l}^{m}$ be the quiver 
$$\kronllmm{-1}{-2}{-l}{l}{2}{1}{}{}$$
This quiver can be obtained from $\kllm$ by adjoining $m$ arrows from $l$ to $-l$.
We label the $m$ new arrows by $a_n^{-1}$ for $n\in[1,m]$.
We impose the relations
$$a_n a_n^{-1} = e_{-l}\ \text{ and }\ a_n^{-1} a_n = e_{l}\ \text{ for each $n$}.$$ 
So we can view each $a_n^{-1}$ as the formal (two-sided) inverse of $a_n$.
A representation $M$ of $\br{K}_{l,l}^{m}$ with the above relation satisfies that 
$$M(a_n)M(a_n^{-1})\ \text{ and }\ M(a_n^{-1})M(a_n)\ \text{ are identity matrices.}$$
This in particular implies that each $M(a_n)\in \GL_l$.
Throughout this paper, a representation of $\br{K}_{l,l}^{m}$ always satisfies these relations.

For any finite sequence $I:=(i_1,i_2,\dots,i_n)$ of numbers in $[1,m]$, we can associate a path from $-l$ to $\pm l$ ($\pm$ depends on the parity of the length of $I$)
$$\bs{a}_I:= a_{i_1} a_{i_2}^{-1} a_{i_3} \cdots a_{i_n}^{(-1)^{n-1}}.$$  
Now each triple $(i,j,I)$ with $n$ odd determines a unique path from $-i$ to $j$ via $I$:
$$p_{-i,j}^I:=p_{-i,-l} \bs{a}_I p_{l,j},$$
where $p_{-i,-l}$ (resp. $p_{l,j}$) is the unique path from $-i$ to $-l$ (resp. from $l$ to $j$).
In this paper, $I$ is always a subinterval in $[1,m]$ with usual or reverse order. 

For $I$ and $J$ both of odd length, let $f_{i,j;k}^{I,J}$ be the presentation
\begin{equation} \label{eq:presIJ} P_k \xrightarrow{\left(p_{-i,k}^I,p_{-j,k}^J\right)} P_{-i}\oplus P_{-j}.
\end{equation}
In this paper, we frequently abbreviate $I$ for $p_{-i,k}^I$ as a map in $\Hom_{\br{K}_{l,l}^{m}}(P_{k},P_{-i})$.
So we shall write $P_k \xrightarrow{(I,J)} P_{-i}\oplus P_{-j}$ for \eqref{eq:presIJ}.
When $i+j=k$, we will write $f_{i,j}^{I,J}$ for $f_{i,j;k}^{I,J}$. 
So $s\big(f_{i,j}^{I,J}\big)$ is a pure Schofield semi-invariant of weight $\e_k-\e_{-i}-\e_{-j}$.
By \cite[Lemma 1.8]{Fs1}, all $s\big(f_{i,j}^{I,J}\big)$ are irreducible.
Another convention is that if $i$ or $j$ is zero, then we set $P_{-i}$ or $P_{-j}$ to be zero.

We extend the involution $\vee$ on the path algebra $k\kllm$ defined in \cite{Fk1} \footnote{It is denoted by $-$ there.} to the $k$-category generated by $\br{K}_{l,l}^{m}$ as follows.
It sends the path $p_{-i,j}^I$ to $p_{-j,i}^{\invarr{I}}$, and thus induces a map
\begin{equation*} \label{eq:-map}
\vee: \Hom_{\br{K}_{l,l}^{m}}\Big(\bigoplus_j P_j, \bigoplus_i P_{-i}\Big) \xrightarrow{} \Hom_{\br{K}_{l,l}^{m}}\Big(\bigoplus_i P_{i}, \bigoplus_j P_{-j}\Big).
\end{equation*}
We will denote $\vee f_{i,j}^{I,J}$ by $\check{f}_{i,j}^{I,J}$:
$P_{i}\oplus P_{j} \xrightarrow{\big(\invarr{I},\invarr{J}\big)} P_{-k}.$


In most part of this paper, we only care about the cases when $I=n$ and $J=[n]\text{ or }[n-1]$ depending on the parity of $n$. 
In fact, they are\begin{align*}	
f\ijn:=f_{i,j}^{n,[n-1]}: P_{i+j}&\xrightarrow{\left(n,[n-1]\right)} P_{-i}\oplus P_{-j} && \text{if $n$ is even;}\\
f\ijn:=f_{i,j}^{n,[n]}: P_{i+j}&\xrightarrow{\left(n,[n]\right)} P_{-i}\oplus P_{-j} && \text{if $n$ is odd,}
\end{align*}
and their duals $\check{f}_{i,j}^{n}$. We define $s\ijn:=s(f\ijn)$ and $\check{s}\ijn:=s(\check{f}\ijn)$.


\subsection{Lifting $s\ijn$} \label{ss:liftsi}
In this subsection we lift each $s\ijn$ to a function $\wtd{s}\ijn\in\SI_\beta(\kllm)$.
\begin{definition} \label{D:liftsijn} For any $n\in [1,m]$, we define 
	$$\wtd{s}\ijn(M):= \begin{cases}
		s\ijn & \text{if } j=0;\\
		s\ijn(M)\det\left(M\big(a_{\rm ev}^{[n]}\big)\right) & \text{if } 1\leq i+j<l \text{ or $n$ is even};\\
		\wtd{s}_{i,j}^{n-1}(M) & \text{if } i+j=l \text{ and $n$ is odd}.
\end{cases}$$
where we set $\det\left(M\big(a_{\rm ev}^{[n]}\big)\right):=\prod_{i\in[n-1]\rm\ even} \det M(a_i)$.
Similarly we define $\wtd{\check{s}}\ijn$ by replacing $s\ijn$ with $\check{s}\ijn$ in the above formula. 
\end{definition}

\begin{remark} One reason why these cases should be treated separately is that otherwise Corollary \ref{C:irreducible} below will not hold.
	More reasons can be found in Section \ref{ss:liftseed}.
\end{remark}

We are going to show that $\wtd{s}\ijn$ is regular on $\Rep_{\bl}(\kllm)$. 
Suppose $A,B,C,D$ are $p\times p,\ p\times q,\ q\times p,\ q\times q$ matrices respectively.
Consider the block matrix $M=\sm{A & B \\ C & D}$ with $D$ nonsingular.
The {\em Schur complement} $M/D$ of the block $D$ in $M$ is
$A-BD^{-1}C$.
Similarly if $A$ is nonsingular, the Schur complement $M/A$ is $D-CA^{-1}B$.
Here is a standard fact in linear algebra.

\begin{lemma} \label{L:blockdet} $\det(M)=\det(D)\det(M/D)$.
\end{lemma}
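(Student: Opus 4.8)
The plan is to exhibit an explicit block factorization of $M$ into triangular factors whose determinants are immediate, and then take determinants. Concretely, since $D$ is invertible I would write
\begin{equation*}
M=\begin{pmatrix} A & B \\ C & D \end{pmatrix}
=\begin{pmatrix} I_p & BD^{-1} \\ 0 & I_q \end{pmatrix}
\begin{pmatrix} A-BD^{-1}C & 0 \\ C & D \end{pmatrix},
\end{equation*}
and verify the identity block by block: the $(1,1)$ block of the product is $(A-BD^{-1}C)+BD^{-1}C=A$, the $(1,2)$ block is $BD^{-1}D=B$, the $(2,1)$ block is $C$, and the $(2,2)$ block is $D$, as required.

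Next I would compute the two determinants. The first factor is block upper unitriangular, so its determinant is $\det(I_p)\det(I_q)=1$. The second factor is block lower triangular with diagonal blocks $M/D=A-BD^{-1}C$ and $D$, so its determinant is $\det(M/D)\det(D)$; this last step uses the standard fact that the determinant of a block triangular matrix is the product of the determinants of its diagonal blocks (which in turn follows, e.g., by Laplace expansion, or by further factoring the block triangular matrix into a product of matrices of the form $\mathrm{diag}(N,I)$ and $\mathrm{diag}(I,N)$ times a unipotent block triangular matrix). Multiplying the two determinants gives $\det(M)=\det(D)\det(M/D)$.

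There is essentially no obstacle here: the only point requiring a word of justification is the multiplicativity of the determinant applied to the factorization together with the block-triangular determinant formula, both of which are elementary linear algebra. Since the lemma is stated as ``a standard fact in linear algebra,'' a one-line indication of the factorization above should suffice, and I would keep the proof correspondingly short.
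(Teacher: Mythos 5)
Your factorization argument is correct and is the canonical proof of the Schur complement determinant identity; the paper itself offers no proof, simply citing the lemma as a standard fact of linear algebra, so your write-up is exactly the level of justification one would supply if pressed. Nothing is missing.
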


Let $\wtd{f}\ijn$ be the following presentation 
	\begin{align*} 	& P_{i+j}\oplus rP_l \xrightarrow{\left[\begin{array}{cccccc}
		n& 0 & 1 & 0  & \cdots & 0\\
		0& 0 & 2 & 3 & \ddots & \vdots\\
		\vdots&\vdots & \ddots &\ddots &\ddots &0\\
		0&0&\cdots& 0 & n-4&n-3 \\
		0&n-1&0&\cdots &0&n-2
		\end{array}\right]} P_{-i}\oplus P_{-j}\oplus rP_{-l} & & \text{for $n=2r+2$},\\
	& P_{i+j}\oplus rP_l \xrightarrow{\left[\begin{array}{cccccc}
		n& 0 & 1 & 0  & \cdots & 0\\
		0& 0 & 2 & 3 & \ddots & \vdots\\
		\vdots&\vdots & \ddots &\ddots &\ddots &0\\
		0&0&\cdots& 0 & n-3&n-2 \\
		0&n&0&\cdots &0&n-1
		\end{array}\right]} P_{-i}\oplus P_{-j}\oplus rP_{-l} & & \text{for $n=2r+1$}.
	\end{align*}

\noindent We remind readers that according to our convention each entry $n$ in the matrix represents the unique path passing $a_n$.

\begin{lemma} \label{L:wtds} Suppose that $j\neq 0$, then we have that
	$$\wtd{s}\ijn=\begin{cases} (-1)^{jr}s(\wtd{f}\ijn) & \text{if } 1\leq i+j<l \text{ or $n$ is even} ;\\
	(-1)^{j(r-1)}s(\wtd{f}_{i,j}^{n-1})& \text{if } $i+j=l$ \text{ and $n$ is odd}.\end{cases}$$
In particular, $\wtd{s}\ijn\in \SI_\bl(\kllm)$. 	
\end{lemma}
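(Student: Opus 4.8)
The claim to prove is Lemma \ref{L:wtds}: that $\wtd{s}\ijn$ equals (up to an explicit sign) the Schofield semi-invariant $s(\wtd{f}\ijn)$, and hence lies in $\SI_\bl(\kllm)$. The strategy is to compute the determinant $s(\wtd{f}\ijn)(M) = \det M(\wtd{f}\ijn)$ by a Schur complement expansion along the block $rP_l \to rP_{-l}$ in the presentation $\wtd{f}\ijn$. First I would unwind the matrix: the last $r$ columns (corresponding to the $rP_l$ summand) and the last $r$ rows (the $rP_{-l}$ summand) form, after substituting a representation $M$, an $rl \times rl$ block whose entries are $M$ of the relevant single-arrow paths $a_2, a_4, \ldots$ (for $n$ even) or $a_3, a_5, \ldots$ (for $n$ odd). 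Crucially this submatrix is block upper-triangular (or can be arranged so by a permutation of blocks, which accounts for the sign $(-1)^{jr}$ type factor), with the $M(a_i)$ for the relevant even/odd indices on the diagonal. Hence its determinant is $\pm\prod_{i} \det M(a_i) = \pm\det\!\big(M(a_{\rm ev}^{[n]})\big)$, precisely the correction factor appearing in Definition \ref{D:liftsijn}.

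Next, apply Lemma \ref{L:blockdet} ($\det M = \det D \cdot \det(M/D)$) with $D$ the $rl\times rl$ block just analyzed. The Schur complement $M/D$ should collapse exactly to the $l\times l$ (or smaller, according to $i+j$) matrix $M(f\ijn)$ defining $s\ijn$: the point is that conjugating/substituting the inverse of the triangular block $D$ into the coupling entries precisely reconstructs the composite path $a_n a_{[n-1]}^{\pm}$ (or $a_n a_{[n]}^{\pm}$) that defines $f\ijn = f_{i,j}^{n,[n-1]}$ or $f_{i,j}^{n,[n]}$. So I would carefully track how the off-diagonal coupling entries $1,2,\ldots$ in $\wtd{f}\ijn$, together with $D^{-1}$, telescope into the alternating product $g_{[n-1]}$ or $g_{[n]}$ hidden in $\wtd{s}\ijn$ versus $s\ijn$. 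This gives $s(\wtd{f}\ijn)(M) = \pm\det M(f\ijn) \cdot \det\!\big(M(a_{\rm ev}^{[n]})\big) = \pm\, \wtd{s}\ijn(M)$ in the generic (central maps invertible) locus.

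For the case $i+j = l$ and $n$ odd, the same computation is run with $\wtd{f}_{i,j}^{n-1}$ in place of $\wtd{f}\ijn$ (matching the third case of Definition \ref{D:liftsijn}), and the sign bookkeeping shifts $r \mapsto r-1$ accordingly; this is a routine variant of the main case once the even case is in hand. Finally, since $s(\wtd{f}\ijn)$ is by construction a Schofield semi-invariant of a presentation of weight summing to zero on $\bl$ (one must check the weight of $\wtd{f}\ijn$ restricts to the same $\sigma$-weight $\e_{i+j}-\e_{-i}-\e_{-j}$ plus the frozen contributions — this is immediate from the block structure, as the $rP_l, rP_{-l}$ parts contribute zero net weight on $\bl$), it automatically lies in $\SI_\bl(\kllm)$; the two expressions agree as polynomials because they agree on the dense open locus where the central maps are invertible.

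\textbf{Main obstacle.} The delicate part is the bookkeeping of signs and the precise verification that the Schur complement $M/D$ equals $M(f\ijn)$ (not some other presentation) — i.e., correctly matching the telescoping of the subdiagonal entries $1, 2, \ldots, n-1$ against $D^{-1}$ to recover the alternating composite path defining $s\ijn$ versus the extra determinant factor $\det(M(a_{\rm ev}^{[n]}))$. The parity split ($n = 2r+2$ vs.\ $n = 2r+1$) must be handled on both sides in a compatible way, and the factor $(-1)^{jr}$ comes from permuting the $P_{-i}\oplus P_{-j}$ rows past the $rP_{-l}$ rows (a block transposition of sizes $j$ and $rl$, or more precisely the relevant sub-blocks), which requires care. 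Everything else is formal.
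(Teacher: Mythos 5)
Your plan is essentially the paper's own proof: decompose $\wtd{f}\ijn$ into blocks with $f_D\colon rP_l\to rP_{-l}$ the bidiagonal block whose determinant after substitution is $\det\big(M(a_{\rm ev}^{[n]})\big)$, apply Lemma \ref{L:blockdet}, and check that the Schur complement telescopes back to $M(f\ijn)$ up to sign. One small correction: the sign $(-1)^{jr}$ does not come from a block-row permutation but from the alternating signs in $M(f_D)^{-1}$ (whose corner entry is $(-1)^{r-1}a_1^{-1}a_{[n-2]}$), which make the Schur complement equal to $\big(n,(-1)^r[n-1]\big)$ rather than $\big(n,[n-1]\big)$; since the $P_{-j}$ column block has size $j$, this contributes $(-1)^{jr}$ to the determinant.
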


\begin{proof} We only prove the statement for even $n$. The case for odd $n$ is similar.
	We decompose $\wtd{f}\ijn$ in blocks $\wtd{f}\ijn=\sm{f_{A} & f_{B} \\ f_{C} & f_{D}}$, where
	\begin{align*}f_{A}&: P_{i+j}\xrightarrow{(n,0)} P_{-i}\oplus P_{-j},\\
	f_{B}&: P_{i+j}\xrightarrow{(1,0,\cdots,0)} rP_{-l},\\
	f_{C}&: rP_l\xrightarrow{\sm{0&\cdots&0&0\\0&\cdots&0&n-1}^T}P_{-i}\oplus P_{-j},\\
	f_{D}&: rP_l \xrightarrow{} rP_{-l}.
	\end{align*}
	The Schur complement of $M(f_{D})$ in $M(f\ijn)$ is $M(f_{A})-M(f_{C})M(f_{D})^{-1}M(f_{B})$.
	Elementary linear algebra shows that $M(f_{D})^{-1}=M(f_{D}^{-1})$, where
	$$f_{D}^{-1}: r P_{l} \xrightarrow{
		\sm{a_2^{-1} & -a_2^{-1}a_3a_4^{-1} & a_2^{-1}a_3a_4^{-1}a_5a_6^{-1} &\cdots & z\\
			0 & a_4^{-1} & -a_4^{-1}a_5a_6^{-1} &\ddots & \vdots\\
			\vdots &\ddots &\ddots &\ddots& \vdots\\
			\vdots & \ddots & \ddots & \ddots& -a_{n-4}^{-1}a_{n-3}a_{n-2}^{-1}\\
			0&\cdots&\cdots &0&a_{n-2}^{-1} }} r P_{-l}.$$
	In particular, $z=(-1)^{r-1}\prod_{i=2}^{n-2} a_i^{(-1)^{i-1}}=(-1)^{r-1}a_1^{-1}a_{[n-2]}.$
	Then $$f_{A}-f_{B}f_{D}^{-1}f_{C}=\big(n,(-1)^r [n-1]\big): P_{i+j}\xrightarrow{} P_{-i}\oplus P_{-j},$$
	so $s\ijn=(-1)^{jr}\det\left( M(f_A-f_{B}f_{D}^{-1}f_{C}) \right)$.
	Note that 
	$$M(f_{A})-M(f_{C})M(f_{D})^{-1}M(f_{B})=M(f_A-f_{B}f_{D}^{-1}f_{C}).$$
	By Lemma \ref{L:blockdet},
	\begin{align*}s(\wtd{f}\ijn)(M) &= \det\left( M(f_A-f_{B}f_{D}^{-1}f_{C}) \right)\det(M(f_{D}))\\
	&=(-1)^{jr}s\ijn(M)\prod_{i\in[n]\rm\ even} \det M(a_i)\\
	&=(-1)^{jr}\wtd{s}\ijn(M).\end{align*}
\end{proof}

Let $W$ be the $m$-dimensional vector space spanned by the arrows from $-l$ to $l$.
Let $T$ be the subgroup of diagonal matrices in $\GL(W)$.
The semi-invariant ring $\SI_\beta(K_{l_1,l_2}^m)$ is also graded by the $T$-weights $\lambda$.
We call $\wtd{\sigma}:=(\sigma,\lambda)$ the extended $\sigma$-weight. 
Note that $\sigma$-weight of $\wtd{s}\ijn$ is equal to 
\begin{equation*} \label{l:sigmaijn} \sigma\ijn:=\begin{cases}
\e_{i+j}-\e_{-i}-\e_{-j}& j=0;\\
\e_{i+j}-\e_{-i}-\e_{-j}+(r-1)(e_{l}-\e_{-l})& i+j=l \text{ and $n=2r+1$};\\
\e_{i+j}-\e_{-i}-\e_{-j}+r(e_{l}-\e_{-l})& \text{otherwise }. 
\end{cases} \end{equation*}

We set $\epsilon(k):=(-1)^{k-1}$. We write $[a]_+$ for $\max(a,0)$. 
\begin{lemma} \label{L:Tdegree} Each $\wtd{s}\ijn$ is $T$-homogeneous of degree equal to
	$$\lambda\ijn:= \begin{cases} 
	i\e_n & j=0;\\ 
	i\e_n+ \sum_{k\in [n]} \left([\epsilon(k)]_+j\e_k + [-\epsilon(k)]_+(l-j)\e_k \right) & j\geq 1,\ i+j<l,\ n \text{ odd};\\
	i\e_n+ \sum_{k\in [n-1]} \left([\epsilon(k)]_+j\e_k + [-\epsilon(k)]_+(l-j)\e_k \right) & j\geq 1,\ n \text{ even, or } i+j=l,\ n \text{ odd}.
	\end{cases}$$
\end{lemma}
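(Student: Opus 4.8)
The plan is to read the $T$-weight of each $\wtd s\ijn$ directly off its defining expression in Definition \ref{D:liftsijn} (equivalently, off the block presentation $\wtd f\ijn$ of Lemma \ref{L:wtds}), using two elementary facts. First, the torus $T$ rescales each central map $M(a_k)$ by $t_k$ (and the formal inverse $M(a_k^{-1})$ by $t_k^{-1}$) and fixes every flag map, so a path traversing the central region contributes a monomial in the $t_k$ determined by how it weaves through the arrows $a_k$. Second, $s(f)=\det M(f)$ is multilinear in the columns of the square matrix $M(f)$, so a column whose single entry-path has net $T$-exponent $\sum_k c_k\e_k$ scales by $\prod_k t_k^{c_k}$; hence the $T$-weight is additive over the visible factorizations and over column blocks.

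First I would dispose of the case $j=0$: here $\wtd s\ijn=s\ijn$ is the determinant of the $i\times i$ matrix $M(p_{-i,i}^n)$, a product of flag matrices and one copy of $M(a_n)$, so it scales by $t_n^{\,i}$ and $\lambda\ijn=i\e_n$. For the principal case $j\ge 1$ with $1\le i+j<l$ or $n$ even I would use $\wtd s\ijn=s\ijn\cdot\det\!\big(M(a_{\rm ev}^{[n]})\big)$. Each factor $\det M(a_k)$ is the determinant of an $l\times l$ matrix scaled by $t_k$, hence carries $T$-weight $l\e_k$, and the product carries $T$-weight $l\sum_{k\le n-1,\ k\ \mathrm{even}}\e_k$. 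For $s\ijn=\det M(f\ijn)$ with $f\ijn=f_{i,j}^{n,J}$, $J=[n-1]$ ($n$ even) or $J=[n]$ ($n$ odd): the $i$ columns coming from the summand $P_{-i}$ carry the path $p_{-i,i+j}^n$, which passes through $a_n$ exactly once, so each scales by $t_n$; the $j$ columns coming from $P_{-j}$ carry $p_{-j,i+j}^{J}$, whose central word is $\bs a_J=a_1a_2^{-1}a_3\cdots$, so each scales by $\prod_{k\in J}t_k^{\epsilon(k)}$. Thus $s\ijn$ has the a priori rational $T$-weight $i\e_n+j\sum_{k\in J}\epsilon(k)\e_k$, and Lemma \ref{L:wtds} guarantees that multiplying by $\det(M(a_{\rm ev}^{[n]}))$ clears all denominators, so the sum is in fact a genuine polynomial weight.

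Adding the two contributions and regrouping then gives $\lambda\ijn$: one writes $j\epsilon(k)=[\epsilon(k)]_+j-[-\epsilon(k)]_+j$ and absorbs the term $-[-\epsilon(k)]_+j$ together with the $l$ coming from $\det M(a_k)$ into $[-\epsilon(k)]_+(l-j)$, using that the even indices occurring in $\bs a_J$ are exactly those hit by $\det(M(a_{\rm ev}^{[n]}))$. This produces the displayed formula in the $n$-even and ($n$-odd, $i+j<l$) branches, the coefficient of $\e_n$ becoming $i+j$ in the odd sub-case (the $k=n$ term of the sum over $[n]$). Finally, when $i+j=l$ and $n$ is odd one has $\wtd s\ijn=\wtd s_{i,j}^{n-1}$ by Definition \ref{D:liftsijn}, and $n-1$ is even, so $\lambda\ijn$ is obtained by substituting $n-1$ for $n$ in the formula just proved and using $l-j=i$.

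The one place requiring care is the bookkeeping of the alternating word $\bs a_J$ — keeping straight which central arrows appear as $a_k$ and which as $a_k^{-1}$ — together with the verification that the negative powers of the $t_k$ are exactly cancelled by the even central determinants; this is precisely why Definition \ref{D:liftsijn} singles out the even indices, and it is the only point where anything beyond a routine computation is involved. Everything else is the repackaging of $j\epsilon(k)$ and $l$ into the $[\,\cdot\,]_+$ notation, which I do not expect to present any conceptual difficulty.
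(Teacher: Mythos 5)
Your proof is correct and follows essentially the same route as the paper's: the paper likewise records that $\det a_k$ is $T$-homogeneous of degree $l\e_k$ and that $s\ijn$ has $T$-degree $i\e_n+j\sum_{k\in J}\epsilon(k)\e_k$ (with $J=[n-1]$ or $[n]$ according to parity), and then combines these via Definition \ref{D:liftsijn} and Lemma \ref{L:wtds}; you merely carry out explicitly the column-by-column bookkeeping and the $[\,\cdot\,]_+$ regrouping that the paper leaves to the reader. One caveat on your last step: substituting $n-1$ for $n$ in the even-$n$ formula and using $l-j=i$ yields $\sum_{k\in[n-1]}\left([\epsilon(k)]_+j+[-\epsilon(k)]_+(l-j)\right)\e_k$ with \emph{no} leading $i\e_n$ term (indeed $\wtd{s}\ijn=\wtd{s}_{i,j}^{n-1}$ involves no path through $a_n$, so its $T$-weight can have no $\e_n$-component), so your derivation does not literally reproduce the displayed third branch in the sub-case $i+j=l$, $n$ odd. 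The discrepancy is an $i\e_n$ that appears to be a typo in the statement rather than a gap in your argument: the weight you actually compute is the one consistent with the $\lambda$-weight checks in Corollary \ref{C:exrellift2}(2).
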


\begin{proof} The case when $j=0$ is clear. For the rest,
	we observe that $\det a_k$ is $T$-homogeneous of degree $l\e_k$ and ${s}\ijn$ is $T$-homogeneous of degree
$$\begin{cases}
i\e_n + j\sum_{k\in [n-1]} \epsilon(k)\e_k &  \text{if $n$ is even};\\
i\e_n + j\sum_{k\in [n]} \epsilon(k)\e_k & \text{if $n$ is odd and $i+j< l$}.
\end{cases}$$
Then the result follows easily from Lemma \ref{L:wtds}.
\end{proof}

\begin{corollary} \label{C:irreducible} Each $\wtd{s}\ijn$ is irreducible in $\SI_\bl(\kllm)$.
\end{corollary}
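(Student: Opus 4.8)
The statement to prove is Corollary \ref{C:irreducible}: each $\wtd{s}\ijn$ is irreducible in $\SI_\bl(\kllm)$. I would proceed by reducing the question to an irreducibility statement about the underlying pure Schofield semi-invariant $s\ijn$ and then tracking the explicit multiplicative factors introduced in Definition \ref{D:liftsijn}. The point is that we already know from \cite[Lemma 1.8]{Fs1} that every $s\big(f_{i,j}^{I,J}\big)$, in particular $s\ijn$, is irreducible in $\SI_\bl(\kllm)$; and the determinants $\det M(a_k)$ of the central maps are visibly irreducible (they are irreducible polynomials in the entries of a single $l\times l$ generic matrix). So the only danger is that $\wtd{s}\ijn$, being in the generic cases a \emph{product} of $s\ijn$ with a product of central determinants, fails to be irreducible simply because it is a genuine product of two or more nonunits.

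\textbf{Key steps.} First I would dispose of the case $j=0$: here $\wtd{s}\ijn = s\ijn$ by definition, and this is irreducible by \cite[Lemma 1.8]{Fs1}. Likewise the case $i+j=l$ with $n$ odd reduces to $\wtd{s}_{i,j}^{n-1}$, which is covered by the even case, so it suffices to treat the two remaining cases $1\le i+j<l$, or $n$ even. In these cases $\wtd{s}\ijn = s\ijn\cdot\prod_{k\in[n-1]\text{ even}}\det M(a_k)$ (up to a sign), a product of the irreducible $s\ijn$ with several irreducible central determinants. The crux is therefore to show that this product cannot be rewritten nontrivially, i.e.\ that it genuinely is \emph{not} irreducible unless the product is trivial --- wait, that is the wrong direction. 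Let me restate: we must \emph{reconcile} the fact that $\wtd{s}\ijn$ looks like a product of irreducibles with the claim that it is itself irreducible. The resolution must be that in $\SI_\bl(\kllm)$, as opposed to $k[\Rep_\bl(\kllm)]$, the factors $\det M(a_k)$ are \emph{units} --- no; they are not units either. The genuine resolution, and the heart of the argument, is a \emph{$T$-degree} (extended $\sigma$-weight) argument: by Lemma \ref{L:Tdegree} the $T$-degree $\lambda\ijn$ of $\wtd{s}\ijn$ has, in each coordinate $\e_k$ for even $k\le n-1$, the value $(l-j)$ coming from the inverse-orientation contribution, which is \emph{strictly less than} $l$ whenever $j\ge 1$; but $\det M(a_k)$ has $T$-degree exactly $l\e_k$. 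Hence $\det M(a_k)$ \emph{cannot} divide $\wtd{s}\ijn$ inside the $T$-graded ring, which forces any factorization of $\wtd{s}\ijn$ to be compatible with its $\sigma$-weight and $T$-weight being those of a single Schofield semi-invariant.

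\textbf{Carrying it out.} Concretely: suppose $\wtd{s}\ijn = PQ$ with $P,Q\in\SI_\bl(\kllm)$ nonunits. Since $\SI_\bl(\kllm)$ is $\GL_\bl\times T$-graded and $\wtd{s}\ijn$ is homogeneous, $P$ and $Q$ are homogeneous; their $\sigma$-weights add up to $\sigma\ijn$ and their $T$-degrees add up to $\lambda\ijn$. Now I would use Lemma \ref{L:wtds} to pass to the larger quiver $\br{K}_{l,l}^m$ (where inverse central arrows are available), identifying $\wtd{s}\ijn$ up to sign with the pure Schofield semi-invariant $s(\wtd{f}\ijn)$, and then invoke \cite[Lemma 1.8]{Fs1} (or the argument therein) which shows that a pure Schofield semi-invariant attached to the presentation $f_{i,j;k}^{I,J}$ with $P_k$ indecomposable in the source is irreducible --- the presentation $\wtd{f}\ijn$ has this shape once one checks $P_{i+j}$ is the only "new" summand and the $rP_l$ block contributes only to the invertible Schur block. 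The main obstacle, and the step I would spend the most care on, is precisely verifying that \cite[Lemma 1.8]{Fs1} genuinely applies to $s(\wtd{f}\ijn)$: one must check that the block-triangular structure $\wtd{f}\ijn=\sm{f_A&f_B\\f_C&f_D}$ exhibited in the proof of Lemma \ref{L:wtds}, with $f_D$ invertible, does not break the hypotheses of that lemma (for instance that no cancellation of paths occurs, so $s(\wtd{f}\ijn)$ remains pure, and that the relevant general representation still has the indecomposability needed). Once that is in place, irreducibility of $\wtd{s}\ijn$ follows immediately, and the $T$-degree observation above is what guarantees that the "spurious" central-determinant factors cannot be split off --- i.e.\ the naive factorization $\wtd{s}\ijn = s\ijn\cdot\prod\det M(a_k)$ valid in $k[\Rep_\bl(\kllm)]$ does \emph{not} take place within $\SI_\bl(\kllm)$ because $s\ijn$ by itself is \emph{not} $T$-homogeneous of the right shape to be a semi-invariant summand; rather the only $\GL_\bl$-semi-invariant witness of this weight is the indecomposable $s(\wtd{f}\ijn)$.
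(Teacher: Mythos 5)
Your proposal stalls exactly at the point where the real work lies, and its framing contains a conceptual slip. For $n\ge 3$ the function $s\ijn$ is \emph{not} an element of $\SI_\bl(\kllm)$: its defining presentation lives over $\br{K}_{l,l}^{m}$ and involves the formal inverses $a_k^{-1}$, so $s\ijn$ is regular only on $\Rep_{\bl}^\circ(\kllm)$, i.e.\ it lies in the localization $\SI_\bl(\kllm)_{D_{\rm c}}$, where the central determinants are units. Hence the ``naive factorization'' $\wtd{s}\ijn=s\ijn\cdot\prod\det M(a_k)$ that you spend the first half of the proposal trying to explain away is not a factorization inside $\SI_\bl(\kllm)$ at all, and the claim that ``$s\ijn$ is irreducible in $\SI_\bl(\kllm)$'' is not even well posed. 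More seriously, the step you explicitly defer --- ``verifying that \cite[Lemma 1.8]{Fs1} genuinely applies to $s(\wtd{f}\ijn)$'' --- is the entire content of the proof, and your guessed form of that lemma (pure presentation with indecomposable source $\Rightarrow$ irreducible) does not match how the paper uses it: the source of $\wtd{f}\ijn$ is $P_{i+j}\oplus rP_l$, which is decomposable, and the paper invokes the lemma as a \emph{weight-extremality} criterion, reducing the corollary to showing that $(\sigma\ijn,\lambda\ijn)$ cannot be written as a sum of two extended weights of nonzero semi-invariants.

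Your $T$-degree observation is a correct ingredient but proves too little: it only shows $\det M(a_k)\nmid\wtd{s}\ijn$, whereas a putative homogeneous factor $Q$ of $\wtd{s}\ijn$ with $\sigma$-weight $p(e_{l}-\e_{-l})$ need not be a product of central determinants --- that weight space is spanned by $s(f)$ for $f:pP_l\to pP_{-l}$, which contains mixed-determinant-type semi-invariants. The paper closes exactly this gap by pinning down the possible $\lambda$-weights of $T$-homogeneous elements of that weight space and comparing with $\lambda\ijn$, all of whose coordinates are $<l$; you instead assert that the degree comparison ``forces any factorization to be compatible'' without argument. If you want to rescue your line of reasoning, the clean assembly is: (i) in $\SI_\bl(\kllm)_{D_{\rm c}}=k[\zllm\times(\mb{G}_{\rm m})^m]$ the element $\wtd{s}\ijn$ equals $s\ijn$ times a unit and is therefore irreducible there, using the irreducibility of $s\ijn$ in the UFD $k[\zllm]$; (ii) $\det M(a_k)\nmid\wtd{s}\ijn$ for every $k$ by your $T$-degree comparison; since $\SI_\bl(\kllm)$ is a UFD and each $\det M(a_k)$ is prime, (i) and (ii) together yield irreducibility in $\SI_\bl(\kllm)$. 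As written, your proposal supplies neither half of this argument nor the paper's weight-extremality computation.
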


\begin{proof} By \cite[Lemma 1.8]{Fs1}, it suffices to show that each $(\sigma\ijn,\lambda\ijn)$ is an extremal $\wtd{\sigma}$-weight.
Any decomposition of $\sigma\ijn$ into $\sigma$-weights in $\SI_\bl(\kllm)$ must contain some multiple of $e_{l}-\e_{-l}$, say $p(e_{l}-\e_{-l})$.
According to \cite{DW1,SV,DZ} the semi-invariant of this $\sigma$-weight is spanned by the Schofield's semi-invariants $s(f)$, where $f: p P_{l}\to p P_{-l}$.
So if it is $T$-homogeneous, then its $\lambda$-weight must be equal to $l\mu$ for some non-zero $\mu\in \mb{Z}_{\geq 0}^m$.
Then $\lambda\ijn - l\mu$ is not a $\lambda$-weight because it has some negative coordinate. A contradiction.
\end{proof}
\noindent In view of this lemma, each $\wtd{s}\ijn$ minimally lift ${s}\ijn$ to $\SI_\bl(\kllm)$.

We leave it to readers to formulate the analogous statement for the duals $\wtd{\check{s}}\ijn$.

\section{Graded Upper Cluster Algebras} \label{S:UCA}
The upper cluster algebras needed in this paper are so-called skew-symmetric and of geometric type. 
To define such an upper cluster algebra one needs to specify a {\em seed} $(\Delta,\b{x})$ in some ambient field $\mc{F}\supset k$.
Here, $\Delta$ is an ice quiver with no loops or oriented 2-cycles and 
the {\em extended cluster} $\b{x}=\{x_1,x_2,\dots,x_q\}$ is a collection of algebraically independent (over $k$)
elements of $\mc{F}$ attaching to each vertex of $\Delta$.

An {\em ice quiver} $\Delta=(\Delta_0,\Delta_1)$ is a quiver, where some vertices in $\Delta_0$ are designated as {\em mutable} while the rest are {\em frozen}.
We denote the set of mutable (resp. frozen) vertices of $\Delta$ by $\Delta_\mu$ (resp. $\Delta_\nu$).
We usually label the quiver such that the first $p$ vertices are mutable.
Up to arrows between frozen vertices, such a quiver is uniquely determined by its {\em $B$-matrix} $B(\Delta)$.
It is a $p\times q$ matrix given by
$$b_{u,v} = |\text{arrows }u\to v| - |\text{arrows }v \to u|.$$
The elements of $\b{x}$ associated with the mutable vertices are called {\em cluster variables}; they form a {\em cluster}.
The elements associated with the frozen vertices are called {\em frozen variables}, or {\em coefficient variables}.

\begin{definition} \label{D:Qmu}
	Let $u$ be a mutable vertex of $\Delta$.
	The {\em quiver mutation} $\mu_u$ transforms $\Delta$ into the new quiver $\Delta'=\mu_u(\Delta)$ via a sequence of three steps.
	\begin{enumerate}
		\item For each pair of arrows $v\to u\to w$, introduce a new arrow $v\to w$; 
		\item Reverse the direction of all arrows incident to $u$;
		\item Remove all oriented 2-cycles.
	\end{enumerate}
\end{definition}

\begin{definition} 
	A {\em seed mutation} $\mu_u$ at a (mutable) vertex $u$ transforms $(\Delta,\b{x})$ into the seed $(\Delta',\b{x}')=\mu_u(\Delta,\b{x})$ defined as follows.
	The new quiver is $\Delta'=\mu_u(\Delta)$.
	The new extended cluster is
	$\b{x}'=\b{x}\cup\{x_{u}'\}\setminus\{x_u\}$
	where the new cluster variable $x_u'$ replacing $x_u$ is determined by the {\em exchange relation}
	\begin{equation*} \label{eq:exrel}
	x_u\,x_u' = \prod_{v\rightarrow u} x_v + \prod_{u\rightarrow w} x_w.
	\end{equation*}
\end{definition}

\noindent We note that the mutated seed $(\Delta',\b{x}')$ contains the same
coefficient variables as the original seed $(\Delta,\b{x})$.
It is easy to check that one can recover $(\Delta,\b{x})$
from $(\Delta',\b{x}')$ by performing a seed mutation again at $u$.
Two seeds $(\Delta,\b{x})$ and $(\Delta^\dag,\b{x}^\dag)$ that can be obtained from each other by a sequence of mutations are called {\em mutation-equivalent}, denoted by $(\Delta,\b{x})\sim (\Delta^\dag,\b{x}^\dag)$.

Let $\mc{L}(\b{x})$ be the Laurent polynomial algebra in $\b{x}$ over the base field $k$.
If $\Delta$ is an ice quiver, we denote by $\mc{L}_{\Delta}(\b{x})$ the Laurent polynomial in $\b{x}$ which is polynomial in $\b{x}(\Delta_{\nu})$,
that is, $\mc{L}_\Delta(\b{x}):=k\left[\b{x}(\Delta_\mu)^{\pm1},\b{x}(\Delta_\nu)\right]=k\left[x_1^{\pm 1},\dots,x_p^{\pm 1}, x_{p+1}, \dots x_{q}\right]$.


\begin{definition}[{Upper Cluster Algebra}]
	The {\em upper cluster algebra} (or UCA in short) with seed $(\Delta,\b{x})$ is
	$$\uca(\Delta,\b{x}):=\bigcap_{(\Delta^\dag,\b{x}^\dag) \sim (\Delta,\b{x})}\mc{L}_{\Delta}(\b{x}^\dag).$$
\end{definition}
\noindent Note that our definition of UCA is slightly different from the original one in \cite{BFZ}, where $\mc{L}_\Delta(\b{x}^\dag)$ is replaced by $\mc{L}(\b{x}^\dag)$.
The Laurent Phenomenon \cite{FZ1,BFZ} says that a UCA contains all cluster and coefficient variables.

Any UCA, being a subring of a field, is an integral domain (and under our conventions, a $k$-algebra).
Since normality is preserved under localization and intersection, any UCA is normal.
The next lemma is useful to identify a UCA as a subalgebra of some given Noetherian normal domain.

Let $R$ be a finitely generated $k$-algebra.
We call two elements of $R$ {\em coprime in codimension 1} if the locus of their common zeros
has codimension~$\ge 2$ in $\operatorname{Spec}(R)$. 
\begin{definition}  \label{D:CR1} We say that a seed $(\Delta,\b{x})$ is {\em CR1} in $R$ if 
	\begin{enumerate} \item $\b{x}\subset R$ and each $x_u'\in R$.
		\item	each pair of cluster variables in $\b{x}$ and each pair $(x_u,x_u')$ are coprime in codimension 1 in $R$.
	\end{enumerate}
\end{definition}

\begin{lemma}[{\cite[Proposition 3.6]{FP}}] \label{L:RCA}
	Let $R$ be a finitely generated $k$-algebra and a normal domain.
	If $(\Delta,\b{x})$ is a CR1 seed in $R$, then $R\supseteq\uca(\Delta,\b{x})$.
\end{lemma}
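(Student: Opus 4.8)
The plan is to deduce $\uca(\Delta,\b{x})\subseteq R$ from a local statement at the height-one primes of $R$, using that $R$ is a Noetherian normal domain. Since $R$ is a finitely generated $k$-algebra it is Noetherian, and normality gives $R=\bigcap_{\operatorname{ht}\mf{p}=1}R_{\mf{p}}$ inside $\operatorname{Frac}(R)$, each $R_{\mf{p}}$ being a discrete valuation ring. By the first clause of Definition~\ref{D:CR1} we have $\b{x}\subset R$, so the ambient field $k(\b{x})$ embeds into $\operatorname{Frac}(R)$; hence every element of $\uca(\Delta,\b{x})\subseteq k(\b{x})$ lies in $\operatorname{Frac}(R)$, and it makes sense to ask whether it lies in a given $R_{\mf{p}}$. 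It therefore suffices to prove $\uca(\Delta,\b{x})\subseteq R_{\mf{p}}$ for every height-one prime $\mf{p}$.

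Fixing such a $\mf{p}$, the decisive observation is that the second clause of Definition~\ref{D:CR1} forces $\mf{p}$ to contain at most one cluster variable: for $i\neq j$ the common zero locus $V(x_i)\cap V(x_j)$ has codimension $\ge 2$ in $\Spec R$, so it contains no prime divisor, and therefore no height-one prime contains both $x_i$ and $x_j$. If $\mf{p}$ contains none of $x_1,\dots,x_p$, then these are units of $R_{\mf{p}}$ while the frozen variables lie in $R\subseteq R_{\mf{p}}$, whence $\mc{L}_\Delta(\b{x})=k[x_1^{\pm1},\dots,x_p^{\pm1},x_{p+1},\dots,x_q]\subseteq R_{\mf{p}}$; since $\uca(\Delta,\b{x})\subseteq\mc{L}_\Delta(\b{x})$ straight from the definition, we are done at this $\mf{p}$.

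If instead $\mf{p}$ contains exactly one cluster variable $x_k$, I would pass to the adjacent seed $\mu_k(\Delta,\b{x})$. Its new cluster variable $x_k'$ lies in $R$ by the first clause of Definition~\ref{D:CR1}, and by the second clause $(x_k,x_k')$ is coprime in codimension one, so $\mf{p}$ cannot also contain $x_k'$; thus $x_k'$ is a unit of $R_{\mf{p}}$. Since moreover no $x_j$ with $j\in\Delta_\mu\setminus\{k\}$ lies in $\mf{p}$, every generator and every inverted generator of $\mc{L}_{\mu_k(\Delta)}(\mu_k(\b{x}))$ lies in $R_{\mf{p}}$, so $\mc{L}_{\mu_k(\Delta)}(\mu_k(\b{x}))\subseteq R_{\mf{p}}$. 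As mutation-equivalence is an equivalence relation, $\uca(\Delta,\b{x})=\uca(\mu_k(\Delta),\mu_k(\b{x}))\subseteq\mc{L}_{\mu_k(\Delta)}(\mu_k(\b{x}))\subseteq R_{\mf{p}}$. The two cases being exhaustive, $\uca(\Delta,\b{x})\subseteq\bigcap_{\operatorname{ht}\mf{p}=1}R_{\mf{p}}=R$.

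I do not expect a real obstacle here: this is the familiar \emph{Starfish}-type reduction. The only substantive inputs are the commutative-algebra fact that a Noetherian normal domain equals the intersection of its localizations at height-one primes --- this is exactly what turns the codimension-one conditions of CR1 into statements about prime divisors --- and the elementary remark that coprimality in codimension one is precisely what prevents two cluster variables, or a cluster variable and its single mutation, from vanishing simultaneously along a prime divisor of $\Spec R$. The only place calling for care is the one-vanishing case, where one must verify that in $\mc{L}_{\mu_k(\Delta)}(\mu_k(\b{x}))$ all the inverted generators --- namely $x_k'$ together with the surviving mutable variables $x_j$ for $j\neq k$ --- are non-vanishing at $\mf{p}$, which is exactly where the two clauses of CR1 are used together.
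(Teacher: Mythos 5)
Your argument is correct and is precisely the standard ``Starfish''-type proof of the cited result \cite[Proposition 3.6]{FP}, for which the paper itself gives no independent proof: normality yields $R=\bigcap_{\operatorname{ht}\mf{p}=1}R_{\mf{p}}$, coprimality in codimension one ensures at most one mutable variable vanishes along any prime divisor, and the adjacent seed handles the one-vanishing case. No gaps.
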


Let $\bs{e}$ be a subset of $\Delta_0$. We write $\b{x}(\bs{e})$ for the set $\{x_e\}_{e\in\bs{e}}$.
If $\b{x}(\bs{e})$ is contained in a ring $R$, then we write $R_{\b{x}(\bs{e})}$ for the localization of $R$ at the product $\prod_{e\in\bs{e}}x_e$.
Let $\Delta^{\bs{e}}$ be the ice quiver obtained from $\Delta$ by freezing every vertex in $\bs{e}$.
\begin{lemma}[{\cite[Lemma 2.4]{Fs2}}] \label{L:reduce2frozen} Let $\bs{e}$ be a subset of $\Delta_\mu$.
	Suppose that $(\Delta,\b{x})$ is a CR1 seed in $\uca(\Delta,\b{x})$, which is Noetherian.
	Then $\uca(\Delta,\b{x})_{{\b{x}}(\bs{e})}=\uca(\Delta^{\bs{e}},\b{x})_{{\b{x}}(\bs{e})}$.
\end{lemma}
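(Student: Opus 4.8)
The plan is to prove the two inclusions separately, since before localizing there is no containment in either direction: freezing the vertices of $\bs{e}$ shrinks each local chart (``polynomial in $\b{x}(\bs{e})$'' instead of ``Laurent''), but simultaneously enlarges the algebra by forbidding mutations at $\bs{e}$; inverting $\b{x}(\bs{e})$ kills both effects, so one expects equality only after localization.

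For ``$\supseteq$'' I would apply the RCA criterion (Lemma \ref{L:RCA}) to $R:=\uca(\Delta,\b{x})_{\b{x}(\bs{e})}$. As a localization at one element of $\uca(\Delta,\b{x})$, this $R$ is again a normal, Noetherian, finitely generated domain, so the content is to check that $(\Delta^{\bs{e}},\b{x})$ is a CR1 seed in $R$. The key observation is that freezing the vertices of $\bs{e}$ does not change the exchange relation at any mutable vertex $u\in\Delta_\mu\setminus\bs{e}$, since that relation only involves the arrows incident to $u$; hence the cluster variables of $\Delta^{\bs{e}}$ form a subset of those of $\Delta$, and each adjacent variable $x_u'$ of $(\Delta^{\bs{e}},\b{x})$ coincides with the one of $(\Delta,\b{x})$. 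All of these already lie in $\uca(\Delta,\b{x})\subseteq R$, which is condition (1) of Definition \ref{D:CR1}; and condition (2) transfers from $\uca(\Delta,\b{x})$ to $R$ because passing to the open subset $\Spec R\subseteq\Spec\uca(\Delta,\b{x})$ can only raise the codimension of the common zero locus of a pair. Lemma \ref{L:RCA} then gives $R\supseteq\uca(\Delta^{\bs{e}},\b{x})$, and since $R$ is its own localization at $\b{x}(\bs{e})$, also $R\supseteq\uca(\Delta^{\bs{e}},\b{x})_{\b{x}(\bs{e})}$.

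For ``$\subseteq$'' I would use that the frozen variables $\b{x}(\bs{e})$ are the \emph{same} elements of the ambient field in every seed mutation-equivalent to $(\Delta^{\bs{e}},\b{x})$, as $\bs{e}$ is never mutated. For such a seed $(\Delta^\dag,\b{x}^\dag)$, running the same mutation sequence from $(\Delta,\b{x})$ only touches vertices of $\Delta_\mu\setminus\bs{e}\subseteq\Delta_\mu$, so $(\Delta^\dag,\b{x}^\dag)$, with $\bs{e}$ regarded as mutable, is mutation-equivalent to $(\Delta,\b{x})$ as well, and $\mc{L}_{\Delta^{\bs{e}}}(\b{x}^\dag)_{\b{x}(\bs{e})}=\mc{L}_{\Delta}(\b{x}^\dag)$. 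Thus any $f\in\uca(\Delta,\b{x})$ satisfies $f=h_\dag\big/\big(\prod_{e\in\bs{e}}x_e\big)^{N_\dag}$ with $h_\dag\in\mc{L}_{\Delta^{\bs{e}}}(\b{x}^\dag)$ and $N_\dag$ minimal. Comparing $x_e$-adic valuations --- each $x_e$ is a prime of $k[\b{x}^\dag]$, and the denominators in $\mc{L}_{\Delta^{\bs{e}}}(\b{x}^\dag)$ involve only the variables indexed by $\Delta_\mu\setminus\bs{e}$ --- shows $N_\dag=-\min_{e\in\bs{e}}v_{x_e}(f)$, which depends on $f$ alone, not on $\dag$. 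Calling this value $N$, we get $f\cdot\big(\prod_{e\in\bs{e}}x_e\big)^{N}\in\mc{L}_{\Delta^{\bs{e}}}(\b{x}^\dag)$ for every $\dag$, hence $f\cdot\big(\prod_{e\in\bs{e}}x_e\big)^{N}\in\uca(\Delta^{\bs{e}},\b{x})$, so $f\in\uca(\Delta^{\bs{e}},\b{x})_{\b{x}(\bs{e})}$. Since this last ring already contains $\b{x}(\bs{e})^{-1}$, the inclusion $\uca(\Delta,\b{x})\subseteq\uca(\Delta^{\bs{e}},\b{x})_{\b{x}(\bs{e})}$ upgrades to $\uca(\Delta,\b{x})_{\b{x}(\bs{e})}\subseteq\uca(\Delta^{\bs{e}},\b{x})_{\b{x}(\bs{e})}$.

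I expect the ``$\supseteq$'' direction to be the real work: one must recognise $\uca(\Delta,\b{x})_{\b{x}(\bs{e})}$ as a ring meeting the hypotheses of Lemma \ref{L:RCA} --- this is where the Noetherian/finite-generation assumption on $\uca(\Delta,\b{x})$ is spent --- and verify CR1 inside it, the crucial input being that freezing leaves the exchange data at the surviving mutable vertices untouched, so that both the membership requirement and the codimension-$1$ coprimality of the relevant pairs survive the localization. The ``$\subseteq$'' direction should be comparatively routine once one notices that the denominator exponent is a valuation-theoretic invariant of $f$ and hence uniform over all seeds.
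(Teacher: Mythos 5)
The paper does not actually prove this lemma---it imports it from {\cite[Lemma 2.4]{Fs2}}---so your argument has to stand on its own. Your ``$\supseteq$'' half essentially does: freezing $\bs{e}$ changes neither the arrows at, nor the exchange relation of, any surviving mutable vertex, so $(\Delta^{\bs{e}},\b{x})$ is a CR1 seed in $R=\uca(\Delta,\b{x})_{\b{x}(\bs{e})}$ and Lemma \ref{L:RCA} gives the containment. (One caveat: the hypothesis is only that $\uca(\Delta,\b{x})$ is Noetherian, not that it is finitely generated, so you cannot literally quote Lemma \ref{L:RCA} for $R$; you need to observe that its proof only uses that $R$ is a Noetherian normal domain, where the codimension condition still makes sense.)

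The ``$\subseteq$'' half has a genuine gap exactly where all the content sits. You set $N_\dag=-\min_{e}v_{x_e}(f)$ and declare that this ``depends on $f$ alone, not on $\dag$.'' But the valuation you are using is the $x_e$-adic valuation attached to the ring $k[\b{x}^\dag]$, and a priori this is a \emph{different} discrete valuation on $\mc{F}$ for each seed $\dag$: the observation that $x_e$ is prime in every $k[\b{x}^\dag]$ produces one valuation per seed, not one valuation. The uniformity of $N_\dag$ over the infinitely many seeds is precisely the assertion that all these valuations coincide, and that is the nontrivial step of this direction. It is true, and can be proved one mutation at a time: if $\dag'=\mu_u(\dag)$ with $u\neq e$, then $x_u'=(M_1+M_2)/x_u$ where $M_1,M_2$ are distinct monomials that cannot both be divisible by $x_e$ (that would force a $2$-cycle between $u$ and $e$), so $v^{\dag}_{x_e}(x_u')=0$; moreover the residues of $\b{x}^{\dag'}\setminus\{x_e\}$ in the residue field of the DVR $k[\b{x}^\dag]_{(x_e)}$ still generate a field of transcendence degree $q-1$ (one recovers $\bar{x}_u$ from $\bar{x}_u'$ and the exchange monomials), hence remain algebraically independent; this forces $k[\b{x}^{\dag'}]_{(x_e)}=k[\b{x}^\dag]_{(x_e)}$ as subrings of $\mc{F}$ and hence $v^{\dag'}_{x_e}=v^{\dag}_{x_e}$. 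Without some such argument, the minimal denominator exponent could in principle grow without bound along the mutation tree, and $f\cdot\bigl(\prod_{e\in\bs{e}}x_e\bigr)^{N}$ need not lie in every $\mc{L}_{\Delta^{\bs{e}}}(\b{x}^\dag)$.
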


\begin{definition}[\cite{Fs1}] \label{D:wtconfig} A {\em weight configuration} $\bs{\sigma}$ of $\mb{Z}^n$ on an ice quiver $\Delta$ is an assignment for each vertex $v$ of $\Delta$ a (weight) vector $\bs{\sigma}(v)\in \mb{Z}^n$ such that for each mutable vertex $u$, we have that
	\begin{equation} \label{eq:weightconfig}
	\sum_{v\to u} \bs{\sigma}(v) = \sum_{u\to w} \bs{\sigma}(w).
	\end{equation}
\end{definition}

\noindent 
By slight abuse of notation, we can view $\bs{\sigma}$ as a matrix whose $v$-th row is the weight vector $\bs{\sigma}(v)$.
In this matrix notation, the condition \eqref{eq:weightconfig} is equivalent to $B\bs{\sigma}$ is a zero matrix.

Given a weight configuration $\bs{\sigma}$ on $\Delta$,
we can assign a multidegree (or weight) to the UCA $\uca(\Delta,\b{x})$ by setting
$\deg(x_v)=\bs{\sigma}(v)$ for $v\in\Delta_0$.
Then mutation preserves multihomogeneousity.
We say that this UCA is $\bs{\sigma}$-graded, and denoted by $\uca(\Delta,\b{x};\bs{\sigma})$.

\section{$k[\zllm]$ as a UCA} \label{S:CSZ}

\subsection{Cluster structure of $k[X_{2,2}]$} \label{ss:X22}
We denote the quotient $(\mc{A}^p \times \mch{A}^q )/{G}$ by $X_{p,q}$.
The coordinate algebra $k[X_{p,q}]$ is the same as semi-invariant ring of the quiver representations of $S_l^{p,q}$ with the standard dimension vector $\bl$:
$$\pqflag{l}{p}{q}$$

It was proved in \cite{Fs1} that the algebra $k[X_{3,0}]$ is an upper cluster algebra.
The quotient $X_{3,0}$ is isomorphic to $X_{2,1}$ and $X_{1,2}$, which are more useful for our purpose.
We shall describe the seed $(\Delta_l,\b{s}_l)$ for $k[X_{2,1}]$ directly.
To do this, we only need to specify an oriented triangle with vertices labeled by (dual) flags:
$$\mtwoup{\mch{A}_1}{\mc{A}_1}{\mc{A}_2}{$\circlearrowleft$}{}{}{}{}{}$$
We will denote such an oriented triangle by a cyclically ordered triples, eg., $$[\mch{A}_1,\mc{A}_1,\mc{A}_2]=[\mc{A}_1,\mc{A}_2,\mch{A}_1]=[\mc{A}_2,\mch{A}_1,\mc{A}_1].$$
The ice quiver $\Delta_l$ is the (ice) {\em hive quiver} of size $l$.
It has a cyclic symmetric. However, we label its vertices by two numbers in a non-symmetric manner according to the vertices in the triangle.
The labels of the frozen vertices on the side $\mch{A}_1\mc{A}_i$ ($i=1,2$) have zeros in their $j$-th coordinates ($i\neq j$).
The {\em orientation} of $\Delta_l$ is determined by the orientation of the triangle.
Here is a picture of hive quiver $\Delta_5$ for the above oriented triangle.
The dashed lines are called {\em edges} of $\Delta_l$. 
\begin{figure}[!h]
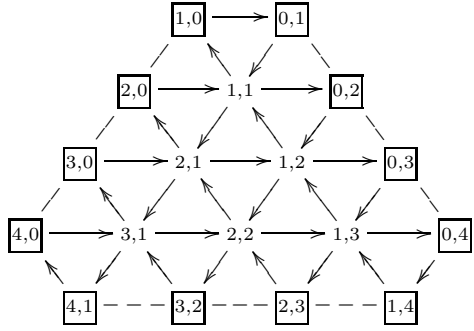
 $\hivefive$ \caption{The hive quiver $\Delta_5$} \label{f:hive} \end{figure}
The cluster variable $\b{s}_l(i,j)=s_{i,j}$ is the semi-invariant associated to the presentation
$$f_{i,j}: P_{i+j} \xrightarrow{\left(p_{i}, p_{j}\right)} P_{i}^1\oplus P_{j}^2,$$
where $p_{i}$ is the unique path in $\Hom_{S_l^{2,1}}(P_{i+j},P_{i}^1)$.
To define the cluster structure on $k[X_{1,2}]$ we only need to dualize the construction for $k[X_{2,1}]$.
Its cluster variable $\check{\b{s}}_l(i,j)=\check{s}_{i,j}$ is the semi-invariant associated to the projective presentation
$$\check{f}_{i,j}: P_{i}^1\oplus P_{j}^2 \xrightarrow{\left(p_{i}, p_{j}\right)} P_{i+j}.$$
We can reformulate the generic condition in terms of these $s_{i,j}$ on the edges.

\begin{lemma} \label{L:generic-by-edge} A pair $(A_1,\check{A}_1)\in \mc{A}_1\times \mch{A}_1$ (resp. $(A_1,A_2)\in \mc{A}_1\times \mc{A}_2$) is generic if and only if it is not vanishing on any $s_{i,0}$ (resp. $s_{i,j}\ (i+j=l)$).
\end{lemma}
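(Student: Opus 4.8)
The statement is really two dual assertions, and by the involution $\vee$ relating $X_{2,1}$ and $X_{1,2}$ it suffices to treat one of them carefully; I will do the pair $(A_1,\check{A}_1)\in\mc{A}_1\times\mch{A}_1$ and indicate the obvious change for $(A_1,A_2)$. Recall the definition: $(A,\check A)$ is generic iff the matrix $A(p_i)\check A(\check p_i)$ has full rank for all $1\le i\le l-1$, where $p_i$ (resp. $\check p_i$) is the unique path from $i$ to $l$ (resp. $l$ to $i$). On the other hand, in the quiver $S_l^{2,1}$ (whose semi-invariant ring is $k[X_{2,1}]$) the presentation defining $s_{i,0}$ is $f_{i,0}\colon P_{i}\xrightarrow{(p_i,\,0)} P_i^1$ — since the second summand is zero when $j=0$ — and unwinding Schofield's construction, $s_{i,0}(A_1,\check A_1)$ is, up to a unit, the determinant of the composite $k^{i}\xrightarrow{A_1(p_i)} k^{l}\xrightarrow{\check A_1(\check p_i)} k^{i}$, i.e. of the $i\times i$ matrix $A_1(p_i)\check A_1(\check p_i)$ that appears in the generic condition. (Here I should be a little careful about which of the two flags indexes the path and which the copath; the labelling conventions in Section \ref{ss:X22} were set up precisely so that the frozen vertex $s_{i,0}$ on the side $\mch{A}_1\mc{A}_1$ records exactly this composite.)

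Given that identification, the lemma becomes a tautology: $(A_1,\check A_1)$ is generic $\iff$ $A_1(p_i)\check A_1(\check p_i)$ has full rank $i$ for every $i\in[1,l-1]$ $\iff$ its determinant $s_{i,0}(A_1,\check A_1)$ is nonzero for every such $i$. The only real content is the first step: verifying that the semi-invariant attached to the frozen edge vertex $(i,0)$ of $\Delta_l$ is (a unit times) $\det\bigl(A_1(p_i)\check A_1(\check p_i)\bigr)$. So the steps are: (1) write down explicitly the presentation $f_{i,0}$ in $S_l^{2,1}$ and expand $s_{i,0}=s(f_{i,0})$ using the definition $s(f)(M)=\det M(f)$ from Section \ref{S:SSI}; (2) observe that because one projective summand vanishes, $M(f_{i,0})$ is literally the $i\times i$ matrix of the path composite $p_i$ followed by the copath $\check p_i$, so $s_{i,0}$ is that determinant; (3) compare with the generic condition $A(p_i)\check A(\check p_i)$ full rank and conclude the equivalence, running over all $i\in[1,l-1]$.

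For the companion statement about $(A_1,A_2)\in\mc{A}_1\times\mc{A}_2$, the relevant frozen vertices are the $s_{i,j}$ with $i+j=l$ sitting on the opposite edge, and the matrix $\sm{A_1(p_i)\\ A_2(p_j)}$ is square of size $l$ exactly when $i+j=l$; the same computation shows $s_{i,j}$ with $i+j=l$ equals (up to a unit) $\det\sm{A_1(p_i)\\ A_2(p_j)}$, so nonvanishing of all these is precisely the generic condition. The main obstacle — and it is a bookkeeping obstacle rather than a conceptual one — is keeping the identification of path/copath and the non-symmetric two-index labelling of $\Delta_l$ straight, so that the frozen vertex one writes down really is the one on the correct side of the triangle; once the dictionary is fixed the proof is a one-line determinant identity plus the elementary fact that a matrix has full rank iff its determinant (when square) is nonzero.
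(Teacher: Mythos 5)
Your proposal is correct: the paper states this lemma without proof, treating it as immediate from the definitions, and the argument you give is exactly the intended one — $s_{i,0}$ is by construction $\det\bigl(A_1(p_i)\check{A}_1(\check{p}_i)\bigr)$ and $s_{i,j}$ with $i+j=l$ is $\det\sm{A_1(p_i)\\ A_2(p_j)}$, so nonvanishing of these determinants is verbatim the definition of genericity. The only care needed is the bookkeeping you already flag (which edge of $\Delta_l$ carries which frozen vertices), and your reading of the labelling convention is the right one.
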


Given a pair of oriented triangles, we can glue them along a pair of edges. For example,
\begin{figure}[!h] $\hspace{-0.1in} \mthree{}{}{}{}{$\circlearrowleft$}{$\circlearrowleft$}{}{} \hspace{1.8in} \mthree{}{}{}{}{$\circlearrowleft$}{$\circlearrowright$}{}{}$ \caption{} \label{f:pair} \end{figure} 

\noindent They correspond to the gluing of two ice hive quivers of the same size by identifying frozen vertices along the common edge.
After the identification, we unfreeze those frozen vertices, and add additional arrows depending on the orientations as illustrated in Figure \ref{f:Pair}.
The additional arrows are in red. The gluing on the left (resp. right) is called {\em consistent} (resp. {\em non-consistent}).
\begin{figure}[!h] $\diamondfive{}{} \qquad  \diamondfiveanti$ \caption{} \label{f:Pair} \end{figure}

More generally, let us recall that an {\em ideal oriented triangulation} of a marked bordered surface $\mc{S}$ is an oriented triangulation of $\mc{S}$ whose
vertices are the marked points of $\mc{S}$. All triangulations in this paper will be ideal and oriented.
Let $\mb{T}$ be a triangulation of a marked bordered surface $\mc{S}$.
Then we can glue ice hive quivers of size $l$ according to such a triangulation.
We denote the resulting ice quiver by $\Diamond_l(\mb{T})$.
We allow self-fold triangles and two triangles glue along more than one edge. 
Note that they happen only when there are {\em punctures} (=marked points on the interior of $\mc{S}$).
Figure \ref{f:twoglue} is a picture for such a triangulation.
\begin{figure}[!h] $\hspace{.6in} \mtwoglue{}{\sbt}{}{$\circlearrowright$}{$\circlearrowleft$} \hspace{1.2in} \ckrontwofivefive$ \caption{} \label{f:twoglue} \end{figure} 

The result for $k[X_{3,0}]$ has been generalized to $k[X_{p,0}]$ as an example in \cite{FW}.
Since $k[X_{p,q}]$ is isomorphic to $k[X_{p+q,0}]$, we actually get a cluster structure for all $k[X_{p,q}]$.
For the rest of this subsection, we shall give a seed for $k[X_{2,2}]$.
 
We knew from \cite{FW} that the ice quiver of the upper cluster algebra $k[X_{4,0}]$ corresponds to the triangulation of a rectangle as in Figure \ref{f:pair} (left).
However, we will consider a seed of $k[X_{2,2}]$ different from the one directly transferring from that of $k[X_{4,0}]$.
For this, let us recall an operation for $\mb{T}$ called twist defined in \cite{FW}.
To describe this operation, we need to specify a triangle $\t\in\mb{T}$ and one of its edges $\e$.
The twist consists of 3 steps. \begin{enumerate}
	\item Cut along the other two edges of the chosen triangle (see Figure \ref{f:twist} left); 
	\item Change the orientation of the triangle and the identification of the two edges 
	(the new identification is indicated by arrows, see Figure \ref{f:twist} right);
	\item Glue according to the new identification.
\end{enumerate}
\begin{figure}[!h]
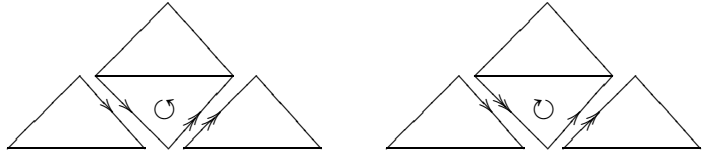
 $\mtwod{>}{}{} \hspace{-.3in} \mthree{}{}{}{}{}{$\circlearrowleft$}{>}{>>}\hspace{-.3in} \mtwod{}{>>}{}
	\hspace{.3in} \mtwod{>}{}{} \hspace{-.3in} \mthree{}{}{}{}{}{$\circlearrowright$}{>>}{>}\hspace{-.3in} \mtwod{}{>>}{}$ \caption{The twist} \label{f:twist}\end{figure}
We have a few remarks. In Step (1), if any of two edges is a part of boundary, then we do nothing for that edge.
Readers should be aware that the new identification in Step (2) is not just a naive ``interchange".
For example, let $a$ be the vertex opposite to the chosen edge.
According to our definition, the vertices of adjacent triangles previously glued to $a$ will not be glued to $a$.
Finally, we need to warn readers that this operation may alter the topology of the surface as we shall see.
We denote the new triangulation (of a possibly new surface) by $\mb{T}^{\t,\e}$.

\begin{lemma}\cite[Lemma 3.3]{FW} There is a sequence of mutations $\bs{\mu}^{\t,\e}$ such that  
	$$\bs{\mu}^{\t,\e}(\Diamond_l(\mb{T}))=\Diamond_l(\mb{T}^{\t,\e}).$$
\end{lemma}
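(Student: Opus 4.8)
The plan is to exploit the fact that the twist $\mb{T}\mapsto\mb{T}^{\t,\e}$ is a \emph{local} operation: it alters the triangulation only inside a neighborhood $N$ of the chosen triangle $\t$, namely $\t$ together with the (at most three) triangles glued to $\t$. Hence $\Diamond_l(\mb{T})$ and $\Diamond_l(\mb{T}^{\t,\e})$ agree outside the full subquiver supported on the vertices of the hive attached to $\t$ (its three rows of edge-vertices and its $\binom{l-1}{2}$ interior vertices). So it suffices to produce a mutation sequence $\bs{\mu}^{\t,\e}$ supported on the vertices of this hive, to check that it is admissible (each step is at a mutable vertex and creates no loops), and to verify that it carries the local piece $\Diamond_l(N)$ to $\Diamond_l(N^{\t,\e})$ while fixing all genuinely frozen vertices, so that the modified piece can be reglued into the rest of $\Diamond_l(\mb{T})$ untouched.

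For $\bs{\mu}^{\t,\e}$ I would mutate the interior vertices of the hive of $\t$ one at a time, in the order obtained by sweeping the diagonal lines parallel to the edge $\e$, starting at the diagonal closest to $\e$ and moving outward (together with the vertices of the $\e$-row itself, when $\e$ is a glued edge, so that the gluing along $\e$ also gets its type toggled). The argument is then an induction on $l$. One first computes the effect of a single diagonal sweep: it reverses the orientation of the elementary triangles straddling that diagonal and pushes the ``orientation defect'' one step farther from $\e$, while toggling the type of gluing (consistent versus non-consistent, i.e.\ the presence of the red gluing arrows of Figure \ref{f:Pair}) along the two cut edges. After the first sweep the remaining interior vertices are precisely those of the hive of size $l-1$ nested inside, carrying again a configuration of ``twist type'' but one size smaller, so the inductive hypothesis applies. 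The base case $l=2$ is vacuous, since that hive has no interior vertices, and the twist there reduces to the relabelling of the three edge-rows dictated by the change of identification in Step (2) of the twist (see Figure \ref{f:twist}).

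The main obstacle is the arrow bookkeeping at the interface between the interior of the hive of $\t$ and its three rows of edge-vertices. One must pin down exactly which arrows join an interior vertex to a boundary vertex, how each diagonal mutation creates and then cancels the $2$-cycles that appear there, and --- the crucial point --- confirm that after the full sweep the arrows incident to the vertices on the two cut edges (and on $\e$) are exactly those prescribed by the new identification: equivalently, that flipping the orientation of $\t$ switches consistent and non-consistent gluings along those edges while leaving the rest of $\Diamond_l(\mb{T})$ intact. This is a finite but delicate check; organizing it along the diagonal induction, and using the cyclic $\mb{Z}/3$-symmetry of the abstract hive quiver $\Delta_l$ to cut down the number of edge cases, is what makes it tractable.
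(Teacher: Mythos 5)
First, a caveat: the paper does not prove this lemma; it is imported verbatim from \cite[Lemma 3.3]{FW}, and the text explicitly refers the reader there for the definition of $\bs{\mu}^{\t,\e}$. So your proposal is being measured against that source rather than against an in-paper argument. Your framing is right in outline: the twist is local to the hive $\Delta_l(\t)$ and its gluings, so one must exhibit an explicit mutation sequence supported there, verify its effect on that hive and on the arrows crossing its three edges, and organize the verification by the lines parallel to $\e$. But there is a genuine gap on two levels. The first is that the entire content of the lemma is the combinatorial verification you explicitly defer as ``a finite but delicate check''; describing the obstacle is not overcoming it. The second is that the proposed sequence --- a single sweep in which each interior vertex is mutated exactly once --- is provably wrong. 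Take $l=4$. The hive of $\t$ has exactly three interior vertices $(1,1),(2,1),(1,2)$, and they span an oriented $3$-cycle. Whichever of the three is mutated first (in your order, one of $(2,1),(1,2)$), the composite arrow through it cancels the side of that $3$-cycle opposite to it; the only \emph{mutable} common neighbour of the two remaining interior vertices is the vertex just mutated, and a mutation at a vertex never creates a new arrow incident to that vertex, so in a single pass the cancelled arrow is never restored. The sweep therefore ends with only two arrows among the three interior vertices, whereas in $\Diamond_4(\mb{T}^{\t,\e})$ these three vertices (which any isomorphism realizing the claimed equality must match with the interior vertices of the twisted hive) again span an oriented $3$-cycle. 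Hence no single-pass sequence works for $l\ge 4$: the vertices farther from $\e$ must be revisited, and the correct $\bs{\mu}^{\t,\e}$ is a nested composition of line-mutations rather than one sweep.

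Two further points. Mutating the vertices of the $\e$-row itself, as you propose when $\e$ is glued, is not permissible: those vertices are shared with the hive on the other side of $\e$, and mutating them creates composite arrows running between the interior of that neighbouring hive and the hive of $\t$, i.e.\ it disturbs a part of $\Diamond_l(\mb{T})$ that the twist must leave untouched. The toggling of the gluing type along $\e$ (the appearance or disappearance of the arrows joining consecutive $\e$-vertices, as in Figure \ref{f:Pair}) is instead produced by the mutations at the interior vertices of $\Delta_l(\t)$ adjacent to $\e$, through the composites they create between pairs of $\e$-vertices. Finally, the concern about ``creating loops'' is misplaced: quiver mutation never creates loops, and admissibility is only an issue for mutations of quivers with potential, which is not what this lemma concerns.
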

\noindent Readers can find the definition of $\bs{\mu}^{\t,\e}$ in the proof of \cite[Lemma 3.3]{FW}.
Here we provide a picture for the hive quiver $\Delta_5$ (Figure \ref{f:hive}) after applying the twist for the vertex $\e = \mch{A}_1$.
$$\hivefivetwisted$$

We apply this operation to the seed transferring from $k[X_{4,0}]$.
The mutated quiver, denoted by $\Diamond_l$ \footnote{Unfortunately we use this notation for a slightly different quiver in \cite{Fk1}}, is the ice quiver corresponding to the triangulation in Figure \ref{f:pair} (right).
We denote such a cluster still by $\b{s}_l$.
This seed can be visualized by the following picture. Recall that we have defined the seed for each triangle.
Readers may reflect for a while to convince that the semi-invariants do agree along the common edge.
\begin{figure}[H] $$\mthree{{\mc{A}_2}}{\mat{\mch{A}_1\\\mc{A}_1}}{\mat{\mc{A}_1\\ \mch{A}_1}}{{\mch{A}_2}}{$\circlearrowleft$}{$\circlearrowright$}{}{}$$  \caption{} \label{f:X22}  \end{figure}

\begin{lemma} \label{L:CSX22} The coordinate algebra $k[X_{2,2}]$ is isomorphic to the upper cluster algebra $\uca(\Diamond_l,\b{s}_l)$.
	Moreover, $k[X_{2,2}^\circ]$ is exactly $k[X_{2,2}]$ localizing at all frozen variables.
\end{lemma}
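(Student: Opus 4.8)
The plan is to deduce the first isomorphism from the known description of $k[X_{4,0}]$ as an upper cluster algebra, transported along $X_{2,2}\cong X_{4,0}$ and then carried to the seed of Figure~\ref{f:X22} by the twist, and to deduce the second statement by rewriting the genericity conditions defining $X_{2,2}^\circ$ as nonvanishing of frozen variables by means of Lemma~\ref{L:generic-by-edge}.

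For the first statement I would start from \cite{FW}, which gives $k[X_{4,0}]$ as the upper cluster algebra of the seed whose ice quiver is $\Diamond_l(\mb{T})$ for the rectangle triangulation $\mb{T}$ in Figure~\ref{f:pair}~(left), with cluster variables the Schofield semi-invariants attached to its two triangles. Using $X_{2,2}\cong X_{4,0}$ (relabelling two of the four marked points by dual flags) this produces a seed for $k[X_{2,2}]$ whose cluster variables are exactly the functions $s_{i,j}$ and $\check{s}_{i,j}$ of Section~\ref{ss:X22}. Next I would apply the twist recalled above at the triangle and edge indicated there; by Lemma~\cite[Lemma~3.3]{FW} this twist is realized by a mutation sequence $\bs{\mu}^{\t,\e}$ with $\bs{\mu}^{\t,\e}\bigl(\Diamond_l(\mb{T})\bigr)=\Diamond_l$, the ice quiver of Figure~\ref{f:pair}~(right). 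Since an upper cluster algebra depends only on the mutation-equivalence class of its seed, the seed obtained on $\Diamond_l$ still computes $k[X_{2,2}]$, so what remains is to identify its cluster with the collection $\b{s}_l$ drawn in Figure~\ref{f:X22}. Away from the twisted edge the cluster variables are unchanged; the finitely many that change are the outputs of the exchange relations in $\bs{\mu}^{\t,\e}$, and I would check that each of these is the prescribed Schofield semi-invariant. Concretely this reduces to checking that the semi-invariants read off from the two triangles agree along their common edge — a direct comparison of the presentations $f_{i,j}^{I,J}$ and $\check{f}_{i,j}^{I,J}$ of Section~\ref{S:SSI}, together with the standard (Plücker/hive-type) identities among Schofield semi-invariants of the flag quivers $S_l^{p,q}$.

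For the second statement, observe that the frozen vertices of $\Diamond_l$ are exactly those lying on the four outer edges of the quadrilateral in Figure~\ref{f:X22}, and that these four edges join, respectively, the pairs $(\mc{A}_1,\mc{A}_2)$, $(\mc{A}_1,\mch{A}_2)$, $(\mc{A}_2,\mch{A}_1)$ and $(\mch{A}_1,\mch{A}_2)$ — precisely the four pairs occurring in the definition of $X_{2,2}^\circ$. The frozen variables at the vertices of such an edge are the Schofield semi-invariants attached to that edge: the $s_{i,0}$ on an $\mc{A}\mch{A}$-edge, the $s_{i,j}$ with $i+j=l$ on an $\mc{A}\mc{A}$-edge, and dually on the $\mch{A}\mch{A}$-edge. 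By Lemma~\ref{L:generic-by-edge} and its evident dual for a pair of dual flags, each such pair is generic if and only if none of the semi-invariants on the corresponding edge vanishes; hence $X_{2,2}^\circ$ is exactly the principal open locus of the affine variety $X_{2,2}$ where the product of all frozen variables is nonzero. It follows that $k[X_{2,2}^\circ]$ is the localization of $k[X_{2,2}]$ at that product, that is, $k[X_{2,2}]$ localized at all of its frozen variables; in passing this reconfirms that each $s_{i,j}$ and $\check{s}_{i,j}$ is regular on all of $X_{2,2}$.

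The hard part will be the identification of the mutated cluster with $\b{s}_l$ in the first statement, i.e.\ matching the output of the explicit twist mutation sequence with the Schofield semi-invariants attached to the new triangulation — equivalently, the ``agreement along the common edge'' verification that the paper flags for the reader. Everything else — mutation invariance of upper cluster algebras, the isomorphism $X_{2,2}\cong X_{4,0}$, Lemma~\cite[Lemma~3.3]{FW}, and the passage from genericity to nonvanishing of frozen variables — is formal given the cited results.
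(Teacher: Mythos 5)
Your proposal is correct and follows essentially the same route the paper takes (the paper leaves this lemma's proof implicit in the preceding construction): transport the cluster structure from $k[X_{4,0}]$ via $X_{2,2}\cong X_{4,0}$, realize the twist by the mutation sequence of \cite[Lemma 3.3]{FW}, verify agreement of the semi-invariants along the common edge, and use Lemma \ref{L:generic-by-edge} to identify $X_{2,2}^\circ$ with the principal open locus where the product of frozen variables is nonzero. The one verification you flag as hard — matching the mutated cluster variables with the prescribed Schofield semi-invariants along the glued edge — is exactly the point the paper also leaves to the reader, so nothing is missing.
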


\subsection{Cluster Structure of $k[\zllm]$}
We fix an element $\bs{g}:=(g_1,g_2,\dots,g_m)\in G^m$. We shall write $g_I^{-1}:=(g_I)^{-1}$.
We mainly concern the two kinds of triangles in Figure \ref{f:oddeven} for $n\in[2,m]$ and their duals.
\begin{figure}[!h] $\mtwoup{\mc{A}}{g_{[n-1]}\mch{A}}{\mc{A}g_{[n]}^{-1}}{$\circlearrowleft$}{}{}{}{$n$ is even}$ \hspace{1in} $\mtwoup{\mc{A}}{\mc{A}g_{[n-1]}^{-1}}{g_{[n]}\mch{A}}{$\circlearrowleft$}{}{}{}{$n$ is odd}$ \caption{The triangles $\t_n$} \label{f:oddeven}  \end{figure}
By definition, the dual of such a triangle changes all $g_I\mch{A}$ to $\mc{A}g_{\invarr{I}}$ and all ${\mc{A}}g_I^{-1}$ to $g_{\invarr{I}}^{-1}\mch{A}$, and reverses the orientation.
We denote the both kinds by $\t_n$ and their duals by $\check{\t}_n$.

It is clear that we can (consistently) glue the right edge of $\t_n$ to the left edge of $\t_{n+1}$ for any $n$.
We can also (non-consistently) glue $\t_n$ and $\check{\t}_n$ along their bases if we introduce the $G$-action on $\mc{A}^p\times \mch{A}^q$.
We justify this by Figure \ref{f:X22} and the following picture (for even $n$),
where we transfer the triangles to equivalent ones by the $G$-action.   
\begin{figure}[!h]
\begin{align*} 
&\mtwoup{\mc{A}}{g_{[n-1]}\mch{A}}{\mc{A}g_{[n]}^{-1}}{$\circlearrowleft$}{}{}{}{} & \bigsimeq & \qquad \mtwoup{\mc{A}g_{[n-1]}}{\mch{A}}{\mc{A}g_{n}}{$\circlearrowleft$}{}{}{}{} \\[-0.2in]
&\mtwodown{\mch{A}}{\mc{A}g_{\invint{n-1}}}{g_{\invint{n}}^{-1}\mch{A}}{$\circlearrowright$}{}{}{} & \bigsimeq & \qquad \mtwodown{g_{\invint{n}}\mch{A}}{\mc{A}g_{n}}{\mch{A}}{$\circlearrowright$}{}{}{} 
\end{align*} \caption{} \label{f:gluedual}
\end{figure}
According to construction of the last subsection, 
it is clear from the right-hand side of Figure \ref{f:gluedual} that when $\bs{g}$ varies the cluster variable $\b{s}_l(\t_n;i,j)$ associated to $\t_n$ is equal to $s\ijn$ and similarly $\b{s}_l(\check{\t}_n;i,j)=\check{s}\ijn$.

We define the {\em $G$-type} of the edge $\mc{A}g_1  ^{\line(1,0){25}} g_2\mch{A}$ to be $g_1g_2$.
Two such edges can be identified as in Figure \ref{f:X22} if and only if they have the same $G$-type.
To define the $G$-type of the edge $\mc{A}g_1 ^{\line(1,0){25}} \mc{A}g_2$, we need to fix a direction, say $\mc{A}g_1 \!\!\longrightarrow\!\! \mc{A}g_2$.
In this case, we define its $G$-type to be $g_1g_2^{-1}$.
We remark that this direction has nothing to do with the orientations of the triangles.
Then two such edges can be identified along the chosen directions if and only if they have the same $G$-type.
Although there is an ambiguity for this $G$-type, the triangle $[g_0\check{A},\mc{A}g_1,\mc{A}g_2]$ is uniquely determined up to the $G$-equivalence by the $G$-types of its edges (Indeed, the $G$-type $h_3$ of $\mc{A}g_1,\mc{A}g_2$ satisfies $h_3=h_1h_2^{-1}$ or $h_3=h_2^{-1}h_1$ where $h_i$ ($i=1,2$) is the $G$-type of $\mc{A}g_i,g_0\check{A}$).
So to efficiently draw this kind of triangulation, it makes sense to introduce the edge-labeled triangles, that is, triangles with edges labeled by their $G$-types. 
If the $G$-type is $g_{[n]}$ or $g_{\invint{n}}$, we will just label it by $[n]$ or $\overleftarrow{[n]}$.
For example, after appropriate choice of directions, here is a picture for $\t_n$ together with its adjacent triangles.
$$\glueedge$$


\begin{definition} We define the oriented triangulation $\mb{T}_m$ as follows.
The triangles in $\mb{T}_m$ are $\t_n$ ($n\in[2,m]$) and their duals.
We glue them as the recipe above. 
We have seen that all elements in $\b{s}_l(\t_n)$ agree on common edges.
So we obtained a set $\b{s}_l(\mb{T}_m)\subset k[\zllm]$ indexed by the vertices of $\Diamond_{l}(\mb{T}_m)$. 
We denote $(\Diamond_l(\mb{T}_m),\b{s}_l(\mb{T}_m))$ by $(\Diamond_l^m,\b{s}_l^m)$. 
\end{definition}
\noindent Here is a picture for $\mb{T}_5$, where all edges with the same label are identified.
Note that the leftmost diamond itself have two pairs of edges identified. It corresponds to the quiver of Figure \ref{f:twoglue} (right).
$$\mthreeq{1}{[2]}{2}{1}{\invint{2}}{$\circlearrowleft$}{$\circlearrowright$}\quad \mthreeq{[2]}{[3]}{3}{\invint{2}}{\invint{3}}{$\circlearrowleft$}{$\circlearrowright$} \quad \mthreeq{[3]}{[4]}{4}{\invint{3}}{\invint{4}}{$\circlearrowleft$}{$\circlearrowright$} \quad \mthreeq{[4]}{[5]}{5}{\invint{4}}{\invint{5}}{$\circlearrowleft$}{$\circlearrowright$}$$
We call such a picture the {\em diamond view} of $\mb{T}_m$. An edge labeled by $n\in [2,m]$ is called a {\em diamond diagonal}. 
We label the diamonds by their diagonals.
A diamond is called odd (resp. even) if its diagonal is labeled by an odd (resp. even) number.
This view is related to the map $\pi=(\pi_2,\pi_3,\dots,\pi_m)$ in Lemma \ref{L:embedding}.
To be more precise, let $\Diamond_l^{(n)}$ be the diamond $n$ and $\b{s}_l^{(n)}$ be the restriction of $\b{s}_l^m$ to $\Diamond_l^{(n)}$, then
the upper cluster algebra $\uca(\Diamond_l^{(n)},\b{s}_l^{(n)})$ is equal to $\pi_n^*(k[X_{2,2}])$.

\begin{lemma} \label{L:indep} The set $\b{s}_l^m$ is algebraically independent over $k$.
\end{lemma}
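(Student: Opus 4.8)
The plan is to reduce algebraic independence of $\b{s}_l^m$ to a transcendence-degree count, using the closed embedding from Lemma \ref{L:embedding} together with the known cluster structure on $k[X_{2,2}]$. The key point is that each restriction $\b{s}_l^{(n)}$ to a single diamond $\Diamond_l^{(n)}$ pulls back (via $\pi_n^*$) the extended cluster $\b{s}_l$ of $k[X_{2,2}] = \uca(\Diamond_l,\b{s}_l)$, which is algebraically independent because $\b{s}_l$ is an extended cluster of an upper cluster algebra. So on each diamond the restricted functions are independent; the content is that the overlaps (the shared diamond-diagonal edges) do not create enough algebraic relations to destroy global independence.

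First I would count: the number of vertices of $\Diamond_l^m$ equals $(m-1)$ times the number of vertices of the hive-glued quiver $\Diamond_l$, minus the number of vertices identified along the $m-2$ shared diamond diagonals (each diagonal being an $A_l$-type edge of $l-1$ vertices, or however many the gluing identifies). On the other side, I would compute $\dim \zllm = \dim\big((\mc{A}\times G^m\times\mch{A})/(G\times G)\big) = 2\dim\mc{A} + m\dim G - 2\dim G = 2\binom{l}{2}\cdot\!(\text{?}) + (m-2)(l^2-1)$; more usefully, by Lemma \ref{L:embedding} the open set $(\zllm)^\circ_\pi$ embeds as a closed subvariety of $(X_{2,2}^\circ)^{m-2}\times X_{2,2}^{\circm}$, and the image is cut out as $p^{-1}((\Delta_H)^{2m-3})$; since $X_{2,2}^\circ$ is a geometric quotient whose further quotient by the relevant edge-$G$-actions is $H$, each diagonal condition $\Delta_H$ cuts the dimension by exactly $\dim H = l-1$. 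Hence $\dim\zllm = (m-1)\dim X_{2,2} - (2m-3)(l-1)$ — wait, more carefully $\dim\zllm = \dim\big((X_{2,2})^{m-1}\big)$ minus the codimension of the $2m-3$ independent diagonal conditions, each of codimension $l-1$. Since $\dim X_{2,2} = \#(\Diamond_l)_0$ (as $k[X_{2,2}]$ is a UCA with that many algebraically independent generators $\b{s}_l$, so its fraction field has transcendence degree $\#(\Diamond_l)_0$), the vertex count of $\Diamond_l^m$ matches $\dim\zllm$ exactly when the edge-identifications in $\mb{T}_m$ correspond precisely to those $2m-3$ diagonal conditions — which is exactly what the diamond view and the footnote about disk diagonals assert.

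So the argument runs: (1) $\b{s}_l^m\subset k[\zllm]$, so $\operatorname{trdeg}_k k(\b{s}_l^m) \le \dim\zllm$; (2) compute $\dim\zllm$ via Lemma \ref{L:embedding} as $(m-1)\dim X_{2,2} - (2m-3)(l-1)$, equivalently $\#(\Diamond_l^m)_0$ by the vertex bookkeeping; (3) show $\operatorname{trdeg}_k k(\b{s}_l^m) \ge \#(\Diamond_l^m)_0$ by exhibiting enough functionally independent coordinates — this is where I would use that $\pi_n^*(\b{s}_l)$ is algebraically independent on each diamond, organized so that the union over diamonds, with diagonals counted once, is independent. Concretely: on $(X_{2,2}^\circ)^{m-2}\times X_{2,2}^{\circm}$ the pullbacks of the $(m-1)$ copies of $\b{s}_l$ are jointly independent (product of varieties), and restricting to the closed subvariety $\pi((\zllm)^\circ_\pi)$ — which by the displayed $p^{-1}((\Delta_H)^{2m-3})$ description is defined by identifying, for each of the $2m-3$ shared edges, a tuple of $l-1$ frozen variables on one diamond with the corresponding tuple on the adjacent diamond — keeps independent exactly the $\b{s}_l^m$-functions (each shared diagonal's variables counted once). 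Since $\b{s}_l^m$ has $\#(\Diamond_l^m)_0$ elements and restriction is dominant onto a variety of that dimension, they are algebraically independent.

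\textbf{The main obstacle} I anticipate is step (2)–(3): verifying that the edge-identifications defining $\mb{T}_m$ (there are $2m-3$ of them: the $m-2$ diamond diagonals shared between consecutive diamonds plus the $m-1$ ``base'' gluings of each $\t_n$ to $\check{\t}_n$, but the leftmost diamond has two self-identifications, which needs care) correspond bijectively and transversally to the $2m-3$ diagonal conditions $\Delta_H$ appearing in the map $p$ of Lemma \ref{L:embedding}, so that neither side over- nor under-counts. This is really a bookkeeping match between the combinatorics of the triangulation $\mb{T}_m$ (its diamond view) and the explicit list of components of $p$; the footnote ``The components of $p$ correspond to the disk diagonals in the triangulation $\mb{T}_m$'' signals that the author intends exactly this correspondence, so the proof would make it precise, then conclude that $\operatorname{trdeg}_k k(\b{s}_l^m) = \#(\Diamond_l^m)_0 = |\b{s}_l^m|$, which is the claim. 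Everything else — pullback of an extended cluster is algebraically independent, geometric-quotient dimension counts, Lemma \ref{L:CSX22} giving $\dim X_{2,2}$ — is routine given the cited results.
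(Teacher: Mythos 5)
Your proposal is correct and is essentially the paper's argument: Lemma \ref{L:embedding} shows that $\b{s}_l^m$ generates $k(\zllm)$ as a field, and comparing $\dim\zllm$ with the cardinality of $\b{s}_l^m$ forces algebraic independence. The one simplification the paper makes is to compute $\dim\zllm = 2\dim\mc{A}+(m-2)\dim G$ directly from the quotient presentation $(\mc{A}\times G^m\times\mch{A})/(G\times G)$, which sidesteps the transversality and edge-identification bookkeeping for the $2m-3$ diagonal conditions that you flag as your main obstacle.
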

\begin{proof} By Lemma \ref{L:embedding} the functional field of $\zllm$ is isomorphic to that of the image of $\pi$.
So $k(\zllm)=\pi^*k((X_{2,2})^{m-1})$ is generated by $\bigcup_{n\in[m]} \b{s}_l^{(n)} = \b{s}_l^m$ as a field.
The Krull dimension of the integral domain $k[\zllm]$ is equal to 
$$2\dim\mc{A}+(m-2)\dim G=(l-1)(l+2)+(l^2-1)(m-2).$$
By a simple counting, this is exactly the cardinality of $\b{s}_l^m$.
Hence $\b{s}_l^m$ must be algebraically independent over $k$.
\end{proof}

We can also first glue all $\t_n$, and then all duals $\check{\t}_n$ together as shown below.
\begin{figure}[!h]
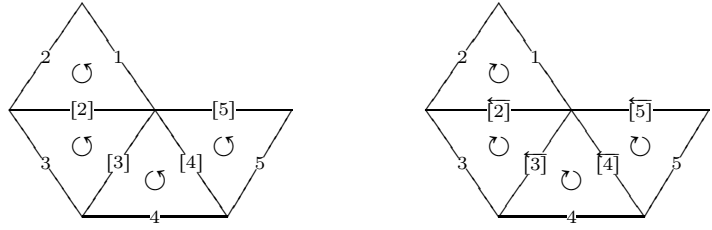
$$\mfive{}{[2]}{[3]}{[4]}{[5]}{$\circlearrowleft$}{$\circlearrowleft$}{$\circlearrowleft$}{$\circlearrowleft$}\qquad \mfive{}{\invint{2}}{\invint{3}}{\invint{4}}{\invint{5}}{$\circlearrowright$}{$\circlearrowright$}{$\circlearrowright$}{$\circlearrowright$}$$
\caption{Disk view of $\mb{T}_5$} \label{f:diskview}
\end{figure}
Such a picture is called the {\em disk view} of $\mb{T}_m$. 
We call an edge labeled by $[n]$ or $\overleftarrow{[n]}$ ($n\in [m-1]$, including $1=[1]=\overleftarrow{[1]}$) a {\em disk diagonal}.
We call the edge labeled by $[m]$ or $\overleftarrow{[m]}$ the {\em boundary}.  
It is clear from both views that topologically we get a sphere with a disk cut out.
It has $m-1$ punctures and $2$ marked points on the boundary.
However, as we shall see at the end of this subsection if the non-consistent gluing is forbidden, then it should be a (unpunctured) $(m-2)$-torus with some marked boundary circle(s).

Let $D_{\rm h}$ be the product of all cluster variables on the disk diagonals.
\begin{corollary} \label{C:local=} The upper cluster algebra $\uca(\Diamond_l^m,\b{s}_l^m)$ is a subalgebra of $k[\zllm]$.
	Moreover, the two algebras are equal after localizing at $D_{\rm h}$. 
\end{corollary}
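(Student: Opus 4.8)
The statement has two parts: the inclusion $\uca(\Diamond_l^m,\b{s}_l^m)\subseteq k[\zllm]$, and equality after inverting $D_{\rm h}$. Write $R:=k[\zllm]$. As the coordinate ring of the categorical quotient of the irreducible normal affine variety $\mc{A}\times G^m\times\mch{A}$ by the reductive group $G\times G$, $R$ is a finitely generated normal domain, and by Lemma \ref{L:indep} and Lemma \ref{L:embedding} its fraction field is $k(\zllm)$, the field generated by $\b{s}_l^m$ — the ambient field of the seed $(\Diamond_l^m,\b{s}_l^m)$.

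For the inclusion I would apply the CR1 criterion, Lemma \ref{L:RCA}. That $\b{s}_l^m\subset R$ is clear: on each diamond $\Diamond_l^{(n)}$ the cluster is $\pi_n^*$ of the seed of $k[X_{2,2}]=\uca(\Diamond_l,\b{s}_l)$ (Lemma \ref{L:CSX22}), and $\pi_n^*$ maps into $R$. For a single exchange relation at a mutable vertex $u$: if $u$ is interior to some $\Diamond_l^{(n)}$ its neighbourhood in $\Diamond_l^m$ is its neighbourhood in $\Diamond_l^{(n)}$, so $x_u'\in\uca(\Diamond_l^{(n)},\b{s}_l^{(n)})=\pi_n^*(k[X_{2,2}])\subseteq R$; if $u$ lies on an inter-diamond (disk) diagonal one must compute $x_u'$ from the local picture of the exchange relation and recognize it as a pure Schofield semi-invariant of $\kllm$, which then restricts to a regular function on $\zllm\;(=\Rep_\bl^1(\kllm)/\SL_\bl)$, i.e.\ an element of $R$. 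Finally, the cluster variables of $\b{s}_l^m$ and the pairs $(x_u,x_u')$ are irreducible Schofield semi-invariants of pairwise distinct weights (\cite[Lemma 1.8]{Fs1}, Corollary \ref{C:irreducible}), hence pairwise non-associate, so their zero divisors meet in codimension $\geq 2$; thus $(\Diamond_l^m,\b{s}_l^m)$ is CR1 in $R$ and Lemma \ref{L:RCA} gives $R\supseteq\uca(\Diamond_l^m,\b{s}_l^m)$.

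For the equality after inverting $D_{\rm h}$: by Lemma \ref{L:generic-by-edge} the non-vanishing of the disk-diagonal cluster variables is exactly the genericity condition defining $(\zllm)^\circ_\pi$, so $\zllm\setminus Z(D_{\rm h})=(\zllm)^\circ_\pi$ and $R_{D_{\rm h}}=k[(\zllm)^\circ_\pi]$; localizing the inclusion above gives ``$\subseteq$''. For ``$\supseteq$'' I would use Lemma \ref{L:embedding}, which realizes $(\zllm)^\circ_\pi$ as a closed subvariety of $(X_{2,2}^\circ)^{m-2}\times X_{2,2}^{\circm}$, so $k[(\zllm)^\circ_\pi]$ is generated by the pullbacks $\pi_n^*\bigl(k[X_{2,2}^\circ]\bigr)$ together with $\pi_m^*\bigl(k[X_{2,2}^{\circm}]\bigr)$. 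By Lemma \ref{L:CSX22}, $k[X_{2,2}^\circ]$ (resp.\ $k[X_{2,2}^{\circm}]$) is $\uca(\Diamond_l,\b{s}_l)$ localized at the frozen variables on the edges whose pairs are required generic; under $\pi_n^*$ those edges are disk diagonals of $\Diamond_l^m$, so the variables in question divide $D_{\rm h}$. Moreover $k[X_{2,2}]$ is generated by Schofield semi-invariants, each a cluster or frozen variable of a seed obtained by mutating only within the interior and central diagonal of $\Diamond_l^{(n)}$ — the same mutations being available in $\Diamond_l^m$ — so $\uca(\Diamond_l^{(n)},\b{s}_l^{(n)})\subseteq\uca(\Diamond_l^m,\b{s}_l^m)$. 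Hence $\pi_n^*(k[X_{2,2}^\circ])$ and $\pi_m^*(k[X_{2,2}^{\circm}])$ lie in $\uca(\Diamond_l^m,\b{s}_l^m)_{D_{\rm h}}$, giving $k[(\zllm)^\circ_\pi]\subseteq\uca(\Diamond_l^m,\b{s}_l^m)_{D_{\rm h}}$, and the two algebras coincide. (Equivalently, by Lemma \ref{L:reduce2frozen} one freezes the disk-diagonal vertices $\bs{e}$: then $(\Diamond_l^m)^{\bs{e}}$ is the disjoint union of the $\Diamond_l^{(n)}$ amalgamated over their shared frozen edge variables, which by Corollary \ref{C:doubleflag} record the $H$-type maps $p_{i,j}$, so the amalgamation imposes exactly the diagonal conditions $p^{-1}((\Delta_H)^{2m-3})$ cutting out $\pi((\zllm)^\circ_\pi)$.)

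The main obstacle is the inclusion at the inter-diamond diagonal vertices — proving that a single exchange relation there is still a regular function on $\zllm$ (equivalently, a pure Schofield semi-invariant) — together with the combinatorial bookkeeping in the second part, where one must match the gluing of the diamonds with the explicit description of $\pi((\zllm)^\circ_\pi)$ from Lemma \ref{L:embedding}, in particular treating the self-folded leftmost diamond $\Diamond_l^{(2)}$ and the boundary diamond $\Diamond_l^{(m)}$ (the $X_{2,2}^{\circm}$ factor) separately.
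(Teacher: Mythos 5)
Your proposal follows essentially the same route as the paper: the inclusion $\uca(\Diamond_l^m,\b{s}_l^m)\subseteq k[\zllm]$ via the CR1 criterion (Lemma \ref{L:RCA}, using irreducibility of the $s\ijn$ and factoriality of $k[\zllm]$), and the localized equality via Lemma \ref{L:generic-by-edge}, the closed embedding of Lemma \ref{L:embedding}, and Lemma \ref{L:CSX22}. One caution: for the containment $\pi_n^*(k[X_{2,2}])\subseteq\uca(\Diamond_l^m,\b{s}_l^m)_{D_{\rm h}}$ the paper uses exactly your parenthetical argument (freezing the disk-diagonal vertices via Lemma \ref{L:reduce2frozen}), which is the safe one --- your primary justification that $k[X_{2,2}]$ is generated by cluster variables reachable by interior mutations is not something you can take for granted for an \emph{upper} cluster algebra.
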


\begin{proof} Since each $s\ijn$ or $\check{s}\ijn$ is irreducible and $k[\zllm]$ is a UFD (\cite[Theorem 3.17]{PV}), 
	$\b{s}_l^m$ is a CR1 seed in $k[\zllm]$. 
	So we have from Lemma \ref{L:RCA} the containment $k[\zllm] \supseteq \uca(\Diamond_l^m,\b{s}_l^m)$.
	By Lemma \ref{L:reduce2frozen}, $\uca(\Diamond_l^m,\b{s}_l^m)_{D_{\rm h}}$ is the same as $\uca\big((\Diamond_l^m)^{d},\b{s}_l^m\big)_{D_{\rm h}}$
	where $(\Diamond_l^m)^{d}$ is obtained from $\Diamond_l^m$ by freezing all vertices on the disk diagonals.
	Then $\uca(\Diamond_l^m,\b{s}_l^m)_{D_{\rm h}}$ contains  $\uca\big(\Diamond_l^{(n)},\b{s}_l^{(n)}\big)$ for all $n\in [2,m]$.
	Recall that by construction each $\uca\big(\Diamond_l^{(n)},\b{s}_l^{(n)}\big)$ is equal to $\pi_n^*(k[X_{2,2}])$.	
	By Lemma \ref{L:generic-by-edge} $k[\zllm]_{D_{\rm h}}$ is exactly $k[(\zllm)_\pi^\circ]$.
	Then by Lemma \ref{L:embedding} and \ref{L:CSX22}, 
	$$k[\zllm]_{D_{\rm h}}=\pi^*\left(k\big[(X_{2,2}^\circ)^{m-2}\times X_{2,2}^\circm\big]\right),$$ 
	which is generated by all $\pi_n^* \left(k[X_{2,2}]\right)$ and inverse cluster variables on the disk diagonals.
	We thus get the other containment ``$\subseteq$".
\end{proof}

Let $\d=\br{bc}$ be the common edge of two adjacent triangles in $\mb{T}$ as in Figure \ref{f:flip} (left).
A {\em flip} along $\d$ is an operation on $\mb{T}$ sending the pair to Figure \ref{f:flip} (right), 
\begin{figure}[!h]
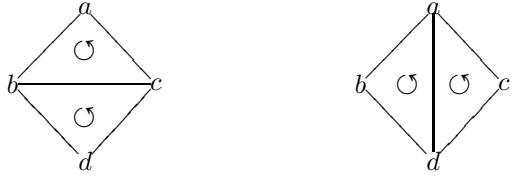
 $\mthree{a}{b}{c}{d}{$\circlearrowleft$}{$\circlearrowleft$}{}{} \hspace{1in} \mthreeflip{a}{b}{c}{d}{$\circlearrowleft$}{$\circlearrowleft$}$ \caption{The flip} \label{f:flip} \end{figure}
and keeping the remaining triangles unchanged.
We denote the new triangulation by $\mb{T}'$.
It is well-known \cite{FST} that any two consistent triangulations of $\mc{S}$ are related by a sequence of flips.
We warn readers that the flip is not defined for a pair of triangles as in Figure \ref{f:pair} (right).
\begin{lemma}\cite[10.3]{FG} There is a sequence of mutations $\bs{\mu}^{\d}$ such that 
	$$\bs{\mu}^{\d}(\Diamond_l(\mb{T}))=\Diamond_l(\mb{T}').$$
\end{lemma}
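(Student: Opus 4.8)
The plan is to reduce the statement to a purely local one about two hive quivers glued along a single edge, and then to realise the flip by an explicit layered sequence of mutations. First I would observe that $\mb{T}$ and $\mb{T}'$ differ only inside the quadrilateral $Q$ bounded by the four outer edges $\br{ab},\br{bc},\br{cd},\br{da}$ of Figure~\ref{f:flip}, where the diagonal $\d$ is replaced by the other diagonal $\d'$. Since $\Diamond_l(-)$ is assembled by gluing size-$l$ hive quivers along edges, the ice quivers $\Diamond_l(\mb{T})$ and $\Diamond_l(\mb{T}')$ agree outside the sub-quiver $\Diamond_l^Q$ attached to the two triangles of $Q$, and they carry identical data on the four outer edges of $Q$ (those vertices are frozen or glued to triangles outside $Q$). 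Every vertex lying strictly inside $Q$ — including the $l-1$ vertices on $\d$, which become mutable after the gluing — has arrows only to vertices of $\Diamond_l^Q$. So it will be enough to produce a mutation sequence $\bs{\mu}^{\d}$ supported on the interior vertices of $Q$ carrying the $Q$-part of $\Diamond_l(\mb{T})$ to the $Q$-part of $\Diamond_l(\mb{T}')$; the same sequence then works globally, since none of its mutations interacts with the rest of $\mc{S}$.

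Next I would identify the local model: two size-$l$ hive quivers glued along an edge, with the remaining four edges frozen, is exactly the ice quiver attached by the construction of Section~\ref{ss:X22} to a triangulation of a square, hence a seed of $k[X_{p,q}]\cong k[X_{4,0}]$, which is a UCA by \cite{FW}. The two diagonals of the square give the seeds $(\Diamond_l^Q(\mb{T}),\b{s}_l)$ and $(\Diamond_l^Q(\mb{T}'),\b{s}_l')$; their frozen parts coincide and their clusters agree except at the $(l-1)^2$ vertices lying in the interior of $Q$ (namely $2\binom{l-1}{2}$ interior points of the two triangles together with the $l-1$ points on the diagonal). Thus $\bs{\mu}^{\d}$ must mutate at precisely those $(l-1)^2$ vertices.

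Finally, for the sequence itself I would follow Fock--Goncharov \cite[10.3]{FG} and build $\bs{\mu}^{\d}$ by induction on $l$: first mutate, in a fixed zig-zag order, the $2l-3$ vertices of the outermost shell adjacent to $\d$ — this reverses the outer layer of the rhombus and exposes, in its middle, a glued pair of size-$(l-1)$ hive quivers with the correct frozen boundary — and then apply $\bs{\mu}^{\d}$ for $l-1$; the base case $l=2$ is a single mutation. Two checks remain: (a) this composite of $(l-1)^2$ quiver mutations sends $\Diamond_l^Q(\mb{T})$ to $\Diamond_l^Q(\mb{T}')$, a statement of pure quiver combinatorics (an octahedron-recurrence-type identity for hive quivers), proved by the same induction; and (b) the cluster variables produced along the way are the Schofield semi-invariants $s_{i,j}$ attached to the two new triangles. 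Part (b) should follow once (a) is in hand, because the exchange relations triggered are the Desnanot--Jacobi/Pl\"ucker identities among Schofield semi-invariants and, landing on the quiver $\Diamond_l^Q(\mb{T}')$ with the same frozen variables, the cluster is pinned down by its $\g$-vectors there. I expect (a) — verifying that the layered sequence really peels off one shell of the rhombus at a time while restoring the hive structure of the smaller rhombus in the middle — to be the main obstacle; this is the combinatorial heart of the Fock--Goncharov flip, and the cleanest organisation of it is exactly the induction on $l$ just described.
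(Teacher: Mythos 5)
First, a point of comparison: the paper does not prove this lemma at all --- it is quoted from Fock--Goncharov \cite[10.3]{FG} (with the cluster-variable counterpart handled separately by \cite[Lemma 4.14]{FW}), so you are supplying an argument where the paper only cites. Your global reduction is fine: the two quivers agree outside the quadrilateral $Q$, every vertex strictly inside $Q$ has all its neighbours inside the local subquiver, so any mutation sequence supported there acts only on the local model, which is a seed for $k[X_{4,0}]$. That part matches what one would extract from \cite{FG}.

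The gap is in your step (a), the mutation sequence itself. You claim $\bs{\mu}^{\d}$ consists of $(l-1)^2$ mutations, one at each interior vertex of $Q$, organized by peeling $2l-3$-vertex shells off the diagonal and recursing to $l-1$. This is not the Fock--Goncharov sequence, and for $l\ge 4$ it cannot work. The flip is the octahedron recurrence run on the lattice points strictly between the two ``tent'' surfaces over the square, and the number of mutations is $\binom{l+1}{3}$ (slices of sizes $1,3,6,\dots,\binom{l}{2}$), not $(l-1)^2$: the counts agree for $l=2,3$ (where $1=1$ and $4=4$, which is why the small cases look consistent with your recursion) but already for $l=4$ one needs $10$ mutations at $9$ vertices, so at least one vertex is mutated twice. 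Correspondingly, the intermediate clusters contain ``deep'' invariants depending on all four flags which belong to neither the initial nor the final seed; a sequence in which every exchange relation immediately produces a final cluster variable --- which is what ``each interior vertex mutated exactly once'' forces, since all $(l-1)^2$ interior variables change --- does not exist for $l\ge 4$. The recursive structure is also wrong in shape: what gets peeled off is a slice of the three-dimensional array (of size $\binom{k}{2}$ at depth $k$), not a two-dimensional shell of the rhombus exposing a smaller glued pair of hives. To repair the proof you should set up the $3$-dimensional indexing (as in \cite[10.3]{FG} or the octahedron-recurrence literature), mutate slice by slice, and check that each slice of mutations is well defined on the quiver level; your part (b) is then unnecessary for the lemma as stated, since it only concerns the quiver.
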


We define an edge-labeled oriented triangulation $w_0(\mb{T}_m)$ as follows.
We modify all edge-labels of $\mb{T}_m$ by the action of $w_0$.
We must clarify what we mean by the action of $w_0$ on edge-labels:
$$w_0(g_{[n]}) = w_0\big(g_1g_2^{-1}\cdots g_n^{(-1)^{n-1}}\big)= g_{w_0(1)}g_{w_0(2)}^{-1}\cdots g_{w_0(n)}^{(-1)^{n-1}},$$
which is not $[w_0(n)]$ or $[w_0(n),w_0(1)]$ in general.
We also define $\overleftarrow{w}_0(\mb{T}_m)$ as the edge-labeled triangulation obtained from ${w}_0(\mb{T}_m)$ by reversing the order of all edge-labels.
We can easily verify that when $m$ is odd
$$\overleftarrow{w}_0(g_{[n]})=\begin{cases}g_{-[m-n+1,m]} & n \text{ is even;}\\ g_{[m-n+1,m]} & n \text{ is odd,} \end{cases}$$
where we set $g_{-I}:=g_{i_1}^{-1}g_{i_2}\cdots g_{i_n}^{(-1)^{n}}$.
For example half of the disk views of $w_0(\mb{T}_4)$ and $\overleftarrow{w}_0(\mb{T}_5)$ are display below (cf. Figure \ref{f:diskview}).
$$\mfourw{4}{\invarr{[3,4]}}{\invarr{[2,4]}}{\invarr{[1,4]}} \qquad \mfivew{$\circlearrowleft$}{[5]}{-[2,5]}{{[3,5]}}{-[4,5]}$$

\begin{lemma} \label{L:Tmflip} Apply a sequence of flips at the disk diagonals $[m-1],\dots,[3],[2]$ and their duals to $\mb{T}_m$, then we get the triangulation $\mb{T}_m^\dagger={w}_0(\mb{T}_m)$ if $m$ is even or $\mb{T}_m^\dagger=\overleftarrow{w}_0(\mb{T}_m)$ if $m$ is odd.
\end{lemma}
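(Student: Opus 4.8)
The plan is to reduce the statement to an elementary fact about flips of fan triangulations of polygons, and then to match edge–labels and orientations. First I would unravel the disk view of $\mb{T}_m$ (Figure~\ref{f:diskview}): the triangles $\t_2,\dots,\t_m$ form a fan with apex the common vertex $\mc{A}$ and with boundary vertices $P_1,\dots,P_m$, where $P_i$ is the vertex carrying the partial product $g_{[i]}$ (so $P_i=g_{[i]}\mch{A}$ for $i$ odd and $P_i=\mc{A}g_{[i]}^{-1}$ for $i$ even); the internal diagonals of this fan are exactly $[2],\dots,[m-1]$, the edges $[1]=P_0P_1$ and the boundary $[m]=P_0P_m$ are its two outer sides incident to the apex, and the remaining polygon sides $P_{i-1}P_i$ are the diamond diagonals (of $G$-type $g_i$). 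The duals $\check{\t}_2,\dots,\check{\t}_m$ form the $\vee$-mirror of this fan, glued to it along the diamond diagonals. Since a flip at $[n]$ (resp. $\overleftarrow{[n]}$) takes place inside the quadrilateral $\t_n\cup\t_{n+1}$ (resp. $\check{\t}_n\cup\check{\t}_{n+1}$), it changes only edges of the $\t$-fan (resp. the $\check{\t}$-fan) and leaves the diamond diagonals and the other fan untouched; so the prescribed flips on the two sides are independent, and it suffices to analyse the $\t$-fan, the $\check{\t}$-fan case being entirely analogous by the duality $\vee$.

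Next I would invoke the following elementary fact: in a convex $(N{+}1)$-gon with cyclically ordered vertices $v_0,\dots,v_N$, starting from the fan triangulation with apex $v_0$, performing the flips at $v_0 v_{N-1}$, then $v_0 v_{N-2}$, \dots, then $v_0 v_2$ in this order is legitimate (at each step the diagonal to be flipped has not been touched by the earlier flips) and produces the fan triangulation with apex $v_N$, whose diagonals are $v_N v_1,\dots,v_N v_{N-2}$ and whose triangles are $v_N v_0 v_1,\ v_N v_1 v_2,\dots,v_N v_{N-2}v_{N-1}$. Taking $v_i=P_i$ and $N=m$, the prescribed flips $[m-1],\dots,[2]$ turn the $\t$-fan into the fan with apex $P_m$; the old edges $[1]$, $[m]$ and all diamond diagonals survive, and the new diagonals are the edges $P_m P_1,\dots,P_m P_{m-2}$. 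Then I would compute the $G$-type of each new diagonal $P_m P_j$ directly from the descriptions of $P_j$ and $P_m$ as (dual) flags twisted by partial products, using the definition of the $G$-type of an edge together with the cancellation $g_{[m]}^{-1}g_{[j]} = g_{m}^{(-1)^{m}}g_{m-1}^{(-1)^{m-1}}\cdots g_{j+1}^{(-1)^{j+1}}$. One finds that $P_m P_j$ carries $G$-type $w_0(g_{[m-j]})$ when $m$ is even and $\overleftarrow{w}_0(g_{[m-j]})$ when $m$ is odd; the parity enters precisely because $P_m$ is a flag ($\mc{A}$-type) for even $m$ but a dual flag ($\mch{A}$-type) for odd $m$, which reverses the order in which the word $g_{[m]}^{-1}$ is read off along the edge. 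Likewise the surviving edges are read off, up to inverting the $G$-type (i.e. up to the immaterial choice of direction along an edge), as $w_0(g_{[1]})=g_m$, $w_0(g_{[m]})$, and $w_0(g_i)=g_{m+1-i}$, where the identities $g_{[m]}^{-1}=w_0(g_{[m]})$ (for $m$ even) and $g_{[m]}=\overleftarrow{w}_0(g_{[m]})$ (for $m$ odd) get used.

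Finally I would check orientations — each flip keeps the orientation of the untouched triangles and orients the two new triangles consistently inside the flipped quadrilateral, and tracking this through the sequence shows the $\circlearrowleft/\circlearrowright$ pattern of the flipped $\t$-fan agrees with that of $w_0(\mb{T}_m)$ (resp. $\overleftarrow{w}_0(\mb{T}_m)$) — and then assemble: the same conclusion for the $\check{\t}$-fan follows by the $\vee$-mirror, and gluing the two flipped fans back along the unchanged, correctly relabelled diamond diagonals yields $\mb{T}_m^\dagger = w_0(\mb{T}_m)$ for $m$ even and $\mb{T}_m^\dagger=\overleftarrow{w}_0(\mb{T}_m)$ for $m$ odd; this can be cross-checked against the sample pictures of $w_0(\mb{T}_4)$ and $\overleftarrow{w}_0(\mb{T}_5)$ displayed above. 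I expect the main obstacle to be exactly the bookkeeping in the second paragraph: keeping precise track, through the whole flip sequence, of the $G$-types, the direction conventions on edges, and the flag/dual-flag decoration of the vertices, and seeing that it is the flip of this last decoration at the new apex $P_m$ that forces the even/odd split between $w_0$ and $\overleftarrow{w}_0$. The geometric content — that flipping a fan onto the opposite fan realizes the order-reversing permutation $w_0$ of the arrow labels — is straightforward; what remains is careful verification.
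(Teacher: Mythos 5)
Your proposal is correct and follows essentially the same route as the paper: the paper likewise shows by induction that the flips at $[m-1],\dots,[2]$ replace the fan apex $\mc{A}$ by the opposite boundary vertex $\mc{B}=P_m$ (so that $\mb{T}_m^\dagger$ consists of the triangles $[\mc{B},P_{n-1},P_n]$ together with $[\mc{B},g_1\mch{A},\mc{A}]$, and dually), and then verifies that the new common edges $\mc{B}P_n$ carry the $G$-types prescribed by $w_0$ resp.\ $\overleftarrow{w}_0$. One bookkeeping slip in your second paragraph: the identification of $\mb{T}_m^\dagger$ with $w_0(\mb{T}_m)$ reindexes the triangles by $n\mapsto m+1-n$, so the surviving edge $[1]$ (whose $G$-type is still $g_1$) matches the label $w_0(g_m)=g_1$ on the corresponding edge of $w_0(\t_m)$, not $w_0(g_{[1]})=g_m$, and each diamond diagonal of $G$-type $g_i$ matches $w_0(g_{m+1-i})=g_i$; with that correction the untouched edges match automatically and only the new diagonals $\mc{B}P_j$, which you compute correctly, need checking.
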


\begin{proof} 
	Let $\t_n^\dagger$ be the triangle obtained from $\t_n$ by replacing $\mc{A}$ by $\mc{B}=g_{[m]}^{-1}\mch{A}$ if $m$ is even or $\mc{B}=g_{[m]}\mch{A}$ if $m$ is odd.
	We can easily prove by induction that $\mb{T}_m^\dagger$ consists of triangles $\t_n^\dagger$ ($n\in[2,m-1]$) and 
	$\t_{m}^\dagger:=[\mc{B},g_1\mch{A},\mc{A}]$.
	The common edge of $t_{n}^\dagger$ and $t_{n+1}^\dagger$ is $\mc{B}^{\line(1,0){25}}\mc{A}g_{[n]}^{-1}$ if $n$ is even 
	or $\mc{B}^{\line(1,0){25}}g_{[n]}\mch{A}$ if $n$ is odd.
	They all have desired $G$-types. The situation is similar for dual triangles.
\end{proof}

\begin{proposition} \label{P:Zllmcluster} 
We have an isomorphism $k[\zllm]\cong \uca(\Diamond_l^m,\b{s}_l^m)$.
\end{proposition}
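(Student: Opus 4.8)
The plan is to promote the partial statement in Corollary \ref{C:local=} to a genuine isomorphism. Recall from that corollary that $\uca(\Diamond_l^m,\b{s}_l^m)\subseteq k[\zllm]$ and that the two algebras already coincide after inverting $D_{\rm h}$, the product of the cluster variables on the disk diagonals of $\mb{T}_m$. So the only thing left is the reverse containment $k[\zllm]\subseteq\uca(\Diamond_l^m,\b{s}_l^m)$: every regular function on $\zllm$ is ``universally Laurent'' for the given seed. I would obtain this by covering $\zllm$ in codimension one by \emph{two} charts on each of which the desired identity is already available, the first being $(\zllm)^\circ_\pi=\zllm\setminus Z(D_{\rm h})$ and the second coming from the flipped triangulation.

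For the second chart I would use Lemma \ref{L:Tmflip}: a sequence of flips at the disk diagonals $[m-1],\dots,[2]$ and their duals carries $\mb{T}_m$ to $\mb{T}_m^\dagger$ (which is $w_0(\mb{T}_m)$ for $m$ even, $\overleftarrow{w}_0(\mb{T}_m)$ for $m$ odd), and by the flip lemma \cite[10.3]{FG} the seed $(\Diamond_l^m(\mb{T}_m^\dagger),\b{s}_l^m(\mb{T}_m^\dagger))$ is mutation-equivalent to $(\Diamond_l^m,\b{s}_l^m)$, so the two upper cluster algebras agree. On the other hand, as the proof of Lemma \ref{L:Tmflip} records, $\mb{T}_m^\dagger$ is again glued from triangles $\t_n^\dagger$ of the same $X_{2,2}$ type (with $\mc{A}$ replaced by $\mc{B}=g_{[m]}^{\mp1}\mch{A}$), so the argument of Corollary \ref{C:local=} applies verbatim and gives $\uca(\Diamond_l^m(\mb{T}_m^\dagger),\b{s}_l^m(\mb{T}_m^\dagger))_{D_{\rm h}^\dagger}=k[\zllm]_{D_{\rm h}^\dagger}$, where $D_{\rm h}^\dagger$ is the product of the cluster variables on the disk diagonals of $\mb{T}_m^\dagger$. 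Combining, $\uca(\Diamond_l^m,\b{s}_l^m)_{D_{\rm h}^\dagger}=k[\zllm]_{D_{\rm h}^\dagger}$ as well.

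Next I would check that $D_{\rm h}$ and $D_{\rm h}^\dagger$ are coprime in the UFD $k[\zllm]$: both are products of the irreducible semi-invariants $s\ijn,\check{s}\ijn$ (irreducibility by \cite[Lemma 1.8]{Fs1}), but the disk diagonals of $\mb{T}_m$ carry $G$-types $g_{[n]},g_{\invint{n}}$ while those of $\mb{T}_m^\dagger$ carry the $w_0$-twisted types, which are genuinely different functions (already $g_1$ versus $g_m$ on the $[1]$-diagonal), so no irreducible factor is shared — a finite comparison. Hence $Z(D_{\rm h})\cap Z(D_{\rm h}^\dagger)$ has codimension $\geq 2$ in $\zllm$, and since $\zllm$ is normal,
\[
k[\zllm]=k[\zllm]_{D_{\rm h}}\cap k[\zllm]_{D_{\rm h}^\dagger}=\uca(\Diamond_l^m,\b{s}_l^m)_{D_{\rm h}}\cap\uca(\Diamond_l^m,\b{s}_l^m)_{D_{\rm h}^\dagger}.
\]
Finally I would observe $\uca_{D_{\rm h}}\cap\uca_{D_{\rm h}^\dagger}=\uca$ (writing $\uca$ for $\uca(\Diamond_l^m,\b{s}_l^m)$): writing $\uca=\bigcap_\Sigma\mc{L}_\Sigma$ over all mutation-equivalent seeds, for each $\Sigma$ the elements $D_{\rm h},D_{\rm h}^\dagger$ are products of distinct cluster variables of $\uca$ and hence coprime in codimension one in the UFD $\mc{L}_\Sigma$, so $\mc{L}_\Sigma=(\mc{L}_\Sigma)_{D_{\rm h}}\cap(\mc{L}_\Sigma)_{D_{\rm h}^\dagger}\supseteq\uca_{D_{\rm h}}\cap\uca_{D_{\rm h}^\dagger}$; intersecting over $\Sigma$ gives $\uca_{D_{\rm h}}\cap\uca_{D_{\rm h}^\dagger}\subseteq\uca$, and the reverse inclusion is trivial. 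This yields $k[\zllm]\subseteq\uca$ and completes the proof.

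The step I expect to be the main obstacle is the coprimality bookkeeping: one must be genuinely sure that flipping to $\mb{T}_m^\dagger$ replaces the disk-diagonal semi-invariants by a \emph{disjoint} family of irreducibles, so that $Z(D_{\rm h})\cap Z(D_{\rm h}^\dagger)$ has codimension $\geq 2$ in $\zllm$ and $D_{\rm h},D_{\rm h}^\dagger$ stay coprime in every $\mc{L}_\Sigma$; and one must verify that the flip lemma and the Corollary \ref{C:local=} argument transport cleanly to $\mb{T}_m^\dagger$, in particular that the cluster variables of the mutated seed really are the Schofield semi-invariants attached to $\mb{T}_m^\dagger$ (matching of seeds, not just of quivers), for which the explicit description of the mutation sequence $\bs{\mu}^\d$ and the identification $\b{s}_l(\t_n)=s\ijn$ from Section \ref{S:CSZ} should suffice.
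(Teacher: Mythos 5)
Your proposal coincides with the paper's proof up to the last step: same two charts (the seed for $\mb{T}_m$ and the flipped seed for $\mb{T}_m^\dagger$), same use of Lemma \ref{L:Tmflip} to see that the two seeds are mutation-equivalent (the paper cites \cite[Lemma 4.14]{FW} for the seed-level, not just quiver-level, statement you rightly flag as needing care), and the same observation that $D_{\rm h}$ and $D_{\rm h}^\dagger$ have no common irreducible factor in the UFD $k[\zllm]$, so that $Z(D_{\rm h})\cap Z(D_{\rm h}^\dagger)$ has codimension $\geq 2$. Where you diverge is the conclusion. The paper argues geometrically: the inclusion $\uca\subseteq k[\zllm]$ gives a regular birational morphism $X=\Spec k[\zllm]\to Y=\Spec\uca$ which is an isomorphism off $Z(D_{\rm h})\cap Z(D_{\rm h}^\dagger)$, and \cite[Corollary 1 of II.4.4]{Sh} (with nonsingularity of $Y$ relaxed to local factoriality, as remarked after the proof) forces the non-isomorphism locus to be a divisor unless it is empty.

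Your algebraic replacement for this last step has a genuine gap. The identity $k[\zllm]=k[\zllm]_{D_{\rm h}}\cap k[\zllm]_{D_{\rm h}^\dagger}=\uca_{D_{\rm h}}\cap\uca_{D_{\rm h}^\dagger}$ is fine, but the assertion that $\uca_{D_{\rm h}}\cap\uca_{D_{\rm h}^\dagger}\subseteq\uca$ because ``$D_{\rm h},D_{\rm h}^\dagger$ are products of distinct cluster variables and hence coprime in codimension one in $\mc{L}_\Sigma$ for every seed $\Sigma$'' is unjustified. A cluster variable of another seed, expanded in the chart $\mc{L}_\Sigma$, is a Laurent polynomial $N/\b{x}^{\d}$ whose numerator $N$ need not be irreducible in $\mc{L}_\Sigma$, and two such numerators may a priori share a factor; there is no general theorem that distinct cluster variables remain pairwise coprime in the Laurent ring of an \emph{arbitrary} seed (this is exactly the sort of coprimality that is easy only for the initial chart, where the relevant variables are units). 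Nor can you import the coprimality you verified in $k[\zllm]$, since a putative common irreducible factor in $\mc{L}_\Sigma$ is merely an element of the fraction field and need not lie in $k[\zllm]$, so the two factorizations cannot be compared. This is precisely the difficulty the paper's geometric ending is designed to bypass; to keep your algebraic route you would have to prove the coprimality in every chart, which is not easier than the proposition itself.
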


\begin{proof} By Corollary \ref{C:local=}, we have that $k[\zllm]_{D_{\rm h}} = \uca(\Diamond_l^m,\b{s}_l^m)_{D_{\rm h}}$.
The same arguments can show that the equality also holds for the seed $(\Diamond_l^m,\b{s}_l^m)^\dagger$ corresponding to $\mb{T}_m^\dagger$, that is, $k[\zllm]_{D_{\rm h}^\dagger} = \uca(\Diamond_l^m,\b{s}_l^m)^\dagger_{D_{\rm h}^\dagger}$.	
	Moreover, by Lemma \ref{L:Tmflip} and \cite[Lemma 4.14]{FW} the two seeds are mutation-equivalent so that $\uca(\Diamond_l^m,\b{s}_l^m)=\uca(\Diamond_l^m,\b{s}_l^m)^\dagger$.
	We notice that $\mb{T}_m$ and $\mb{T}_m^\dagger$ do not have any disk diagonal in common.
	Let $X:=\Spec\big(k[\zllm]\big)$ and $Y:=\Spec \left(\uca(\Diamond_l^m,\b{s}_l^m) \right)$.
	We have a regular birational map $X \to Y$ induced by the inclusion of Corollary \ref{C:local=}.
	To show it is in fact an isomorphism, it suffices to show according to \cite[Corollary 1 of II.4.4]{Sh} that
	$Z(D_{\rm h}) \cap Z(D_{\rm h}^\dagger)$ has codimension 2 in $X$.
	Recall that each function in $\b{s}_l(\mb{T}_m)$ and $\b{s}_l(\mb{T}_m^\dagger)$ is irreducible.
	Since $X$ is factorial (\cite{PV}) and all irreducible factors in $D_{\rm h}$ and $D_{\rm h}^\dagger$ are distinct,
	we conclude that the intersection has codimension 2.
\end{proof}
\noindent The above proof is similar to that of \cite[Theorem 4.15]{FW}.
As remarked there, condition for $Y$ required in \cite[Corollary 1 of II.4.4]{Sh} is being nonsingular, 
but we see from the proof that it can be weakened as being locally factorial.

In the end, we make a connection to the Fock-Goncharov's construction \cite{FG}, in which non-consistent gluing is not allowed.
\begin{proposition} If we apply the twist to each dual triangle $\check{\t}_n$ w.r.t the disk diagonal, then we transforms $\Diamond_l^m$ to $\Diamond_l(\mb{T}_{m}')$,
where $\mb{T}_{m}'$ is a consistent ideal triangulation of a (unpunctured) torus of genus $g=m-2$ with 
$$\begin{cases} \text{two boundary components and one marked point on each} & \text{ if $m$ is even};\\
\text{one boundary component and two marked points} & \text{ if $m$ is odd}.
\end{cases}$$
\end{proposition}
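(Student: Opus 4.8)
The plan is to reduce the statement to a purely topological assertion about ideal triangulations via \cite[Lemma 3.3]{FW}, and then to carry out the surface bookkeeping.

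\emph{Step 1 (realising the twists as mutations).} For each $n\in[2,m]$ let $\e_n$ be the disk-diagonal edge of $\check{\t}_n$ with respect to which the twist is taken (for $2\le n<m$, one of the two disk-diagonal edges of $\check{\t}_n$, say the one it shares with $\check{\t}_{n-1}$; for $n=m$ the unique disk diagonal $\invint{m-1}$, the edge $\invint{m}$ being part of the boundary). The twist at $(\check{\t}_n,\e_n)$ cuts the diamond diagonal $n$ — i.e.\ exactly the non-consistent gluing of $\t_n$ to $\check{\t}_n$ — together with one adjacent disk diagonal, reverses the orientation of $\check{\t}_n$, and reglues. Since a twist changes only the chosen triangle and the identifications along its non-kept edges, and since consecutive dual triangles meet only along disk diagonals, these twists can be carried out one after another for $n=2,3,\dots,m$: at each stage the relevant $\check{\t}_n$ still has a disk-diagonal edge available. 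By \cite[Lemma 3.3]{FW} the composite mutation sequence $\bs{\mu}:=\bs{\mu}^{\check{\t}_m,\e_m}\cdots\bs{\mu}^{\check{\t}_2,\e_2}$ carries $\Diamond_l^m=\Diamond_l(\mb{T}_m)$ to $\Diamond_l(\mb{T}_m')$, where $\mb{T}_m'$ denotes the ideal triangulation obtained from $\mb{T}_m$ by these twists.

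\emph{Step 2 ($\mb{T}_m'$ is consistent).} The only non-consistent gluings of $\mb{T}_m$ are the $m-1$ identifications of $\t_n$ with $\check{\t}_n$ along the diamond diagonal $n$. The twist at $(\check{\t}_n,\e_n)$ is precisely the operation whose inverse turns the consistent glued pair of Figure~\ref{f:pair}(left) into the non-consistent pair of Figure~\ref{f:pair}(right) — this is exactly how $\Diamond_l$ was produced from the seed of $k[X_{4,0}]$, cf.\ the paragraph following Figure~\ref{f:twist}. Hence each twist replaces one non-consistent gluing by a consistent one and leaves the remaining, already consistent, gluings of $\mb{T}_m$ unchanged, so after all $m-1$ twists the triangulation $\mb{T}_m'$ is consistent.

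\emph{Step 3 (identifying the surface).} This is the substance of the proof. I would start from the disk view of $\mb{T}_m$ in Figure~\ref{f:diskview}: $\mb{T}_m$ is the union of the upper polygon $\mc{D}^+$ triangulated by $\t_2,\dots,\t_m$ and the lower polygon $\mc{D}^-$ triangulated by $\check{\t}_2,\dots,\check{\t}_m$, glued along the edge $1$ and the diamond diagonals $2,\dots,m$, with $[m]$ and $\invint{m}$ free. Running the twists amounts to replacing each non-consistent base identification by the orientation-compatible one dictated by Figure~\ref{f:twist}; tracking the effect on the boundary word of $\mc{D}^+\cup\mc{D}^-$ and on the edge labels (their $G$-types) produces an explicit presentation of $|\mb{T}_m'|$ as a polygon with side identifications. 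From that presentation I would (i) verify orientability, which is automatic once all gluings are consistent; (ii) determine how the vertex classes are glued together and how many interior edges remain, hence compute $\chi(|\mb{T}_m'|)=V'-E'+F'$; and (iii) trace the free edges $[m],\invint{m}$ and the remaining boundary segments to see how they assemble into boundary circles — two circles with one marked point each for $m$ even, one circle with two marked points for $m$ odd, the parity entering through whether $[m]$ and $\invint{m}$ end up on the same boundary cycle. Matching $\chi$, orientability and the boundary data against the classification of compact surfaces with boundary yields the stated genus $g=m-2$ and boundary description. As in the proof of Lemma~\ref{L:Tmflip}, I would organise this as an induction on $m$, peeling off the outermost diamond $[m]$ at each step.

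\emph{Main obstacle.} Steps 1 and 2 are formal. The real work is Step 3: following precisely how the order-reversing twists permute the diamond diagonals along the boundary word, and hence how the vertex classes collapse and which boundary segments merge into which circles. The Euler-characteristic and orientability inputs are routine; what requires care is matching up the $G$-type labels so that the resulting polygon word is recognised as that of the asserted surface, which is cleanest via the inductive peeling already used for $\mb{T}_m^\dagger$ in Lemma~\ref{L:Tmflip}.
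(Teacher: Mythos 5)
Your Steps 1 and 2 follow the paper's route: the paper likewise observes (by induction) that the twists preserve the internal triangulation of the dual disk while regluing it to the other disk, that all gluings become consistent, and that joining the two disk pieces along the edge $1$ yields a polygonal presentation on a $2m$-gon. One caveat already there: your claim that each twist ``leaves the remaining, already consistent, gluings unchanged'' is not automatic, because the twist reverses the orientation of $\check{\t}_n$ (which affects consistency along the \emph{kept} edge too) and, as Figure \ref{f:twist} shows, it \emph{swaps} the two flaps attached to the two cut edges rather than regluing each to its old partner; one genuinely needs the induction the paper invokes to see that the dual disk reassembles itself.

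The real problem is Step 3, which you defer entirely and which is the whole content of the proposition; moreover, carried out as you describe, it does not deliver the stated conclusion. The twists change neither the number of triangles nor the number of edges, so $F=2(m-1)$ and $E=3m-2$ (namely $2m-3$ disk diagonals, $m-1$ diamond diagonals, and the two free edges $[m]$ and $\overleftarrow{[m]}$); with the asserted vertex data (two marked points in all, no punctures) this gives $\chi=V-E+F=2-m$. An orientable genus-$(m-2)$ surface with the stated boundary has $\chi=2-2(m-2)-b=6-2m-b$, which equals $2-m$ only for $m\le 3$; equivalently, a $2m$-gon with only $m-1$ identified pairs of sides can present a surface of genus at most $\lfloor (m-1)/2\rfloor$. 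So the ``matching of $\chi$ against the classification'' you promise actually yields $g=(m-b)/2$, i.e.\ $g=(m-2)/2$ for $m$ even and $g=(m-1)/2$ for $m$ odd, not $g=m-2$. You must either locate an error in this count or conclude that the genus in the statement (and in the paper's one-line assertion of a ``$(m-2)$-torus'') needs correcting; as written, your Step 3 cannot be completed to a proof of the proposition as stated.
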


\begin{proof} It is not hard to show by induction that after applying such a sequence of twists, 
	we get the same triangulation of dual disk but it will be glued to the other disk differently.
	We note that all gluing is consistent now. 
	Then we glue the two disk pieces along the edge labeled by $1$.
	We obtained a polygonal presentation of a $(m-2)$-torus with a boundary on a $2m$-gon.
	Moreover, we see that when $m$ is odd two edges $[m]$ and $\overleftarrow{[m]}$ are glued to a (boundary) circle, 
	and each vertex is identified to a marked point on the boundary.
	When $m$ is even, $[m]$ (or $\overleftarrow{[m]}$) is glued to itself and form a boundary circle.
	Here is a picture for $m=4$.
	$$\gluetorus$$
\end{proof}
\noindent We remark that after applying the twists the cluster variables in the interior of each dual triangle are no longer given by the recipe described in Section \ref{ss:X22}.

We label the vertex $(i,j)$ in $\Delta_l(\t_n)$ (resp. $\Delta_l(\check{\t}_n)$) by $(\ijn)$ (resp. $(_{i,j}^{n\vee})$).
Throughout the paper, we shall use the notation $(_{i,j}^{n\veebar})$ for $(_{i,j}^{n})$ or $(_{i,j}^{n\vee})$, and $\check{v}$ for $(_{i,j}^{n\vee})$ if $v=(_{i,j}^{n})$.
Since vertices on the edges will be identified, it is good to resolve the ambiguity.
We shall use \begin{enumerate}
	\item $(_{i,0}^n)$ instead of $(_{i,0}^{n\vee})$ for each $n$;
	\item $(_{0,j}^{n-1\veebar})$ instead of $(_{0,j}^{n\veebar})$ for even $n$;
	\item $(_{i,j}^{n-1\veebar})$ instead of $(_{i,j}^{n\veebar})$ for $i+j=l$ and odd $n$.
\end{enumerate}
\noindent Readers can find a quiver in such a labeling in Figure \ref{f:diamond444p}.

In particular, we have an exchange relation in $k[\zllm]$ at each mutable vertex in $(\Diamond_l^m,\b{s}_l^m)$.
We record these exchange relations at edge vertices, which are needed in Section \ref{ss:liftseed}. 
We set $s_{0,0}^n=s_{l,0}^n=1$ in \eqref{eq:exrel1} below.
\begin{corollary} There are exchange relations:\begin{align} 	
	\label{eq:exrel1} s_{i,0}^n {s_{i,0}^n}'&=s_{i,1}^n\check{s}_{i,1}^ns_{i-1,0}^n+s_{i-1,1}^n\check{s}_{i-1,1}^ns_{i+1,0}^n,\\
	\label{eq:exrel2} s_{0,j}^n {s_{0,j}^n}'&=s_{1,j}^n s_{1,j-1}^{n+1} + s_{1,j-1}^{n} s_{1,j}^{n+1} && \text{$n$ is odd,}\\
	\label{eq:exrel3} s\ijn {s\ijn}'&=s_{i-1,j}^n s_{i,j-1}^{n+1} + s_{i,j-1}^{n} s_{i-1,j}^{n+1} && \text{$n$ is even and } i+j=l.
	\end{align} for some regular functions ${s\ijn}'$ on $\zllm$. \end{corollary}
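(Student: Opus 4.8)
The plan is to recognise \eqref{eq:exrel1}--\eqref{eq:exrel3} as nothing but the exchange relations \eqref{eq:exrel} of the seed $(\Diamond_l^m,\b{s}_l^m)$ at the three relevant families of edge vertices. By Proposition~\ref{P:Zllmcluster} we have $k[\zllm]\cong\uca(\Diamond_l^m,\b{s}_l^m)$. The vertices carrying $s_{i,0}^n$, $s_{0,j}^{n}$ (with $n$ odd) and $s\ijn$ (with $n$ even and $i+j=l$) all lie on edges of $\mb{T}_m$ other than the boundary $[m],\overleftarrow{[m]}$ — a base $\t_n\cap\check{\t}_n$ in the first case, a shared edge $\t_n\cap\t_{n+1}$ in the other two — hence are unfrozen in the gluing and mutable in $\Diamond_l^m$. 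So at each such vertex $u$ there is an exchange relation $x_u x_u'=\prod_{v\to u}x_v+\prod_{u\to w}x_w$, and by the Laurent phenomenon $x_u'\in\uca(\Diamond_l^m,\b{s}_l^m)=k[\zllm]$; this is the regular function $(s\ijn)'$ in the statement. It remains to read off the arrows of $\Diamond_l^m$ incident to each such vertex.

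For this I would work directly from the assembly of $\Diamond_l^m=\Diamond_l(\mb{T}_m)$ out of copies $\Delta_l(\t_n),\Delta_l(\check{\t}_n)$ of the hive quiver (Figure~\ref{f:hive}), glued along shared edges with the extra red arrows dictated by the orientations (Figure~\ref{f:Pair}); recall $\b{s}_l(\t_n;i,j)=s\ijn$, $\b{s}_l(\check{\t}_n;i,j)=\check{s}\ijn$, and that these agree on common edges. The vertex carrying $s_{i,0}^n$ lies on the base of $\t_n$, glued \emph{non-consistently} to $\check{\t}_n$ (Figure~\ref{f:Pair}, right; see Figure~\ref{f:twoglue}, right, for such a diamond when $n=2$): its incident arrows are the ones internal to $\Delta_l(\t_n)$, involving the side neighbours $s_{i\pm1,0}^n$ and the interior neighbours $s_{i,1}^n,s_{i-1,1}^n$, together with the red arrows to $\Delta_l(\check{\t}_n)$, involving $\check{s}_{i,1}^n,\check{s}_{i-1,1}^n$; distributing these six arrows into $\prod_{v\to u}x_v$ and $\prod_{u\to w}x_w$, and using $s_{0,0}^n=s_{l,0}^n=1$ at the endpoints, yields \eqref{eq:exrel1}. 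The vertices carrying $s_{0,j}^n$ ($n$ odd) and $s\ijn$ ($n$ even, $i+j=l$) lie instead on a \emph{consistently} glued edge $\t_n\cap\t_{n+1}$ (Figure~\ref{f:Pair}, left): there the incident arrows split between $\Delta_l(\t_n)$ — contributing $s_{1,j}^n$, resp.\ $s_{i-1,j}^n$ and $s_{i,j-1}^n$ — and $\Delta_l(\t_{n+1})$ — contributing $s_{1,j-1}^{n+1}$ and $s_{1,j}^{n+1}$, resp.\ $s_{i,j-1}^{n+1}$ and $s_{i-1,j}^{n+1}$ — plus the red gluing arrows, and the same reading gives \eqref{eq:exrel2} and \eqref{eq:exrel3}.

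The one place that needs care, and the main obstacle, is the bookkeeping of arrow \emph{directions} in the glued quiver: odd and even diamonds carry opposite orientations (Figure~\ref{f:oddeven}), so which of $(i,1),(i-1,1)$, resp.\ $(i,j-1),(i-1,j)$, feeds which product depends on the parity of $n$, and the attachment of the red arrows differs between the consistent and non-consistent gluings; one must also treat separately the corner vertices where a factor degenerates to $1$. This is purely diagrammatic, however: once $\Diamond_l^m$ is drawn in a neighbourhood of each such vertex — exactly as in Figure~\ref{f:diamond444p} — the incident arrows, and hence \eqref{eq:exrel1}--\eqref{eq:exrel3}, can be read off with no further argument.
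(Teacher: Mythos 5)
Your proposal is correct and matches the paper's (essentially unstated) argument: the three identities are just the exchange relations of the seed $(\Diamond_l^m,\b{s}_l^m)$ at the edge vertices of the glued hive quivers, and regularity of each $(s\ijn)'$ follows from Proposition \ref{P:Zllmcluster} together with the Laurent phenomenon, since the mutated cluster variable lies in $\uca(\Diamond_l^m,\b{s}_l^m)=k[\zllm]$. The only quibble is cosmetic: in the consistent gluing no red arrows are added (they occur only in the non-consistent case), so in \eqref{eq:exrel2}--\eqref{eq:exrel3} all four incident arrows come from the two hives themselves, but this does not affect the relations you read off.
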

\noindent  The exchange relations \eqref{eq:exrel2} and \eqref{eq:exrel3} also hold for the dual semi-invariants.  
Similar to the Schofield's construction in the proof of \cite[Lemma 4.12]{FW}, interested readers can give an explicit description for ${s\ijn}'$.

\section{$\SI_\bl(\kllm)$ as a UCA} \label{S:CSmain}
\subsection{Lifting the seed $(\Diamond_l^m,\b{s}_l^m)$} \label{ss:liftseed}
We are going to extend and lift the seed $(\Diamond_l^m,\b{s}_l^m)$ to a seed $(\wtd{\Diamond}_l^m,\wtd{\b{s}}_l^m)$ such that the cluster variables in $\wtd{\b{s}}_l^m$ are regular functions on $\Mod_{\bl}(\kllm):=\Rep_{\bl}(\kllm)/\SL_{\bl}$.
We first notice that the $m$ central determinants are regular functions on $\Mod_{\bl}(\kllm)$.
So we add them as $m$ frozen variables to $(\Diamond_l^m,\b{s}_l^m)$.
We label the frozen vertex corresponding to $\det a_n$ by $n$.
Next, we lift the cluster $\b{s}_l^m$ to a cluster $\wtd{\b{s}}_l^m$.
The natural candidate for $\wtd{\b{s}}_l^m(\t_n;i,j)$ is $\wtd{s}\ijn$.
We see directly from Definition \ref{D:liftsijn} that all $\wtd{s}\ijn$'s agree on the diagonals.
We shall prove that with this natural choice we are able to lift the exchange relations as well.

\begin{lemma} \label{L:regquotient} Let $f$ and $g$ be two $T$-homogeneous regular functions on $\Rep_{\bl}^1(\kllm)$.
	Suppose that the $T$-weights of $f$ and $g$ are $\lambda_1,\lambda_2$ with $\lambda_1-\lambda_2$ non-negative
	and $s:=f/g$ is also regular on $\Rep_{\bl}^1(\kllm)$.
Then $s$ is regular on $\Rep_{\bl}(\kllm)$.
\end{lemma}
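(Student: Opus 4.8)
The plan is to prove that $s$ — a priori a regular function only on the closed subvariety $\Rep_{\bl}^1(\kllm)$ — extends to a regular function on the affine space $\Rep_{\bl}(\kllm)$. Since $k[\Rep_{\bl}(\kllm)]$ is a polynomial ring, hence a normal factorial domain, and a normal domain is the intersection of its localizations at the height-one primes, it suffices to show that $s$ is regular along every prime divisor of $\Rep_{\bl}(\kllm)$. First one must pin $s$ down as a rational function on $\Rep_{\bl}(\kllm)$: the content of $T$-homogeneity is that $\Rep_{\bl}^1(\kllm)$ is cut out by the \emph{inhomogeneous} relations $\det a_n-1$, so a multihomogeneous polynomial on $\Rep_{\bl}(\kllm)$ vanishing on $\Rep_{\bl}^1(\kllm)$ vanishes identically (restrict to each $\SL_l$-factor and compare multidegrees); hence $f$ and $g$ have \emph{unique} multihomogeneous lifts $\wtd f,\wtd g\in k[\Rep_{\bl}(\kllm)]$ of $T$-weights $\lambda_1$ and $\lambda_2$, and we set $s:=\wtd f/\wtd g$, a rational function satisfying $s(\tau\cdot M)=\tau^{\lambda_1-\lambda_2}s(M)$ for $\tau\in T$.

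Next I would dispose of the prime divisors $\mf p$ not contained in $Z(D_{\rm c})=\Rep_{\bl}(\kllm)\setminus\Rep_{\bl}^\circ(\kllm)$ by a rescaling argument. Test the discrete valuation $v_{\mf p}$ against a smooth curve germ $\gamma\colon\operatorname{Spec}\mathcal O\to\Rep_{\bl}(\kllm)$ ($\mathcal O$ a DVR) meeting $V(\mf p)$ transversally at a general point, so that $\gamma$ hits $\Rep_{\bl}^\circ(\kllm)$ there, each $\det a_n\circ\gamma$ is a unit, and $\gamma$ lies generically in the domain of $s$. As $k$ is algebraically closed, $\tau:=\bigl((\det a_n\circ\gamma)^{-1/l}\bigr)_n$ is a unit-valued curve in $T$, so $\tau\cdot\gamma$ is a morphism $\operatorname{Spec}\mathcal O\to\Rep_{\bl}^1(\kllm)$; since $s$ is regular there, $v(s(\tau\cdot\gamma))\ge 0$, and therefore $v(s(\gamma))=v(\tau^{-(\lambda_1-\lambda_2)})+v(s(\tau\cdot\gamma))=v(s(\tau\cdot\gamma))\ge 0$, because $\tau^{-(\lambda_1-\lambda_2)}$ is a unit. (Note the sign of $\lambda_1-\lambda_2$ is not used here.)

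It remains to treat the prime divisors contained in $Z(D_{\rm c})$, which are exactly the irreducible hypersurfaces $Z(\det a_n)$, $n\in[1,m]$; this is where the non-negativity hypothesis enters. Run the same argument through a general point of $Z(\det a_n)$: now $\det a_n\circ\gamma$ is a uniformizer, so to rescale the $n$-th central map to determinant $1$ one passes to a ramified cover $t=w^l$ and uses $\tau_n\sim w^{-1}$; the factor $\tau^{-(\lambda_1-\lambda_2)}$ then has valuation $(\lambda_1-\lambda_2)_n\ge 0$, and one again reads off $v_{\mf p}(s)\ge 0$, i.e. $\det a_n$ does not occur in the denominator of $s$. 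The step I expect to be the real obstacle is making this rigorous: rescaling $M(a_n)$ to determinant $1$ forces the curve to run off to infinity inside the non-proper slice $\Rep_{\bl}^1(\kllm)$ (its "large" Smith-normal-form entry does not stay integral), so regularity of $s$ on $\Rep_{\bl}^1(\kllm)$ cannot be invoked verbatim. One must first normalize $M(a_n)$ along $\gamma$ — e.g. bring it to the form $\operatorname{diag}(1,\dots,1,t)$ by a $\GL_{\bl}$-transformation at the vertices $\pm l$, which alters the $\SL_{\bl}$-invariant $s$ only by a unit — and then balance the residual scaling against the $T$-weight while tracking its effect on the flag coordinates and on the other central maps. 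An alternative, possibly cleaner, route is to avoid curves entirely and argue inside the $T$-graded factorial ring $\SI_{\bl}(\kllm)$: write $\wtd f,\wtd g$ in the coordinates furnished by Lemma~\ref{L:localdet}, $\SI_{\bl}(\kllm)_{D}=k[\zllm\times(\mb{G}_{\rm m})^m]$, and deduce from the $\det a_n$-exponents together with $\lambda_1-\lambda_2\ge 0$ that $\wtd g$ divides $\wtd f$.
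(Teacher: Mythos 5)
Your setup (unique $T$-homogeneous lifts $\wtd f,\wtd g$, reduction to valuations along prime divisors) and your treatment of the divisors that meet $\Rep_{\bl}^{\circ}(\kllm)$ are sound and reproduce the first half of the paper's argument, which gets regularity of $s$ on $\Rep_{\bl}^{\circ}(\kllm)$ by exactly the rescaling $M=tN$, $N\in\Rep_{\bl}^1(\kllm)$, $s(M)=t^{\lambda_1-\lambda_2}s(N)$. But the proposal stops precisely where the hypothesis $\lambda_1-\lambda_2\geq 0$ has to do its work: the divisors $Z(\det a_n)$. You correctly diagnose why the rescaling fails at a generic point of $Z(\det a_n)$ (the factor $\tau_n$ acquires a pole, so $\tau\cdot\gamma$ is no longer an integral point of the non-proper slice $\Rep_{\bl}^1(\kllm)$ and regularity there controls nothing), but neither of your two proposed repairs is carried out, and you flag this yourself as ``the real obstacle.'' That is a genuine gap, not a routine verification left to the reader.

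The paper does not work at a generic point of a single $Z(\det a_n)$ at all. It passes instead to the deeper stratum $Z=\{M\mid M(a_n)=0,\ n\in[m]\}$, which is contained in \emph{every} $Z(\det a_n)$, and argues via the contraction $t\to 0$ in $s(tN)=t^{\lambda_1-\lambda_2}s(N)$ — this is where $\lambda_1-\lambda_2\geq 0$ enters — that $s$ remains regular along $Z$. Since the indeterminacy locus of $s$ is closed of pure codimension one and contained in $\bigcup_n Z(\det a_n)$ with each $Z(\det a_n)$ irreducible, any codimension-one component would be a full $Z(\det a_n)$ and hence would contain $Z$; so the indeterminacy locus has codimension $\geq 2$ and normality of $\Rep_{\bl}(\kllm)$ concludes. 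I will add that your unease about this step is well founded: with $s$ read as the ratio $\wtd f/\wtd g$ of the homogeneous lifts and $f,g$ arbitrary $T$-homogeneous functions, the weight inequality alone does not formally exclude a pole along $Z(\det a_n)$ (consider $f=x_{11}^2$ and $g=\det a_1$ restricted to $\Rep_{\bl}^1(\kllm)$ for $l=2$, where $\lambda_1=\lambda_2$), so the limit-along-$T$-orbits step is genuinely the delicate point. In the paper's actual applications $g$ is an irreducible semi-invariant whose zero locus is not contained in $\bigcup_n Z(\det a_n)$, and for those the conclusion already follows from the portion of the argument you did complete; but as a proof of the lemma as stated, your write-up is missing its decisive step.
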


\begin{proof} For any representation $M\in \Rep_{\bl}^\circ(\kllm)$, we can write $M$ as $tN$ for some $t\in (\mb{G}_{\rm m})^m$ and $N\in \Rep_{\bl}^1(\kllm)$.
	This is possible since we are working over algebraically closed fields.
	Then $s(M)=f(tN)/g(tN)=t^{\lambda_1-\lambda_2}f(N)/g(N)$.
	Since $f/g$ is regular on $\Rep_{\bl}^1(\kllm)$, we see that it is regular on $\Rep_{\bl}^\circ(\kllm)$.
Moreover, by taking the limit $t\to 0$ we see that $s$ is regular on $Z:=\left\{M\in\Rep_\bl(\kllm)\mid M(a_n)=0,\ n\in[m] \right\}$.
	
The locus of indeterminacy of $s$ is a closed set contained in a union of irreducible subvarieties of $\Rep_{\bl}(\kllm)$ defined by vanishing of central determinants.
We note that the set $Z$ is contained in every irreducible subvariety cut out by $\det a_n$.
We conclude that the locus of indeterminacy has codimension at least two.
Since $\Rep_{\bl}(\kllm)$ is normal, the function $s$ is in fact regular everywhere.
\end{proof}

\begin{definition} A vertex is called {\em edge} if it lies on an edge of a hive.
It is called {\em corner} if it is on a corner of a hive (there are 6 corner vertices on a hive).
\end{definition}

\begin{corollary} \label{C:exrellift1}
	The exchange relations \eqref{eq:exrel1}, \eqref{eq:exrel2}, and \eqref{eq:exrel3} are satisfied by the minimal lifts $\wtd{s}\ijn$ except at corner vertices $(_{l-1,0}^n)$ for each $n$, $(_{0,l-1}^n)$ for odd $n$, and $(_{l-1,1}^n)$ for even $n$.
\end{corollary}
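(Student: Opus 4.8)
The plan is to reduce the whole statement to a single regularity assertion and then prove that assertion by restricting to $\Rep_\bl^1(\kllm)$ and invoking Lemma~\ref{L:regquotient}. Fix an edge vertex $v=(_{i,j}^n)$ not among the excluded corners, and let $R(v)$ be the relevant relation among \eqref{eq:exrel1}, \eqref{eq:exrel2}, \eqref{eq:exrel3} (or a dual of the latter two). Replacing in the right-hand side of $R(v)$ every $s$ (resp. $\check s$) by its minimal lift produces two monomials $\wtd M_1(v),\wtd M_2(v)\in\SI_\bl(\kllm)$, and I set $\wtd s_v':=(\wtd M_1(v)+\wtd M_2(v))/\wtd s_v$, a priori only a rational function on $\Rep_\bl(\kllm)$. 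If $\wtd s_v'$ turns out to be regular, it is $\SL_\bl$-invariant (a ratio of invariants on a dense open set), so the lifted relation $\wtd s_v\,\wtd s_v'=\wtd M_1(v)+\wtd M_2(v)$ holds in $\SI_\bl(\kllm)$, which is exactly the claim. Hence everything reduces to regularity of $\wtd s_v'$ on $\Rep_\bl(\kllm)$.

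To get regularity on $\Rep_\bl^1(\kllm)$: a categorical-quotient computation as in the proof of Lemma~\ref{L:localdet} gives a quotient morphism $\Rep_\bl^1(\kllm)\to\zllm$, and under the induced restriction every central determinant becomes $1$, so by Definition~\ref{D:liftsijn} each minimal lift $\wtd s$ restricts to the corresponding $s$. Therefore $\wtd M_i(v)$ restricts to $M_i(v)$ and $\wtd s_v'$ restricts to $s_v':=(M_1(v)+M_2(v))/s_v$, which is regular on $\zllm$ because $R(v)$ (i.e. the exchange relations \eqref{eq:exrel1}--\eqref{eq:exrel3}) holds there. Since $s_v\not\equiv0$ on $\Rep_\bl^1(\kllm)$, this shows $\wtd s_v'$ is regular on $\Rep_\bl^1(\kllm)$. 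Now Lemma~\ref{L:regquotient}, applied with $g=\wtd s_v$ and $f=\wtd M_1(v)+\wtd M_2(v)$, upgrades this to regularity on all of $\Rep_\bl(\kllm)$ — provided $f$ is $T$-homogeneous and the $T$-weight of $f$ exceeds that of $g$ coordinatewise.

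Verifying these two weight conditions is the computational heart, and the place I expect the real (if routine) work to be. Using the explicit formula for $\lambda\ijn$ in Lemma~\ref{L:Tdegree} together with its dual analogue, one computes the $T$-weights $\lambda(\wtd M_1(v))$ and $\lambda(\wtd M_2(v))$ directly, splitting into the cases $n$ even and $n$ odd and according to the position of $v$ along its edge — in particular whether a neighbouring index hits $i+j=l$ or has a vanishing coordinate. I expect to find that for $v$ outside the listed corners the two weights coincide, so $f$ is $T$-homogeneous, and that their common value minus $\lambda\ijn$ lies in $\mb{Z}_{\ge0}^{m}$; then Lemma~\ref{L:regquotient} closes the argument, and with it the corollary. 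The only subtlety is keeping the even/odd and boundary subcases organized.

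Finally, the excluded corners are precisely where this breaks down. At $(_{l-1,0}^n)$ a neighbouring semi-invariant falls into the $j=0$ clause of Definition~\ref{D:liftsijn}, and at $(_{0,l-1}^n)$ (for odd $n$) and $(_{l-1,1}^n)$ (for even $n$) a neighbour falls into the $i+j=l$ clause; in each case the corresponding $\lambda$-weight jumps, the two lifted monomials $\wtd M_1(v),\wtd M_2(v)$ acquire different $T$-weights, $f$ fails to be $T$-homogeneous, and indeed the naive lift of $R(v)$ is false there. These vertices are therefore set aside and must be treated separately afterward with a corrected relation carrying an extra central-determinant factor.
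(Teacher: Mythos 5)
Your proposal is correct and follows essentially the same route as the paper: restrict to $\Rep_\bl^1(\kllm)$ where the minimal lifts coincide with the original $s\ijn$ and the exchange relation is already known on $\zllm$, then upgrade regularity of the quotient to all of $\Rep_\bl(\kllm)$ via Lemma~\ref{L:regquotient} after checking with Lemma~\ref{L:Tdegree} that the two lifted monomials share a common $\lambda$-weight dominating that of $\wtd{s}\ijn$. The paper carries out the weight check only for a representative interior case and one edge case and declares the rest "similar," so your deferral of the case-by-case computation matches its level of detail; your explicit justification of regularity on $\Rep_\bl^1$ is in fact a point the paper leaves implicit.
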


\begin{proof} If $v=(\ijn)$ is a non-edge vertex, then we have the exchange relation
	$$s\ijn {s\ijn}' = s_{i,j-1}^ns_{i+1,j}^ns_{i-1,j+1}^n+s_{i,j+1}^ns_{i-1,j}^ns_{i+1,j-1}^n.$$
We first check using Lemma \ref{L:Tdegree} that $\wtd{s}_{i,j-1}^n\wtd{s}_{i+1,j}^n\wtd{s}_{i-1,j+1}^n$ and $\wtd{s}_{i,j+1}^n\wtd{s}_{i-1,j}^n\wtd{s}_{i+1,j-1}^n$ have the same $\lambda$-weight equal to $3i\e_n+\sum_{p\in [n-1]} \left([\epsilon(p)]_+3j\e_p + [-\epsilon(p)]_+3(l-j)\e_p \right)$, which is greater than the $\lambda$-weight of 
$\wtd{s}\ijn$. So by Lemma \ref{L:regquotient}, there is some ${s\ijn}'$ regular on $\Rep_\bl(\kllm)$.

If $v$ is an edge vertex, then we have three kinds of exchange relations \eqref{eq:exrel1}--\eqref{eq:exrel3}.
The verification is similar. For example, for \eqref{eq:exrel2} we check using Lemma \ref{L:Tdegree} that for $j<l-1$, $\wtd{s}_{1,j}^n \wtd{s}_{1,j-1}^{n+1}$ and $\wtd{s}_{1,j-1}^{n} \wtd{s}_{1,j}^{n+1}$ have the same $\lambda$-weight greater than that of $\wtd{s}_{0,j}^n$.
\end{proof}

\noindent The exchange relations cannot be trivially lifted at the three kinds of corner vertices.
For example, when $j=l-1$ the definition of $\wtd{s}_{1,j}^n$ in \eqref{eq:exrel2} has a different formula (see Definition \ref{D:liftsijn}).
We need to make use of the $m$ new frozen variables to repair them. Here is the recipe.

\begin{corollary} \label{C:exrellift2} We have exchange relations in $\SI_\bl{\kllm}$ at those corner vertices after we add \begin{enumerate}
\item  an arrow from $(_{l-1,0}^n)$ to $n$ for each $n$;
\item  an arrow from $n$ to $(_{l-1,1}^{n\veebar})$ for even $n$;
\item  an arrow from $n$ to $(_{0,l-1}^{n\veebar})$ for odd $n\geq 3$.
\end{enumerate}
\end{corollary}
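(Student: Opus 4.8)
The plan is to run the argument of Corollary \ref{C:exrellift1} once more, but now allowing the new frozen variables $\det a_n$ to absorb the discrepancy between $\wtd s\ijn$ and $s\ijn$.

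First I would recall that the exchange relations \eqref{eq:exrel1}--\eqref{eq:exrel3} hold in $k[\zllm]$, and that $k[\zllm]=k[\Rep_\bl^1(\kllm)]^{\SL_\bl}$ (the argument behind Lemma \ref{L:localdet}), so those relations, and the regularity of the corresponding functions ${s\ijn}'$, are valid on the closed subset $\Rep_\bl^1(\kllm)$. On $\Rep_\bl^1(\kllm)$ every central determinant is identically $1$, so there $\wtd s\ijn$ agrees with $s\ijn$. Substituting the minimal lifts into \eqref{eq:exrel1}--\eqref{eq:exrel3} at one of the three corner vertices $v=(\ijn)$ and multiplying through by $\wtd s\ijn$ therefore produces an identity $\wtd s\ijn\,{s\ijn}'=M_1+M_2$ which \emph{over $\Rep_\bl^1(\kllm)$} still has ${s\ijn}'$ regular (it coincides there with the $k[\zllm]$ exchange function), but whose two monomials $M_1,M_2$ need no longer be $T$-homogeneous of a common weight.

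The point is to compute, via Lemma \ref{L:Tdegree} and the explicit formula for $\lambda\ijn$, exactly how the three clauses of Definition \ref{D:liftsijn} ($j=0$; $1\le i+j<l$ or $n$ even; $i+j=l$ and $n$ odd) differ: passing from one vertex of a hive to an adjacent one can change which clause applies, and each such change multiplies the lift by a factor whose $T$-weight is an integer multiple of $l\e_n=\deg(\det a_n)$ (and whose $\GL_\bl$-weight is the same multiple of $\e_l-\e_{-l}$). A direct check at each of the three corner types, and for each parity of $n$, shows that the defect between $M_1$ and $M_2$ is precisely one factor of $\det a_n$. Multiplying the lighter monomial by $\det a_n$ — that is, by the variable $x_n$ attached to the new frozen vertex $n$ — restores $T$-homogeneity; by Lemma \ref{L:Tdegree} once more the common weight of the corrected monomials dominates $\lambda\ijn$ (their difference is a non-negative vector), and the parallel $\sigma$-weight check (using that $\det a_n$ has $\GL_\bl$-weight $\e_l-\e_{-l}$) shows the corrected right-hand side is $\GL_\bl$-semi-invariant. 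Tracking, for each corner type and parity, which of $M_1,M_2$ is the ``$\prod_{v\to u}$'' monomial and which is the ``$\prod_{u\to w}$'' one, the requirement that $\det a_n$ land in the deficient monomial is exactly the prescription of the three new arrows $(_{l-1,0}^n)\to n$, $\ n\to(_{l-1,1}^{n\veebar})$ for even $n$, and $\ n\to(_{0,l-1}^{n\veebar})$ for odd $n\ge 3$.

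Finally I would set ${s\ijn}'$ equal to the corrected right-hand side divided by $\wtd s\ijn$ and invoke Lemma \ref{L:regquotient}: numerator and denominator are $T$-homogeneous with the weight of the numerator $\ge$ that of the denominator, and on $\Rep_\bl^1(\kllm)$ (where $\det a_n\equiv 1$) the quotient coincides with the regular function ${s\ijn}'$ furnished by the $k[\zllm]$ exchange relation; hence ${s\ijn}'$ is regular on all of $\Rep_\bl(\kllm)$, and being a regular $\GL_\bl$-semi-invariant it lies in $\SI_\bl(\kllm)$, with $\sigma$-weight the one dictated by the weight configuration of the enlarged quiver (Definition \ref{D:wtconfig}). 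The one genuinely laborious point is this case analysis: correctly pinning down, among the three corner types and the two parities of $n$, which exchange monomial is the ``light'' one, so that each new arrow is added in the right direction.
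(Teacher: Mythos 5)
Your proposal follows essentially the same route as the paper: substitute the minimal lifts $\wtd{s}\ijn$ into the exchange relations \eqref{eq:exrel1}--\eqref{eq:exrel3} at the three corner types, observe that the two exchange monomials differ in $\lambda$-weight by exactly $l\e_n=\deg(\det a_n)$, insert $\det a_n$ into the deficient monomial (which dictates the direction of the new arrow), and conclude via Lemma \ref{L:regquotient} since the common $\lambda$-weight dominates that of $\wtd{s}\ijn$. The only part you defer --- the explicit identification, for each corner type, of which monomial receives the factor $\det a_n$ --- is precisely the short case-by-case check the paper records, so the argument is complete modulo that routine computation (note only that the individual correction factors in Definition \ref{D:liftsijn} are products of $\det a_i$ for even $i<n$, not powers of $\det a_n$; it is the \emph{net} defect between the two monomials that equals $l\e_n$).
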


\begin{proof} (1) corresponds to the exchange relation \eqref{eq:exrel1} when $i=l-1$.
We recall that $s_{l,0}=1$ by definition. 
By lemma \ref{L:regquotient} and \eqref{eq:exrel1} it suffices to check that $\wtd{s}_{l-1,1}\wtd{\check{s}}_{l-1,1}\wtd{s}_{l-2,0}$ and $\wtd{s}_{l-2,1}\wtd{\check{s}}_{l-2,1}\det(a_{n})$ have the same $\lambda$-weight, which is greater than that of $\wtd{s}_{l-1,0}^n$. We note that $\lambda$-weight of $\det a_n$ is $l\e_n$.

(2) corresponds to the exchange relation \eqref{eq:exrel2} when $j=l-1$.
We check that $\wtd{s}_{1,l-1}^n \wtd{s}_{1,l-2}^{n+1}\det(a_{n})$ and $\wtd{s}_{1,l-2}^{n} \wtd{s}_{1,l-1}^{n+1}$ have the same $\lambda$-weight, which is greater than that of $\wtd{s}_{0,l-1}^n$.

(3) corresponds to the exchange relation \eqref{eq:exrel3} when $i=l-1$ and $j=1$.
We check that $\wtd{s}_{l-2,1}^{n} \wtd{s}_{l-1,0}^{n+1}\det(a_n)$ and $\wtd{s}_{l-1,0}^n \wtd{s}_{l-2,1}^{n+1}$ have the same $\lambda$-weight, which is greater than that of $\wtd{s}_{l-1,1}^n$.
\end{proof}
\noindent Here is picture for $l=4$. The left one is for even $n$ and the right one is for odd $n\geq 3$.
We omit $n$ in the labels of vertices and write $(i,\check{j})$ for $(_{i,j}^{n\vee})$.
\begin{figure}[!h]
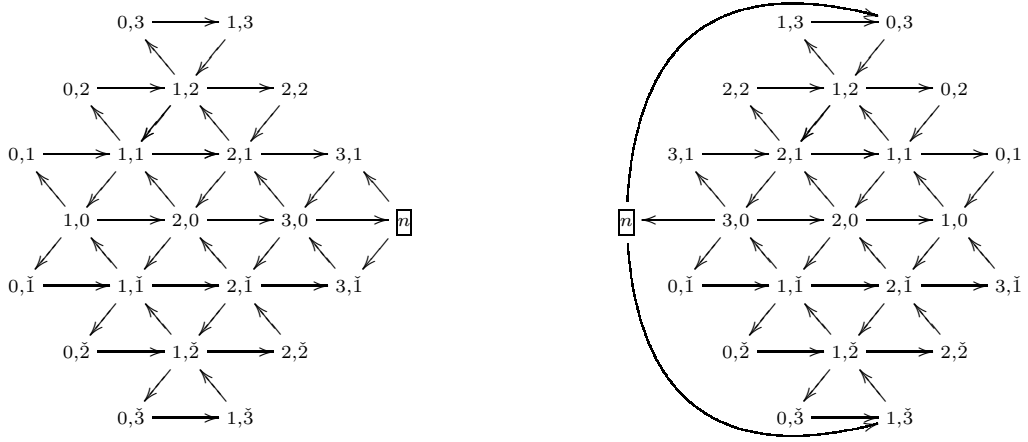
 $\diamondfourtildeeven \hspace{1in} \diamondfourtildeodd$ \caption{The lifted even diamond (left) and odd diamond (right)} \label{f:diamondtilde} \end{figure}
\noindent We also provide a (half) disk-view for $l=m=4$. The two frozen vertices $\fr{3}$ in the picture should be identified.
\begin{figure}[!h]
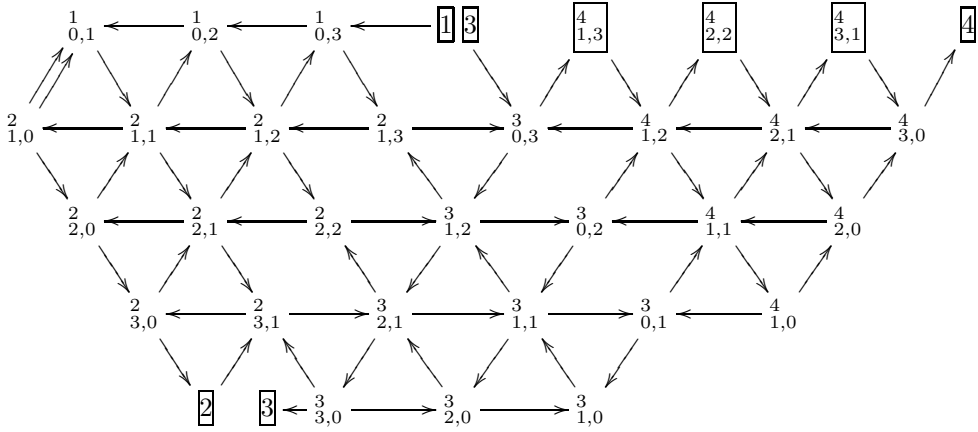
 $\hspace{-.3in} \ckronfourfourfourp$ \caption{A half of $\wtd{\Diamond}_{4}^4$} \label{f:diamond444p} \end{figure}

\begin{corollary} \label{C:RCA} The semi-invariant ring $\SI_\bl(\kllm)$ is a subalgebra of the upper cluster algebra $\uca(\wtd{\Diamond}_l^m,\wtd{\b{s}}_l^m)$.
\end{corollary}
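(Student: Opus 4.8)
The plan is to deduce the stated inclusion from the localized identity
$\SI_\bl(\kllm)_{D_{\rm c}} = \uca(\wtd{\Diamond}_l^m,\wtd{\b s}_l^m)_{D_{\rm c}}$, where $D_{\rm c}=\prod_{n=1}^m\det a_n$ is the product of the $m$ newly added frozen variables. First I would record the preliminaries: the extended cluster $\wtd{\b s}_l^m\cup\{\det a_1,\dots,\det a_m\}$ is algebraically independent of the expected cardinality, since $\b s_l^m$ is algebraically independent by Lemma \ref{L:indep}, each $\wtd s\ijn$ equals $s\ijn$ times a monomial in the $\det a_n$ by Definition \ref{D:liftsijn}, and $\operatorname{trdeg}_k\SI_\bl(\kllm)=\dim\zllm+m=|\b s_l^m|+m$; hence the extended cluster is a transcendence basis, $\uca(\wtd{\Diamond}_l^m,\wtd{\b s}_l^m)$ is a genuine graded upper cluster algebra, and $\operatorname{Frac}\SI_\bl(\kllm)=\operatorname{Frac}\uca(\wtd{\Diamond}_l^m,\wtd{\b s}_l^m)$, so everything below takes place inside this common field.

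Next I would prove $\SI_\bl(\kllm)_{D_{\rm c}}=\uca(\wtd{\Diamond}_l^m,\wtd{\b s}_l^m)_{D_{\rm c}}$. By Lemma \ref{L:localdet} the left-hand side is $k[\zllm][(\det a_1)^{\pm1},\dots,(\det a_m)^{\pm1}]$, and $k[\zllm]\cong\uca(\Diamond_l^m,\b s_l^m)$ by Proposition \ref{P:Zllmcluster}. On the cluster side, once all $\det a_n$ are inverted each $\wtd s\ijn$ becomes $s\ijn$ up to a unit, while Corollaries \ref{C:exrellift1} and \ref{C:exrellift2} say that the exchange relations attached to $(\wtd{\Diamond}_l^m,\wtd{\b s}_l^m)$ reduce, modulo these units, exactly to the exchange relations of $(\Diamond_l^m,\b s_l^m)$. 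Thus the two seed structures are identified after inverting $D_{\rm c}$, and $\uca(\wtd{\Diamond}_l^m,\wtd{\b s}_l^m)_{D_{\rm c}}=\uca(\Diamond_l^m,\b s_l^m)[(\det a_1)^{\pm1},\dots,(\det a_m)^{\pm1}]=\SI_\bl(\kllm)_{D_{\rm c}}$. This step is essentially bookkeeping on top of Section \ref{ss:liftseed}.

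The remaining step — the descent, which I expect to be the main obstacle — goes as follows. Fix $f\in\SI_\bl(\kllm)$; by the previous step $f\in\uca(\wtd{\Diamond}_l^m,\wtd{\b s}_l^m)_{D_{\rm c}}$, so it is enough to show $f\in\mc{L}_{\wtd{\Diamond}_l^m}(\b x^\dag)$ for every seed $(\Delta^\dag,\b x^\dag)\sim(\wtd{\Diamond}_l^m,\wtd{\b s}_l^m)$, whence $f$ lies in the intersection defining $\uca(\wtd{\Diamond}_l^m,\wtd{\b s}_l^m)$. Localizing at $D_{\rm c}$ identifies $\mc{L}_{\wtd{\Diamond}_l^m}(\b x^\dag)_{D_{\rm c}}$ with $\mc{L}_{\Diamond_l^m}(\b s^\dag)[(\det a_n)^{\pm1}]$ for a corresponding $\Diamond_l^m$-seed $\b s^\dag$, and $\SI_\bl(\kllm)\subseteq\SI_\bl(\kllm)_{D_{\rm c}}=\uca(\Diamond_l^m,\b s_l^m)[(\det a_n)^{\pm1}]\subseteq\mc{L}_{\Diamond_l^m}(\b s^\dag)[(\det a_n)^{\pm1}]$; hence the only thing that can go wrong is a negative power of some $\det a_n$ in the $\b x^\dag$-expansion of $f$. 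To exclude this I would use that $R:=k[\mc{A}\times\Hom(V_{-l},V_l)^m\times\mch{A}]$, being the ring of $\SL_{\gamma_l}\times\SL_{\gamma_l}$-invariants on an affine space, is a factorial Noetherian normal domain by \cite{PV}, in which $\det a_1,\dots,\det a_m$ are pairwise non-associate primes, so $R=R_{D_{\rm c}}\cap\bigcap_{n}R_{(\det a_n)}$; taking $\SL_l\times\SL_l$-invariants and invoking Lemma \ref{L:locinv} yields $\SI_\bl(\kllm)=\SI_\bl(\kllm)_{D_{\rm c}}\cap\bigcap_n\{\bar v_n\geq0\}$, where $\bar v_n$ is the valuation on $\operatorname{Frac}\SI_\bl(\kllm)$ restricted from the $(\det a_n)$-adic valuation of $R$. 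The delicate point, and where I would spend the most care, is the identification of $\bar v_n$ with the $(\det a_n)$-adic valuation of each Laurent ring $\mc{L}_{\wtd{\Diamond}_l^m}(\b x^\dag)$: both are discrete valuations with uniformizer $\det a_n$, and all cluster variables and old frozen variables — being (lifts of) Schofield's semi-invariants, hence regular on $\Rep_\bl(\kllm)$ and not identically zero on the prime divisor $\{\det a_n=0\}$ — are units along that divisor, which forces the two valuation rings to coincide. Granting this, $\bar v_n(f)\geq0$ for all $n$ gives $f\in\mc{L}_{\wtd{\Diamond}_l^m}(\b x^\dag)_{D_{\rm c}}\cap\bigcap_n\{\bar v_n\geq0\}=\mc{L}_{\wtd{\Diamond}_l^m}(\b x^\dag)$, and intersecting over all seeds yields $f\in\uca(\wtd{\Diamond}_l^m,\wtd{\b s}_l^m)$, completing the inclusion $\SI_\bl(\kllm)\subseteq\uca(\wtd{\Diamond}_l^m,\wtd{\b s}_l^m)$.
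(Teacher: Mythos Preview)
The statement as printed has the inclusion reversed. The paper's proof of Corollary~\ref{C:RCA} actually shows $\uca(\wtd{\Diamond}_l^m,\wtd{\b{s}}_l^m)\subseteq\SI_\bl(\kllm)$: one checks that $(\wtd{\Diamond}_l^m,\wtd{\b{s}}_l^m)$ is a CR1 seed in the UFD $\SI_\bl(\kllm)$ (irreducibility of each $\wtd{s}\ijn$ from Corollary~\ref{C:irreducible}, regularity of the adjacent $(\wtd{s}\ijn)'$ from Corollaries~\ref{C:exrellift1}--\ref{C:exrellift2}) and applies Lemma~\ref{L:RCA}. The logic of Theorem~\ref{T:equal} confirms this reading: it is deduced from Corollary~\ref{C:RCA} together with Lemma~\ref{L:ACR}, and since Lemma~\ref{L:ACR} is the inclusion $\SI\subseteq\uca$, Corollary~\ref{C:RCA} must supply $\uca\subseteq\SI$. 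You have instead attacked the inclusion as literally stated, which is the content of Lemma~\ref{L:ACR}.

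Read as an attempt at Lemma~\ref{L:ACR}, your argument has a genuine gap at the step you yourself flag as delicate. For an arbitrary mutated seed $\b{x}^\dag$ you assert that every cluster variable is a lift of a Schofield semi-invariant, hence regular on $\Rep_\bl(\kllm)$ and nonzero on $\{\det a_n=0\}$. Neither clause is available. Only the \emph{initial} cluster variables are Schofield semi-invariants by construction; knowing that all mutated cluster variables lie in $\SI_\bl(\kllm)$ (equivalently, are regular on $\Rep_\bl(\kllm)$) is precisely the inclusion $\uca\subseteq\SI$, so you are invoking the other direction. And their non-vanishing on $\mc{D}_k=\{\det a_k=0\}$ is exactly the hard input the paper supplies in the proof of Lemma~\ref{L:ACR}: one shows that $\wtd{\b{s}}_l^m\setminus\{s_k\}$ remains algebraically independent when restricted to $\mc{D}_k$ (via a degenerate version of the embedding lemma, Lemma~\ref{L:embeddingk}), and then propagates this to all mutated seeds through the exchange relations. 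Your valuation framework is compatible with that argument, but it does not bypass the independence-on-$\mc{D}_k$ statement; without it the identification of $\bar v_n$ with the $(\det a_n)$-adic valuation on $\mc{L}_{\wtd{\Diamond}_l^m}(\b{x}^\dag)$ is unjustified.
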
 
\begin{proof} Since $\b{s}_l^m$ is algebraically independent, we see from Lemma \ref{L:localdet} that $\wtd{\b{s}}_l^m$ is also algebraically independent. We check the conditions in Lemma \ref{L:RCA}.
The condition (1) is verified by Corollary \ref{C:exrellift1} and \ref{C:exrellift2}.
The condition (2) almost follows from Corollary \ref{C:irreducible}.
Indeed, since $\SI_\bl(\kllm)$ is a UFD, it suffices to show that each $\wtd{s}\ijn$ is not a factor of $(\wtd{s}\ijn)'$.
But this is clear from the difference of their $\wtd{\sigma}$-weights.
\end{proof}

\subsection{Cluster structure of $\SI_\bl(\kllm)$}
We first recall a lemma in \cite{FW}.
We fix a set of frozen vertices $\bs{e}$ of $\wtd{\Delta}$.
Let $\Delta$ be the full (ice) subquiver of $\wtd{\Delta}$ obtained by forgetting $\bs{e}$.
It follows from \cite[Lemma 2.2]{FW} and the remark afterwards that

\begin{lemma} \label{L:frext=}  Suppose that $B(\Delta)$ has full rank.
Then for any seed $(\wtd{\Delta},\wtd{\b{x}})$, there is some $\b{x}$ such that
	$$\uca(\wtd{\Delta},\wtd{\b{x}})_{\wtd{\b{x}}(\bs{e})}=\uca(\Delta,\b{x})\otimes k[\wtd{\b{x}}(\bs{e})^{\pm 1}].$$
\end{lemma}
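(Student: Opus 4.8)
The plan is to reduce to the case where $\bs{e}$ is a single frozen vertex $e$ --- which is \cite[Lemma 2.2]{FW} --- and then induct on $|\bs{e}|$. For one frozen vertex I would proceed as follows. After inverting $\wtd{x}_e$, look for a new extended cluster $\b{x}$ whose variable at a vertex $v$ of $\Delta$ is $x_v=\wtd{x}_v\,\wtd{x}_e^{-a_v}$ for suitable integers $a_v$, chosen so that this monomial rescaling absorbs every factor of $\wtd{x}_e$ occurring in the exchange relations of $(\wtd{\Delta},\wtd{\b{x}})$. Matching the exchange relation at a mutable vertex $u$ of $\wtd{\Delta}$ with the one it should become in $\Delta$, the vector $(a_v)_{v\in\Delta_0}$ is forced to satisfy the linear system $\sum_{v\in\Delta_0} b_{uv}\,a_v = -\,b_{ue}$, one equation per mutable $u$, where the $b$'s are the entries of $B(\wtd{\Delta})$. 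The coefficient matrix of this system is exactly $B(\Delta)$, so the hypothesis that $B(\Delta)$ has full (row) rank is precisely what makes it solvable; this is the only place the hypothesis enters.

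With such exponents fixed, I would then check that the substitution $\wtd{x}_v\mapsto x_v$ turns every seed in the mutation class of $(\wtd{\Delta},\wtd{\b{x}})$ into the one obtained from $(\Delta,\b{x})$ by the same sequence of mutations at mutable vertices. This uses that deleting $e$ commutes with mutation at a mutable vertex, and that the absorbing exponents transform along a mutation in the controlled (tropical) way dictated by the exchange matrix. A point to verify is that the full-rank condition propagates: under mutation at a mutable vertex, the column-submatrix $B(\Delta)$ of $B(\wtd{\Delta})$ gets multiplied on both sides by invertible integer matrices, so its rank is unchanged, and the rescaling is well defined at every seed of the class. Granting this, for each seed $(\wtd{\Delta}^{\dagger},\wtd{\b{x}}^{\dagger})$ one gets $\mc{L}_{\wtd{\Delta}}(\wtd{\b{x}}^{\dagger})_{\wtd{x}_e} = \mc{L}_{\Delta}(\b{x}^{\dagger})\otimes k[\wtd{x}_e^{\pm 1}]$; and since localizing at $\wtd{x}_e$ commutes with the intersection that defines the upper cluster algebra, intersecting over the class yields $\uca(\wtd{\Delta},\wtd{\b{x}})_{\wtd{x}_e} = \uca(\Delta',\b{x})\otimes k[\wtd{x}_e^{\pm 1}]$ with $\Delta' = \wtd{\Delta}\setminus\{e\}$. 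This is the content of \cite[Lemma 2.2]{FW} and the remark following it.

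For the general statement I would induct on $|\bs{e}|$, peeling off one vertex $e_1\in\bs{e}$ at a time. Since $B(\Delta)$ is a column-submatrix of $B(\wtd{\Delta}\setminus\{e_1\})$, the latter again has full rank, so the inductive hypothesis applies to the peeled quiver with the smaller frozen set $\bs{e}\setminus\{e_1\}$; one arranges the first rescaling so that it fixes the variables at $\bs{e}\setminus\{e_1\}$ (i.e.\ $a_v=0$ there), and tensoring the inductive output with $k[\wtd{x}_{e_1}^{\pm 1}]$ closes the induction. The step I expect to be the real obstacle is the integrality bookkeeping behind the first paragraph: full rank of $B(\Delta)$ over $\mathbb{Q}$ only guarantees \emph{rational} solutions of the defining linear systems, so one must show --- as \cite[Lemma 2.2]{FW} does --- that the frozen columns of $B(\wtd{\Delta})$ genuinely lie in the integral column span of $B(\Delta)$, and that this persists uniformly across the mutation class. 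Everything else is a routine, if slightly lengthy, compatibility check between monomial rescaling and seed mutation.
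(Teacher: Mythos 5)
The paper itself offers no proof of this lemma: it is imported wholesale from \cite[Lemma 2.2]{FW} and the remark following it, and the paper explicitly declines even to construct $\b{x}$. So your proposal is really a reconstruction of that external proof, and its architecture --- peel off one frozen vertex $e$ at a time, rescale each $\wtd{x}_v$ by a power of $\wtd{x}_e$ so that the exchange relations of $\wtd{\Delta}$ collapse to those of $\Delta$, observe that the exponents must solve a linear system with coefficient matrix $B(\Delta)$ and right-hand side the $e$-column of $B(\wtd{\Delta})$, then check compatibility with mutation before intersecting over the mutation class --- is the right one and matches the cited source.

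The genuine gap sits exactly where you flag it and then wave it away. Full rank of $B(\Delta)$ only produces a \emph{rational} solution of $\sum_v b_{uv}a_v=-b_{ue}$; the integrality you need says that the $e$-column of $B(\wtd{\Delta})$ lies in the integral column span $B(\Delta)\mathbb{Z}^{\Delta_0}$, and this is \emph{not} a consequence of the stated hypothesis, so it cannot ``be shown'' as you propose --- it is an independent condition. Concretely: let $\Delta$ have one mutable vertex $u$ and one frozen vertex $f$ joined by a double arrow $u\to f$, so $B(\Delta)=\sm{0 & 2}$ has full rank, and attach $e$ by a single arrow $u\to e$. The system becomes $2a_f=\pm 1$, with no integer solution; and indeed the two sides of the asserted equality are not even abstractly isomorphic here, since $\uca(\wtd{\Delta},\wtd{\b{x}})_{\wtd{x}_e}=k[x_u,x_u',x_f,x_e^{\pm1}]/(x_ux_u'-1-x_f^2x_e)$ is factorial (Nagata's criterion applied to the prime element $x_u$), while $\uca(\Delta,\b{x})\otimes k[x_e^{\pm1}]=k[y_u,y_u',y_f,x_e^{\pm1}]/(y_uy_u'-1-y_f^2)$ has divisor class group $\mathbb{Z}$. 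So the lemma is being quoted with an implicit hypothesis carried over from \cite{FW}, and a self-contained proof along your lines must both state the integral-span condition and verify it for the pair $(\wtd{\Diamond}_l^m,\Diamond_l^m)$ where the lemma is actually applied in Corollary \ref{C:SIcluster}; Lemma \ref{L:fullrank} alone does not do this. Your remaining steps (localization commuting with the defining intersection, invariance of the rank condition under mutation, induction on $|\bs{e}|$) are standard and fine modulo routine verification, but as written the proof does not close at the integrality step.
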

\noindent The cluster $\b{x}$ can be explicitly constructed as in \cite{FW} but we do not need it here.

The next lemma can be easily proved by \cite[Lemma 3.3 and 3.4]{FW}.
\begin{lemma} \label{L:fullrank} The $B$-matrix of $\Diamond_l^m$ has full rank.	
\end{lemma}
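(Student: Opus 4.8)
The plan is to reduce the claim to the corresponding statements about the building blocks --- the hive quiver $\Delta_l$ and its gluings --- which are already recorded in \cite{FW}. First I would recall how $\Diamond_l^m$ is assembled: by the construction in Section \ref{S:CSZ} it is $\Diamond_l(\mb{T}_m)$, obtained by gluing $2(m-1)$ copies of the hive quiver $\Delta_l$ according to the triangulation $\mb{T}_m$. The two basic operations producing $\Diamond_l^m$ from a single hive quiver are \emph{consistent} and \emph{non-consistent} gluing of two hives along an edge (Figure \ref{f:Pair}), iterated along the diamonds $\t_n$ and their duals $\check\t_n$. The key inputs from \cite{FW} are: (i) the $B$-matrix of the single hive quiver $\Delta_l$ has full rank, and (ii) the full-rank property is preserved under the gluing operations used to build triangulated quivers $\Diamond_l(\mb{T})$ --- this is exactly the content of \cite[Lemma 3.3 and 3.4]{FW}, which track the mutation sequences $\bs\mu^{\t,\e}$ and the behavior of $B$-matrices under cutting and regluing.

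The main step is to set up the reduction carefully. I would write $\Diamond_l^m = \Diamond_l(\mb{T}_m)$ and argue by induction on $m$. The base case $m=2$ is a single diamond $\Diamond_l$, whose $B$-matrix is obtained from that of $k[X_{4,0}]$ (equivalently, from gluing two hive quivers of size $l$) by the twist $\bs\mu^{\t,\e}$; since mutation preserves the rank of the $B$-matrix and the $k[X_{4,0}]$ quiver has full-rank $B$-matrix by \cite{FW}, the base case follows. For the inductive step, passing from $\mb{T}_{m-1}$ to $\mb{T}_m$ amounts to adjoining the pair of triangles $\t_m,\check\t_m$ glued along a diagonal edge to the existing configuration; each such gluing is one of the two operations in Figure \ref{f:Pair}, and \cite[Lemma 3.3, 3.4]{FW} guarantees that full rank is inherited (the newly added columns and rows, coming from the interior vertices of the two new hives together with the new diagonal identifications, contribute a block that is itself of full rank, so the block-triangular structure of the enlarged $B$-matrix forces full rank). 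Concretely one shows that in a suitable ordering of the vertices the $B$-matrix of $\Diamond_l(\mb{T}_m)$ is block lower-triangular with the $B$-matrix of $\Diamond_l(\mb{T}_{m-1})$ and the $B$-matrix of a single additional diamond $\Diamond_l$ along the diagonal blocks, each of full rank by induction and the base case.

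The one point requiring genuine care --- and the step I expect to be the real obstacle --- is the bookkeeping of the \emph{identified edge vertices}. When two hives are glued, the frozen vertices along the common edge get unfrozen and identified, and extra (red) arrows are introduced; one must check that this identification and the extra arrows do not create linear dependencies among the rows of $B$. This is where invoking \cite[Lemma 3.4]{FW} is essential: it is precisely the statement that the mutation sequences realizing the gluings, and hence the resulting $B$-matrices, preserve full rank, so no ad hoc rank computation at the glued edges is needed. Once that is in place, the induction closes and the lemma follows. (One should also note that the $m$ extra frozen vertices added in Section \ref{ss:liftseed} to pass to $\wtd\Diamond_l^m$ are irrelevant here, since $B(\Diamond_l^m)$ refers to the quiver before that frozen extension; the full-rank statement for $B(\Diamond_l^m)$ is exactly what Lemma \ref{L:frext=} needs as its hypothesis in the next section.)
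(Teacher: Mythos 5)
Your proposal lands on essentially the same argument as the paper, whose entire proof of this lemma is the one-line citation of \cite[Lemma 3.3 and 3.4]{FW}: reducing the claim to the full-rank statements for (glued) hive quivers established there is exactly what is intended, and your remark that the extra $m$ frozen vertices of $\wtd{\Diamond}_l^m$ are irrelevant here is also correct. One caveat on your fleshed-out induction: the assertion that $B(\Diamond_l(\mb{T}_m))$ is block lower-triangular with $B(\Diamond_l(\mb{T}_{m-1}))$ and $B(\Diamond_l)$ as diagonal blocks is not literally true --- the two pieces share the identified edge vertices, which moreover pass from frozen to mutable upon gluing (so the corresponding rows belong to neither constituent $B$-matrix), and the new diamond $\t_m,\check{\t}_m$ is attached along two edges, not one; as you yourself anticipate, the rank bookkeeping at the glued edges must genuinely be delegated to \cite[Lemma 3.4]{FW} rather than to a naive block decomposition.
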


Recall that $D_{\rm c}=\prod_{n=1}^m {s}_n$ is the product of central determinants. 
\begin{corollary} \label{C:SIcluster} We have an isomorphism 
	$\SI_\bl(\kllm)_{D_{\rm c}}\cong \uca(\wtd{\Diamond}_l^m,\wtd{\b{s}}_l^m)_{D_{\rm c}}$.
\end{corollary}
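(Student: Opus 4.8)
The plan is to prove the stated isomorphism by showing the two containments
$$\SI_\bl(\kllm)_{D_{\rm c}} \subseteq \uca(\wtd{\Diamond}_l^m,\wtd{\b{s}}_l^m)_{D_{\rm c}} \subseteq \SI_\bl(\kllm)_{D_{\rm c}}$$
inside a common ambient field $\mc{F}$. The first is immediate from Corollary \ref{C:RCA}: since $D_{\rm c}=\prod_{n=1}^m s_n$ is a product of frozen cluster variables it lies in both algebras, so localizing the inclusion $\SI_\bl(\kllm)\subseteq\uca(\wtd{\Diamond}_l^m,\wtd{\b{s}}_l^m)$ at $D_{\rm c}$ preserves it. (One also notes here that $\wtd{\b{s}}_l^m\subseteq\SI_\bl(\kllm)$ is an algebraically independent set of cardinality equal to $\dim\SI_\bl(\kllm)$, by Lemma \ref{L:localdet} and Lemma \ref{L:indep}, so the two rings have the same fraction field $\mc{F}$ and everything lives inside $\mc{F}$.) The substance is the reverse containment.

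For the reverse containment I would peel off the $m$ adjoined frozen vertices. Let $\bs{e}\subset(\wtd{\Diamond}_l^m)_0$ be the $m$ frozen vertices labelled $1,\dots,m$ introduced in Section \ref{ss:liftseed}, so that $\wtd{\b{s}}_l^m(\bs{e})=\{s_1,\dots,s_m\}$ and the full subquiver of $\wtd{\Diamond}_l^m$ obtained by deleting $\bs{e}$ is exactly $\Diamond_l^m$. By Lemma \ref{L:fullrank} the $B$-matrix of $\Diamond_l^m$ has full rank, so Lemma \ref{L:frext=} applies and produces a cluster $\b{x}$ with
$$\uca(\wtd{\Diamond}_l^m,\wtd{\b{s}}_l^m)_{D_{\rm c}}=\uca(\wtd{\Diamond}_l^m,\wtd{\b{s}}_l^m)_{\wtd{\b{s}}_l^m(\bs{e})}=\uca(\Diamond_l^m,\b{x})\otimes_k k[s_1^{\pm1},\dots,s_m^{\pm1}].$$
One then identifies $\b{x}$ with $\b{s}_l^m$: the explicit description of $\b{x}$ in \cite{FW} writes each of its members as the corresponding entry of $\wtd{\b{s}}_l^m$ divided by a Laurent monomial in $s_1,\dots,s_m$ dictated by the $\bs{e}$-columns of $B(\wtd{\Diamond}_l^m)$, while Definition \ref{D:liftsijn} and Lemma \ref{L:wtds} exhibit each $\wtd{s}\ijn$ as $s\ijn$ times precisely such a monomial in the central determinants; the corner arrows added in Corollaries \ref{C:exrellift1} and \ref{C:exrellift2} were chosen exactly so that these two monomials coincide. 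Hence $\b{x}=\b{s}_l^m$, and Proposition \ref{P:Zllmcluster} gives $\uca(\Diamond_l^m,\b{x})=k[\zllm]$. Finally, under the splitting $\GL_l=G\rtimes\mb{G}_{\rm m}$ used in Lemma \ref{L:localdet} the determinant is the projection onto $\mb{G}_{\rm m}$, so $k[s_1^{\pm1},\dots,s_m^{\pm1}]=k[(\mb{G}_{\rm m})^m]$ and
$$\uca(\wtd{\Diamond}_l^m,\wtd{\b{s}}_l^m)_{D_{\rm c}}=k[\zllm]\otimes_k k[(\mb{G}_{\rm m})^m]=k[\zllm\times(\mb{G}_{\rm m})^m]=\SI_\bl(\kllm)_{D_{\rm c}},$$
the last step being Lemma \ref{L:localdet}. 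This gives equality, hence the isomorphism.

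The main obstacle is the identification $\b{x}=\b{s}_l^m$: Lemma \ref{L:frext=} only guarantees the existence of \emph{some} cluster $\b{x}$, and one must check that the frozen-monomial normalization from \cite{FW} agrees on the nose with the lift built into Definition \ref{D:liftsijn}. This is precisely where the careful bookkeeping of Section \ref{ss:liftseed} pays off — the three families of arrows to the vertices $1,\dots,m$ were put in exactly so that the lifted exchange relations \eqref{eq:exrel1}--\eqref{eq:exrel3} hold, which forces the two monomials to match. If one prefers to avoid spelling out $\b{x}$, an alternative is to argue by transcendence: since $s_1,\dots,s_m$ are algebraically independent over $k(\zllm)$, the already-established inclusion $k[\zllm]\otimes_k k[s_n^{\pm1}]\subseteq\uca(\Diamond_l^m,\b{x})\otimes_k k[s_n^{\pm1}]$ forces $k[\zllm]\subseteq\uca(\Diamond_l^m,\b{x})$, and one then only needs the opposite containment, which follows from $\b{x}\subset k(\zllm)$ together with the minimality of the lifts $\wtd{s}\ijn$ noted after Corollary \ref{C:irreducible}.
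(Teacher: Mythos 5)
Your core argument is the paper's: localize, apply Lemma \ref{L:localdet} and Proposition \ref{P:Zllmcluster} on one side, and Lemma \ref{L:frext=} (via Lemma \ref{L:fullrank}) on the other. But you make the argument harder than it needs to be, and the two extra steps you add are exactly where the trouble lies. First, the corollary only asserts an \emph{isomorphism}, so you do not need to identify the cluster $\b{x}$ produced by Lemma \ref{L:frext=} with $\b{s}_l^m$: two seeds on the same ice quiver with algebraically independent clusters have abstractly isomorphic upper cluster algebras, so $\uca(\Diamond_l^m,\b{x})\cong\uca(\Diamond_l^m,\b{s}_l^m)\cong k[\zllm]$ regardless of whether $\b{x}=\b{s}_l^m$. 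The paper deliberately sidesteps your identification (it writes $\b{t}_l^m$ and warns that it ``may not be the same as $\b{s}_l^m$''), and your justification for it --- that the frozen-monomial normalization of \cite{FW} matches Definition \ref{D:liftsijn} ``on the nose'' --- is asserted rather than verified; if you insist on an equality of subrings rather than an isomorphism, this is a genuine gap you would have to close by an explicit computation.

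Second, your ``first containment'' $\SI_\bl(\kllm)\subseteq\uca(\wtd{\Diamond}_l^m,\wtd{\b{s}}_l^m)$ is not available at this stage. Although the statement of Corollary \ref{C:RCA} reads that way, its proof runs through Lemma \ref{L:RCA}, which yields the opposite inclusion $\uca(\wtd{\Diamond}_l^m,\wtd{\b{s}}_l^m)\subseteq\SI_\bl(\kllm)$; the inclusion you quote is really Lemma \ref{L:ACR}, whose proof invokes the present corollary, so using it here is circular. Both defects vanish if you drop the identification and the first containment and simply read off the isomorphism from the two tensor-product decompositions
$\SI_\bl(\kllm)_{D_{\rm c}}=\uca(\Diamond_l^m,\b{s}_l^m)\otimes k[(\mb{G}_{\rm m})^m]$ and
$\uca(\wtd{\Diamond}_l^m,\wtd{\b{s}}_l^m)_{D_{\rm c}}=\uca(\Diamond_l^m,\b{t}_l^m)\otimes k[s_1^{\pm1},\dots,s_m^{\pm1}]$,
which is what the paper does.
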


\begin{proof} Recall from Lemma \ref{L:localdet} that 
	$\SI_\bl\big(\kllm\big)_{D_{\rm c}} = k\big[\zllm\times (\mb{G}_{\rm m})^m\big]$. 
	So by Proposition \ref{P:Zllmcluster} we have that
$$\SI_\bl\big(\kllm\big)_{D_c} =\uca(\Diamond_l^m,\b{s}_l^m) \otimes k[(\mb{G}_{\rm m})^m].$$
	On the other hand, we know from Lemma \ref{L:frext=} that 
$$\uca(\wtd{\Diamond}_l^m,\wtd{\b{s}}_l^m)_{D_c}=\uca({\Diamond}_l^m,{\b{t}}_l^m)\otimes k[{s}_1^{\pm 1},\dots,{s}_m^{\pm 1}].$$
Here we use the letter $t$ to indicate that the cluster $\b{t}_l^m$ may not be the same as $\b{s}_l^m$. 
But in any case we get our desired isomorphism.
\end{proof}

\begin{lemma} \label{L:ACR} We have that $\SI_\bl(\kllm)\subseteq \uca(\wtd{\Diamond}_l^m,\wtd{\b{s}}_l^m)$.
\end{lemma}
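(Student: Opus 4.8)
The plan is to deduce this from Corollaries \ref{C:RCA} and \ref{C:SIcluster} by a normality argument modeled on the proof of Proposition \ref{P:Zllmcluster}. Write $\uca:=\uca(\wtd{\Diamond}_l^m,\wtd{\b{s}}_l^m)$ and $\SI:=\SI_\bl(\kllm)$. The CR1 verification in the proof of Corollary \ref{C:RCA}, combined with Lemma \ref{L:RCA}, already gives $\uca\subseteq\SI$; counting the vertices of $\wtd{\Diamond}_l^m$ against $\dim\SI=\dim\zllm+m$ (Lemma \ref{L:localdet}) shows the two domains have equal transcendence degree over $k$, hence a common fraction field, so the inclusion is a regular birational morphism $\varphi\colon X:=\Spec\SI\to Y:=\Spec\uca$. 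By Corollary \ref{C:SIcluster}, $\varphi$ restricts to an isomorphism over the open set $\{D_{\rm c}\neq 0\}$, where $D_{\rm c}=\prod_{n=1}^m\det a_n$. Since every upper cluster algebra is normal, $\uca$ is a Krull domain and $\uca=\bigcap_{\mf q}\uca_{\mf q}$ over its height-one primes; thus it suffices to prove $\SI\subseteq\uca_{\mf q}$ for every height-one prime $\mf q\subseteq\uca$.

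If $D_{\rm c}\notin\mf q$ this is immediate, since $\SI\subseteq\SI_{D_{\rm c}}=\uca_{D_{\rm c}}\subseteq\uca_{\mf q}$. Suppose $D_{\rm c}\in\mf q$. I would first rule out the possibility that $\det a_n\in\mf q$ for \emph{all} $n$: as $\varphi$ is dominant between varieties of equal dimension, $\op{ht}(\mf P\cap\uca)\geq\op{ht}\mf P$ for every prime $\mf P$ of $\SI$, so such a $\mf q$ would be the contraction of a prime $\mf P$ of $\SI$ of height $\leq 1$ containing every $\det a_n$ — impossible, because $\det a_1,\dots,\det a_m$ form a regular sequence in the factorial ring $\SI$, whence $V(\det a_1,\dots,\det a_m)$ has codimension $m\geq 2$ in $X$. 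Hence some $\det a_n\notin\mf q$, and then $\uca_{\mf q}\supseteq\uca_{\det a_n}$, so it is enough to establish the one-central-map localization of Corollary \ref{C:SIcluster}, namely
\[
\SI_{\det a_n}=\uca_{\det a_n}\qquad\text{for each }n\in[m].
\]
Granting this, $\SI\subseteq\uca_{\det a_n}\subseteq\uca_{\mf q}$, which completes the proof and, together with $\uca\subseteq\SI$, also yields Theorem \ref{T:equal}.

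The main obstacle is this displayed identity. By the reductions of Lemma \ref{L:localdet} the ring $\SI_{\det a_n}$ is the coordinate ring of $Y^{(n)}\times\mb{G}_{\rm m}$, where $Y^{(n)}=(\mc{A}\times\Hom(V_{-l},V_l)^{n-1}\times G\times\Hom(V_{-l},V_l)^{m-n}\times\mch{A})/(G\times G)$ is an intermediate quotient in which only the $n$-th central map is required to lie in $G$. I would obtain the needed cluster description either by extending the triangulation picture of Section \ref{S:CSZ} to a ``mixed'' situation with one gauged diamond and $m-1$ ungauged central maps — thereby realizing $\SI_{\det a_n}$ as an upper cluster algebra on a localization of the lifted quiver $\wtd{\Diamond}_l^m$ and then running the embedding/normality argument of Lemma \ref{L:embedding} and Proposition \ref{P:Zllmcluster} verbatim — or, more economically, by exploiting the $\mf{S}_m$-symmetry permuting the central arrows $a_1,\dots,a_m$: this symmetry carries the seed $(\wtd{\Diamond}_l^m,\wtd{\b s}_l^m)$ to a mutation-equivalent one (reorder the chain of diamonds, cf.\ Lemma \ref{L:Tmflip} and \cite[Lemma 4.14]{FW}), so $\uca$ is $\mf{S}_m$-stable and it suffices to treat a single $n$, say $n=m$, where $Y^{(m)}$ differs from $\zllm$ only in its last factor. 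Everything downstream of the displayed identity is the formal Krull-domain/codimension bookkeeping already carried out for $k[\zllm]$, so I expect the whole difficulty to be concentrated in this enhancement of Corollary \ref{C:SIcluster}.
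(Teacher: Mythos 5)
Your reduction to height-one primes of $\uca:=\uca(\wtd{\Diamond}_l^m,\wtd{\b{s}}_l^m)$ is a reasonable skeleton, but it has two genuine gaps. First, the step ruling out a height-one prime $\mf{q}$ containing every $\det a_n$ assumes that $\mf{q}$ is the contraction of a prime of $\SI:=\SI_\bl(\kllm)$. For a birational inclusion of normal finitely generated domains this is false in general (e.g.\ $k[x,y]\subset k[x,y/x]$: the prime $(x)$ is not a contraction), and showing that $\mf{q}$ survives in $\SI$ is essentially equivalent to showing $\SI\subseteq\uca_{\mf{q}}$, i.e.\ to the very statement being proved at that prime. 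Second, and more seriously, all of the technical content is deferred to the unproved identity $\SI_{\det a_n}=\uca_{\det a_n}$, and neither sketch offered for it is sound as stated. The $\mf{S}_m$-symmetry argument in particular fails: the seed $\wtd{\b{s}}_l^m$ is built from the alternating products $g_{[n]}=g_1g_2^{-1}g_3\cdots$, so the cluster variables involve the inverses of the even-indexed central maps but not the odd-indexed ones, and a permutation of the arrows is not known to carry the seed to a mutation-equivalent one (Lemma \ref{L:Tmflip} only realizes the single flip sequence giving $w_0$). This odd/even asymmetry is precisely the delicate point.

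For comparison, the paper's proof avoids valuations entirely: given $s\in\SI$, Corollary \ref{C:SIcluster} writes $s$ as a Laurent polynomial in any cluster $\wtd{\b{t}}_l^m\sim\wtd{\b{s}}_l^m$, and one shows that no negative power of $s_k=\det a_k$ can occur because the remaining cluster variables restricted to the divisor $\mc{D}_k=\{\det M(a_k)=0\}$ remain algebraically independent; otherwise $s=f/(hs_k^d)$ with $s_k\nmid f$ would fail to be regular along $\mc{D}_k$. That algebraic independence is the true analogue of your ``one-central-map localization,'' and it is where the work lies: the paper proves it by extending the embedding Lemma \ref{L:embedding} to the degenerate locus $Z_{l,l}^{m,k}$ (Lemma \ref{L:embeddingk}), using Gauss decomposability in $\hat{H}$ for odd $k$ and passing to the $w_0$-flipped cluster of Lemma \ref{L:Tmflip} for even $k$. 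To salvage your route you would need to supply this same content for $\SI_{\det a_n}=\uca_{\det a_n}$, together with an independent argument that no height-one prime of $\uca$ contains all of $\det a_1,\dots,\det a_m$.
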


\begin{proof} 	We already have that $\SI_\bl(\kllm)_{D_{\rm c}}\cong \uca(\wtd{\Diamond}_l^m,\wtd{\b{s}}_l^m)_{D_{\rm c}}$.
	So every element in $\SI_\bl(\kllm)$ can be written as a Laurent polynomial $p$ in any $\wtd{\b{t}}_l^m\sim \wtd{\b{s}}_l^m$.
	We need to show that $p$ is polynomial in each $s_k:=\det a_k$. Let 
	$$\mc{D}_k := \{M\in \Rep_\bl(\kllm) \mid \det(M(a_k))=0 \}.$$
	We claim that it suffices to show that $\wtd{\b{s}}_l^m\setminus s_k$ is still algebraically independent when restricted to $\mc{D}_k$.
	If this is the case, then $(\wtd{\b{t}}_l^m\setminus s_k)\mid_{\mc{D}_k}$ is also algebraically independent.
	Suppose for the contradiction that there is some $s\in \SI_\bl(\kllm)$ which is not polynomial in $s_k$.
	Then we write $s$ as $s=f/(hs_k^d)$ where $f$ (resp. $h$) is a polynomial (resp. monomial) in $\wtd{\b{t}}_l^m$ such that $s_k \notdivides f$.
	By the algebraic independence of $(\wtd{\b{t}}_l^m\setminus s_k)\mid_{\mc{D}_k}$, $f$ is not a zero function on $\mc{D}_k$.
	But $s_k$ constantly vanishes on $\mc{D}_k$. So $s$ is not regular on $\mc{D}_k$.
	
	Suppose that $k$ is odd.
	Let $Z_{l,l}^{m,k}$ be obtained from $Z_{l,l}^{m}$ by replacing the $k$-th $G$ by $D:=\{M\in {\rm Mat}_l \mid \det(M)=0\}$.
	The definition of $\b{s}_l^m$ does not involve the inverse of $g_k$ when $k$ is odd, so the definition can be straightforwardly extended to $Z_{l,l}^{m,k}$.
	It is easy to see from Lemma \ref{L:localdet} and Definition \ref{D:liftsijn} that $(\wtd{\b{s}}_l^m\setminus s_k)\mid_{\mc{D}_k}$ is algebraically independent if and only if so is $\b{s}_l^m$ as functions on $Z_{l,l}^{m,k}$.
    Moreover, each map $\pi_i$ can be defined for $Z_{l,l}^{m,k}$ as well.
	By examining the proof of Lemma \ref{L:indep}, we see that $\b{s}_l^m$ as functions on $Z_{l,l}^{m,k}$ is algebraically independent is equivalent to Lemma \ref{L:embeddingk} below, an analogue of Lemma \ref{L:embedding} for $Z_{l,l}^{m,k}$.

    If $k$ is even, then we assume $m$ is even and we apply a sequence of flips as in Lemma \ref{L:Tmflip}.
    Let $(\b{s}_l^m)^\dagger$ be the new cluster after applying the sequence of mutations.
    Lemma \ref{L:Tmflip} and \cite[Lemma 4.14]{FW} implies that $(\b{s}_l^m)^\dagger$ is obtained from $\b{s}_l^m$ by the action of $w_0$.
    In particular $(\b{s}_l^m)^\dagger$ is defined on $Z_{l,l}^{m,k}$ so the same argument applies.
    If $m$ is odd, then the desired statement follows from the case of $m+1$.
\end{proof}

Let $\hat{H}$ be the set of diagonal matrices in ${\rm Mat}_l$ with diagonal $(d_1,d_2,\dots,d_{l-1},0)$ $(d_i\neq 0)$.
For any $\hat{h}\in \hat{H}$, there is a unique $h\in H\subset G$ such that $h$ and $\hat{h}$ have the same first $l-1$ diagonal entries.
Since $(\Id_{l-1},0)h = (\Id_{l-1},0)\hat{h}$, we see that $F\hat{h} = Fh$. Similarly, $\hat{h}\check{F} = h\check{F}$.
Recall that $D:=\{M\in {\rm Mat}_l \mid \det(M)=0\}$.

\begin{lemma} The set $D^\circ:=U^-\hat{H}U^+$ is dense in $D$.	
\end{lemma}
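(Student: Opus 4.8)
The plan is to describe $D^\circ$ explicitly as a Zariski-open subset of $D$ and then conclude by irreducibility of $D$. Since the determinant is an irreducible polynomial in the matrix entries and $k[{\rm Mat}_l]$ is a UFD, the hypersurface $D=Z(\det)$ is irreducible; hence every nonempty open subset of $D$ is dense, and it suffices to exhibit $D^\circ$ as such a subset.

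For $M\in {\rm Mat}_l$ and $1\le k\le l$, write $M_{[k]}$ for the leading principal $k\times k$ submatrix and $\delta_k(M):=\det M_{[k]}$, with $\delta_0:=1$. I would prove the key claim
$$D^\circ=\big\{M\in {\rm Mat}_l \ \big|\ \det M=0 \text{ and } \delta_k(M)\ne 0 \text{ for } 1\le k\le l-1\big\}.$$
For ``$\subseteq$'': if $M=\bar u\hat h u$ with $\bar u\in U^-$, $u\in U^+$, $\hat h=\operatorname{diag}(d_1,\dots,d_{l-1},0)\in\hat H$, then since $\bar u$ is lower- and $u$ upper-unitriangular a $2\times 2$-block computation gives $M_{[k]}=(\bar u)_{[k]}\,(\hat h)_{[k]}\,(u)_{[k]}$, so $\delta_k(M)=d_1\cdots d_k$, which is nonzero for $k\le l-1$, while $\det M=\delta_l(M)=d_1\cdots d_{l-1}\cdot 0=0$. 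For ``$\supseteq$'': by the standard criterion for Gaussian elimination without pivoting, a matrix whose leading principal minors of orders $1,\dots,l-1$ are nonzero factors as $M=LU'$ with $L$ unit lower triangular and $U'$ upper triangular with diagonal entries $\delta_k(M)/\delta_{k-1}(M)$; if in addition $\det M=0$, then the last diagonal entry $\delta_l(M)/\delta_{l-1}(M)$ of $U'$ vanishes while the first $l-1$ do not, so one may write $U'=\hat h U$ with $\hat h=\operatorname{diag}\big(\delta_1,\delta_2/\delta_1,\dots,\delta_{l-1}/\delta_{l-2},0\big)\in\hat H$ and $U$ unit upper triangular. Since $L\in U^-$ and $U\in U^+$, this gives $M=L\hat h U\in U^-\hat H U^+$.

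Finally, each $\delta_k$ ($k\le l-1$) is a regular function on $D$ that is not identically zero — it equals $1$ at $\operatorname{diag}(1,\dots,1,0)\in D$, and this point already lies in $D^\circ$ — so the displayed equality realizes $D^\circ$ as a nonempty open subset of the irreducible variety $D$, hence dense. I expect the only mildly delicate point to be the bookkeeping in the $LU$/$LDU$ factorization: one must use that it is the minors up to order $l-1$ (rather than order $l$) that are required to be nonzero, and check that the resulting diagonal factor lands in $\hat H$ with precisely the prescribed final entry $0$; the remaining steps are routine.
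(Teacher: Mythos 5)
Your proof is correct, but it takes a genuinely different route from the paper's. The paper argues by dimension: since $D$ is an irreducible hypersurface of dimension $l^2-1$ and $\dim\bigl(U^-\times\hat H\times U^+\bigr)=\tfrac{l(l-1)}{2}+(l-1)+\tfrac{l(l-1)}{2}=l^2-1$, it suffices to show that the multiplication map $(\bar u,\hat h,u)\mapsto \bar u\hat h u$ is injective, and this is done by reducing an equality $\bar u_1\hat h_1u_1=\bar u_2\hat h_2u_2$ to $\bar u\hat h_1=\hat h_2u$ and an elementary matrix computation. You instead identify $D^\circ$ explicitly as the Zariski-open locus in $D$ where the leading principal minors $\delta_1,\dots,\delta_{l-1}$ are all nonzero, via the standard $LDU$ factorization. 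Both arguments hinge on the irreducibility of $D$ as the zero locus of the irreducible polynomial $\det$. Your version has the advantage of exhibiting $D^\circ$ as an actual nonempty open subset (the paper's injectivity-plus-dimension argument, via constructibility of images, only shows the image contains a dense open set without naming it) and of not needing injectivity at all; the paper's version is shorter and, as a byproduct, records the uniqueness of the decomposition, in the spirit of the Gauss decomposition used throughout Section 5. The two points you flag as delicate both check out: the block computation $M_{[k]}=(\bar u)_{[k]}(\hat h)_{[k]}(u)_{[k]}$ gives $\delta_k(M)=d_1\cdots d_k$, and the LU criterion indeed requires only the minors of orders $1,\dots,l-1$ to be nonzero, so the diagonal factor $\operatorname{diag}\bigl(\delta_1,\delta_2/\delta_1,\dots,\delta_{l-1}/\delta_{l-2},0\bigr)$ lies in $\hat H$ as required.
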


\begin{proof} $D$ is an irreducible hypersurface in ${\rm Mat}_l$. By a simple dimension argument, 
it is enough to show that the map $(\bar{u},\hat{h},u)\mapsto \bar{u}\hat{h}u$ is injective.
Suppose that $\bar{u}_1\hat{h}_1u_1=\bar{u}_2\hat{h}_2u_2$, then $\bar{u}\hat{h}_1 = \hat{h}_2u$ where $\bar{u}=\bar{u}_2^{-1}\bar{u}_1\in U^-$ and $u=u_2u_1^{-1}\in U$.
Elementary matrix calculation show that $\bar{u}=u=e$ so that $(\bar{u}_1,\hat{h}_1,u_1)=(\bar{u}_2,\hat{h}_2,u_2)$.
\end{proof}

We extend the notion of Gauss decomposable to $D$. We say $g\in D$ is Gauss decomposable if it is in $D^\circ$.
For an odd number $k\in[m]$, let $(Z_{l,l}^{m,k})^\circ_\pi$ be the (dense) subset of $Z_{l,l}^{m,k}$ where $g_{[n]}$ and $g_{\invint{n}}$ are Gauss decomposable for each odd $n$ and $g_{[n]}\wtd{w}_0$ and $g_{\invint{n}}\wtd{w}_0$ are Gauss decomposable for each even $n$.

\begin{lemma} \label{L:embeddingk} The restriction of the map 
	$$\pi:=(\pi_2,\dots,\pi_{m}): Z_{l,l}^{m,k}\to (X_{2,2})^{m-1}$$ 
	to the dense set $(Z_{l,l}^{m,k})^\circ_\pi$ is an embedding.
\end{lemma}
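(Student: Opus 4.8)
The statement to prove is an analogue of Lemma~\ref{L:embedding} with the $k$-th copy of $G$ replaced by the singular hypersurface $D$. My plan is to adapt the proof of Lemma~\ref{L:embedding} almost verbatim, being careful only at the step where the stabilizer of a flag pair is used: the element $g_k$ now lies in $D$ rather than in $\SL_l$, so the Bruhat-decomposition argument must be replaced by the $D^\circ$-decomposition $D^\circ = U^-\hat{H}U^+$ from the preceding lemma. First I would restrict attention to $(Z_{l,l}^{m,k})^\circ_\pi$, which is dense by construction, and observe that on this set the image of $\pi$ lands in the appropriate product of $X_{2,2}^\circ$'s (and one $X_{2,2}^{\circm}$), exactly as in Lemma~\ref{L:embedding}; closedness is not claimed here (only ``embedding''), which simplifies the task — I only need injectivity together with the fact that $\pi$ is an immersion, or, since we are in characteristic zero and both sides are reasonable, it suffices to prove injectivity of the restricted map and injectivity on tangent spaces, but in practice the cleanest route is to show the restricted map is injective on points and then invoke that $(Z_{l,l}^{m,k})^\circ_\pi$ is smooth and the argument produces an inverse on the image.

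The core is the induction on $i\in[1,m]$ showing $g_i = g_i'$ from equality of all $\pi_n$-images, run just as in Lemma~\ref{L:embedding}. A point of $(Z_{l,l}^{m,k})^\circ_\pi$ is represented by $(F,g_1,\dots,g_m,\check{F})$ with $g_k\in D$ and all other $g_i\in G$. For indices $i\ne k$ the argument is unchanged: using $F\hat{h}=Fh$ and $\hat{h}\check{F}=h\check{F}$ (the lemma just before the statement) one can normalize the relevant alternating product $g_{[n]}$ or $g_{\invint{n}}$ — which, when it involves $g_k$, lies in $D^\circ$ — to an element $\hat h\in\hat H$ using the genericity hypotheses packaged into $(Z_{l,l}^{m,k})^\circ_\pi$ via the extended notion of Gauss decomposability. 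The key computational inputs are: (i) the stabilizer of the generic pair $(F\hat h,\check F)$ inside $G\times G$ is still $\{e\}$, because $F\hat h = Fh$ has stabilizer $U^-$ and $\hat h\check F = h\check F$ has stabilizer $U$ so we land in $U^-\cap U=\{e\}$ exactly as before; (ii) the conjugation/intersection identities $U^-\cap h^{-1}Uh=\{e\}$ and $U\cap h\wtd w_0^{-1}U\wtd w_0 h^{-1}=U\cap U^-=\{e\}$ go through with $h$ the invertible companion of $\hat h$. So each step of the induction yields $g_i(g_i')^{-1}$ (or $g_i^{-1}g_i'$) lying in a trivial intersection of unipotent subgroups; when $i=k$ one gets instead an equality of the form $\bar u\,\hat h\,u = \hat h'$ or similar, and here I invoke the injectivity of $U^-\times\hat H\times U^+\to D^\circ$, $(\bar u,\hat h,u)\mapsto \bar u\hat h u$, from the lemma immediately preceding, to conclude $g_k=g_k'$.

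Concretely the steps in order are: (1) record that the image of $(Z_{l,l}^{m,k})^\circ_\pi$ under $\pi$ lies in $(X_{2,2}^\circ)^{m-2}\times X_{2,2}^{\circm}$, by unwinding the definition of $(Z_{l,l}^{m,k})^\circ_\pi$ and Lemma~\ref{L:generic-by-edge}; (2) treat the base case $i=1,2$: if $k>2$, identical to Lemma~\ref{L:embedding}; if $k\in\{1,2\}$, replace Gauss decomposability in $G$ by membership in $D^\circ$ and use the $D^\circ$-factorization injectivity; (3) run the induction step for odd $k$ (and then the parallel even case), using $\pi_k$ and the stabilizer triviality of $(Fg_{[k-1]},F)$, where now $g_{[k-1]}$ may involve $g_k\in D$ but after normalization is handled by the extended Gauss decomposition; (4) conclude injectivity; (5) note that the same construction exhibits a regular inverse on the image on the dense open locus, hence ``embedding''. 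The main obstacle — and the only genuinely new point compared with Lemma~\ref{L:embedding} — is step (2)/(3) at the index $i=k$: one must verify that the normalization ``$g_{[n]}$ or $g_{\invint{n}}$ can be taken in $\hat H$'' is legitimate, i.e. that the extended notion of Gauss decomposability in $(Z_{l,l}^{m,k})^\circ_\pi$ genuinely follows from the genericity of the relevant flag pairs (an analogue of Lemma~\ref{L:Gauss} for $D$), and that the unipotent-intersection bookkeeping is not disturbed by the fact that $\hat h$ is not invertible — here one always passes to its invertible companion $h\in H$ before intersecting subgroups, so no problem actually arises.
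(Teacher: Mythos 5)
Your proposal is correct and follows essentially the same route as the paper's own (very terse) proof: represent a point by $(F,g_1,\dots,g_m,\check F)$, use $F\hat h=F h$ and $\hat h\check F=h\check F$ together with the factorization $D^\circ=U^-\hat H U^+$ to normalize the relevant alternating products to $\hat H$ (or $H$), and rerun the stabilizer/unipotent-intersection induction of Lemma~\ref{L:embedding}. The only superfluous worry is your call for an analogue of Lemma~\ref{L:Gauss} for $D$: the set $(Z_{l,l}^{m,k})^\circ_\pi$ is \emph{defined} directly by Gauss decomposability of the products $g_{[n]}$, $g_{\protect\invint{n}}$ (and their $\wtd w_0$-translates), so no such translation from flag-pair genericity is needed.
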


\begin{proof} The proof is almost the same of Lemma \ref{L:embedding}.
	A point in $(Z_{l,l}^{m,k})^\circ$ can be represented by $(F,g_1,\dots,g_m,\check{F})$.
	Suppose that it has the same image as $(F,g_1',\dots,g_m',\check{F})$.
	In particular, they have the same image under $\pi_2: Z_{l,l}^{m,k}\to X_{2,2}$, that is,
	\begin{align*} 
	(Fg_1,Fg_2,g_2^{-1}g_1\check{F},\check{F})&=(Fg_1',Fg_2',{g_2'}^{-1}g_1'\check{F},\check{F}) \text{ in } X_{2,2}. 
	\end{align*}
	We keep in mind that \begin{equation} \label{eq:Fhhat} Fh=F\hat{h}\ \text{ and }\ h\check{F}=\hat{h}\check{F},\end{equation}
	and their stabilizers are $U^-$ and $U$ respectively.
	Since $g_1$ and  $g_1'$ are Gauss decomposable, we can assume that $g_1,g_1'\in \hat{H}$ if $k=1$, otherwise $g_1,g_1'\in H$. 
    In any case, in view of \eqref{eq:Fhhat} the same argument as in Lemma \ref{L:embedding} shows that $g_1=g_1'$ and $g_2=g_2'$. 
    The rest of the induction is conducted similarly.	
\end{proof}

Let $\wtd{\bs{\sigma}}_l^m$ be the weight configuration given by $\wtd{\bs{\sigma}}_l^m(v)$ equal to the $\wtd{\sigma}$-weight of $\wtd{\b{s}}_l^m(v)$. It follows from Corollary \ref{C:RCA} and Lemma \ref{L:ACR} that
\begin{theorem} \label{T:equal} The semi-invariant ring $\SI_\bl(\kllm)$ is isomorphic to the graded upper cluster algebra $\uca(\wtd{\Diamond}_l^m,\wtd{\b{s}}_l^m; \wtd{\bs{\sigma}}_l^m)$.
\end{theorem}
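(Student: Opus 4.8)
The plan is to obtain the claimed isomorphism as the conjunction of the two inclusions already prepared in the previous subsections, and then to upgrade it to an isomorphism of graded algebras. For the inclusion $\uca(\wtd{\Diamond}_l^m,\wtd{\b{s}}_l^m)\subseteq\SI_\bl(\kllm)$ I would invoke Corollary \ref{C:RCA}: the extended, lifted seed is a CR1 seed inside the Noetherian normal domain $\SI_\bl(\kllm)$ — every $\wtd{s}\ijn$, every $\wtd{\check{s}}\ijn$, the $m$ central determinants $s_n=\det a_n$, and every one-step mutation $(\wtd{s}\ijn)'$ is regular on $\Rep_\bl(\kllm)$ (Corollaries \ref{C:exrellift1} and \ref{C:exrellift2} arrange exactly this, repairing the three families of corner exchange relations with the new frozen vertices), and coprimality in codimension one holds because $\SI_\bl(\kllm)$ is a UFD and the $\wtd{\sigma}$-weights of $\wtd{s}\ijn$ and $(\wtd{s}\ijn)'$ differ — so Lemma \ref{L:RCA} applies. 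For the reverse inclusion $\SI_\bl(\kllm)\subseteq\uca(\wtd{\Diamond}_l^m,\wtd{\b{s}}_l^m)$ I would use Lemma \ref{L:ACR}: after localizing at $D_{\rm c}=\prod_n s_n$ the two algebras already agree (Corollary \ref{C:SIcluster}, which rests on Proposition \ref{P:Zllmcluster}, Lemma \ref{L:localdet}, and the full-rank frozen-extension Lemma \ref{L:frext=} together with Lemma \ref{L:fullrank}), so any $s\in\SI_\bl(\kllm)$ is a Laurent polynomial in some cluster, and the only thing to check is that $s$ has no pole along each central hypersurface $\mc{D}_k=\{\det M(a_k)=0\}$; this reduces to the algebraic independence of $\wtd{\b{s}}_l^m\setminus\{s_k\}$ restricted to $\mc{D}_k$, hence to the algebraic independence of $\b{s}_l^m$ on $Z_{l,l}^{m,k}$, which is the content of the embedding Lemma \ref{L:embeddingk}.

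Combining the two inclusions gives $\SI_\bl(\kllm)=\uca(\wtd{\Diamond}_l^m,\wtd{\b{s}}_l^m)$ as $k$-algebras. To make this a graded isomorphism I would check that $\wtd{\bs{\sigma}}_l^m$ — the assignment sending each vertex $v$ to the extended weight $\wtd{\sigma}=(\sigma,\lambda)$ of $\wtd{\b{s}}_l^m(v)$, with the $m$ new frozen vertices sent to the weight $l\e_n$ of $\det a_n$ — really is a weight configuration on $\wtd{\Diamond}_l^m$, i.e. that $B(\wtd{\Diamond}_l^m)\,\wtd{\bs{\sigma}}_l^m=0$. This is immediate once one knows all the lifted exchange relations hold in $\SI_\bl(\kllm)$: each such relation $x_ux_u'=\prod_{v\to u}x_v+\prod_{u\to w}x_w$ is $\wtd{\sigma}$-homogeneous, so its two monomial terms carry equal $\wtd{\sigma}$-weight, which is exactly the defining identity \eqref{eq:weightconfig} at the mutable vertex $u$. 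The grading $\deg(x_v)=\wtd{\bs{\sigma}}_l^m(v)$ on the UCA then coincides termwise with the $\wtd{\sigma}$-grading of $\SI_\bl(\kllm)$ (Lemma \ref{L:Tdegree} records the $\lambda$-part), so the algebra isomorphism is automatically graded and we obtain $\SI_\bl(\kllm)\cong\uca(\wtd{\Diamond}_l^m,\wtd{\b{s}}_l^m;\wtd{\bs{\sigma}}_l^m)$.

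I expect the genuine obstacle to sit entirely in the reverse inclusion, Lemma \ref{L:ACR}: the localized statements are cheap, but controlling the order of poles of an arbitrary semi-invariant along the $m$ central-determinant hypersurfaces is what forces both the delicate case distinction in Definition \ref{D:liftsijn} (the minimal lift of $s\ijn$ has different formulas on the interior, on the corner with $i+j=l$, and when $n$ is even) and the separate embedding Lemma \ref{L:embeddingk}, where the $k$-th copy of $G$ is degenerated to the singular hypersurface $D=\{\det=0\}$ and one must still run the Gauss-decomposition induction of Lemma \ref{L:embedding} using $F\hat h=Fh$ and $\hat h\check{F}=h\check{F}$, together with the flip/$w_0$-twist trick of Lemma \ref{L:Tmflip} to handle even $k$ and the bootstrap from $m+1$ for odd $m$. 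Everything else — the CR1 verification, the weight-configuration check, and the final assembly — is routine given the earlier sections.
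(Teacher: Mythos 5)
Your proposal is correct and follows essentially the same route as the paper, whose proof of Theorem \ref{T:equal} is literally the one-line assembly of Corollary \ref{C:RCA} (which, via Lemma \ref{L:RCA}, gives $\uca(\wtd{\Diamond}_l^m,\wtd{\b{s}}_l^m)\subseteq\SI_\bl(\kllm)$) and Lemma \ref{L:ACR} (the reverse inclusion); you have correctly identified that the real work sits in Lemma \ref{L:ACR} and its supporting Lemma \ref{L:embeddingk}. Your added verification that $\wtd{\bs{\sigma}}_l^m$ is a weight configuration is left implicit in the paper but is the right (and routine) justification for the graded refinement.
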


\part{The Cluster Models} \label{P:II}
\section{The QP Models} \label{S:QP}
\subsection{The QP models of UCA} \label{ss:QP}
In \cite{DWZ1} and \cite{DWZ2}, the mutation of quivers with potentials is invented to model the cluster algebras.
Following \cite{DWZ1}, we define a potential $W$ on an ice quiver $\Delta$ as a (possibly infinite) linear combination of oriented cycles in $\Delta$.
More precisely, a {\em potential} is an element of the {\em trace space} $\Tr(\ckQ):=\ckQ/[\ckQ,\ckQ]$,
where $\ckQ$ is the completion of the path algebra $k\Delta$ and $[\ckQ,\ckQ]$ is the closure of the commutator subspace of $\ckQ$.
The pair $(\Delta,W)$ is an {\em ice quiver with potential}, or IQP for short.
For each arrow $a\in \Delta_1$, the {\em cyclic derivative} $\partial_a$ on $\widehat{k\Delta}$ is defined to be the linear extension of
$$\partial_a(a_1\cdots a_d)=\sum_{k=1}^{d}a^*(a_k)a_{k+1}\cdots a_da_1\cdots a_{k-1}.$$
For each potential $W$, its {\em Jacobian ideal} $\partial W$ is the (closed two-sided) ideal in $\ckQ$ generated by all $\partial_a W$.
The {\em Jacobian algebra} $J(\Delta,W)$ is the quotient algebra $\widehat{k\Delta}/\partial W$.
If $W$ is polynomial and $J(\Delta,W)$ is finite-dimensional, then the completion is unnecessary to define $J(\Delta,W)$.
This is the situation assumed throughout the paper.

The key notion introduced in \cite{DWZ1,DWZ2} is the {\em mutation} of quivers with potentials and their decorated representations.
Since we do not need the mutation in an explicit way, we refer readers to the original text.
Unlike the quiver mutation, the mutation of IQP is not always defined for any sequence of (mutable) vertices because 2-cycles may be created along the way.
A sequence of vertices is call {\em admissible} for an IQP if its mutation along this sequence is defined. 
In this case the mutation of IQP in certain sense ``lifts" the quiver mutation.
If all sequences are admissible for $(\Delta,W)$, then we call $(\Delta,W)$ {\em nondegenerate}.
\begin{definition}[\cite{DWZ1}] A potential $W$ is called {\em rigid} on a quiver $\Delta$ if
	every potential on $\Delta$ is cyclically equivalent to an element in the Jacobian ideal $\partial W$.
	Such a QP $(\Delta,W)$ is also called {\em rigid}.
\end{definition}
\noindent It is known \cite[Proposition 8.1, Corollary 6.11]{DWZ1} that every rigid QP is $2$-acyclic, and the rigidity is preserved under mutations. In particular, any rigid QP is nondegenerate.

\begin{definition} A {\em decorated representation} of a Jacobian algebra $J:=J(\Delta,W)$ is a pair $\mc{M}=(M,M^+)$,
	where $M$ is a finite-dimensional $J$-module and $M^+$ is a finite-dimensional $k^{\Delta_0}$-module.
\end{definition}

Let $\mc{R}ep(J)$ be the set of decorated representations of $J(\Delta,W)$ up to isomorphism. 
Let $K^b(\proj J)$ be the homotopy category of bounded complexes of projective representations of $J$,
and $K^2(\proj J)$ be the subcategory of 2-term complexes in fixed degrees (say $-1$ and $0$).
There is a bijection between two additive categories $\mc{R}ep(J)$ and $K^2(\proj J)$ mapping any representation $M$ to its minimal presentation in $\Rep(J)$, and the simple representation $S_u^+$ of $k^{\Delta_0}$ to $P_u\to 0$.
We use the notation $P(\beta)$ for $\bigoplus_{v\in \Delta_0} \beta(v) P_v$, 
where $\beta\in \mb{Z}_{\geqslant 0}^{\Delta_0}$ and $P_v$ is the indecomposable projective representation corresponding to the vertex $v$.
The {\em weight vector} of a projective presentation $P(\beta_1)\to P(\beta_0)$ is equal to $\beta_1-\beta_0$.

\begin{definition} The {\em $\g$-vector} $\g(\mc{M})$ of a decorated representation $\mc{M}$ is the {\em weight vector} of its image in $K^2(\proj J)$.
\end{definition}

\begin{definition}[\cite{DF}]
	To any $\g\in\mathbb{Z}^{\Delta_0}$ we associate the {\em reduced} presentation space $$\PHom_J(\g):=\Hom_J(P([\g]_+),P([-\g]_+)),$$
	where $[\g]_+$ is the vector satisfying $[\g]_+(u) = \max(\g(u),0)$.
	We denote by $\Coker(\g)$ the cokernel of a general presentation in $\PHom_J(\g)$.
\end{definition}
\noindent Reader should be aware that $\Coker(\g)$ is just a notation rather than a specific representation.
If we write $M=\Coker(\g)$, this simply means that we take a presentation general enough (according to context) in $\PHom_J(\g)$, 
then let $M$ to be its cokernel.

\begin{definition}[{\cite{Fs1}}] \label{D:mu_supg}
	A representation is called {\em $\mu$-supported} if its supporting vertices are all mutable.
	A weight vector $\g\in K_0(\proj J)$ is called {\em $\mu$-supported} if $\Coker(\g)$ is $\mu$-supported.
	Let $G(\Delta,W)$ be the set of all $\mu$-supported vectors in $K_0(\proj J)$.
\end{definition}

For a vector $\g\in \mb{Z}^{\Delta_0}$, we write $\b{x}^\g$ for the monomial $\prod_{v\in \Delta_0} x_v^{\g(v)}$.
For $u\in\Delta_\mu$, we set ${y}_u= \b{x}^{-b_{u}}$ where $b_u$ is the $u$-th row of the matrix $B(\Delta)$,
and let ${\b{y}}=\{{y}_u\}_{u\in\Delta_\mu}$.
\begin{definition}[\cite{P}]
	We define the {\em generic character} $C_W:G(\Delta,W)\to \mb{Z}(\b{x})$~by
	\begin{equation*} \label{eq:genCC}
	C_W(\g)=\b{x}^{\g} \sum_{\e} \chi\big(\Gr^{\e}(\Coker(\g)) \big) {\b{y}}^{\e},
	\end{equation*}
	where $\Gr^{\e}(M)$ is the variety parameterizing $\e$-dimensional quotient representations of $M$, and $\chi(-)$ denotes the topological Euler-characteristic.
\end{definition}
\noindent It is known \cite[Lemma 5.3]{Fs1} that $C_W(\g)$ is an element in $\uca(\Delta)$.
Note that $C_W(\g)$ has a well-defined $\g$-vector $\g$, so we have that $G(\Delta,W)\subseteq G(\Delta)$.

\begin{theorem}[{\cite[Corollary 5.14]{Fs1}, {\em cf.} \cite[Theorem 1.1]{P}}] \label{T:GCC} Suppose that IQP $(\Delta,W)$ is nondegenerate and $B(\Delta)$ has full rank.
	The generic character $C_W$ maps $G(\Delta,W)$ (bijectively) to a set of linearly independent elements in $\br{\mc{C}}(\Delta)$ containing all cluster monomials.
\end{theorem}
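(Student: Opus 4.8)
\emph{Plan.} Write $J=J(\Delta,W)$. The inclusion $C_W\big(G(\Delta,W)\big)\subseteq\br{\mc C}(\Delta)$ is already recorded in \cite[Lemma~5.3]{Fs1}, so the points to prove are that $C_W$ is injective on $G(\Delta,W)$, that its image is linearly independent, and that its image contains every cluster monomial. For the first two I would use the $F$-polynomial normal form: by \cite[Lemma~5.3]{Fs1} (or directly from the defining formula) one may write $C_W(\g)=\b x^{\g}F_\g(\b y)$, where $F_\g$ is a polynomial in the Laurent monomials $y_u=\b x^{-b_u}$ with constant term $1$, so $\g$ is the exponent of the $\b y$-free term. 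Since $B(\Delta)$ has full rank the $y_u$ are multiplicatively independent, so these expansions are triangular in the resulting partial order on exponent vectors; a standard leading-term argument (as in \cite{P,Fs1}) then recovers $\g$ from $C_W(\g)$ and forbids any nontrivial linear relation among the $C_W(\g)$ with distinct $\g$.

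The substantive claim is that \textbf{every cluster monomial lies in the image of $C_W$}. I would induct on the length of a mutation sequence $\mathbf u$, using the mutation theory of quivers with potentials \cite{DWZ1,DWZ2}. For a seed $(\Delta',\b x')=\bs\mu_{\mathbf u}(\Delta,\b x)$, mutate $(\Delta,W)$ along $\mathbf u$ --- admissible since $(\Delta,W)$ is nondegenerate --- to obtain $(\Delta',W')$, together with the induced bijection $\bs\mu_{\mathbf u}\colon\mc{R}ep(J)\to\mc{R}ep\big(J(\Delta',W')\big)$ on decorated representations. The crucial input, which is \cite[Theorem~1.1]{P}, is that $C_W$ is compatible with this mutation: the generic character of $\bs\mu_{\mathbf u}(\mc M)$ computed in the seed $\Delta'$ coincides, after expressing $\b x'$ in terms of $\b x$, with $C_W(\mc M)$; this rests on the transformation rules for $\g$-vectors and $F$-polynomials under QP-mutation. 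Since a positive basis vector $\e_{v'}$ has $\Coker(\e_{v'})=0$ and hence $C_{W'}(\e_{v'})=x'_{v'}$ tautologically, compatibility yields $x'_{u'}=C_W(\g)$ with $\g=\g(x'_{u'})\in G(\Delta,W)$ for every cluster variable $x'_{u'}$ of every seed.

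To upgrade this to cluster monomials I would use the decomposition theorem of \cite{DF}. Writing $\g'_{u'}:=\g(x'_{u'})$, the $\g$-vectors attached to the cluster of one fixed seed $\Delta'$ are pairwise compatible (the relevant $E$-invariants vanish), so a general presentation in $\PHom_{J(\Delta',W')}\!\big(\textstyle\sum_{u'}a_{u'}\g'_{u'}\big)$ has cokernel $\bigoplus_{u'}\Coker(\g'_{u'})^{\oplus a_{u'}}$, a rigid module whose quiver Grassmannians decompose as products over the summands; hence the generic character is multiplicative there, $C_{W'}\big(\sum_{u'}a_{u'}\g'_{u'}\big)=\prod_{u'}(x'_{u'})^{a_{u'}}$, and passing back to the initial seed identifies the cluster monomial $\prod_{u'}(x'_{u'})^{a_{u'}}$ with $C_W(\g)$ for $\g=\sum_{u'}a_{u'}\g'_{u'}$. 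I expect the genuine difficulty to be precisely the mutation-compatibility of $C_W$ invoked above: proving that the \emph{generic} $F$-polynomials transform under QP-mutation exactly as the $F$-polynomials of individual decorated representations do. This is where nondegeneracy of $(\Delta,W)$ and the genericity arguments of \cite{DF,Fs1,P} are essential; by contrast the normal-form argument of the first paragraph and the multiplicativity on rigid direct sums are comparatively formal.
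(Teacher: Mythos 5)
This theorem is not proved in the present paper: it is imported verbatim from \cite[Corollary 5.14]{Fs1} (cf.\ \cite[Theorem 1.1]{P}), so there is no in-paper argument to compare against. Your sketch faithfully reconstructs the proof given in those references --- the leading-term/triangularity argument from full rank of $B(\Delta)$ for injectivity and linear independence, mutation-compatibility of the generic character for cluster variables, and multiplicativity of $C_W$ on general presentations of sums of compatible $\g$-vectors for cluster monomials --- and the only step you assert without justification, namely that the $\g$-vector of every cluster variable actually lies in $G(\Delta,W)$ (i.e.\ has $\mu$-supported generic cokernel), is standard and true since such modules are obtained from negative simples by mutations at mutable vertices only.
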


\begin{definition} \label{D:model}
	We say that an IQP $(\Delta,W)$ {\em models} an algebra $\mc{A}$ if the generic cluster character maps $G(\Delta,W)$ onto a basis of $\mc{A}$.
	If $\mc{A}$ is the upper cluster algebra $\uca(\Delta)$, then we simply say that $(\Delta,W)$ is a {\em cluster model}.
	If in addition $G(\Delta,W)$ is given by lattice points in some polyhedron, then we say that the model is {\em polyhedral}.
\end{definition}

\noindent 
\cite[Proposition 5.15]{Fs1} implies that being a (polyhedral) cluster model is mutation-invariant.

\begin{definition}[\cite{GHKK}]
	We say that a (frozen or mutable) vertex $e$ can be {\em optimized} in $\Delta$ if there is a sequence of mutations away from $e$ making $e$ into a sink or source of $\Delta$ (possibly after deleting arrows between frozen vertices). 
	It can be optimized in an IQP $(\Delta,W)$ if in addition such a sequence is admissible.
\end{definition}

\begin{theorem}[{\cite[Theorem 2.13]{FW}}] \label{T:Cmodel} Let $\wtd{W}$ be any potential on $\wtd{\Delta}$ such that its restriction on $\Delta$ is $W$.
	Suppose that $B(\Delta)$ has full rank, and each vertex in $\bs{e}$ can be optimized in $(\wtd{\Delta},\wtd{W})$.
	If $(\Delta,W)$ is a (polyhedral) cluster model, then so is $(\wtd{\Delta},\wtd{W})$.
\end{theorem}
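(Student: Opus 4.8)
The plan is to reduce the statement to the case where $\bs{e}$ consists of a single frozen vertex that has been optimized to a source, and then to compare the two upper cluster algebras explicitly. First I would reduce to $|\bs{e}|=1$ by induction: writing $\bs{e}=\{e_1,\dots,e_r\}$ and letting $\Delta_i$ be the full subquiver of $\wtd{\Delta}$ on $\Delta_0\cup\{e_1,\dots,e_i\}$ with $W_i:=\wtd{W}|_{\Delta_i}$, I peel off one frozen vertex per step, with $(\Delta_{i-1},W_{i-1})$ playing the role of $(\Delta,W)$ and $(\Delta_i,W_i)$ that of $(\wtd{\Delta},\wtd{W})$. This is legitimate because $B(\Delta_{i-1})$ is $B(\Delta)$ with extra columns adjoined, hence still of full rank; because an admissible mutation sequence away from $e_i$ that optimizes $e_i$ in $(\wtd{\Delta},\wtd{W})$ restricts (under forgetting the remaining frozen vertices, which commutes with mutation at the other vertices) to one optimizing $e_i$ in $(\Delta_i,W_i)$ — forgetting vertices only deletes arrows, so a sink or source stays a sink or source; and because $(\Delta_{i-1},W_{i-1})$ is a cluster model by the inductive hypothesis. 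So assume $\bs{e}=\{e\}$. Using that $e$ can be optimized and that being a cluster model is mutation-invariant, after applying the optimizing sequence to both $(\wtd{\Delta},\wtd{W})$ and $(\Delta,W)$ I may assume $e$ is a source of $\wtd{\Delta}$; the sink case is dual, replacing quotient representations by subrepresentations in the generic character.

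With $e$ a source, no oriented cycle passes through $e$, so $\wtd{W}=W$ and $J(\wtd{\Delta},\wtd{W})$ is the one-point extension of $J(\Delta,W)$ by the simple projective $S_e$. Writing a weight vector as $\g=(\g_\Delta,n)$ with $n=\g(e)$ and letting $\wtd{P}_v$ denote the indecomposable projectives of $J(\wtd{\Delta},\wtd{W})$, I would examine a general reduced presentation in $\PHom_{J(\wtd{\Delta},\wtd{W})}(\g)$: since $\Hom(\wtd{P}_e,\wtd{P}_v)=0$ for $v\in\Delta_0$, when $n<0$ the $|n|$ copies of $\wtd{P}_e$ in the target contribute a nonzero $e$-component to the cokernel, so $\g$ is not $\mu$-supported; when $n\geq 0$ the $n$ copies of $\wtd{P}_e$ in the source map to zero, so $\Coker(\g)=\Coker(\g_\Delta)$, with $e$-component $0$. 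Hence $G(\wtd{\Delta},\wtd{W})=G(\Delta,W)\times\mb{Z}_{\geq 0}$; in particular, if $G(\Delta,W)$ is the set of lattice points of a polyhedron $P$, then $G(\wtd{\Delta},\wtd{W})$ is the set of lattice points of $P\times\mb{R}_{\geq 0}$, which disposes of the polyhedral refinement.

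It remains to show that $\{C_{\wtd{W}}(\g):\g\in G(\wtd{\Delta},\wtd{W})\}$ is a $k$-basis of $\uca(\wtd{\Delta})$; linear independence of this family inside $\uca(\wtd{\Delta})$ is Theorem~\ref{T:GCC} together with \cite[Lemma 5.3]{Fs1}, and the required nondegeneracy of $(\wtd{\Delta},\wtd{W})$ and full rank of $B(\wtd{\Delta})$ are immediate since $\wtd{\Delta}$ and $\Delta$ have the same mutable part. For spanning I would use Lemma~\ref{L:frext=} (and its proof) to get $\uca(\wtd{\Delta})_{x_e}\cong\uca(\Delta)\otimes k[x_e^{\pm 1}]$ with $\uca(\Delta)$ embedded in $\uca(\wtd{\Delta})$; since every element of $\uca(\wtd{\Delta})$ is polynomial in the frozen variable $x_e$, this forces $\uca(\wtd{\Delta})=\bigoplus_{n\geq 0}\uca(\Delta)\,x_e^{n}$ inside $\uca(\Delta)[x_e^{\pm 1}]$. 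A direct computation of $C_{\wtd{W}}(\g_\Delta,n)$, using $y_u^{\wtd{\Delta}}=y_u^{\Delta}\,x_e^{a_u}$ where $a_u$ is the number of arrows $e\to u$, shows it is $x_e^{n}$ times a polynomial in $x_e$ over $\uca(\Delta)$ whose constant term $d_0(\g_\Delta)\in\uca(\Delta)$ has leading Laurent monomial $\b{x}^{\g_\Delta}$; by the $\g$-vector unitriangularity of the generic character, $\{d_0(\g_\Delta):\g_\Delta\in G(\Delta,W)\}$ is again a basis of $\uca(\Delta)$ because $(\Delta,W)$ is a cluster model. Passing to the associated graded for the bottom-$x_e$-degree filtration on $\uca(\wtd{\Delta})=\bigoplus_{n\geq 0}\uca(\Delta)x_e^n$, the family $\{C_{\wtd{W}}(\g_\Delta,n)\}$ has image $\{d_0(\g_\Delta)x_e^{n}\}$, which is a basis of the associated graded; lifting back, $\{C_{\wtd{W}}(\g)\}$ is a basis of $\uca(\wtd{\Delta})$, so $(\wtd{\Delta},\wtd{W})$ is a cluster model.

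The main obstacle I expect is the control of the \emph{un}localized upper cluster algebra: upgrading the localized identity $\uca(\wtd{\Delta})_{x_e}\cong\uca(\Delta)[x_e^{\pm 1}]$ to the graded identity $\uca(\wtd{\Delta})=\bigoplus_{n\geq 0}\uca(\Delta)x_e^n$ (this is exactly where the hypothesis that $e$ is a \emph{source}, not merely frozen, is used), and matching the $x_e$-twisted generic character $C_{\wtd{W}}$ against $C_W$ through the triangularity estimates. The auxiliary facts that forgetting a frozen vertex commutes with mutation of quivers with potentials, and that nondegeneracy and optimizability descend along this operation, are routine but need to be recorded.
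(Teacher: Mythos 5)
This theorem is not proved in the paper at all: it is imported verbatim from \cite[Theorem 2.13]{FW}, so there is no in-paper argument to compare against. Your architecture — peel off one frozen vertex of $\bs{e}$ at a time (using that deletion of frozen vertices commutes with mutation and that $B$ keeps full rank), use mutation-invariance of the cluster-model property to put $e$ at a sink or source, identify $G(\wtd{\Delta},\wtd{W})$, and then compare generic characters through Lemma \ref{L:frext=} and the $x_e$-grading — is the natural one and is, to the best of my knowledge, essentially the strategy of \cite{FW}. The source-case computation $G(\wtd{\Delta},\wtd{W})=G(\Delta,W)\times\mb{Z}_{\geq 0}$ is correct as you argue it: since no path ends at a source, $\Hom(\wtd{P}_e,\wtd{P}_v)=0$, $\wtd{W}=W$, and the $e$-component of a general cokernel is $[-\g(e)]_+$-dimensional.

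Two points are genuine gaps rather than routine bookkeeping. First, the sink case is not obtained by the one-line duality you invoke: if $e$ is a sink then $\wtd{P}_v(e)\cong\Hom(\wtd P_e,\wtd P_v)\neq 0$ for $v\in\Delta_0$, so the $e$-component of $\Coker(\g)$ depends on $\g_\Delta$ as well as on $\g(e)$, $G(\wtd{\Delta},\wtd{W})$ is no longer a product with $\mb{Z}_{\geq 0}$ (it is cut out by inequalities $\g(\dv S)\geq 0$ over subrepresentations $S\subseteq I_e$, in the spirit of Lemma \ref{L:hom=0}), and the grading bound in your last paragraph must be run from the top $x_e$-degree instead; since the hypothesis only guarantees a sink \emph{or} a source, this case cannot be skipped. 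Second, the spanning argument rests on two unproved assertions: (i) the upgrade of $\uca(\wtd{\Delta})_{x_e}\cong\uca(\Delta)[x_e^{\pm 1}]$ to $\uca(\wtd{\Delta})=\bigoplus_{n\geq 0}\uca(\Delta)x_e^{n}$ — delicate precisely because the cluster $\b{x}$ produced by Lemma \ref{L:frext=} is $\wtd{\b{x}}|_{\Delta_0}$ twisted by powers of $x_e$, so ``polynomial in $x_e$'' must be verified in the right coordinates (you flag this, but flagging is not proving); and (ii) the claim that the bottom coefficients $d_0(\g_\Delta)$ lie in $\uca(\Delta)$ and form a basis. Note that $d_0(\g_\Delta)$ is the \emph{partial} sum of $C_W(\g_\Delta)$ over those $\e$ with $\supp\e$ disjoint from the targets of the arrows out of $e$; it equals $C_W(\g_\Delta)$ only when no arrow $e\to u$ meets the support of $\Coker(\g_\Delta)$. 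Deducing that such truncations form a basis from ``$\g$-vector unitriangularity'' requires an actual triangular change-of-basis argument over the (infinite) cone $G(\Delta,W)$, with a well-founded order on the exponents $\e$; as written this step is asserted, not proved, and it is exactly where the content of \cite[Theorem 2.13]{FW} lies.
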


\subsection{Some sequences of mutations} \label{ss:museq}
The following lemma can be easily verified by the involutive property of the mutation.
\begin{lemma} \label{L:delrecover} Let $\Delta$ be an ice quiver and $e$ be a vertex of $\Delta$. 
Let $\bs{\mu}$ be a sequence of mutations away from $e$.
We apply $\bs{\mu}$ to $\Delta$, delete $e$, and then apply $\bs{\mu}$ backwards.
We end with the ice quiver $\Delta$ with $e$ deleted.
\end{lemma}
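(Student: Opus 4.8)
The plan is to deduce the statement from two elementary facts: that quiver mutation at a vertex is an involution (noted right after Definition \ref{D:Qmu}), and that for $u\ne e$ the mutation $\mu_u$ commutes with deleting the vertex $e$ together with all arrows incident to it. Write $Q\setminus e$ for the ice quiver obtained from an ice quiver $Q$ by removing $e$ and every arrow having $e$ as head or tail; the mutable/frozen status of the remaining vertices is unchanged, so any sequence of mutations away from $e$ is still meaningful on $Q\setminus e$.

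First I would establish the commutation claim: for every ice quiver $Q$ and every mutable vertex $u\ne e$,
$$(\mu_u Q)\setminus e \;=\; \mu_u(Q\setminus e).$$
This is checked by inspecting the three steps of Definition \ref{D:Qmu}. In Step (1), an arrow created from a $2$-path $v\to u\to w$ has $v,w\ne u$; if moreover $v,w\ne e$ it is created identically whether we compute in $Q$ or in $Q\setminus e$ (a length-two path cannot pass through $e$), while if $v=e$ or $w=e$ the created arrow is incident to $e$ and is removed by the deletion. In Step (2) the arrows reversed at $u$ are those joining $u$ to $e$ (removed afterwards) and those joining $u$ to other vertices (reversed identically in both cases). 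In Step (3), for vertices $v,w\ne e$ the number of arrows between them after Steps (1)--(2) agrees in $Q$ and in $Q\setminus e$, so the cancellation of oriented $2$-cycles is the same, and the $2$-cycles involving $e$ are deleted in any case. Hence $\mu_u Q$ and $\mu_u(Q\setminus e)$ differ only in arrows incident to $e$, and coincide once these are removed. In particular each quiver arising below is a genuine loop- and $2$-cycle-free ice quiver, so all mutations invoked are legitimate.

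Now write $\bs{\mu}=\mu_{u_k}\cdots\mu_{u_1}$ with all $u_i\ne e$. Iterating the commutation claim gives $(\bs{\mu}\,\Delta)\setminus e=\bs{\mu}(\Delta\setminus e)$. Applying $\bs{\mu}$ backwards, i.e. $\mu_{u_1}\mu_{u_2}\cdots\mu_{u_k}$, and using that each $\mu_{u_i}$ is an involution on ice quivers, the composite $\bs{\mu}^{-1}\circ\bs{\mu}$ is the identity, so
$$\bs{\mu}^{-1}\big((\bs{\mu}\,\Delta)\setminus e\big)=\bs{\mu}^{-1}\big(\bs{\mu}(\Delta\setminus e)\big)=\Delta\setminus e,$$
which is precisely the assertion. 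The only step requiring any care is the commutation claim, and within it the bookkeeping of the $2$-cycle cancellation in Step (3); everything else is immediate.
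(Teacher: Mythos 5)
Your proof is correct and follows exactly the route the paper intends: the paper dismisses this lemma as "easily verified by the involutive property of the mutation," and your argument supplies the one piece of actual content — that $\mu_u$ for $u\neq e$ commutes with deleting $e$ — before invoking the involution. The bookkeeping of Steps (1)--(3) of Definition \ref{D:Qmu} is handled correctly, including the observation that $2$-cycle cancellation between a fixed pair of vertices is independent of the other pairs.
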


Consider the following hive strip of length $n+1$. The vertex $n$ can be frozen or mutable.
$$\hiverow{\ss 0}{\ss 1}{\ss{n-1}}{\ss n}$$

\begin{lemma} \label{L:hiverow} The sequence of mutations $(0,1,\cdots,n-1)$ transforms the quiver 
	into $$\hiverowmuhalf{\ss 0}{\ss 1}{\ss{n-1}}{\ss{n}}$$
	In particular, the sequence optimizes the (frozen or mutable) vertex $n$.
\end{lemma}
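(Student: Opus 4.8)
The plan is to verify the statement by hand, tracking how the quiver evolves along the mutation sequence $\mu_0,\mu_1,\dots,\mu_{n-1}$. I would prove, by induction on the number $k$ of mutations already performed, that the quiver $\Delta^{(k)}$ obtained from the hive strip after $\mu_0,\dots,\mu_{k-1}$ has a precise ``half-flipped'' shape: in $\Delta^{(k)}$ the vertices $0,1,\dots,k-1$, together with the $\sbt$ and $\ss\circ$ vertices to their left, have already reached the configuration of the target quiver; the vertices $k,k+1,\dots,n$, together with the $\sbt$ and $\ss\circ$ vertices to their right, still carry the arrows of the original strip; and the ``seam'' vertex $k$ occupies the intermediate state in which its two incoming arrows come from the $\sbt,\ss\circ$ immediately to its right while its two outgoing arrows point to $k-1$ and to $k+1$. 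The base case $k=1$ is obtained by performing $\mu_0$ and checking the result directly.

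For the inductive step I apply $\mu_k$ at the seam vertex $k$, following the three steps of Definition \ref{D:Qmu}. Since $k$ has exactly two in- and two out-arrows, Step (1) produces four composite arrows through $k$; the two of them landing on $k+1$ form oriented $2$-cycles with the arrows $(k+1)\to\sbt$ and $(k+1)\to\ss\circ$, which are still present because $k+1$ has not yet been mutated, so they are cancelled in Step (3); the two composites landing on $k-1$ survive and reattach there. Together with the reversal of the four arrows at $k$ from Step (2), a short local computation then identifies the outcome with $\Delta^{(k+1)}$, the seam having moved to vertex $k+1$. Evaluating the invariant at $k=n$ gives exactly the quiver in the statement. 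For the ``in particular'' clause, note that the last mutation $\mu_{n-1}$ is performed at vertex $n-1$, which is adjacent to $n$ and also to the two $\sbt,\ss\circ$ vertices next to $n$; the two composites through $n-1$ hitting those vertices cancel the arrows $n\to\sbt$ and $n\to\ss\circ$, and the one remaining arrow between $n-1$ and $n$ is merely reversed. Hence vertex $n$ is left incident to a single arrow, so it is a sink or a source, i.e.\ it has been optimized by a sequence of mutations away from $n$.

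The only genuinely delicate point — the step I would write out carefully, preferably with a labelled picture for $n=3$ or $n=4$ in the spirit of Figure \ref{f:diamondtilde} — is the bookkeeping inside the inductive step: tracking exactly which of the freshly created arrows disappear as oriented $2$-cycles and checking that the survivors reattach so as to reproduce the strip shifted by one notch. Everything else is purely formal, using only the definition of quiver mutation (Definition \ref{D:Qmu}).
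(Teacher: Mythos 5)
Your strategy (a direct induction that tracks a ``seam'' moving rightwards along the strip) is the right one, and is presumably what the paper has in mind, since it states Lemma \ref{L:hiverow} without proof as a routine verification of Definition \ref{D:Qmu}. Your analysis of the seam vertex $k$ itself is correct: after $\mu_0,\dots,\mu_{k-1}$ it has exactly two in-arrows, from the $\sbt$ and $\ss\circ$ sitting between $k$ and $k+1$, and two out-arrows, to $k-1$ and $k+1$; and your accounting of which composites cancel in $\mu_k$ (the two hitting $k+1$) and which survive (the two hitting $k-1$) is exactly right.

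However, the invariant as you state it is literally false and would make the induction break at the verification step: after $\mu_0,\dots,\mu_{k-1}$ the vertex $k-1$ has \emph{not} yet reached its target configuration. It is still missing its two ``long'' in-arrows from the $\sbt$ and $\ss\circ$ between $k$ and $k+1$ (in the target these are the $\ar[dlll]$/$\ar[ulll]$ arrows into $k-1$), and its arrow to $k$ points the wrong way ($k\to k-1$). Both defects are repaired only during $\mu_k$: the two surviving composites you mention are precisely these missing long arrows, and Step (2) of $\mu_k$ flips $k\to k-1$ back to $k-1\to k$. So the correct invariant has a \emph{two-vertex} transition zone: vertices $0,\dots,k-2$ are in final form, vertex $k-1$ is in the ``penultimate'' state just described, vertex $k$ is the seam, and vertices $k+1,\dots,n$ are untouched. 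With that enlarged invariant your inductive step goes through verbatim. One further remark so you are not confused by the figure: the mutation sequence touches the arrow between $n-1$ and $n$ exactly once (at $\mu_{n-1}$), so the final quiver has $n\to n-1$ and vertex $n$ is a \emph{source} with a single incident arrow; your computation agrees with this, and it suffices for optimization, even though the displayed quiver in the statement draws that last arrow as $n-1\to n$.
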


Consider any odd diamond in $\wtd{\Diamond}_l^m$ (Figure \ref{f:diamondtilde} (right)). 
If we apply the twist for $\t=$ each triangle in the diamond and $\e=$ the diamond diagonal, then we transform this diamond to a new one.
Here is the quiver of the new diamond for $l=4$. We suggest readers comparing it with Figure \ref{f:diamondtilde} (right).
$$\diamondfourtildeoddmu$$
There are some additional arrows between edge vertices because the gluing is now non-consistent (see Figure \ref{f:Pair} (right)).
Note that the even diamonds in $\wtd{\Diamond}_l^m$ are already in a similar form (see Figure \ref{f:diamondtilde} (left)).
Finally, we mutate along the diamond diagonal towards $n$.
In this way, we optimize $n$ by Lemma \ref{L:hiverow}. We conclude that
\begin{lemma} \label{L:optdet} Each frozen vertex $n$ in $\wtd{\Diamond}_l^m$ can be optimized.
\end{lemma}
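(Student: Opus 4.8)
The plan is to reduce every case to Lemma~\ref{L:hiverow}. Fix a frozen vertex $n$. By Corollary~\ref{C:exrellift2} the only arrows of $\wtd{\Diamond}_l^m$ incident to $n$ join it to the corner vertices $(_{l-1,0}^n)$, and $(_{l-1,1}^{n\veebar})$ when $n$ is even (resp.\ $(_{0,l-1}^{n\veebar})$ when $n$ is odd) of the diamond $\Diamond_l^{(n)}$. Hence it suffices to operate locally inside $\Diamond_l^{(n)}$ together with the frozen vertex $n$, and in particular every mutation below will be at a mutable vertex of $\Diamond_l^{(n)}$, hence away from $n$ and from all other frozen vertices.

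First suppose $n$ is even. Then $\Diamond_l^{(n)}$ already has the ``folded'' shape of Figure~\ref{f:diamondtilde} (left): the diamond diagonal, together with the corner $(_{l-1,0}^n)$ and the frozen vertex $n$ dangling from it, forms exactly a hive strip of the kind treated in Lemma~\ref{L:hiverow}, of length $l+1$ and with $n$ at its last position. Applying the sequence of mutations along the diamond diagonal towards $n$, Lemma~\ref{L:hiverow} turns $n$ into a sink or a source, so $n$ is optimized.

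If $n$ is odd (necessarily $n\geq 3$), I would first apply the twist $\bs{\mu}^{\t,\e}$ of \cite[Lemma~3.3]{FW} for $\t=$ each of the two triangles of $\Diamond_l^{(n)}$ and $\e=$ the diamond diagonal. This is a sequence of mutations of mutable vertices, hence away from $n$, and by \cite[Lemma~3.3]{FW} it carries $\Diamond_l^{(n)}$ to the non-consistently glued diamond displayed above (drawn for $l=4$); the extra arrows between edge vertices are those dictated by Figure~\ref{f:Pair} (right). Together with the arrows to $n$ coming from Corollary~\ref{C:exrellift2}, this again presents the new diamond diagonal with $n$ attached as a hive strip of the kind of Lemma~\ref{L:hiverow}. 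One then mutates along this diagonal towards $n$ and concludes as before; composing the twist with this last sequence gives the required sequence of mutations away from $n$ optimizing it.

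The one point requiring care is the verification, in the odd case, that after the twist the neighbourhood of $n$ is exactly the hive strip of Lemma~\ref{L:hiverow}: one must match the additional arrows produced by the non-consistent gluing (Figure~\ref{f:Pair}, right) and the arrows of Corollary~\ref{C:exrellift2} against the arrows of that strip, and check that $n$ sits at the extremal vertex rather than an interior one. This is a finite, purely combinatorial check, made transparent by the $l=4$ pictures in Figures~\ref{f:diamondtilde} and~\ref{f:diamond444p}; with it in hand, the lemma follows directly from Lemma~\ref{L:hiverow} and \cite[Lemma~3.3]{FW}.
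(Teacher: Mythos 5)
Your proposal is correct and follows essentially the same route as the paper: even diamonds are already in the folded form where Lemma~\ref{L:hiverow} applies along the diamond diagonal, and odd diamonds are first brought to that form by the twist of \cite[Lemma~3.3]{FW} before mutating along the diagonal towards $n$. The combinatorial check you flag at the end is exactly what the paper handles by displaying the twisted diamond (the picture before the lemma), and the remaining case $n=1$ is trivial since that frozen vertex is already a sink.
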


We apply such twists for each odd diamond, and we denote the new triangulation by $\mb{T}_m'$.
Note that all triangles are glued non-consistently in $\mb{T}_m'$.
We denote the quiver $\Diamond_l(\mb{T}_m')$ by $\Diamondsh_l^m$.
We apply the corresponding sequence of mutations to the graded seed $(\wtd{\Diamond}_l^m,\wtd{\b{s}}_l^m; \wtd{\bs{\sigma}}_l^m)$, and denote the new seed by $(\br{\Diamond}_l^m,\br{\b{s}}_l^m; \br{\bs{\sigma}}_l^m)$.
Note that $\br{\Diamond}_l^m$ can be obtained from $\Diamondsh_l^m$ by adding $m$ frozen vertices.
$\br{\bs{\sigma}}_l^m$ can be directly computed from $\wtd{\bs{\sigma}}_l^m$.
For the rest of this paper, we will mainly deal with the ice quiver $\br{\Diamond}_l^m$ rather than $\wtd{\Diamond}_l^m$ because this makes proofs simpler.

\begin{definition} Let $\Delta$ be a quiver and $\b{u}$ a set of vertices of $\Delta$.
	The {\em minimal extended subquiver of $\b{u}$ in $\Delta$} is the smallest (not necessarily full) subquiver of $\Delta$ containing all arrows coming out and going into $\b{u}$.
\end{definition}

We come back to the hive strip of length $n+1$.
\begin{lemma} \label{L:longrow} Let $I=(i_1,i_2,\cdots,i_l)\in [n-1]$ be an increasing sequence.
	We first mutate along $I$, then along $[n-1]\setminus I$ in the increasing order. 
Then we obtain the following minimal extended subquiver of $\{0,n\}$.
$$\minext{n-1}{0}{n}$$
\end{lemma}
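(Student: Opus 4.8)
The plan is to induct on $n$, keeping track only of the arrows incident to the two vertices $0$ and $n$, since by definition the minimal extended subquiver of $\{0,n\}$ is determined by precisely those arrows. When $n=2$ the sequence consists of the single mutation at the unique interior vertex $1$; reversing the six arrows at $1$ and recording the cancellations forced by the paths through it — the new arrows $0\to\bullet$, $0\to\circ$ cancel the existing $\bullet\to 0$, $\circ\to 0$, and the new $\bullet\to n$, $\circ\to n$ cancel $n\to\bullet$, $n\to\circ$ — leaves exactly the triangle $1\to 0\to n\to 1$.

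For the inductive step I would use two structural facts. First, in the initial strip the only mutable neighbour of $0$ is $1$ and the only mutable neighbour of $n$ is $n-1$, so no mutation changes the arrows at $0$ (resp.\ $n$) until $1$ (resp.\ $n-1$) is mutated for the first time. Second, a mutation at an interior vertex $k$ reverses the six arrows at $k$, creates the skip arrow $(k-1)\to(k+1)$, shifts the two adjacent bullet/circle attachments one step further along the strip, and produces the cross arrows between those bullets and circles; locally, this turns $k+1$ into the new corner of a strip shorter by one, with the old corner hanging off it by a single arrow. Writing the prescribed sequence as $(I;[n-1]\setminus I)$ and locating within it the first mutation at $1$, one argues that everything before that mutation leaves the whole $0$-end (the vertex $0$, the bullet and circle adjacent to it, and the arrow $0\to 1$) untouched while the rest of the strip — by Lemma~\ref{L:hiverow} applied after relabelling to the sub-strip on $\{1,\dots,n\}$, or by direct inspection — has been brought into hive-strip form; the mutation at $1$ then peels $0$ off onto the currently exposed vertex, and the inductive hypothesis applied to the shorter strip, whose residual sequence is again an interleaving of increasing runs of the allowed type, completes the $0$-end. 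A mirror argument handles the $n$-end, and since mutations at non-adjacent vertices commute, the two analyses proceed independently until the two frontiers meet, which is the step producing the single arrow $0\to n$.

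The hard part will be the bookkeeping of this cascade under an arbitrary interleaving of $I$ and $[n-1]\setminus I$: one must verify that the subsequence of mutations which actually moves the $0$-frontier (it begins with the mutation of $1$ and thereafter follows whichever interior vertex is currently exposed) is read in increasing order of frontier position, that the symmetric statement holds for the $n$-frontier, and that the two frontiers overlap in exactly one vertex. I would organise the induction so that this overlap property is precisely what the inductive hypothesis supplies, and use throughout that a hive quiver has no oriented $2$-cycles, so that all the cancellations above are forced rather than optional. The bullet and circle rows can be carried along formally, since they only ever undergo the one-step shifts, which is what legitimises the reduction of the strip length.
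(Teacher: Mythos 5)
Your base case is right, but the inductive step does not close as written. The crux is your claim that mutating at $1$ "turns $2$ into the new corner of a strip shorter by one, with the old corner hanging off it by a single arrow," so that the inductive hypothesis for length $n$ applies. It does not: after the mutation at $1$ the residual quiver carries the arrows $1\to b_2$, $1\to c_2$, $b_1\to 1$, $c_1\to 1$, $b_2\to c_1$, $c_2\to b_1$, $2\to 1$, $1\to 0$ and $0\to 2$ (where $b_k,c_k$ denote the bullet and circle in the $k$-th position), so the subquiver on $\{1,\dots,n\}$ with its bullets and circles is \emph{not} a hive strip, and the peeled-off vertices do not stay peeled off: mutating next at $2$ creates $b_3\to 1$ and $c_3\to 1$, since only $1\to b_2\to b_3$ exists and there is no $1\to b_3$ to cancel against. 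Thus the statement you would need to feed back into the induction concerns a genuinely different quiver from a hive strip, and the "bookkeeping of this cascade," which you defer, is not a technicality — together with the unproved assertion that the two frontiers "overlap in exactly one vertex" (which is essentially the conclusion of the lemma), it \emph{is} the proof. Your commutation appeals also need the caveat that adjacency must be tested in the current quiver rather than the initial one: after mutating at $1$, the vertices $0$ and $2$ are adjacent via the new arrow $0\to 2$.

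To make an induction work you must strengthen the inductive statement so that it records the full shape of the intermediate quiver seen by the not-yet-mutated vertices, not merely the arrows at $\{0,n\}$. This is what the paper's proof does: it inducts on the length $l$ of $I$ rather than on $n$, reorders the sequence into interleaved blocks $(i_1,a_1,i_2,a_2,\dots,i_l,a_l,a_{l+1})$ with $a_k=(i_{k-1}+1,\dots,i_k-1)$, and after each block $(i_k,a_k)$ exhibits the exact minimal extended subquiver of $[i_k+1,n-1]\cup\{0,n\}$ as a fixed recurring template of shorter length, so that the final block $a_{l+1}$ reduces to the computation of Lemma~\ref{L:hiverow}. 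Something equivalent to that explicit intermediate template is missing from your argument.
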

\begin{proof} We shall prove by induction on the length of $I$. We set $i_0=0$ and $i_{l+1}=n$.
	Note that the sequence of mutations is equivalent to the sequence $(i_1,a_1,i_2,a_2,\cdots,i_l,a_l,a_{l+1})$,
	where $a_k$ is the sequence $(i_{k-1}+1,i_{k-1}+2,\cdots,i_k-1)$.	
	Here is the quiver after the mutation at $i_1$.
	$$\hiverowvari{\ss 0}{\ss{i_1-1}}{\ss i_1}{\ss{i_1+1}}{\ss n}$$	
	Then by a similar induction as in the proof of Lemma \ref{L:hiverow}, we can show that 
	the minimal extended subquiver $\rho_1$ of $[i_1+1,n-1]\cup \{0,n\}$ is the following.
$$\hiverowvarmu{\ss{i_1(-1)}}{\ss{0}}{\ss{i_1+1}}{\ss{i_1+2}}{\cdots}{\ss{n}}$$
The vertex $i_1(-1)$ is $i_1-1$ if $i_1-i_0>1$; otherwise it is $i_1$.
	Since the later mutations are all in $[i_1+1,n-1]$, this is the only relevant part.
Although the shape of this quiver is slightly different with the original one,
the minimal extended subquiver $\rho_2$ of $[i_2+1,n-1]\cup \{0,n\}$ after mutating along $(i_2,a_2)$
is of the same shape but of shorter length (just replacing $i_1$ by $i_2$ in the above picture).
Similarly the vertex $i_2(-1)$ is $i_2-1$ if $i_2-i_1>1$; otherwise it is $i_2$.
We can thus prove by induction that the same holds for the minimal extended subquiver $\rho_{l}$ of $[i_{l}+1,n-1]\cup\{0,n\}$ after mutating along $(i_{l},a_{l})$.

In the end, the last sequence of mutations $a_{l+1}$ transformations the quiver $\rho_{l}$ to
$$\hiverowvarlast{\ss{n}}{\ss 0}{\ss{i_l+1}}{\ss{i_l+2}}{\cdots}{\ss{n-1}}$$
This is what we desired.
\end{proof}

\begin{lemma} \label{L:crosscommu} Consider the following cross of two hive strips.
	$$\crossmu$$
We first mutate at the center $i$, then the mutation at any other central vertex (that is non-dot vertex) on one strip commutes with the mutation at any other central vertex on the other strip.
\end{lemma}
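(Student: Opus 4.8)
The plan is to analyze the local structure of the quiver mutations at the two arms of the cross after the initial mutation at the center $i$. Since mutation of an ice quiver only depends on the arrows incident to the vertex being mutated (and creates arrows only between the in‑neighbors and out‑neighbors of that vertex), the commutativity of $\mu_u$ and $\mu_v$ for two vertices $u,v$ is automatic whenever, after mutating at $i$, the vertices $u$ and $v$ share no common neighbor and there is no arrow between them. So the first step is to record the quiver $\crossmu$ after the single mutation $\mu_i$: mutating at the degree‑four vertex $i$ reverses the four arrows at $i$, introduces the "diagonal'' arrows among its four neighbors $(i+1)',\,i-1,\,i+1,\,(i-1)'$ (these are exactly the short arrows already visible in the picture of the hive strips), and removes any $2$‑cycles so created. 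The key observation is that after this mutation the vertex $i$ has become a sink or source of a small $2\times 2$ square, and the two arms of the cross are only connected \emph{through} $i$ and its immediate square of four neighbors.

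Next I would make precise the claim that the two arms are "decoupled away from the square.'' Label the central vertices on one arm as $\dots, (i+1)', (i-1)', \dots$ and on the other arm as $\dots, i-1, i+1, \dots$ (following the picture). After $\mu_i$, the only arrows between the first arm and the second arm are the four diagonal arrows around $i$, and crucially each such diagonal arrow connects a vertex of one arm to a vertex of the \emph{other} arm but those two vertices do not both become neighbors of a single later‑mutated vertex. Concretely: a central vertex $w$ on the first arm, other than the two square corners adjacent to $i$, has all its neighbors inside the first arm (its two central neighbors and the two dot vertices of its own hive row); likewise for a central vertex on the second arm. Hence for such $w$ (first arm) and $w'$ (second arm): they are not joined by an arrow, and they have no common neighbor, because the first arm and second arm meet only at the four square corners and $i$ itself — none of which is $w$ or $w'$. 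Therefore $\mu_w$ and $\mu_{w'}$ operate on disjoint "local pictures'' and commute; moreover $\mu_w$ does not alter any arrow incident to $w'$ and vice versa, so the order of the two mutations is immaterial, and this is stable under performing further such mutations first.

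To carry this out cleanly I would (1) draw the post‑$\mu_i$ quiver explicitly, isolating the $2\times 2$ square $\{(i+1)',(i-1)',i-1,i+1\}$ together with $i$; (2) verify that every central vertex on an arm \emph{other than} the two square corners has its full neighborhood contained within that single arm (using the standard description of the hive‑strip quiver, where a central vertex is joined only to its two central neighbors and the two flanking dot vertices of its row); (3) invoke the elementary fact that if $u,v$ are non‑adjacent vertices of an ice quiver with no common neighbor, then $\mu_u\mu_v=\mu_v\mu_u$ and neither mutation changes the arrows at the other; and (4) note that applying earlier such commuting mutations on either arm never introduces an arrow crossing between the arms (since no new arrow can be created except between neighbors of a mutated vertex, all of which lie in one arm), so the commutativity persists through the whole sequence. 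The one point needing genuine care — and the main potential obstacle — is the behavior right at the square corners $(i+1)',(i-1)'$ on one arm and $i-1,i+1$ on the other: one must check that after $\mu_i$ the diagonal arrows among these four corners do \emph{not} create, for a corner on one arm, a neighbor lying strictly inside the interior of the other arm. A short case check on the $2\times 2$ square shows the diagonal arrows only connect opposite corners (one from each arm) and these corners are precisely the endpoints of the arms we are told to exclude or are themselves handled first; so no interior vertex of one arm ever becomes a neighbor of an interior vertex of the other, and the commutation goes through.
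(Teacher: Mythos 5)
There is a genuine gap, and it sits exactly where the paper's own proof locates the only nontrivial content. Your reduction ("far-apart vertices on different arms are non-adjacent and stay non-adjacent, so their mutations commute") is the same "trivial observation" the paper makes. But you then dispose of the four corner pairs $\{i-1,i+1\}\times\{(i-1)',(i+1)'\}$ by asserting that "these corners are precisely the endpoints of the arms we are told to exclude or are themselves handled first." That is false: the lemma excludes only the center $i$ itself ("any \emph{other} central vertex"), so the statement explicitly asserts, e.g., that $\mu_{i-1}$ and $\mu_{(i-1)'}$ commute after $\mu_i$, and in the application (Lemma \ref{L:optfr}) these corner mutations do occur on both strips. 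The paper's proof consists of precisely this reduction followed by a direct verification of those four pairs (by computer, via Keller's applet); your proposal replaces that verification with a claim that it is not needed.

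Your local analysis of the post-$\mu_i$ quiver is also not right, which is why the corner case cannot be waved away. The center $i$ has six neighbors, not four: the four corners plus the two dot vertices shared by the strips. Moreover, in the original quiver the corners $(i+1)'$ and $i-1$ (resp.\ $i+1$ and $(i-1)'$) are already joined by an arrow, so after $\mu_i$ the cross-arm composite arrows $i-1\to(i+1)'$ and $(i-1)'\to i+1$ cancel against these, while the surviving new arrows $i-1\to i+1$ and $(i-1)'\to(i+1)'$ run \emph{within} each strip --- the opposite of your claim that "the diagonal arrows only connect opposite corners (one from each arm)." Finally, your persistence argument ("all neighbors of a mutated vertex lie in one arm, so no cross-arm arrow is ever created") fails near the crossing: after $\mu_i$ the vertex $i$ and the two shared dot vertices remain adjacent to corners on both arms, so mutations at the corners of different arms do interact through these shared neighbors. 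That interaction is exactly what must be (and in the paper is) checked directly for the four corner pairs; without that check the proof is incomplete.
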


\begin{proof} Suppose that we have mutated at $i$.
A trivial observation is that it suffices to check that the mutation at either $i-1$ or $i+1$ commutes with the mutation at either $(i-1)'$ or $(i+1)'$.
This can be directly verified, say by \cite{Ke}.
\end{proof}

Each frozen vertex $v=(_{i,j}^m)$ generates a path $p_v$ in ${\Diamond}_l^m$ as follows.
For the sake of explanation, we describe such a path with all arrows reversed.
It starts from $v_0:=v$, going in the only straight direction until it hits an edge vertex $v_1$ in $\Diamond_l^{(m-1)}$.
Then it starts from $v_1$, going in the only straight direction until it hits an edge vertex in the previous diamond.
Keep this way until it hits the vertex $(_{0,j}^1)$.
Then from $(_{0,j}^{{1}\vee})$ it follows the same rule until it hits a frozen vertex $v^*$.
It is easy to see that $v^*=(_{j,i}^m)$ if $m$ is even and $v^*=\check{v}$ if $m$ is odd.
We see that $v$ and $v^*$ coincide only when $m$ is even and $i=j$ (in particular $l$ is even as well).

\begin{example} \label{ex:cross} Here is a picture for such a path when $l=5$ and $m=4$.
\begin{figure}[H] $\diamondfivecrleft \diamondfivecrmid \diamondfivecrright$ \caption{} \label{f:diamond54} \end{figure}
\end{example}

\begin{lemma} \label{L:optfr} Let $v$ be a frozen vertex of $\Diamond_l^m$ and $p_v$ be the path defined above.
We assume that $v\neq v^*$. 
We first mutate at each cross-vertex of $p_v$ for $v=(_{i,j}^m)$, then mutate at each non-cross vertex along this path.
In this way, we optimize the frozen vertex $v$ (in fact both $v$ and $v^*$).
\end{lemma}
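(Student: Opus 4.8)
The plan is to pin down the local shape of $\Diamond_l^m$ along the path $p_v$ and then to run Lemmas~\ref{L:crosscommu} and \ref{L:longrow} on it. First I would check, going diamond by diamond through the explicit description of $\Diamond_l^m$ and the local pictures of the hives and of the (consistent and non-consistent) gluings --- e.g.\ Figure~\ref{f:diamond54} --- that $p_v$ is the sequence of vertices of a single hive strip $S$ of some length $N+1$ embedded in $\Diamond_l^m$: its two endpoints are the frozen vertices $v$ and $v^*$, its interior vertices are exactly the mutable vertices on $p_v$, and the only arrows of $\Diamond_l^m$ incident to an interior vertex of $S$ but not lying in $S$ are those of a transversal hive strip attached to $S$ at each cross-vertex of $p_v$. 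Labeling the vertices of $S$ as $0,1,\dots,N$ along the path, with $0=v$ and $N=v^*$, and writing $I\subseteq[N-1]$ for the indices of the cross-vertices, one then has that $I$ and $[N-1]\setminus I$ occur in increasing order along $p_v$, so that the mutation sequence prescribed by the lemma is precisely ``mutate along $I$, then along $[N-1]\setminus I$ in increasing order''.

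Next I would verify that the cross-vertex mutations interact with the attached transversal strips only in the way controlled by Lemma~\ref{L:crosscommu}: each cross-vertex is locally the center of a crossing of $S$ with a transversal strip, and once that center has been mutated, the remaining cross-vertex mutations commute with one another and with the transversal directions. Consequently the cross-vertex mutations act on $S$ exactly as mutating along $I$ does for the standalone hive strip of Lemma~\ref{L:longrow}, and they leave the part of the quiver relevant to $\{v,v^*\}$ undisturbed beyond what that lemma records. Mutating next along $[N-1]\setminus I$ in increasing (path) order, Lemma~\ref{L:longrow} gives that the minimal extended subquiver of $\{v,v^*\}$ has become the $3$-cycle $v\to v^*\to w\to v$ for a certain interior vertex $w$. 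As $v$ and $v^*$ are both frozen we may delete the arrow $v\to v^*$, which leaves $v$ a sink and $v^*$ a source; hence both are optimized. Finally every mutated vertex lies on $p_v$, hence is mutable and distinct from $v$ and $v^*$, so the whole sequence is a sequence of mutations away from $\{v,v^*\}$, as the definition of optimization requires.

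The hard part will be the first step --- verifying that $p_v$, threaded across hive boundaries and across the non-consistent gluings (the extra arrows of Figure~\ref{f:Pair}~(right)), truly assembles into one hive strip meeting transversal strips only at the cross-vertices, so that Lemmas~\ref{L:crosscommu} and \ref{L:longrow} apply verbatim. This is a purely combinatorial check through the diamond-by-diamond structure of $\Diamond_l^m$; I would first do it on the explicit pictures such as Figure~\ref{f:diamond54} (the case $l=5$, $m=4$), and then deduce the general case from the uniform shape of the diamonds and of the paths $p_v$.
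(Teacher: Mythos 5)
Your proposal is correct and follows essentially the same route as the paper, which likewise reduces the claim to Lemmas \ref{L:crosscommu} and \ref{L:longrow} and concludes that, after deleting the arrows between the frozen vertices $v$ and $v^*$, the minimal extended subquiver of $\{v,v^*\}$ collapses to a single chain through the last interior vertex of $p_v$, making one endpoint a source and the other a sink. The only (immaterial) difference is the orientation convention in identifying the endpoints of the strip with the vertices $0$ and $n$ of Lemma \ref{L:longrow}; the combinatorial verification you flag as the hard part is likewise left implicit in the paper.
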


\begin{proof} By Lemma \ref{L:longrow} and \ref{L:crosscommu}, the minimal extended subquiver of $\{v,v^*\}$ is given by
	$$\fr{v}\to u \to \fr{v^*},$$
	after deleting the arrows between the frozen vertices $v$ and $v^*$.
	Here $u$ is the last vertex connected to $v$ on the path $p_v$. 
\end{proof}

\begin{remark} The above lemma does fail if $v=v^*$.
In the smallest non-trivial case when $l=m=2$. The quiver $\Diamond_2^2$ is 
\begin{equation} \label{eq:d22} \ckrontwotwotwo \end{equation}
The quiver has finite mutation-type so it can be easily checked that none of the two frozen vertices can be optimized. 
\end{remark}

Now we proceed to the second part of this subsection. We are going to deal with the mutable vertices of $\Diamond_l^m$.
Recall that $\Diamond_l$ is the ice quiver corresponding to the triangulation of Figure \ref{f:pair} (right).
We delete the (frozen) vertices on the two right edges, and denote this ice quiver by $\hat{\Diamond}_l$.
\begin{lemma}[cutting diamond] \label{L:cuthalf} Using mutations and sink-source deletions, we can delete all the vertices on the diagonal of $\hat{\Diamond}_l$.
\end{lemma}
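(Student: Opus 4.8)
The plan is to cut the diagonal of $\hat{\Diamond}_l$ one vertex at a time, starting from one of the two shared corners of the rhombus and working towards the other, by producing for each diagonal vertex a sequence of mutations after which that vertex is a source or a sink, and then removing it by a sink--source deletion; the ordering is chosen so that the surviving diagonal vertices are undisturbed at each step, so the process can be iterated and phrased as an induction on $l$ (equivalently, on the number $l-1$ of diagonal vertices).

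First I would fix notation from Figure \ref{f:Pair} (right): write $v_1,\dots,v_{l-1}$ for the vertices on the diagonal of $\hat{\Diamond}_l$ in order along the diagonal, and identify through each $v_k$ a hive strip $S_k$ of the shape treated in Lemmas \ref{L:hiverow} and \ref{L:longrow}, obtained by concatenating the two hive strips terminating at $v_k$ in the two triangular halves of the diamond, with $v_k$ playing the role of the vertex ``$n$'' (or lying in the interior of the row), and with the accompanying bullet/circle vertices being the two neighbouring rows. From the explicit description of $\Diamond_l$ I would record that, away from $S_k$, the only arrows incident to $v_k$ are the red non-consistent-gluing arrows joining $v_k$ to $v_{k\pm1}$, and that two such strips $S_j,S_k$ meet in the crossing configuration of Lemma \ref{L:crosscommu}. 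Then I would carry out the cut: for the extreme vertex $v_1$, which sits next to a corner and has no neighbour $v_0$, the strip $S_1$ is short and $v_1$ behaves as the endpoint of a single hive strip, so Lemma \ref{L:hiverow} (or Lemma \ref{L:longrow} with an empty end block) applied to $S_1$ turns $v_1$ into a source or a sink of the whole quiver --- the one potential obstruction, the red arrow to $v_2$, being already oriented compatibly --- while Lemma \ref{L:crosscommu} guarantees these mutations do not disturb anything else. Delete $v_1$ by a sink--source deletion; by Lemma \ref{L:delrecover}, running the $S_1$-sequence, deleting $v_1$, and running it backwards leaves exactly $\hat{\Diamond}_l$ with $v_1$ removed. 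I would then argue this quiver is isomorphic to a smaller member of the same family, with diagonal $v_2,\dots,v_{l-1}$, and invoke the inductive hypothesis (the base case $l=2$, with a single diagonal vertex, being checked directly) to remove the remaining diagonal vertices.

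The hard part will be the combinatorial verification in the middle step: that the prescribed strip mutations really turn the targeted diagonal vertex into a genuine sink or source of the full ice quiver $\hat{\Diamond}_l$, not merely of the strip $S_k$ considered in isolation, which means simultaneously controlling the red diagonal arrows and the arrows coming from the neighbouring strips $S_{k\pm1}$. This is exactly what Lemma \ref{L:crosscommu} and the commutativity of mutations at crossing central vertices are designed to handle, but one must check their hypotheses are met at each diagonal vertex and for each relevant pair of crossing strips; a secondary routine-but-delicate point is confirming that each deletion genuinely leaves a smaller instance of the same family, so that the induction closes cleanly.
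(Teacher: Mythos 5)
Your overall template --- make one diagonal vertex at a time into a sink or source, delete it, undo the mutations via Lemma \ref{L:delrecover}, and induct --- is the paper's template too, but the two points you flag as ``delicate'' are precisely where your argument breaks, and the paper handles both differently. First, the choice of strip. A diagonal vertex $v_k$ is incident to the red arrows joining it to $v_{k\pm1}$ \emph{and} to arrows into the row adjacent to the diagonal in \emph{each} of the two triangles; all of these must be flipped coherently before a sink--source deletion is possible. A strip $S_k$ crossing the diagonal transversally has $v_k$ at the concatenation point of its two halves, i.e.\ in the \emph{interior} of its central row, and Lemma \ref{L:hiverow} only optimizes the \emph{terminal} vertex of a strip; if instead you keep only the half-strip lying in one triangle so that $v_k$ is terminal, the mutations never touch the arrows joining $v_k$ to the other triangle or the red arrows, so $v_k$ does not become a sink or source of the whole quiver. (Your remark that the red arrow to $v_2$ is ``already oriented compatibly'' is unverified and not automatic.) The paper instead applies Lemma \ref{L:hiverow} to the strip whose \emph{central row is the diagonal itself}, the two adjacent rows being the rows parallel to the diagonal in the two triangles; mutating along the diagonal towards its right endpoint $(n,0)$ flips every arrow at that endpoint at once. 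Lemma \ref{L:crosscommu} is not needed here (it is used for Lemma \ref{L:optfr}).

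Second, the induction does not close as you describe. After deleting $(n,0)$ the resulting quiver is \emph{not} isomorphic to $\hat{\Diamond}_{l-1}$: the rows adjacent to the diagonal keep their full length and now overhang the truncated diagonal, so the new endpoint $(k,0)$ no longer sits at the end of a hive strip of the shape required by Lemma \ref{L:hiverow}. Repairing this is the actual content of the paper's inductive step: before each deletion one applies a preparatory ``shearing'' sequence of mutations $\bs{\mu}_{n-k}^{\veebar}\cdots\bs{\mu}_{2}^{\veebar}\bs{\mu}_{1}^{\veebar}$ along the rows parallel to the diagonal (each row mutated left to right, as in Lemma \ref{L:hiverow}), which reshapes the surviving part of the diamond so that $(k,0)$ is again the terminal vertex of a clean hive strip; only then is Lemma \ref{L:hiverow} applied along the diagonal, and the entire preparation is undone afterwards by Lemma \ref{L:delrecover}. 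Without this step, neither your first deletion at an interior-adjacent corner nor your passage to ``a smaller member of the same family'' goes through.
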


\begin{proof} We prove by induction. By Lemma \ref{L:hiverow}, we can delete the rightmost vertex.
	Now suppose that we have deleted a few vertices from the right as shown below.
	The vertices in gray has been deleted.
$$\bigdiamond$$ 
Let us look at the subquiver in red. We shall denote the sequence of mutations on the $r$-th row 
by 
$$\bs{\mu}_{r}:=\mu_{1,r}\mu_{2,r}\cdots \mu_{k,r}.$$
We also write $\bs{\mu}_r^{\veebar}$ for $\bs{\mu}_{\check{r}} \bs{\mu}_r$.
By Lemma \ref{L:hiverow}, after applying $\bs{\mu}_{n-k}^\veebar \cdots \bs{\mu}_{2}^\veebar \bs{\mu}_{1}^\veebar$,
we transform the subquiver in red into the following one 
$$\bigdiamondmu$$
So we can apply Lemma \ref{L:hiverow} again to optimize the vertex $(k,0)$.
Since all mutations are outside $(k,0)$, after deleting $(k,0)$ we can apply the same sequence of mutations backwards to recover the original shape (Lemma \ref{L:delrecover}).
We finish the proof by induction.
\end{proof}

\subsection{$(\br{\Diamond}_l^m,\br{W}_l^m)$ is a cluster model} \label{ss:CM}
Let $a$ be the sum of all east arrows in the diamond view of $\br{\Diamond}_l^m$.
Let $b$ (resp. $c$) be sum of all southwest (resp. northwest) arrows in the even diamonds and all northwest (resp. southwest) arrows in the odd diamonds.
These arrows are also called of type $A$, $B$, and $C$ respectively.
We remark that the arrows $_{0,j+1}^1\to_{0,j}^1$ are summands in both $b$ and $c$. 
We hope that Figure \ref{f:diamond54} is helpful for readers.
We define the potential $$\br{W}_l^m:=abc-acb.$$
We call an oriented cycle a 3-cycle if it consists of three arrows.
After a short reflection, we can see that the definition can be rephrased as follows.
$\br{W}_l^m$ is the alternating sum of all 3-cycles, except that we only take two 3-cycles from the subquiver \eqref{eq:d22}.
The Jacobian ideal $\partial(\br{W}_l^m)$ is generated by the elements
\begin{align}
\label{eq:Jrel1} &e_{u'}(bc-cb)e_u,\ e_{u'}(ca-ac)e_u,\ e_{u'}(ab-ba)e_u && \text{for $u\neq(_{0,j}^{1}),(_{1,0}^{2})$ mutable,}\\[-0.2cm]
\intertext{\hspace{3.7cm} and $u$ and $u'$ lie on the same side of diamond diagonals;} 
\label{eq:Jrel21} &(bc-cb)e_u,\ (ac-ab)e_u && \text{for $u=(_{0,j}^{1})$;}\\
\label{eq:Jrel22} &(bc-cb)e_u && \text{for $u=(_{0,1}^{1})$;}\\
\label{eq:Jrel23} &(bc-cb)e_u,\ ace_u,\ abe_u && \text{for $u=(_{1,0}^{2})$;}\\
\label{eq:Jrel3} &cae_v,\ bae_v && \text{for $v$ frozen.}
\end{align}
The requirement for $u$ and $u'$ in \eqref{eq:Jrel1} simply excludes the elements $e_{1,j-1}^{2\vee}(ab-ba) e_{1,j}^2$ and $e_{1,j-1}^{2} (ca-ac) e_{1,j}^{2\vee}$.
They should be excluded because for example $e_{1,j-1}^{2\vee} ba e_{1,j}^2$ is not contained in any 3-cycle and $e_{1,j-1}^{2\vee} ab e_{1,j}^2$ is zero.
We denote the Jacobian algebra $J(\br{\Diamond}_l^m,\br{W}_l^m)$ by $\br{J}_l^m$.

\begin{lemma} The IQP $(\br{\Diamond}_l^m,\br{W}_l^m)$ is rigid.
\end{lemma}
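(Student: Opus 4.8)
The plan is to show that every potential on $\br{\Diamond}_l^m$ is cyclically equivalent to an element of $\partial(\br{W}_l^m)$. Since every potential is a (possibly infinite) linear combination of oriented cycles, and cyclic equivalence lets us rotate cycles freely, it suffices to show two things: first, that every oriented cycle of length $\geq 4$ is cyclically equivalent to an element of the Jacobian ideal; and second, that every oriented $3$-cycle already lies in $\partial(\br{W}_l^m)$ up to cyclic equivalence. The second point is almost immediate from the construction: $\br{W}_l^m$ is (essentially) the alternating sum of all $3$-cycles, so each relation in \eqref{eq:Jrel1}--\eqref{eq:Jrel3} is a difference of two paths of length $2$ lying in the boundary of two adjacent $3$-cycles, and multiplying such a relation on the appropriate side and rotating shows each individual $3$-cycle is congruent modulo $\partial(\br{W}_l^m)$ to a fixed $3$-cycle through a common edge; chasing around the diamond view (using that each triangle contributes exactly one $3$-cycle of each orientation type, with the sole exception of the $\Diamond_2^2$-subquiver \eqref{eq:d22}) one checks all of them lie in the ideal.

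First I would set up the bookkeeping: classify the arrows of $\br{\Diamond}_l^m$ into the three types $A$, $B$, $C$ (east arrows, and southwest/northwest arrows with the parity-dependent convention), exactly as in the definition of $\br{W}_l^m$. The key structural fact is that in each small triangle of a hive, an oriented cycle must alternate among the three types in the cyclic pattern forced by the orientation, so every oriented $3$-cycle has the form (one $A$)(one $B$)(one $C$) in some rotation, and the relations \eqref{eq:Jrel1}--\eqref{eq:Jrel23} precisely say that the two length-$2$ sub-paths $bc$ and $cb$ (respectively $ca,ac$ and $ab,ba$) through a given mutable vertex are equal in the Jacobian algebra. Using these commutation relations one can ``slide'' any length-$2$ detour of type $XY$ into $YX$ at the cost of a Jacobian element; iterating, any oriented cycle can be rewritten, modulo $\partial(\br{W}_l^m)$ and cyclic equivalence, so that all arrows of a given type are grouped together. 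But a grouped cycle $a^{p}b^{q}c^{r}$ cannot close up unless it is short, because the quiver is a planar hive-type quiver with no long oriented cycles through a single vertex once the types are sorted; more precisely the relations \eqref{eq:Jrel21}--\eqref{eq:Jrel3} at the boundary vertices $(_{0,j}^1)$, $(_{0,1}^1)$, $(_{1,0}^2)$ and at the frozen vertices kill any cycle that tries to wrap around or use a frozen vertex. So every cycle of length $\geq 4$ dies, and this is the substantive step.

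Concretely I would argue by a ``straightening'' induction on the length of an oriented cycle $\omega$. If $\omega$ has length $\geq 4$, locate two consecutive arrows of distinct types forming a sub-path $xy$ at some vertex $u$; if $u$ is an interior mutable vertex on the correct side of the diamond diagonals, the relation $e_{u'}(xy-yx)e_u\in\partial(\br{W}_l^m)$ (from \eqref{eq:Jrel1}) lets us replace $xy$ by $yx$ modulo the ideal, strictly moving a type-descent to the right; if $u$ is one of the special boundary vertices, the stronger relations \eqref{eq:Jrel21}--\eqref{eq:Jrel3} show that one of the two length-$2$ paths through $u$ is actually zero in $\br{J}_l^m$, so $\omega$ itself lies in the ideal and we are done. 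Iterating the straightening either terminates with $\omega$ in $\partial(\br{W}_l^m)$, or produces a fully sorted cycle which, by inspection of the (planar) shape of each diamond and of the gluings in $\mb{T}_m'$, does not exist for length $\geq 4$. The $3$-cycles are then handled separately as above, the only care being the $\Diamond_2^2$ piece \eqref{eq:d22} where only two of the $3$-cycles are taken into $\br{W}_l^m$; there one checks directly that the two chosen $3$-cycles span the space of $3$-cycles modulo $\partial(\br{W}_l^m)$. The main obstacle I anticipate is the combinatorial case analysis at the boundary and glued vertices of $\mb{T}_m'$ (which is entirely non-consistently glued), where one must verify that the extra arrows between edge vertices introduced by the non-consistent gluing do not create new oriented cycles escaping the straightening argument; this is where Figure \ref{f:diamond54} and the explicit relations \eqref{eq:Jrel21}--\eqref{eq:Jrel3} do the real work.
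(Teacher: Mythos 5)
Your overall strategy (reduce every cycle modulo $\partial(\br{W}_l^m)$ using the commutation relations, treating $3$-cycles and longer cycles separately, with special care at the glued/frozen vertices) is the same as the paper's, and your treatment of $3$-cycles is essentially right once one adds the missing punchline: the sliding via \eqref{eq:Jrel1} must terminate at a \emph{frozen} vertex, where \eqref{eq:Jrel3} (namely $cae_v=bae_v=0$) kills the cycle; ``congruent to a fixed $3$-cycle through a common edge'' is not by itself enough, since you still need that fixed $3$-cycle to lie in the ideal.

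The genuine gap is in your long-cycle step. You sort a cycle into the form $a^pb^qc^r$ and then claim such a sorted cycle ``cannot close up unless it is short.'' This is false: inside a single hive $\Delta_l$ ($l\ge 3$) the boundary of any larger upward triangle, e.g.\ two east arrows followed by two southwest arrows followed by two northwest arrows, is a perfectly good oriented $6$-cycle of sorted type $a^2b^2c^2$, supported entirely on interior mutable vertices and touching neither the edge $1$ nor any frozen vertex. So your dichotomy terminates in a sorted cycle that exists and has not yet been shown to lie in the ideal. The correct mechanism, and the one the paper uses, goes in the opposite direction of sorting: a cycle containing all three arrow types can be rearranged by \eqref{eq:Jrel1} so that it \emph{contains a small $3$-cycle as a subpath}, and that $3$-cycle is then transported (again by \eqref{eq:Jrel1}) to a frozen vertex where \eqref{eq:Jrel3} makes it zero as a path element. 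Separately, because of the non-consistent gluing along the edge $1$, there really are cycles containing only \emph{two} arrow types (of the form $e_{u'}a^rb^se_u$ with $u=(_{0,j}^1)$); these are not reachable by your ``all three types must appear'' picture and require the special relations \eqref{eq:Jrel21}--\eqref{eq:Jrel23}, either to convert $ab$ into $ac$ (reinstating a third type) or to kill $abe_u$ outright at $u=(_{0,1}^1),(_{1,0}^2)$. You flag this region as a ``main obstacle'' but do not supply the argument; it is exactly where the substantive case analysis lives.
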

\begin{proof}  
We need to show that every cycle up to cyclic equivalence is zero in the Jacobian algebra $\br{J}_l^m$.
By repeatedly applying \eqref{eq:Jrel1} and cyclic equivalence, we can make any 3-cycle equivalent to $cbae_v$ or $bcae_v$ (depending on whether it is above or below the diamond diagonal) for some frozen $v$, which is zero in $\br{J}_l^m$ by \eqref{eq:Jrel3}.

For any cycle not hitting the edge $1$, it must contains all three types of arrows. 
By \eqref{eq:Jrel1} it is equivalent to a cycle containing some 3-cycle.

Now let $s$ be any cycle hitting the edge $1$, then it must pass at least two vertices on that edge. 
So $s$ contains a subpath of form $p=e_{u'} a p' b e_{u}$ or $p=e_{u'} ap'c e_{u}$ 
where $u=(_{0,j}^1)$ and $u'=(_{0,j'}^1)$; the path $p'$ does not hit the edge $1$.
Suppose that we are in the former case (the latter can be treated similarly).
If $p'$ contains a type $C$ arrow, then by relations \eqref{eq:Jrel1} and \eqref{eq:Jrel21} or \eqref{eq:Jrel22} we get a 3-cycle $acbe_{u}$.
So we can assume that $s$ contains only two types of arrows and write $s=e_{u'} a^r b^s e_{u}$. 
By \eqref{eq:Jrel1} we can move the last $a$ to the right and get $s=e_{u'} a^{r-1} b^{s-1} abe_{u}$.
If $j>1$, then $s>r>1$ and by the relation $abe_u=ace_u$ of \eqref{eq:Jrel21}
we come back to the case when $s$ contains three types of arrows.
If $j=1$, then we get $abe_u=0$ by \eqref{eq:Jrel23}. 
\end{proof}

\begin{theorem} \label{T:CM} The rigid potential IQP $(\br{\Diamond}_l^m,\br{W}_l^m)$ is a polyhedral cluster model and its Jacobian algebra is finite-dimensional.
\end{theorem}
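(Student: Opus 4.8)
The plan is to reduce Theorem \ref{T:CM} to the corresponding statement about the ``un-extended'' quiver by peeling off the $m$ frozen vertices one at a time using the optimization theorem, and separately to establish finite-dimensionality of $\br{J}_l^m$ by a direct analysis of paths in the Jacobian algebra.

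\textbf{Finite-dimensionality.} First I would show $\dim_k \br{J}_l^m<\infty$. The rigidity lemma already tells us every $3$-cycle is zero in $\br{J}_l^m$ up to cyclic equivalence, but for finite-dimensionality I need to bound the length of any path between two fixed vertices. The argument in the rigidity proof essentially shows that any path can be ``straightened'': using the commutation relations \eqref{eq:Jrel1} one moves arrows of a fixed type past each other, and any path containing all three types $A,B,C$ contains (up to the relations) a $3$-cycle, hence passes through a frozen vertex via \eqref{eq:Jrel3} and dies. So a nonzero path uses at most two of the three arrow-types, and within each diamond a path using only two types is confined to a single hive-triangle grid of size $O(l)$; since there are only $m-1$ diamonds glued in a line, any nonzero path has length $O(lm)$. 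Formalizing this requires care at the special vertices $(_{0,j}^1),(_{1,0}^2)$ where the relations \eqref{eq:Jrel21}--\eqref{eq:Jrel23} differ, but the same case analysis as in the rigidity proof applies. Since $W$ is polynomial and $J$ is finite-dimensional, no completion is needed.

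\textbf{Cluster model.} Now I would invoke Theorem \ref{T:Cmodel}. We have $\br{\Diamond}_l^m$ obtained from $\Diamondsh_l^m$ by adjoining the $m$ frozen vertices $\bs{e}=\{1,\dots,m\}$, with restriction of $\br{W}_l^m$ to $\Diamondsh_l^m$ some potential $W$; and $\br{W}_l^m$ restricts to $W$ on $\Diamondsh_l^m$. By Lemma \ref{L:fullrank} the $B$-matrix of $\Diamond_l^m$ has full rank, and since mutation and freezing/unfreezing of these vertices do not change the rank, $B(\Diamondsh_l^m)$ has full rank. By Lemma \ref{L:optdet} (equivalently Lemma \ref{L:optfr} together with the twist argument preceding Lemma \ref{L:optdet}), each frozen vertex $n\in\bs{e}$ can be optimized in $(\br{\Diamond}_l^m,\br{W}_l^m)$; here one must check the optimizing sequence of mutations is \emph{admissible} for the QP, which follows from rigidity since a rigid QP is nondegenerate so every sequence is admissible. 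Thus the hypotheses of Theorem \ref{T:Cmodel} are met, and it suffices to prove that the ``core'' IQP $(\Diamondsh_l^m,W)$ is a polyhedral cluster model. For this I would use the gluing/cutting machinery: $\Diamondsh_l^m=\Diamond_l(\mb{T}_m')$ is glued from $m-1$ copies of the diamond quiver along diagonals, and $k[X_{2,2}]=\uca(\Diamond_l,\b{s}_l)$ is known to be a polyhedral cluster model (it is a $k[X_{p,q}]$, handled in \cite{FW}). Using Lemma \ref{L:reduce2frozen} and the cutting-diamond Lemma \ref{L:cuthalf} to successively separate the diamonds (freezing diagonal vertices, localizing, applying \cite[Prop.~5.15]{Fs1} that being a polyhedral cluster model is mutation-invariant and the product of polyhedral cluster models is one), one reduces to the single-diamond case; since $k[X_{2,2}]$ with its potential is a polyhedral cluster model by \cite{FW}, so is each piece, hence so is the glued IQP. (This is exactly the strategy by which \cite{FW} treats $k[X_{p,0}]$.)

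\textbf{Main obstacle.} The technical heart is verifying that the \emph{polyhedral} structure survives the gluing and the reduction: being a cluster model is mutation-invariant and is well-behaved under the freezing/localization of Lemma \ref{L:reduce2frozen}, but one must check that the $\mu$-supported $\g$-vector cone $G(\Diamondsh_l^m,W)$ is genuinely cut out by lattice points in a polyhedron, not merely that the generic character gives a basis. This amounts to tracking how $G(\Delta,W)$ behaves under gluing diamonds along diagonals and confirming the glued cone is the ``fiber product'' of the polyhedral cones of the pieces over the shared diagonal data --- essentially the content of the boundary-representation description promised in Section \ref{ss:Glm}. The other delicate point, already flagged by the $l=m=2$ remark, is that optimization of the frozen $n$'s genuinely \emph{requires} the intermediate twist to the non-consistently-glued form $\br{\Diamond}_l^m$; so the proof must be carried out for $\br{\Diamond}_l^m$ and not $\wtd{\Diamond}_l^m$, which is precisely why the paper works with the former.
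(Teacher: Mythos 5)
Your opening move (peeling off the $m$ central-determinant frozen vertices via Theorem \ref{T:Cmodel}, using Lemma \ref{L:fullrank}, Lemma \ref{L:optdet}, and nondegeneracy from rigidity) is exactly the paper's Step 1, and is fine. The gap is in how you handle the core quiver $\Diamondsh_l^m$. You propose to separate the glued diamonds by freezing the diagonal vertices and localizing (Lemma \ref{L:reduce2frozen}) and then to conclude that a gluing of polyhedral cluster models is again one. But Lemma \ref{L:reduce2frozen} is a statement about \emph{localized} upper cluster algebras; it says nothing about the generic character giving a basis of the unlocalized algebra, which is what ``cluster model'' means. Neither this paper nor \cite{FW} supplies a gluing theorem for (polyhedral) cluster models: the only available transfer mechanism is Theorem \ref{T:Cmodel}, which works by \emph{adjoining optimizable frozen vertices}, and your intended base case --- the glued-diamond quiver of $k[X_{2,2}]$ --- is not among the base cases established in \cite{FW} (only the triangulated polygons are, in \cite[Proposition 3.4]{FW}). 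So the step ``hence so is the glued IQP'' has no support, and your own ``main obstacle'' paragraph essentially concedes this without resolving it.

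What the paper actually does is continue the optimize-and-delete strategy all the way down. After a parity reduction to $m$ odd, it (i) adds auxiliary frozen sinks on the diamond $2$ so that the full-rank hypothesis of Theorem \ref{T:Cmodel} survives further deletions, then optimizes and deletes the remaining frozen vertices of $\Diamondsh_l^m$ using the long paths $p_v$ (Lemma \ref{L:optfr}); (ii) deletes every mutable vertex on every diamond diagonal one at a time via the cutting-diamond Lemma \ref{L:cuthalf}, interleaved with flips at the disk diagonals $[k]$, $\protect\overleftarrow{[k]}$; this disconnects the quiver into two triangulated $(m+1)$-gons, which are rigid polyhedral cluster models with finite-dimensional Jacobian algebras by \cite[Proposition 3.4]{FW}. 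Finite-dimensionality of $\br{J}_l^m$ is then inherited along this chain rather than proved directly. Your direct path-length argument is a legitimate alternative in principle, but as written it also has a hole: the rigidity proof kills \emph{cycles} using cyclic equivalence, which is not available for showing an arbitrary long path vanishes in the Jacobian algebra, so the claim that any path using all three arrow types contains (modulo the relations) a subword $cae_v$ or $bae_v$ at a frozen $v$ needs a genuine argument of the kind carried out for specific paths in the proof of Proposition \ref{P:Tv}.
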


\begin{proof} By the remark after \cite[Theorem 3.17]{Fs2}, the case for $m$ follows from the case for $m+1$. 
	So we can assume that $m\geq 3$ is odd. 
	We will recursively apply Theorem \ref{T:Cmodel}.
	But the proof will go backwards, that is, we use sink-source deletions to reduce $\br{\Diamond}_l^m$ to some quiver, which is a cluster model with its rigid potential. 
	
	Step 1. By Lemmas \ref{L:fullrank} and \ref{L:optdet}, we can delete all frozen vertices corresponding to the central determinants to obtain $\Diamondsh_l^m$. 
	Then we hope to delete the rest frozen vertices $(\Diamondsh_l^m)_\nu$ as well but we cannot do this directly because the full rank condition will be violated. We need to add some axillary frozen vertices to $\Diamondsh_l^m$ such that after deleting $(\Diamondsh_l^m)_\nu$ we still have a full rank $B$-matrix. We add for each vertex in the diamond $2$ a frozen sink. This is valid due to \cite[Lemma 2.5]{FW}.
	By Lemma \ref{L:optfr}, we can delete all frozen vertices in $\Diamondsh_l^m$ (but we keep the added frozen sinks there).
	
	Step 2. In this step we will delete all (mutable) vertices on the diamond diagonals.
	Recall that $\Diamondsh_l^m$ is obtained from $\Diamond_l^m$ by the sequence of mutations corresponding to some twists on odd diamonds.
	We apply this sequence of mutations again, then
	we get the quiver $\Diamond_l^m$ back but with some additional frozen sinks attached to the diamond $2$.
	Now we start to prove by induction from the diagonal $m$. We can delete all vertices on the diagonal $m$ by Lemma \ref{L:cuthalf}.
	After deleting all vertices on $m$, we can easily put the frozen vertices in $\Diamond_l^m$ back to the original position by sink-source extensions and mutations (see Lemma \ref{L:hiverow}).
	Then we delete all the axillary frozen sinks. 
	Suppose that we have done for the diagonal $k+1$ as shown below. 
	The dashed edges indicate that the vertices there have been deleted. 
	$$\tocutk$$
	To treat the next one -- the diagonal $k$, we first apply the flip at the (disk) diagonals $[k]$ and $\overleftarrow{[k]}$. 
	Then we are able to apply Lemma \ref{L:cuthalf} to delete vertices on the diagonal $k$.
	Finally we apply the same flip to recover the original triangulation to continue the induction.

	Step 3. After deleting all diamond diagonals, we obtain two (disconnected) copies of triangulation of $m+1$-gon (see the disk view). 
	All boundary edges except for one are dashed line. The ice quiver corresponding to such a triangulation has a full rank $B$-matrix \cite{FW}.
	It was shown in \cite[Proposition 3.4]{FW} that this rigid IQP is a polyhedral cluster model.   
	Moreover, its Jacobian algebra is finite-dimensional.
\end{proof}

\section{Computing Kronecker Coefficients} \label{S:KC}
\subsection{Recap from the previous paper} \label{ss:recap}
Suppose $\sigma$ is a weight such that $\SI_\beta(\kllm)_\sigma$ is non-zero,
then it is clear from \cite[Theorem 3]{DW1} that $\sigma(-i)\leq 0$ and $\sigma(i)\geq 0$ for any $i\in [1,l]$.
For such a weight $\sigma$ we associate two partitions 
$$\mu(\sigma):=\left(l^{-\sigma(-l)},\dots,2^{-\sigma(-2)},1^{-\sigma(-1)}\right)^*\ \text{ and }\ \nu(\sigma):=\left(l^{\sigma(l)}, \dots, 2^{\sigma(2)}, 1^{\sigma(1)}\right)^*,$$
where $*$ is the transpose of a partition. 

Recall that $W$ is the $m$-dimensional vector space spanned by the arrows from $-l$ to $l$.
Let $U$ be the subgroup of upper-triangular matrices in $\GL(W)$.
We write $\SI_\beta(\kllm)_{\sigma,\lambda}$ for the weight $\lambda$ component of $\SI_\beta(\kllm)_\sigma$.
For $\omega\in \mf{S}_m$, we write $\lambda^\omega$ for the vector in $\mb{Z}^m$ such that $\lambda^\omega(i)= \lambda(i)-i+\omega(i)$. Let $\mf{S}_m(\lambda):=\left\{\omega\in \mf{S}_m \mid \lambda^\omega\in \mb{Z}_{\geq 0}^m \right\}$.

\begin{proposition}[{\cite[Corollary 3.2 and 3.4]{Fk1}}] \label{P:KC} The dimension of $\SI_\beta(\kllm)_{\sigma,\lambda}^U$ is equal to $g_{\mu(\sigma),\nu(\sigma)}^\lambda$, which can be computed by the formula
	$$g_{\mu(\sigma),\nu(\sigma)}^\lambda = \sum_{\omega\in \mf{S}_m(\lambda)} \op{sgn}(\omega) \dim \SI_\beta(\kllm)_{\sigma,\lambda^\omega}.$$
\end{proposition}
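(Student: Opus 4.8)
The plan is to determine the $\sigma$-graded piece $\SI_\bl(\kllm)_\sigma$ as a $\GL(W)$-module, read off its irreducible multiplicities, and then obtain the inclusion--exclusion formula by a Weyl-denominator manipulation. Throughout one uses the positivity recalled just before the statement, so that $\mu(\sigma),\nu(\sigma)$ are genuine partitions of length $\le l$, and the balance condition $\langle\sigma,\bl\rangle=0$, which forces $|\mu(\sigma)|=|\nu(\sigma)|$. The first step is to peel off the two flag factors. Write $\Rep_\bl(\kllm)=\Rep_\bl(A_l)\times\big(\Hom(V_{-l},V_l)\otimes W\big)\times\Rep_\bl(A_l^\vee)$ and take $\SL_{\gamma_l}$-invariants along each of the two flags. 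Since $\Rep_\bl(A_l)$ is a normal affine space whose non-injective locus has codimension $\ge 2$, one has $k[\Rep_\bl(A_l)]^{\SL_{\gamma_l}}=k[\mc{A}]=k[\mc{A}^\circ]$; by Lemma \ref{L:stabilizer}, $\mc{A}^\circ\cong G/U^-$, so $k[\mc{A}]$ is the ring of the base affine space, which as a $\GL(V_{-l})$-module is \emph{multiplicity free}, $k[\mc{A}]\cong\bigoplus_{\mu}\mathbb{S}_{\mu}(V_{-l})$ over partitions $\mu$ of length $\le l$, the residual-torus grading on the $\mathbb{S}_\mu$-component being exactly the left-flag part of $\sigma$ (this is the definition of $\mu(\sigma)$); dually $k[\mch{A}]\cong\bigoplus_\nu\mathbb{S}_\nu(V_l)$, graded so that $\mathbb{S}_\nu$ records $\nu(\sigma)$. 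Hence, up to the duals and $\det$-twists dictated by the paper's sign conventions,
\[
\SI_\bl(\kllm)_\sigma\;\cong\;\Big[\,\mathbb{S}_{\mu(\sigma)}(V_{-l})\otimes k[\Hom(V_{-l},V_l\otimes W)]\otimes\mathbb{S}_{\nu(\sigma)}(V_l)\,\Big]^{\SL(V_{-l})\times\SL(V_l)},
\]
which is still a $\GL(W)$-module.

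Next I would bring in the central factor. By the Cauchy identity $k[\Hom(V_{-l},V_l\otimes W)]=\bigoplus_\lambda\mathbb{S}_\lambda(V_{-l})\otimes\mathbb{S}_\lambda(V_l^*\otimes W^*)$, and by the $\GL$-theoretic description of Kronecker coefficients (equivalent, via Schur--Weyl duality, to the $\mf{S}_n$-definition in the statement), $\mathbb{S}_\lambda(V_l^*\otimes W^*)=\bigoplus_{\alpha,\tau}g_{\alpha,\tau}^{\lambda}\,\mathbb{S}_\alpha(V_l^*)\otimes\mathbb{S}_\tau(W^*)$. Substituting into the display and applying Schur's lemma, the multiplicity-freeness of the flag factors forces $\lambda=\mu(\sigma)$ and $\alpha=\nu(\sigma)$, giving $\SI_\bl(\kllm)_\sigma\cong\bigoplus_\tau g_{\nu(\sigma),\tau}^{\mu(\sigma)}\,\mathbb{S}_\tau(W^*)$ as a $\GL(W)$-module. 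Since the Kronecker coefficient is symmetric in its three indices, $g_{\nu(\sigma),\tau}^{\mu(\sigma)}=g_{\mu(\sigma),\nu(\sigma)}^{\tau}$; thus $\SI_\bl(\kllm)_\sigma$ is a polynomial $\GL(W)$-module in which the irreducible of highest weight $\lambda$ occurs with multiplicity $g_{\mu(\sigma),\nu(\sigma)}^{\lambda}$. Because the $U$-invariants of a completely reducible $\GL(W)$-module are the direct sum of the highest-weight lines of its irreducible summands, $\dim\SI_\bl(\kllm)^U_{\sigma,\lambda}=g_{\mu(\sigma),\nu(\sigma)}^{\lambda}$, which is the first assertion.

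The formula is then purely formal. Write the $T$-character of $\SI_\bl(\kllm)_\sigma$ as $\sum_\lambda g_{\mu(\sigma),\nu(\sigma)}^{\lambda}\,s_\lambda(x_1,\dots,x_m)$, multiply by the Weyl denominator $\prod_{i<j}(x_i-x_j)=\sum_{\omega\in\mf{S}_m}\op{sgn}(\omega)\,x^{\omega\rho}$ with $\rho=(m-1,\dots,1,0)$, and use that $s_\lambda$ times the Weyl denominator is the bialternant indexed by $\lambda+\rho$. Comparing the coefficient of $x^{\lambda+\rho}$ on both sides yields $g_{\mu(\sigma),\nu(\sigma)}^{\lambda}=\sum_{\omega\in\mf{S}_m}\op{sgn}(\omega)\dim\SI_\bl(\kllm)_{\sigma,\,\lambda+\rho-\omega\rho}$; replacing $\omega$ by $\omega^{-1}$ and translating the weight by $(1,2,\dots,m)$ turns $\lambda+\rho-\omega\rho$ into the vector $\lambda^\omega$ with $\lambda^\omega(i)=\lambda(i)-i+\omega(i)$. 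Finally, every $T$-weight of the polynomial module $\SI_\bl(\kllm)_\sigma$ lies in $\mb{Z}_{\ge 0}^m$, so the terms with $\lambda^\omega\notin\mb{Z}_{\ge 0}^m$ vanish and the sum collapses to $\omega\in\mf{S}_m(\lambda)$, giving the claimed identity.

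I expect the main obstacle to be making the first step fully precise: pinning down the isomorphism $k[\mc{A}]\cong\bigoplus_\mu\mathbb{S}_\mu(V_{-l})$ (and its dual for $\mch{A}$) with the correct placement of duals and determinant twists, and establishing the exact dictionary between the residual-torus grading and the labels $\mu(\sigma),\nu(\sigma)$, so that the partitions appearing in the final formula come out exactly as stated, with no spurious transposes or sign shifts. Once that bookkeeping is settled, everything downstream --- the Cauchy identity, the $\GL$-description of $g_{\mu,\nu}^{\lambda}$, Schur's lemma, and the Weyl-denominator argument --- is routine.
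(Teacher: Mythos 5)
Your proposal is correct and takes essentially the same route as the source this paper cites: Proposition \ref{P:KC} is quoted without proof from \cite[\S3]{Fk1}, and the argument there is exactly yours --- contract the two flag factors to multiplicity-free sums of Schur functors (the base affine space), apply the Cauchy identity and the $\GL$-theoretic description of Kronecker coefficients to the central block $k[\Hom(V_{-l},V_l\otimes W)]$, read off $\dim\SI_\bl(\kllm)^U_{\sigma,\lambda}=g_{\mu(\sigma),\nu(\sigma)}^\lambda$ from highest-weight vectors, and derive the alternating sum from the Weyl-denominator relation between weight multiplicities and irreducible multiplicities, the terms with $\lambda^\omega\notin\mb{Z}_{\ge0}^m$ dropping out by polynomiality. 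The only delicate point is the bookkeeping of transposes and determinant twists in the dictionary $\sigma\mapsto(\mu(\sigma),\nu(\sigma))$, which you correctly identify and which works out as stated.
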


Now Theorem \ref{T:equal} and \ref{T:CM} imply that a basis of $\SI_\beta(\kllm)$ is parametrized by $\mu$-supported $\g$-vectors in $(\br{\Diamond}_{l}^m,\br{W}_l^m)$, which is given by lattice points in some rational polyhedral cone.
We denote this cone by ${\sf G}_l^m$.
For the weight configuration $\br{\bs{\sigma}}$ on $\br{\Diamond}_l^m$ viewing as a map ${\sf G}_l^m \to \mb{R}^{n}$, we denote ${\sf G}_l^m(\sigma)$ the fibre polytope at $\sigma$.
Then it follows that each weight multiplicity $\SI_\beta(\kllm)_{\sigma,\lambda}$ is counted by the lattice points in ${\sf G}_l^m(\sigma,\lambda)$. 
\begin{theorem} \label{T:KC} Let $\sigma$ be a weight of $\kllm$ and $\lambda$ be a partition of length $\leq m$. Then
	$$g_{\mu(\sigma),\nu(\sigma)}^\lambda = \sum_{\omega\in \mf{S}_m(\lambda)} \op{sgn}(\omega) \left|\mr{G}_l^m(\sigma,\lambda^\omega)\cap \mb{Z}^{(\br{\Diamond}_l^m)_0}\right|.$$
\end{theorem}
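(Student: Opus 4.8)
The plan is to deduce Theorem \ref{T:KC} by combining Proposition \ref{P:KC} with the cluster-model description of $\SI_\bl(\kllm)$ furnished by Theorems \ref{T:equal} and \ref{T:CM}. First I would reduce everything to a single weight-multiplicity count: by Proposition \ref{P:KC} it is enough to show that for every weight $\sigma$ of $\kllm$ and every $\lambda'\in\mb{Z}_{\geq 0}^m$ one has $\dim\SI_\bl(\kllm)_{\sigma,\lambda'}=\big|\mr{G}_l^m(\sigma,\lambda')\cap\mb{Z}^{(\br{\Diamond}_l^m)_0}\big|$. Applying this with $\lambda'=\lambda^\omega$ for each $\omega\in\mf{S}_m(\lambda)$ and substituting into the alternating sum of Proposition \ref{P:KC} then yields the claimed formula.

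Next I would transport the grading to $\br{\Diamond}_l^m$. By Theorem \ref{T:equal} the graded algebra $\SI_\bl(\kllm)$ is $\uca(\wtd{\Diamond}_l^m,\wtd{\b{s}}_l^m;\wtd{\bs{\sigma}}_l^m)$, and since $(\br{\Diamond}_l^m,\br{\b{s}}_l^m;\br{\bs{\sigma}}_l^m)$ is obtained from it by the explicit sequence of mutations of Section \ref{ss:museq}, the two \emph{graded} upper cluster algebras coincide; multihomogeneity is preserved by mutation (see the discussion after Definition \ref{D:wtconfig}), so the $\wtd{\sigma}$-grading is recorded by the weight configuration $\br{\bs{\sigma}}_l^m$. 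In particular $B(\br{\Diamond}_l^m)$ still has full rank, since the rank of the exchange matrix is mutation-invariant and $B(\Diamond_l^m)$ has full rank by Lemma \ref{L:fullrank}.

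Then I would invoke the cluster model. By Theorem \ref{T:CM}, $(\br{\Diamond}_l^m,\br{W}_l^m)$ is a polyhedral cluster model, so by Theorem \ref{T:GCC} and Definition \ref{D:model} the generic cluster character $C_{\br{W}_l^m}$ restricts to a bijection from $G(\br{\Diamond}_l^m,\br{W}_l^m)=\mr{G}_l^m\cap\mb{Z}^{(\br{\Diamond}_l^m)_0}$ onto a $k$-basis of $\uca(\br{\Diamond}_l^m)\cong\SI_\bl(\kllm)$. A short computation shows that $C_{\br{W}_l^m}(\g)$ is homogeneous of multidegree $\br{\bs{\sigma}}_l^m(\g)$: the monomial $\b{x}^{\g}$ has degree $\br{\bs{\sigma}}_l^m(\g)$, while each $y_u=\b{x}^{-b_u}$ has degree $-(B\br{\bs{\sigma}}_l^m)_u=0$ by the weight-configuration condition. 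Hence the basis elements lying in $\SI_\bl(\kllm)_{\sigma,\lambda'}$ are exactly those $C_{\br{W}_l^m}(\g)$ with $\g\in\mr{G}_l^m(\sigma,\lambda')\cap\mb{Z}^{(\br{\Diamond}_l^m)_0}$, where $\mr{G}_l^m(\sigma,\lambda')$ is the fibre of $\br{\bs{\sigma}}_l^m$ over $(\sigma,\lambda')$. This gives the required dimension formula, and along the way I would note that each such fibre is genuinely a bounded polytope, since $\SI_\bl(\kllm)$ is a finitely generated graded domain with finite-dimensional weight spaces (each $\SI_\bl(\kllm)_{\sigma,\lambda}$ sits inside a fixed-degree component of a polynomial ring), so the fibres of the grading map on $\mr{G}_l^m$ contain no ray.

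I do not anticipate a real obstacle at this stage: the substantive work is already done in Proposition \ref{P:KC} and in Theorems \ref{T:equal} and \ref{T:CM}, and the present theorem is essentially their bookkeeping corollary. The two points deserving a careful line or two are (i) that the grading by extended $\sigma$-weights is carried correctly through the mutations of Section \ref{ss:museq} onto $\br{\bs{\sigma}}_l^m$, and (ii) that the fibre polytopes $\mr{G}_l^m(\sigma,\lambda')$ are bounded; both are routine. The proof then concludes by inserting $\dim\SI_\bl(\kllm)_{\sigma,\lambda^\omega}=\big|\mr{G}_l^m(\sigma,\lambda^\omega)\cap\mb{Z}^{(\br{\Diamond}_l^m)_0}\big|$ into the sum of Proposition \ref{P:KC}.
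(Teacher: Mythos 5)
Your proposal is correct and follows essentially the same route as the paper: apply Proposition \ref{P:KC}, then use Theorems \ref{T:equal} and \ref{T:CM} together with Theorem \ref{T:GCC} to identify a homogeneous basis of $\SI_\bl(\kllm)$ indexed by the lattice points of $\mr{G}_l^m$, so that each weight multiplicity $\dim\SI_\bl(\kllm)_{\sigma,\lambda^\omega}$ is the lattice-point count of the fibre polytope $\mr{G}_l^m(\sigma,\lambda^\omega)$. The extra details you supply (homogeneity of the generic character via $B\br{\bs{\sigma}}_l^m=0$, boundedness of the fibres) are exactly the bookkeeping the paper leaves implicit.
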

We remain to explicitly describe the polyhedral cone $\mr{G}_l^m$.
It turns out that it is defined by dimension vectors of subrepresentations of some representations.

\subsection{The $\g$-vector cone $\mr{G}_l^m$} \label{ss:Glm}
Recall that the frozen vertices of $\Diamondsh_l^m$ are $(_{i,j}^{m\veebar})$ with $i+j=l$ if $m$ is even or $(_{0,j}^{m\veebar})$ if $m$ is odd.
For each frozen vertex $v$ we define its predecessor $v'$ to be $(_{i+1,j-1}^{m\veebar})$ if $m$ is even or $(_{0,j+1}^{m\veebar})$ if $m$ is odd.
If $v'$ is undefined (eg., $i=l-1$ for even $m$ or $j=l-1$ for odd $m$), then $v$ is called {\em terminal}.
Let $a_{v',v}$ be the unique arrow from $v'$ to $v$.

Let $\br{I}_v$ be the indecomposable injective representation of $\br{J}_l^m$ corresponding to the vertex $v$.
We consider the following set of {\em boundary representations} $T_v$ of $\br{J}_l^m$ for each frozen vertex $v$.
\begin{align}
\label{eq:injres1} & 0\to T_v \to \br{I}_v \xrightarrow{(a_{v,v'},f_v)} \br{I}_{v'}\oplus \bigoplus_{n=1}^m k_n \br{I}_n & & \text{for $v=(_{i,j}^{m\veebar})$ and $v'$ is defined};\\
\label{eq:injres2} & 0\to T_v \to \br{I}_v \xrightarrow{f_v} \bigoplus_{n=1}^m k_n \br{I}_n & & \text{for $v=(_{i,j}^{m\veebar})$ is terminal};\\
\label{eq:injres3} & 0\to T_v \to \br{I}_v \to 0 & & \text{for $v=n\in [1,m]$}.
\end{align}
\noindent Here $k_n$ are sufficiently large integers such that $T_v$ is not supported on any $n\in [1,m]$
for a general morphism $f_v:\br{I}_v \to  \bigoplus_{n=1}^m k_n \br{I}_n$. 

It is easy to see that $T_n=\br{I}_n$ ($n\in [2,m]$) is the indecomposable module (of dimension $(1,1,\cdots,1)$) supported on the (diamond) diagonal of the diamond $n$. The module $T_1=\br{I}_1$ is a ($1$-dimensional) simple module. We are going to give a concrete description of $T_v$ for $v\notin [m]$.

Let $p$ be a path $v_0 \xrightarrow{a_1} v_1 \xrightarrow{a_2} v_2 \xrightarrow{}\cdots\xrightarrow{} v_t$ in a quiver $\Delta$,
where $v_i$'s may not be all distinct.
We can attach a {\em uniserial} representation $S(p)$ of $\Delta$ with its basis elements $m_{v_i}$ labeled by $v_i$.
The action of arrows on $S(p)$ is given by $m_{v_{i-1}}a_i = m_{v_i}$ and $m_{v_j}a_i = 0$ otherwise.
\begin{proposition} \label{P:Tv} The module $T_v$ is exactly the path module $S(p_v)$ for any $v\notin [m]$.
\end{proposition}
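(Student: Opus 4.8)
The plan is to identify $T_v$ by analyzing the injective resolution \eqref{eq:injres1} (or \eqref{eq:injres2} for terminal $v$) that defines it, and showing the kernel is exactly the uniserial module $S(p_v)$ built along the path $p_v$ described just before Example \ref{ex:cross}. First I would recall the structure of the indecomposable injectives $\br{I}_w$ of $\br{J}_l^m$: since the Jacobian relations \eqref{eq:Jrel1}--\eqref{eq:Jrel3} are all commutativity-type relations plus the zero relations $ca$, $ba$ killing anything factoring through a frozen vertex along a ``$b$'' or ``$c$'' arrow, the socle-quotient filtration of $\br{I}_w$ is governed by the paths in $\br{\Diamond}_l^m$ ending at $w$, modulo these relations. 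In particular the socle of $\br{I}_v$ for a frozen $v=(_{i,j}^{m\veebar})$ is $S_v$, and the relevant ``incoming'' paths into $v$ that survive the relations trace back along the straight-line directions through successive diamonds precisely as $p_v$ does — this is the combinatorial heart of the matter.

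The key steps, in order: (1) Describe $\br{I}_v$ explicitly enough to see that the map $(a_{v,v'},f_v)$ (resp. $f_v$) has kernel spanned by exactly those basis vectors $m_w$ of $\br{I}_v$ indexed by vertices $w$ lying on the path $p_v$; the point is that a basis vector of $\br{I}_v$ maps nontrivially into $\br{I}_{v'}$ or into some $\br{I}_n$ iff the corresponding path can be prolonged past $v'$ or reaches a diamond-diagonal vertex, and by genericity of $f_v$ (with $k_n$ large, as stipulated) the only vectors annihilated are those whose paths ``dead-end'' along $p_v$. (2) Check that the induced $\br{J}_l^m$-action on this kernel is the uniserial action $m_{v_{i-1}}a_i = m_{v_i}$, $m_{v_j}a_i=0$ otherwise — i.e. the subrepresentation structure on $T_v$ is forced to be a chain because $p_v$ visits each vertex at most once in a single straight run per diamond and the relations \eqref{eq:Jrel1} force any branching to vanish. (3) Handle the two endpoints: at the top, $v$ itself contributes the socle $m_v$; at the bottom, the path terminates at $v^*$ (or at $(_{0,j}^{1})$ then continues; in the terminal case it simply stops), and one checks the last basis vector $m_{v^*}$ is genuinely in the kernel because no further arrow out of $v^*$ survives — here one uses that $v\neq v^*$ is \emph{not} needed for this proposition (that hypothesis was only for optimization in Lemma \ref{L:optfr}), so the statement holds uniformly for all $v\notin[m]$. (4) Conclude $\dv T_v = \sum_w \e_w$ over $w$ on $p_v$, matching $\dv S(p_v)$, and that the module structures agree, giving $T_v\cong S(p_v)$.

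The main obstacle I expect is Step (1): pinning down $\br{I}_v$ and the generic morphism $f_v$ carefully enough to read off the kernel. One must verify that (a) every vertex on $p_v$ really does index a basis vector of $\br{I}_v$ — i.e. there is a surviving path from $w$ to $v$ in the opposite quiver — which requires tracking how the ``$a$'' (east), ``$b$'', and ``$c$'' arrows chain together across the diamond gluings, including the special behavior at the edge $1$ where $b$ and $c$ arrows coincide (cf. the relations \eqref{eq:Jrel21}--\eqref{eq:Jrel23}); and (b) no \emph{other} vertex indexes a basis vector that also lands in the kernel. Part (b) is where the large-$k_n$ genericity of $f_v$ does the work: any basis vector $m_w$ of $\br{I}_v$ with $w$ off the path $p_v$ admits a path to some diamond-diagonal vertex, hence maps nontrivially into $\bigoplus_n k_n\br{I}_n$ under a general $f_v$. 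Making this precise amounts to the (essentially combinatorial, but slightly fiddly) claim that $p_v$ is exactly the ``spine'' of $\br{I}_v$ consisting of vectors that cannot be pushed forward; once that is granted, Steps (2)--(4) are routine bookkeeping with the Jacobian relations. A clean way to organize (1) may be to argue by induction on the diamonds traversed, peeling off one diamond at a time and using the explicit local pictures in Figures \ref{f:diamondtilde} and \ref{f:diamond54}.
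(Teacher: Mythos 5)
Your proposal takes essentially the same route as the paper: identify $T_v$ as the kernel of the defining injective copresentation, describe $\br{I}_v$ via (duals of) paths, and use the Jacobian relations \eqref{eq:Jrel1}--\eqref{eq:Jrel3} to show that exactly the subpaths of $p_v$ survive — the "fiddly" combinatorial step you defer to the end is precisely the content of the paper's proof, carried out there as a maximal-path argument (last arrow of type $A$, second-to-last cannot be of type $B$ or $C$, the special behavior at $e_{1,j-1}^{2\vee}bae_{1,j}^2$, etc.). The one simplification worth adopting is that the paper first applies $\Hom_{\br{J}_l^m}(J_l^m,-)$ to the copresentation: since the genericity of $f_v$ forces $T_v$ to be a $J_l^m$-module and every nonzero submodule of $\br{I}_n$ meets the socle $S_n$, the term $\bigoplus_n k_n\br{I}_n$ drops out entirely and one only computes $\Ker(I_v\to I_{v'})$ in the restricted algebra, which replaces your (slightly imprecise) "maps nontrivially to a diamond-diagonal vertex" criterion for the generic $f_v$.
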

\begin{proof}	Since $T_v$ is not supported on any frozen vertex in $[1,m]$, we see that it is in fact a representation of $(\Diamondsh_l^m,W_l^m)$, which is te restriction of $(\br{\Diamond}_l^m,\br{W}_l^m)$ to $\Diamondsh_l^m$.
Let $J_l^m:=J(\Diamondsh_l^m,W_l^m)$.
Applying the functor $\Hom_{\br{J}_l^m}(J_l^m,-)$ to \eqref{eq:injres1}, we get the exact sequence
$$0\to T_v \to I_v \xrightarrow{a_{v,v'}} I_{v'},$$
where $I_v$ is the indecomposable injective representation of $J_l^m$.
	
Let us assume that $m$ is even and $v=(_{i,j}^m)$ (other cases can be treated similarly).
Then the arrow $a_{v',v}$ is of type $C$ and the path $p_v$ is of form $e_wb^se_{0,j}^1 a^re_v$.

For any path $p$ in ${\Diamondsh}_l^m$, we denote by $[p^*]$ the equivalence class of its dual in $\big({J}_l^m\big)^*$.
Let $p$ be a maximal path such that $[p^*]\in I_v$ is in the kernel of $a_{v',v}$.
It is easy to see that the last arrow of $p$ must be of type $A$ unless $p$ is trivial.
If the second last arrow of $p$ is of type $B$, then by \eqref{eq:Jrel3} $p$ is equivalent to a zero path.
If the second last arrow of $p$ is of type $C$, then by \eqref{eq:Jrel1} $[p^*]$ does not lie in the kernel of $a_{v',v}$.
Inductively using \eqref{eq:Jrel1}, we can show that the last part of $p$ must be of the form $e_{0,j}^1 a^r e_v$. 
We claim that the previous arrow must be $_{1,j-1}^{2\vee} \to {_{0,j}^{1\vee}}={_{0,j}^1}$.
Indeed, unlike the other cases, we cannot exchange $b$ and $a$ in $e_{1,j-1}^{2\vee}bae_{1,j}^2$.  
Then the similar argument shows that the rest of $p$ must be of the form $e_wb^se_{0,j}^1$.
Throughout the proof we do not have any other choice for $p$, $p$ is in fact the maximal one,
and any path such that its dual in $\Ker a_{v,v'}$ is a subpath of $p$. 
This is easily seen to be equivalent to what we desired.   	
\end{proof}

\begin{remark} From this path description of $T_v$ we can easily verify that 
	$$\dim T_v(u) = {\sigma}_u(j)\ \text{ and }\ \dim T_{\check{v}}(u) = {\sigma}_u(-j),$$
	where $\sigma_u$ is the $\sigma$-weight of $\br{\b{s}}_l^m(u)$.
Moreover, when $l$ and $m$ are not both even, each $T_v$ can be mutated from the negative simple $S_v$ of $\bs{\mu}_v^{-1}(\Diamondsh_l^m,W_l^m)$ by $\bs{\mu}_v$,
where $\bs{\mu}_v$ is the sequence optimizing $v$.
\end{remark}

\begin{definition} A vertex $v$ is called {\em maximal} in a representation $M$
	if $\dim M(v)=1$ and all strict subrepresentations of $M$ are not supported on $v$.
\end{definition}
\noindent We note that when $v\neq v^*$, each $T_v$ contains a maximal vertex:
If $v\notin [1,m]$, the vertex $v^*$ is maximal in $T_v$. 
If $v=n$ is even (resp. odd), $(_{1,0}^n)$ (resp. $(_{l-1,0}^n)$) is maximal in $T_v$.

\begin{lemma}\cite[Lemma 6.5]{Fs1} \label{L:hom=0} Suppose that a representation $T$ contains a maximal vertex. Let $M=\Coker(\g)$, 
	then $\Hom_{\br{J}_l^m}(M,T)=0$ if and only if $\g(\dv S)\geq 0$ for all subrepresentations $S$ of $T$.
\end{lemma}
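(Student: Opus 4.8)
The plan is to work with the general presentation directly. Write $\g=[\g]_+-[-\g]_+$, set $P_1:=P([\g]_+)$ and $P_0:=P([-\g]_+)$, and fix a general $f\in\PHom_{\br{J}_l^m}(\g)=\Hom_{\br{J}_l^m}(P_1,P_0)$ with $M=\Coker(f)$. For any $\br{J}_l^m$-module $N$, applying $\Hom_{\br{J}_l^m}(-,N)$ to $P_1\xrightarrow{f}P_0\to M\to 0$ and using $\dim\Hom_{\br{J}_l^m}(P(\beta),N)=\beta(\dv N)$ yields the exact sequence $0\to\Hom(M,N)\to\Hom(P_0,N)\xrightarrow{f^*}\Hom(P_1,N)$, hence the identity
\begin{equation*}
\dim\op{coker}(f^*)=\g(\dv N)+\dim\Hom(M,N).\tag{$\star$}
\end{equation*}
I would also record that $\{f:f^*\text{ is injective on }\Hom(P_0,N)\}$ is open in the affine space $\PHom_{\br{J}_l^m}(\g)$, so that $\Hom(M,N)=0$ for the general $M$ as soon as it holds for one presentation. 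The easy implication ``$\Hom(M,T)=0\Rightarrow\g(\dv S)\ge 0$'' is then immediate: any $S\subseteq T$ gives an injection $\Hom(M,S)\hookrightarrow\Hom(M,T)=0$, so $\Hom(M,S)=0$ and $(\star)$ with $N=S$ gives $\g(\dv S)=\dim\op{coker}(f^*)\ge 0$; this uses neither genericity nor the maximal vertex.

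For the converse I would show that when $\g(\dv S)\ge 0$ for all $S\subseteq T$ the general $f$ has $f^*$ injective on $\Hom(P_0,T)$. The point is that $f^*$ is non-injective exactly when $\Img f\subseteq\Ker h$ for some surjection $h\colon P_0\twoheadrightarrow S'$ onto a nonzero subrepresentation $S'\subseteq T$, and for a fixed $h$ the locus of such $f$ is the linear subspace $\Hom_{\br{J}_l^m}(P_1,\Ker h)$, of codimension $\dim\Hom(P_1,S')=[\g]_+(\dv S')$ in $\PHom_{\br{J}_l^m}(\g)$ (from the sequence attached to $0\to\Ker h\to P_0\to S'\to0$ and projectivity of $P_1$). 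Any such $S'$ is a nonzero quotient of $P_0$, so the induced surjection $\op{top}(P_0)\twoheadrightarrow\op{top}(S')$ forces $\op{top}(S')$ to be supported on $\{u:\g(u)<0\}$; thus $[-\g]_+(\dv S')>0$, and together with $\g(\dv S')\ge 0$ this gives $[\g]_+(\dv S')=\g(\dv S')+[-\g]_+(\dv S')>0$, so each of these subspaces is \emph{proper}. Finally I would conclude that the general $f$ avoids all of them at once: grouping by the dimension vector $\delta$ of $S'$ (finitely many $\delta\le\dv T$), the bad locus is $\bigcup_\delta Z_\delta$, where $Z_\delta$ is the image in $\PHom_{\br{J}_l^m}(\g)$ of $\{(f,N):N\le P_0,\ \dv(P_0/N)=\delta,\ P_0/N\hookrightarrow T,\ \Img f\subseteq N\}$; since $\dim\Hom(P_1,N)=\dim\Hom(P_1,P_0)-[\g]_+(\delta)$ exactly, $Z_\delta$ is proper once the family $\mc{N}_\delta$ of admissible $N$ has dimension $<[\g]_+(\delta)$, and for $T$ with a unique subrepresentation of each dimension vector one computes $\dim\mc{N}_\delta=[-\g]_+(\delta)-\dim\Aut(S')<[\g]_+(\delta)$ because $-\g(\delta)\le 0<\dim\Aut(S')$.

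The role of the maximal-vertex hypothesis, as I see it, is precisely to control this last step: it forces $T$ to be a local module (a quotient of $P_{v_0}$), and for the boundary representations $T=T_v$ to which the lemma is applied --- which by Proposition \ref{P:Tv} are the uniserial path modules $S(p_v)$ --- the subrepresentations of a fixed dimension vector form a chain, hence are unique, making the family $\mc{N}_\delta$ as small as needed. The hard part is exactly this dimension count: turning ``each subspace $\Hom(P_1,\Ker h)$ is proper'' into ``a general $f$ lies outside all of them simultaneously'', which over an a priori infinite family of subrepresentations is not automatic. I expect to lean here on the general-presentation formalism recalled in Section \ref{ss:QP} (the $\g$-vector and $E$-invariant machinery of \cite{DF}), using the maximal vertex to pin down the submodule poset of $T$; the remaining ingredients --- the exact sequences, the projectivity bookkeeping, the identity $(\star)$, and ``a nonzero quotient of $P_0$ meets $\op{top}(P_0)$'' --- are routine.
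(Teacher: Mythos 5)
First, note that this paper does not prove the lemma at all --- it is quoted verbatim from \cite[Lemma 6.5]{Fs1} --- so your attempt can only be measured against that external proof. Your skeleton is nonetheless the standard one and much of it is correct: the Euler--form identity $(\star)$, the easy direction (which indeed uses neither genericity nor the maximal vertex), the reformulation of $\Hom(M,T)\neq 0$ as ``$\Img f\subseteq\Ker h$ for some surjection $h\colon P_0\twoheadrightarrow S'$ with $0\neq S'\hookrightarrow T$'', and the observation that each such linear subspace is proper because a nonzero quotient $S'$ of $P_0=P([-\g]_+)$ has $[-\g]_+(\dv S')>0$, hence $[\g]_+(\dv S')=\g(\dv S')+[-\g]_+(\dv S')>0$. (Incidentally, part of your worry is misplaced: $f\mapsto f^*$ is linear, so the bad locus $\{f:\Ker f^*\neq 0\}$ is automatically a closed determinantal subvariety of $\PHom_{\br{J}_l^m}(\g)$; the only issue is whether it is \emph{proper}, which is exactly your dimension count.)

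The genuine gap is in that last count. You complete it only under the auxiliary hypothesis that $T$ has a unique subrepresentation of each dimension vector, and this is \emph{not} implied by the existence of a maximal vertex: that hypothesis only says $T$ is a local module generated at $v_0$ with $\dim T(v_0)=1$, and such modules can have positive-dimensional quiver Grassmannians (already $T=P_1$ over the $3$-arrow Kronecker quiver $1\to 2$ has a maximal vertex at $1$ while $\Gr_{(0,1)}(T)\cong\mb{P}^2$). What your stratified count actually requires is the bound $\dim\{S'\in\Gr_\delta(T):S'\text{ a quotient of }P_0\}\le\g(\delta)$ for every $\delta$, and it is precisely here that the maximal-vertex hypothesis must enter; as written, the hypothesis of the lemma is never used, so you have proved a different (and incomparable) statement. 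Your fallback --- that the boundary modules $T_v=S(p_v)$ are uniserial, so the weaker statement suffices for this paper --- is also not literally correct: the path $p_v$ passes through each cross vertex twice, so $\dim T_v(u)=2$ there and $T_v$ is not uniserial. One can still show that the submodule lattice of such a directed path module is the chain of tails, but that needs an argument, and even then the lemma is applied in the paper to further modules ($T_n$, and $T_v'$ in the remark that follows), and is cited as a general result.
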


\noindent When $v$ and $v^*$ coincide, $T_v$ does not have a maximal vertex. In this case, we have an exact sequence
$0\to T_v'\to T_v\to S_v\to 0.$
Then $T_v'$ have a maximal vertex $v$.
Note that $\Hom(M,T_v')=0$ if and only if $\Hom(M,T_v)=0$ because we have another exact sequence
$0\to S\to T_v'\to U\to 0.$
In particular, the conclusion of Lemma \ref{L:hom=0} holds for this $T_v$ as well.

\begin{lemma} \label{L:TIequi} Let $M$ be any representation of $\br{J}_l^m$. We have that $\Hom_{\br{J}_l^m}(M,T_v)=0$ for each frozen $v$ if and only if $\Hom_{\br{J}_l^m}(M,I_v)=0$ for each frozen $v$.
\end{lemma}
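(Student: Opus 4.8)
The plan is to compare the two vanishing conditions via the exact sequences \eqref{eq:injres1}, \eqref{eq:injres2}, \eqref{eq:injres3} that define the $T_v$. Since $\Hom_{\br{J}_l^m}(M,-)$ is left exact, each such sequence yields an exact sequence
$$0\to \Hom(M,T_v)\to \Hom(M,\br{I}_v)\to \Hom(M,\br{I}_{v'})\oplus\bigoplus_{n=1}^m k_n\Hom(M,\br{I}_n)$$
(with the obvious modifications for terminal $v$ and for $v=n\in[1,m]$, where $T_n=\br{I}_n$ and the statement is trivial). The direction ``$\Hom(M,\br{I}_v)=0$ for all frozen $v$ $\Rightarrow$ $\Hom(M,T_v)=0$ for all frozen $v$'' is then immediate from left exactness: $\Hom(M,T_v)$ embeds into $\Hom(M,\br{I}_v)$.

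For the converse I would argue by induction on frozen vertices, ordered by the ``predecessor'' relation $v\mapsto v'$, starting from the terminal frozen vertices. First note that the injectives $\br{I}_n$ for $n\in[1,m]$ are exactly the $T_n$, so $\Hom(M,T_n)=0$ already gives $\Hom(M,\br{I}_n)=0$ for those $n$; in particular the term $\bigoplus_{n=1}^m k_n\Hom(M,\br{I}_n)$ in the defining sequences vanishes once we know $\Hom(M,T_v)=0$ for all frozen $v$. For a terminal frozen vertex $v=(_{i,j}^{m\veebar})$, sequence \eqref{eq:injres2} then forces $\Hom(M,\br{I}_v)=0$. For a non-terminal frozen vertex $v$ with predecessor $v'$, sequence \eqref{eq:injres1} gives that $\Hom(M,\br{I}_v)$ injects into $\Hom(M,\br{I}_{v'})$ (the other summand having already been killed); by the inductive hypothesis $\Hom(M,\br{I}_{v'})=0$, so $\Hom(M,\br{I}_v)=0$. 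Running through all frozen vertices this way establishes the converse.

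The one point that needs care — and which I expect to be the main (minor) obstacle — is the induction on terminality: one must check that the predecessor relation $v\mapsto v'$ on the set of frozen vertices of $\Diamondsh_l^m$ is acyclic and that every frozen vertex reaches a terminal one in finitely many steps, so that the induction is well-founded. This is clear from the explicit description after Proposition \ref{P:Tv}: in the even case the predecessor map sends $(_{i,j}^{m\veebar})\mapsto(_{i+1,j-1}^{m\veebar})$, strictly decreasing $j$, and terminates at $j$ small; in the odd case it sends $(_{0,j}^{m\veebar})\mapsto(_{0,j+1}^{m\veebar})$, strictly increasing $j$, and terminates at $j=l-1$. The degenerate coincidence $v=v^*$ (when $l,m$ both even) does not affect this argument since it only concerns how $T_v$ sits relative to its maximal vertex, not the defining sequence \eqref{eq:injres1}, which still expresses $T_v$ as the kernel of $\br{I}_v\to\br{I}_{v'}\oplus\bigoplus k_n\br{I}_n$. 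Hence the induction goes through and both directions are proved.
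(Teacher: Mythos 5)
Your proof is correct and follows essentially the same route as the paper: the forward direction from left exactness (equivalently, $T_v\subseteq \br{I}_v$), and the converse by induction along the predecessor chain starting from terminal vertices, using the copresentations \eqref{eq:injres1}--\eqref{eq:injres3} and the fact that $T_n=\br{I}_n$ for $n\in[1,m]$. The well-foundedness check you flag is the same (implicit) point the paper relies on, and your verification of it is correct.
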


\begin{proof}  Since each subrepresentation of $T_v$ is also a subrepresentation of $I_v$, one direction is clear.
	Conversely, let us assume that $\Hom_{\br{J}_l^m}(M,T_v)=0$ for each frozen vertex $v$.
	We prove that $\Hom_{\br{J}_l^m}(M,I_v)=0$ by induction.
	
	Recall that $T_n$ are injective for all $n\in [1,m]$.
	In general, for an injective presentation of $T$: $0\to T\to I_1\to I_0$ with $\Hom_{\br{J}_l^m}(M,I_0)=0$,
	we have that $\Hom_{\br{J}_l^m}(M,I_1)=0$ is equivalent to $\Hom_{\br{J}_l^m}(M,T)=0$.
	Now we perform the induction from a terminal $v$ then its predecessor using \eqref{eq:injres2} and \eqref{eq:injres1}.
	We can conclude that $\Hom_{\br{J}_l^m}(M,I_v)=0$ for all frozen $v$.
\end{proof}

\begin{proposition} \label{P:Glm} The cone $\mr{G}_l^m \subset \mb{R}^{(\br{\Diamond}_l^m)_0}$ is defined by $\g(\dv S)\geq 0$ for all subrepresentations $S\subseteq T_n$ for $n\in[1,m]$, and for all strict subrepresentations $S\subseteq T_v$ for all frozen $v\notin[1,m]$. 
	Moreover, any inequality above is essential.
\end{proposition}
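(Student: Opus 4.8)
The plan is to characterize the $\mu$-supported $\g$-vector cone $\mr{G}_l^m$ through the generic character and the injective resolutions of the boundary representations. Recall from Theorem~\ref{T:equal} that $\SI_\bl(\kllm)\cong\uca(\wtd{\Diamond}_l^m,\wtd{\b{s}}_l^m)$, and after the mutation sequence of Section~\ref{ss:museq} this is carried to $\uca(\br{\Diamond}_l^m,\br{\b{s}}_l^m)$. A $\g$-vector $\g$ lies in $\mr{G}_l^m$ precisely when $\Coker(\g)$ is $\mu$-supported, i.e.\ not supported on any frozen vertex of $\br{\Diamond}_l^m$. Since the frozen vertices of $\br{\Diamond}_l^m$ are those of $\Diamondsh_l^m$ together with the $m$ vertices $n\in[1,m]$, and $M=\Coker(\g)$ is not supported on a frozen vertex $v$ if and only if $\Hom_{\br{J}_l^m}(I_v,M^*)=0$ --- equivalently (working with the transpose/dual appropriately, or directly, as in \cite[\S6]{Fs1}) if and only if $\Hom_{\br{J}_l^m}(M,I_v)=0$ for every frozen $v$. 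This is exactly the condition studied in Lemmas~\ref{L:hom=0} and~\ref{L:TIequi}.

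First I would invoke Lemma~\ref{L:TIequi}: $\Hom_{\br{J}_l^m}(M,I_v)=0$ for all frozen $v$ is equivalent to $\Hom_{\br{J}_l^m}(M,T_v)=0$ for all frozen $v$. Then, since each $T_v$ contains a maximal vertex when $v\neq v^*$ (by the discussion preceding Lemma~\ref{L:TIequi}: the vertex $v^*$ for $v\notin[1,m]$, and $(_{1,0}^n)$ or $(_{l-1,0}^n)$ for $v=n$), Lemma~\ref{L:hom=0} translates $\Hom_{\br{J}_l^m}(M,T_v)=0$ into the linear inequalities $\g(\dv S)\geq 0$ for every subrepresentation $S\subseteq T_v$. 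In the exceptional case $v=v^*$ (which forces $m$ even, $l$ even, and $v=(_{l/2,l/2}^m)$), I would use the remark that $T_v$ itself lacks a maximal vertex but the submodule $T_v'$ with $0\to T_v'\to T_v\to S_v\to 0$ does have maximal vertex $v$, and $\Hom(M,T_v')=0\iff\Hom(M,T_v)=0$; hence the conclusion of Lemma~\ref{L:hom=0} still applies, and the inequalities coming from subrepresentations of $T_v$ cut out the same cone. For $v=n$ with $n\notin[1,m]$ --- wait, all such $v$ are in $[1,m]$ by definition --- so the statement's split into ``$S\subseteq T_n$ for $n\in[1,m]$'' versus ``strict $S\subseteq T_v$ for frozen $v\notin[1,m]$'' is just the bookkeeping that for the injective-and-simple-ish $T_n$ one may need all subrepresentations including $T_n$ itself, whereas for $v\notin[1,m]$ the inequality $\g(\dv T_v)\geq 0$ is implied by the others (it corresponds to the maximal-vertex condition being automatically consistent), so only strict subrepresentations contribute new constraints. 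I would verify this reduction by a short direct check using the path-module description of $T_v$ from Proposition~\ref{P:Tv}: the full dimension vector $\dv T_v$ decomposes along $v^*$ as $\dv T_v'+\e_{v^*}$ and $\g(\e_{v^*})$ is already constrained because $v^*$ is a terminal edge vertex appearing in another $T_w$.

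The main obstacle I expect is the essentiality claim: that every inequality in the list is a facet, i.e.\ none is redundant. For this I would argue that for each subrepresentation $S$ in the prescribed family there exists a $\g$-vector on the cone with $\g(\dv S)=0$ and $\g(\dv S')>0$ for all other listed $S'$ --- equivalently, a $\mu$-supported representation $M$ with $\Hom(M,T_v)\neq 0$ realized ``minimally'' through the quotient $T_v/S$. The cleanest route is the mutation-equivariance noted in the remark after Proposition~\ref{P:Tv}: when $l,m$ are not both even, each $T_v$ is the mutation $\bs{\mu}_v(S_v)$ of a negative simple at $v$ along the optimizing sequence $\bs{\mu}_v$ of Lemma~\ref{L:optfr} (or \ref{L:optdet}), so the inequalities $\g(\dv S)\geq 0$ are the mutations of the defining inequalities of the cone in a seed where $v$ is a source/sink, where essentiality is transparent; then transport back by $\bs{\mu}_v^{-1}$, using that mutation is a bijective piecewise-linear map on $\g$-vector space preserving facets (\cite{DF}, \cite{Fs1}). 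For the remaining case $l$ and $m$ both even, I would reduce to the odd case by the standard trick (already used in the proof of Theorem~\ref{T:CM}) that the statement for $m$ follows from that for $m+1$ via freezing, together with handling each even diamond independently. A short separate check is needed that the subrepresentation lattices of the $T_v$'s are ``generic enough'' --- i.e.\ that no two distinct $\dv S$ happen to be proportional forcing a merge of facets --- which follows from the explicit uniserial/path structure in Proposition~\ref{P:Tv}, since the dimension vectors $\dv S$ are sums of distinct standard basis vectors $\e_u$ along the path $p_v$ and are visibly pairwise non-proportional.
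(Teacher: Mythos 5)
Your derivation of the inequalities themselves is essentially the paper's argument: Lemma \ref{L:TIequi} reduces $\Hom_{\br{J}_l^m}(M,I_v)=0$ to $\Hom_{\br{J}_l^m}(M,T_v)=0$, Lemma \ref{L:hom=0} (together with the remark covering $v=v^*$) converts this to $\g(\dv S)\geq 0$ over subrepresentations of the $T_v$, and the full inequality $\g(\dv T_v)\geq 0$ for $v\notin[1,m]$ is discarded exactly as in the paper by writing $\dv T_v=\e_{v^*}+(\dv T_v-\e_{v^*})$, where the second summand is the maximal strict subrepresentation of $T_v$ and $\e_{v^*}$ is the dimension vector of the socle of $T_{v^*}$, hence already listed. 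So the first assertion is fine.

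The gap is in the essentiality claim. Your main route --- transporting each inequality by the optimizing sequence $\bs{\mu}_v$ to a seed where $v$ is a sink/source and declaring essentiality ``transparent'' there --- does not go through as stated. The induced map on $\g$-vectors is only piecewise linear, so a single linear inequality $\g(\dv S)\geq 0$ is not the image of a single inequality in the mutated seed; rather, the \emph{entire family} of inequalities attached to $T_v$ corresponds after transport to the one condition at $v$, and the essentiality of each individual member of that family is exactly what must be proved. Moreover, different frozen vertices require different sequences $\bs{\mu}_v$, so there is no common chart in which all inequalities become simple simultaneously, and facet-by-facet redundancy is not preserved by a piecewise-linear change of coordinates in any obvious way. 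Your fallback check --- that no two $\dv S$ are proportional --- is necessary but far from sufficient: an inequality can be a nonnegative combination of several others without being proportional to any one of them. The missing (and much shorter) observation, which is the paper's, is a support argument: every listed $\dv S$ has exactly one nonzero frozen coordinate, equal to $1$ at the vertex $v$ it comes from, because each such $S$ contains the socle of the uniserial $T_v$ at $v$ and omits the top at $v^*$ (and each nonzero $S\subseteq T_n$ contains the socle at $n$). A nonnegative-combination relation among the listed vectors would therefore have to involve only subrepresentations of one and the same $T_v$; these form a chain of $0$--$1$ dimension vectors, for which no such relation exists.
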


\begin{proof} Due to Lemma \ref{L:hom=0} and \ref{L:TIequi}, $\mr{G}_l^m$ is defined by $\g(\dv S)\geq 0$ for all subrepresentations $S$ of $T_v$ and all $v$ frozen.
So it suffices to show that the condition $\g(\dv T_v)\geq 0$ is redundant for each $v\notin[1,m]$.
This is because $\dv T_v = \e_w + (\dv T_v-\e_w)$ and $\dv T_v-\e_w$ is the dimension vector of a strict subrepresentation of $T_v$.

For the last statement, we notice that each subrepresentation defining $\mr{G}_l^m$ supported on a unique frozen vertex. 
So if one defining condition is a positive combination of others, their corresponding subrepresentations must share a common frozen vertex. But this is clearly impossible.
\end{proof}

\begin{example} \label{ex:333} For $l=m=3$, the ice quiver $\br{\Diamond}_3^3$ is draw in the diamond view below.
$$\diamondthreeleft \hspace{1in} \diamondthreeright$$
We list some of the cluster variables mostly following Lemma \ref{L:wtds} ($\br{s}_{1,1}^3$ is obtained from $\wtd{s}_{1,1}^3$ by the mutation at $(_{1,1}^3)$).
Recall that $(_{0,i}^1)=(_{i,0}^1)$.
\begin{align*}
\br{s}_{i,0}^n&=s(f_{i,0}^n) && f_{i,0}^n: P_{i} \xrightarrow{n} P_{-i},\\
\br{s}_{i,j}^2&=s(f_{i,j}^2) && f_{i,j}^2: P_{i+j} \xrightarrow{\sm{2&1}} P_{-i}\oplus P_{-j}, \\
\br{s}_{1,1}^3&=s(f_{1,1}^3) && f_{1,1}^3: P_{1}\oplus P_3 \xrightarrow{\sm{0&1\\3&2}} P_{-2}\oplus P_{-2}, \\
\br{s}_{0,j}^3&=(-1)^js(f_{0,j}^3) && f_{0,j}^3: P_{j}\oplus P_3 \xrightarrow{\sm{0&1\\3&2}} P_{-j}\oplus P_{-3}.
\end{align*}

The path $p_{0,1}^3$ and $p_{0,2}^3$ are given by
\begin{align*}
 p_{0,1}^3&={_{0,1}^3}\leftarrow{_{2,1}^2}\leftarrow{_{1,1}^2}\leftarrow{_{0,1}^1}\leftarrow{_{1,0}^2}\leftarrow{_{1,1}^2}\leftarrow{_{1,2}^2}\leftarrow{_{2,0}^3}\leftarrow{_{1,1}^{3\vee}}\leftarrow{_{0,1}^{3\vee}}\\
 p_{0,2}^3&={_{0,2}^3}\leftarrow{_{1,1}^3}\leftarrow{_{1,2}^2}\leftarrow{_{0,2}^1}\leftarrow{_{1,1}^{2\vee}}\leftarrow{_{2,0}^2}\leftarrow{_{2,1}^2}\leftarrow{_{1,1}^3}\leftarrow{_{1,0}^{3\vee}}\leftarrow{_{0,2}^{3\vee}}
\end{align*}
The paths $p_{0,1}^{3\vee}$ and $p_{0,2}^{3\vee}$ are very similar.
So according to Proposition \ref{P:Glm}, the cone $\mr{G}_l^m$ have exactly $43$ supporting hyperplanes, including
$9\cdot4=36$ coming from $T_{0,1}^{3\veebar}$ and $T_{0,2}^{3\veebar}$;
$3\cdot2=6$ coming from $T_2$ and $T_3$; $1$ coming from $T_1$.

This example has been implemented in SageMath \cite{sage} to compute the relevant Kronecker coefficients.
The case when $l=m=4$ is also implemented. Readers can download the code from author's web page \cite{Fweb}.
\end{example}

As pointed out in the previous paper, the cluster structure for $\SI_\bl(\kllm)$ is not unique for $m\geq 3$.
In the end, we give a seed for $\SI_{\beta_3}(K_{3,3}^3)$, which is not mutation-equivalent to the one in the last example.
The proof is very similar.

\begin{example} \label{ex:333another}
Let $({\underline{\Diamond}_l^m}, {\underline{\b{s}}_l^m})$ be the following quiver.
$$\diamondthreeleftanother \hspace{1in} \diamondthreerightanother$$
This seed cannot be mutated to $(\br{\Diamond}_l^m, \br{\b{s}}_l^m)$ because they have different coefficients.
Each cluster variables with the same label as in Example \ref{ex:333} are equal except that
\begin{align*}
\underline{s}_{1,1}^3&=s(\underline{f}_{1,1}^3) && \underline{f}_{1,1}^3: P_{1}\oplus 2P_3 \xrightarrow{\sm{0&0&1\\2&1&0\\0&3&2}} 2P_{-2}\oplus P_{-3}, \\
\underline{s}_{0,j}^3&=(-1)^{j}s(\underline{f}_{0,j}^3) && \underline{f}_{0,j}^3: P_{j}\oplus 2P_3 \xrightarrow{\sm{0&0&1\\2&1&0\\0&3&2}} P_{-j}\oplus 2P_{-3}.
\end{align*}
\end{example}

\section*{Acknowledgement}
The author would like to thank the NCTS (National Center for Theoretical Sciences) and Shanghai Jiao Tong University for the financial support.

\bibliographystyle{amsplain}

\end{document}